\date{}
\begin{document}
\title{
\vspace{0.5in}{\bf\Large Boundary singularities of solutions of semilinear \\elliptic equations with critical Hardy potentials}}

\author{{\bf\large \bf  Konstantinos T. Gkikas}\\
 \small Centro de Modelamiento Matem\`atico\\
 \small  Universidad de Chile, Santiago de Chile, Chile\\
{\it\small email: kugkikas@gmail.com}\\[0.5cm]
 {\bf\large Laurent V\'eron}\\
{\small Laboratoire de Math\'ematiques et Physique Th\'eorique }\\
{\small  Universit\'e Fran\c{c}ois Rabelais,  Tours,  FRANCE}\\
{\it\small email: veronl@univ-tours.fr}}

\maketitle
\newcommand{\ana}{\nabla}
\newcommand{\R}{I \!  \! R}
\newcommand{\N}{I \!  \! N}
\newcommand{\Ren}{ I \! \! R^n}
\newcommand{\Proof}{{\it Proof}}
\newcommand{\qeda}{\hspace{10mm}\hfill $\square$}
\newcommand{\po}{\mathbb{K}_{\mathcal L_{\xk }}}
\newcommand{\gre}{\mathbb{G}_{\mathcal L_{\xk }}}
\newcommand{\be}{\begin{equation}}
\newcommand{\ee}{\end{equation}}
\newcommand{\bea}{\begin{eqnarray}}
\newcommand{\eea}{\end{eqnarray}}
\newcommand{\bean}{\begin{eqnarray*}}
\newcommand{\eean}{\end{eqnarray*}}
\newcommand{\la}{\label}
\newcommand{\bx}{\bar{x}}
\newcommand{\xa}{\alpha}
\newcommand{\xb}{\beta}
\newcommand{\xg}{\gamma}
\newcommand{\xG}{\Gamma}
\newcommand{\xd}{\delta}
\newcommand{\xD}{\Delta}
\newcommand{\xe}{\varepsilon}
\newcommand{\xz}{\zeta}
\newcommand{\xh}{\eta}
\newcommand{\Th}{\Theta}
\newcommand{\xk}{\kappa}
\newcommand{\xl}{\lambda}
\newcommand{\xL}{\Lambda}
\newcommand{\CC}{\mathcal{C}}
\newcommand{\xm}{\mu}
\newcommand{\xn}{\nu}
\newcommand{\ks}{\xi}
\newcommand{\KS}{\Xi}
\newcommand{\xp}{\pi}
\newcommand{\xP}{\Pi}
\newcommand{\xr}{\rho}
\newcommand{\xs}{\sigma}
\newcommand{\xS}{\Sigma}
\newcommand{\xf}{\phi}
\newcommand{\xF}{\Phi}
\newcommand{\ps}{\psi}
\newcommand{\PS}{\Psi}
\newcommand{\xo}{\omega}
\newcommand{\xO}{\Omega}
\newcommand{\tA}{\tilde{A}}
\newcommand{\tC}{\bar{k}_s}
\newcommand{\qcap}{C_{\frac{2}{q},\;q'}}
\newcommand{\qsub}{\subset^q}
\newcommand{\qeq}{\sim^q}
\newcommand{\ei}{\phi_{\xk }}
\newcommand{\BA}{
\begin{array}}
\newcommand{\EA}{
\end{array}}
\newcommand{\opname}[1]{\mbox{\rm #1}\,}
\newcommand{\supp}{\opname{supp}}
\newcommand{\dist}{\opname{dist}}
\newcommand{\myfrac}[2]{{\displaystyle \frac{#1}{#2} }}
\newcommand{\myint}[2]{{\displaystyle \int_{#1}^{#2}}}
\newcommand{\mysum}[2]{{\displaystyle \sum_{#1}^{#2}}}
\newcommand {\dint}{{\displaystyle \int\!\!\int}}
\newcommand{\q}{\quad}
\newcommand{\qq}{\qquad}
\newcommand{\hsp}[1]{\hspace{#1mm}}
\newcommand{\vsp}[1]{\vspace{#1mm}}
\newcommand{\ity}{\infty}
\newcommand{\prt}{
\partial}
\newcommand{\sms}{\setminus}
\newcommand{\ems}{\emptyset}
\newcommand{\ti}{\times}
\newcommand{\pr}{^\prime}
\newcommand{\ppr}{^{\prime\prime}}
\newcommand{\tl}{\tilde}
\newcommand{\sbs}{\subset}
\newcommand{\sbeq}{\subseteq}
\newcommand{\nind}{\noindent}
\newcommand{\ind}{\indent}
\newcommand{\ovl}{\overline}
\newcommand{\unl}{\underline}
\newcommand{\nin}{\not\in}
\newcommand{\pfrac}[2]{\genfrac{(}{)}{}{}{#1}{#2}}

\newcommand{\finedim}{{\hfill $\Box$}}

\newcommand{\bth}[1]{\def\name{Theorem}
\begin{sub}\label{t:#1}}
\newcommand{\blemma}[1]{\def\name{Lemma}
\begin{sub}\label{l:#1}}
\newcommand{\bcor}[1]{\def\name{Corollary}
\begin{sub}\label{c:#1}}
\newcommand{\bdef}[1]{\def\name{Definition}
\begin{sub}\label{d:#1}}
\newcommand{\bprop}[1]{\def\name{Proposition}
\begin{sub}\label{p:#1}}
\newtheorem{theorem}{Theorem}[section]
\newtheorem{lemma}[theorem]{Lemma}
\newtheorem{prop}[theorem]{Proposition}
\newtheorem{coro}[theorem]{Corollary}
\newtheorem{defin}[theorem]{Definition}
\newtheorem{remark}[theorem]{Remark}
\newtheorem{notation}[theorem]{Notation}

\def\ga{\alpha}     \def\gb{\beta}       \def\gg{\gamma}
\def\gc{\chi}       \def\gd{\delta}      \def\ge{\epsilon}
\def\gth{\theta}                         \def\vge{\varepsilon}
\def\gf{\phi}       \def\vgf{\varphi}    \def\gh{\eta}
\def\gi{\iota}      \def\gk{\kappa}      \def\gl{\lambda}
\def\gm{\mu}        \def\gn{\nu}         \def\gp{\pi}
\def\vgp{\varpi}    \def\gr{\rho}        \def\vgr{\varrho}
\def\gs{\sigma}     \def\vgs{\varsigma}  \def\gt{\tau}
\def\gu{\upsilon}   \def\gv{\vartheta}   \def\gw{\omega}
\def\gx{\xi}        \def\gy{\psi}        \def\gz{\zeta}
\def\Gg{\Gamma}     \def\Gd{\Delta}      \def\Gf{\Phi}
\def\Gth{\Theta}
\def\Gl{\Lambda}    \def\Gs{\Sigma}      \def\Gp{\Pi}
\def\Gw{\Omega}     \def\Gx{\Xi}         \def\Gy{\Psi}

\def\CS{{\mathcal S}}   \def\CM{{\mathcal M}}   \def\CN{{\mathcal N}}
\def\CR{{\mathcal R}}   \def\CO{{\mathcal O}}   \def\CP{{\mathcal P}}
\def\CA{{\mathcal A}}   \def\CB{{\mathcal B}}   \def\CC{{\mathcal C}}
\def\CD{{\mathcal D}}   \def\CE{{\mathcal E}}   \def\CF{{\mathcal F}}
\def\CG{{\mathcal G}}   \def\CH{{\mathcal H}}   \def\CI{{\mathcal I}}
\def\CJ{{\mathcal J}}   \def\CK{{\mathcal K}}   \def\CL{{\mathcal L}}
\def\CT{{\mathcal T}}   \def\CU{{\mathcal U}}   \def\CV{{\mathcal V}}
\def\CZ{{\mathcal Z}}   \def\CX{{\mathcal X}}   \def\CY{{\mathcal Y}}
\def\CW{{\mathcal W}} \def\CQ{{\mathcal Q}}
\def\BBA {\mathbb A}   \def\BBb {\mathbb B}    \def\BBC {\mathbb C}
\def\BBD {\mathbb D}   \def\BBE {\mathbb E}    \def\BBF {\mathbb F}
\def\BBG {\mathbb G}   \def\BBH {\mathbb H}    \def\BBI {\mathbb I}
\def\BBJ {\mathbb J}   \def\BBK {\mathbb K}    \def\BBL {\mathbb L}
\def\BBM {\mathbb M}   \def\BBN {\mathbb N}    \def\BBO {\mathbb O}
\def\BBP {\mathbb P}   \def\BBR {\mathbb R}    \def\BBS {\mathbb S}
\def\BBT {\mathbb T}   \def\BBU {\mathbb U}    \def\BBV {\mathbb V}
\def\BBW {\mathbb W}   \def\BBX {\mathbb X}    \def\BBY {\mathbb Y}
\def\BBZ {\mathbb Z}

\def\GTA {\mathfrak A}   \def\GTB {\mathfrak B}    \def\GTC {\mathfrak C}
\def\GTD {\mathfrak D}   \def\GTE {\mathfrak E}    \def\GTF {\mathfrak F}
\def\GTG {\mathfrak G}   \def\GTH {\mathfrak H}    \def\GTI {\mathfrak I}
\def\GTJ {\mathfrak J}   \def\GTK {\mathfrak K}    \def\GTL {\mathfrak L}
\def\GTM {\mathfrak M}   \def\GTN {\mathfrak N}    \def\GTO {\mathfrak O}
\def\GTP {\mathfrak P}   \def\GTR {\mathfrak R}    \def\GTS {\mathfrak S}
\def\GTT {\mathfrak T}   \def\GTU {\mathfrak U}    \def\GTV {\mathfrak V}
\def\GTW {\mathfrak W}   \def\GTX {\mathfrak X}    \def\GTY {\mathfrak Y}
\def\GTZ {\mathfrak Z}   \def\GTQ {\mathfrak Q}

\newcounter{newsec} \renewcommand{\theequation}{\thesection.\arabic{equation}}
\abstract{ We study the boundary behaviour of the of (E) $-\Gd u-\myfrac{\xk }{d^2(x)}u+g(u)=0$, where $0<\xk <\frac{1}{4}$ and $g$ is a continuous nonndecreasing function in a bounded convex domain of $\BBR^N$. We first construct the Martin kernel associated to the the linear operator
$\CL_{\xk }=-\Gd-\frac{\xk }{d^2(x)}$ and give a general condition for solving equation (E) with any Radon measure $\gm$ for boundary data. When $g(u)=|u|^{q-1}u$ we show the existence of a critical exponent $q_c=q_c(N,\xk )>1$: when $0<q<q_c$ any measure is eligible for solving (E) with
$\gm$ for boundary data; if $q\geq q_c$, a necessary and sufficient condition is expressed in terms of the absolute continuity of $\gm$with respect to some Besov capacity. The same capacity characterizes the removable compact boundary sets. At end any positive solution
(F) $-\Gd u-\frac{\xk }{d^2(x)}u+|u|^{q-1}u=0$ with $q>1$ admits a boundary trace which is a positive outer regular Borel measure. When  $1<q<q_c$ we prove that to any positive outer regular Borel measure we can associate a positive solutions of ($F$) with  this boundary trace. }
\tableofcontents\medskip

  \noindent {\small {\bf Key words}: Semilinear elliptic equation; Hardy potentials; Harmonic measure; Singular integrals; Besov capacities; Boundary singularities; Boundary trace.}\vspace{1mm}

\noindent {\small {\bf MSC2010}: Primary 35J66, 35J10. Secondary 31A15, 35H25, 28A12.}\medskip

\section{Introduction}
Let $\xO$ be a  bounded smooth domain in $\BBR^N$ and $d(x)=\dist(x,\Gw^c)$. In this article we study several aspects of the nonlinear boundary value associated to the equation
\be\label{IE1}
-\Gd u-\myfrac{\xk }{d^2(x)}u+|u|^{p-1}u=0\qquad\text{in }\;\Gw
\ee
where $p>1$. The study of the boundary trace of solutions of $(\ref{IE1})$ is a natural framework for a general study of several nonlinear problems where the nonlinearity, the geometric properties of the domain and the coefficient $\xk $ interact. On this point of view, the case $\xk =0$ has been thoroughly treated  by Marcus and V\'eron \cite{MV-JMPA01}, \cite{MV-CPAM}, \cite{MV}, \cite{MV-CONT}, for example and the synthesis presented in \cite{book}. The associated linear Schr\"{o}dinger operator
\be\label{IE2}
u\mapsto \CL_{\xk }u:= -\Gd u-\myfrac{\xk }{d^2(x)}u
\ee
plays an important role in functional analysis because of the particular singularity of is
potential $V(x):=-\frac{\xk }{d^2(x)}$. The case $\xk <0$ and more generally of nonnegative potential has been studied by Ancona \cite {An} who has shown the existence of a Martin kernel which allows a general representation formula of nonnegative solutions of
\be\label{IE3}
\CL_{\xk }u=0\qquad\text{in }\;\Gw,
\ee
This representation turned out to be the key ingredient of the full classification of positive solutions of
\be\label{IE4}
-\Gd u+u^q=0\qquad\text{in }\;\Gw
\ee
which was obtained by Marcus \cite{Ma}. In a more general setting, V\'eron and Yarur \cite {VY} constructed a capacitary theory associated to the linear equation
\be\label{IE5}
\CL_{V}u:=-\Gd u+V(x)u=0\qquad\text{in }\;\Gw,
\ee
where the potential $V$ is nonnegative and singular near $\prt\Gw$. When  $V(x):=-\frac{\xk }{d^2(x)}$ with $\xk >0$, $V$ is called a Hardy potential. There is a critical value $\xk =\frac{1}{4}$. If $\xk >\frac{1}{4}$, no positive solution of $(\ref{IE3})$ exists.
When $0<\xk \leq \frac{1}{4}$, there exist positive solutions and the geometry of the domain plays a fundamental role in the study of the mere linear equation  $(\ref{IE3})$.
We define the constant $c_\Gw$ by
\be\label{IE6}
c_\Gw=\inf_{v\in H^1_0(\Gw)\setminus\{0\}}\myfrac{\myint{\Gw}{}|\nabla v|^2dx}{\myint{\Gw}{}\frac{v^2}{d^{2}(x)}dx}.
\ee
It is known that $c_\Gw$ belongs to $(0,\frac{1}{4}]$. If $\Gw$ is convex or if the distance function $d$ is super harmonic in the sense of distributions, then $c_\Gw=\frac{1}{4}$. Furthermore
there holds $c_\xO=\frac{1}{4}$ if and only if problem (\ref{IE6}) has no minimizer. (see \cite{hardy-marcus}).
When $ 0< \xk \leq \frac{1}{4}$, {\it which is which is always assumed in the sequel} and $-\xD d\geq0$ in the sense of distributions, it is possible to define the first eigenvalue $\xl_{\xk }$ of the operator $\CL_{\xk }$. If we define the two fundamental exponents $\ga_+$ and $\ga_-$ by
\be\label{alpha}
\ga_+=1+\sqrt{1-4\xk }\quad \text{and}\quad\ga_-=1-\sqrt{1-4\xk }
\ee
then the first eigenvalue is achieved by an eigenfunction $\gf_{\xk }$ which satisfies $\xf_{\xk }(x)\approx d^{\frac{\xa_+}{2}}(x)$ as $d(x)\to 0$. Similarly, the Green kernel $G_{\CL_{\xk }}$ associated to $\CL_{\xk }$ inherits this type of boundary behaviour since there holds
\be\label{alpha-green}
\frac{1}{C_\gk}\min\left\{\frac{1}{|x-y|^{N-2}},\frac{d^{\frac{\ga_+}{2}}(x)d^{\frac{\ga_+}{2}}(y)}{|x-y|^{N+\ga_+-2}}\right\}\leq G_{\CL_{\xk }}(x,y)\leq C_\gk\min\left\{\frac{1}{|x-y|^{N-2}},\frac{d^{\frac{\ga_+}{2}}(x)d^{\frac{\ga_+}{2}}(y)}{|x-y|^{N+\ga_+-2}}\right\}
\ee
We show that $\CL_{\xk }$ satisfies the maximum principle in the sense that if $u\in H^1_{loc}\cap C(\Gw)$ is a subsolution i.e.  $\CL_{\xk }u\leq 0$ such that
\be\label{subsol}\BA {llll}\displaystyle
(i) \quad&\displaystyle\limsup_{x\to y}\myfrac{u(x)}{d^{\ga_-}(x)}\leq 0\quad&\text{if } 0< \xk < \frac{1}{4}\\
\displaystyle(ii)\quad&\displaystyle\limsup_{x\to y}\myfrac{u(x)}{\sqrt{d(x)}|\ln d(x)|}\leq 0\quad&\text{if } \xk = \frac{1}{4}
\EA\ee
for all $y\in\prt\Gw$, then $u\leq 0$. If $\xi\in\prt\Gw$ and $r>0$, we set $\Gd_r(\xi)=\prt\Gw\cap B_r(\xi)$. We prove that a positive solution of $\CL_{\xk }u=0$ which vanishes on a part of the boundary in the sense that

\be\label{vanis}\BA {llll}\displaystyle
(i) \quad&\displaystyle\lim_{x\to y}\myfrac{u(x)}{d^{\ga_-}(x)}= 0\quad&\forall y\in\Gd_r(\xi) \quad&\text{if } 0< \xk < \frac{1}{4}\\
\displaystyle(ii)\quad&\displaystyle\lim_{x\to y}\myfrac{u(x)}{\sqrt{d(x)}|\ln d(x)|}= 0\quad&\forall y\in\Gd_r(\xi) \quad&\text{if } \xk = \frac{1}{4},
\EA\ee
satisfies
\be\label{Comp}
\myfrac{u(x)}{\gf_{\xk }(x)}\leq C_1\myfrac{u(y)}{\gf_{\xk }(y)}\qquad\forall x,y\in \Gd_{\frac r2}(\xi),
\ee
for some $C_1=C_1(\Gw,\xk )>0$. \medskip

For any $h\in C(\prt\Gw)$ we construct the unique solution $v:=v_h$ of the Dirichlet problem
\be\label{Diri}\BA {llll}
\CL_{\xk }v=0\qquad\text{in }\Gw\\
\phantom{\CL_{\xk }}
v=h\qquad\text{on }\prt\Gw
\EA\ee
Using this construction and estimates (\ref{subsol}) we  show the existence of the $\CL_{\xk }$-measure, which is a Borel measure $\gw^x$ with the property that for any $h\in C(\prt\Gw)$, the above function $v_h$ satisfies
\be\label{harm}\BA {llll}
v_h(x)=\myint{\prt\Gw}{}h(y)d\gw^x(y).
\EA\ee
Because of Harnack inequality, the measures $\gw^x$ and $\gw^z$ are mutually absolutely continuous for $x,z\in\Gw$ and for any $x\in\Gw$ we can define the Radon-Nikodym derivative
\be\label{RD}
K(x,y):=\myfrac{d\gw^x}{d\gw^{x_0}}(y)\quad\text{for }\gw^{x_0}\text{-almost }y\in\prt\Gw.
\ee
There exists $r_0:=r_0(\Gw)$ such that for any $x\in\Gw$ such that $d(x)\leq r_0$, there exists a unique $\xi=\xi_x\in\prt\Gw$ such that
$d(x)=|x-\xi_x|$. If we denote by $\Gw'_{r_0}$ the set of $x\in\Gw$ such that $0< d(x)< r_0$, the mapping $\Gp$ from $\overline\Gw'_{r_0}$ to $[0,r_0]\ti\prt\Gw$ defined by $\Gp(x)=(d(x),\xi_x)$ is a $C^1$ diffeomorphism.
If $\xi\in\prt\Gw$ and $0\leq r\leq r_0$, we set $x_r(\xi)=\Gp^{-1}(r,\xi)$. Let $W$ be defined in $\Gw$ by
\be\label{W}
W(x)=\left\{\BA {lll}d^{\frac{\ga_-}{2}}(x)\qquad&\text{if }\;\xk <\frac{1}{4}\\[2mm]
\sqrt{d(x)}|\ln d(x)|\qquad&\text{if }\;\xk =\frac{1}{4},
\EA
\right.
\ee
we prove that the $\CL_{\xk }$-harmonic measure can be equivalently defined by
\be\label{harm-1}
\gw^x(E)=\inf\left\{\psi:\psi\in C_+(\Gw), \;\CL_{\xk }\text{-superharmonic in }\Gw\text{ and s.t. }\liminf_{x\to E}\myfrac{\psi(x)}{W(x)}\geq 1\right\}
\ee
for any compact set $E\subset\prt\Gw$ and then extended classically to Borel subsets of $\prt\Gw$.
\medskip

The $\CL_{\xk }$-harmonic measure is connected to the Green kernel of $\CL_{\xk }$ by the following estimates\medskip

\noindent{\bf Theorem A} {\it There exists $C_3:=C_3(\Gw)>0$ such that for any $r\in (0,r_0]$ and $\xi\in\prt\Gw$, there holds
\be\label{harm2}\BA {llll}
\frac{1}{C_3}r^{N+\frac{\ga_-}{2}-2}G_{\CL_{\xk }}(x_r(\xi),x)
&\leq\gw^x(\Gd_r(\xi))\\&\leq C_3r^{N+\frac{\ga_-}{2}-2}G_{\CL_{\xk }}(x_r(\xi),x)\quad\forall x\in\Gw\setminus B_{4r(\xi)}
\EA\ee
if $0<\xk <\frac{1}{4}$, and
\be\label{harm3}\BA {llll}
\frac{1}{C_3}r^{N-2+\frac{1}{2}}|\ln d(x)|G_{\CL_{\frac{1}{4}}}(x_r(\xi),x)&\leq\gw^x(\Gd_r(\xi))\\&\leq C_3r^{N-2+\frac{1}{2}}|\ln d(x)|G_{\CL_{\frac{1}{4}}}(x_r(\xi),x)\quad\forall x\in\Gw\setminus B_{4r(\xi)}.
\EA\ee
}
As a consequence $\gw^x$ has the doubling property. The previous estimates allow to construct a kernel function
 of $\CL_{\xk }$ in $\Gw$, prove its uniqueness up to an homothety.  When normalized, the kernel function denoted by $K_{\CL_{\xk }}$
 is the Martin kernel, defined by
\be\label{martin}\BA {llll}
K_{\CL_{\xk }}(x,\xi)=\displaystyle\lim_{x\to\xi}\myfrac{G_{\CL_{\xk }}(x,y)}{G_{\CL_{\xk }}(x,x_0)}\quad\forall\xi\in\prt\Gw.
\EA\ee
for some $x_0\in\Gw$. An important property of the Martin kernel is that it allows to represent a positive   $\CL_{\xk }$-harmonic function $u$ by mean of a Poisson type formula which endows the form

\be\label{martin1}\BA {llll}
u(x)=\myint{\prt\Gw}{}K_{\CL_{\xk }}(x,\xi)d\gm(\xi)\quad\text{for }\gw^{x_0}\text{-almost }x\in\Gw
\Gw.
\EA\ee
for some positive Radon measure $\gm$ on $\prt\Gw$. The measure $\gm$ is called the boundary trace of $u$. Furthermore
$K_{\CL_{\xk }}$ satisfies the following two-side estimates\medskip

\noindent{\bf Theorem B} {\it There exists $C_3:=C_3(\Gw,\xk )>0$ such that for any $(x,\xi)\in\Gw\ti\prt\Gw$ there holds
\be\label{harm4}\BA {llll}
\myfrac{1}{C_3}\myfrac{d^{\frac{\ga_+}{2}}}{|x-\xi|^{N+\ga_+-2}}\leq K_{\CL_{\xk }}(x,\xi)\leq C_3\myfrac{d^{\frac{\ga_+}{2}}}{|x-\xi|^{N+\ga_+-2}}.
\EA\ee}

Thanks to these estimates we can adapt the approach developed in \cite{GV} to prove the existence of weak solutions to the nonlinear boundary value problem
\be\label{Non1}\BA {ll}
-\Gd u-\myfrac{\xk }{d^2(x)}u+g(u)=\gn\qquad\text{in }\;\Gw\\[2mm]
\phantom{-\Gd -\myfrac{\xk }{d^2(x)}u+g(u)}
u=\gm\qquad\text{in }\;\prt\Gw,
\EA\ee
where $g$ is a continuous nondecreasing function such that $g(0)\geq 0$ and $\gn$ and $\gm$ are Radon measures on $\Gw$ and $\prt\Gw$ respectively .
We define the class ${\bf X}_\gk(\Gw)$ of test functions by
\be\label{Non2}
{\bf X}_\gk(\Gw)=\left\{\BA {llll}
\eta\in L^2(\Gw)\text{ s.t. }\nabla (d^{-\frac{\ga_+}{2}}\eta)\in L^2_{\xf_k}(\Gw)\text{ and }\gf^{-1}_{\xk }\CL_\gk\eta\in L^{\infty}(\Gw)
\EA\right\}\ee
and we prove\medskip

\noindent{\bf Theorem C} {\it Assume $g$ satisfies
\be\label{Non3}\BA {llll}
\myint{1}{\infty}(g(s)+|g(-s)|)s^{-2\frac{N-1+\frac{\ga_+}{2}}{N-2+\frac{\ga_+}{2}}} ds<\infty.
\EA\ee
Then for any Radon measures $\gn$ on $\Gw$ and such that $\int_\Gw\gf_\gk d|\gm|<\infty$ and $\gm$ on $\prt\Gw$ there exists a unique $u\in L^1_{\gf_{\xk }}(\Gw)$ such that
$g(u)\in L^1_{\gf_{\xk }}(\Gw)$ which satisfies
\be\label{Non3'}\BA {llll}
\myint{\Gw}{}\left(u\CL_{\xk }\eta +g(u)\eta\right) dx=\myint{\Gw}{}\left(\eta d\gn+\BBK_{\CL_{\xk }}[\gm]\CL_{\xk }\eta dx\right)\quad\forall\eta\in {\bf X}_\gk(\Gw).
\EA\ee}

When $g(r)=|r|^{q-1}r$ the critical value is $q_c=\frac{N+\frac{\ga_+}{2}}{N+\frac{\ga_+}{2}-2}$ and (\ref{Non3}) is satisfied for
$0\leq q<q_c$ (the subcritical range). In this range of values of $q$, existence and uniqueness of a solution to 
\be\label{Non4}\BA {llll}
-\Gd u-\myfrac{\xk }{d^2(x)}u+|u|^{q-1}u=0\qquad\text{in }\;\Gw\\[2mm]
\phantom{-\Gd -\myfrac{\xk }{d^2(x)}u+|u|^{q-1}u}
u=\gm\qquad\text{in }\;\prt\Gw,
\EA\ee
has been recently obtained by Marcus and Nguyen \cite{MaNg}. When $q\geq q_c$ not all the Radon measures are eligible for solving problem (\ref{Non4}).\medskip 

We prove the following result in which statement $C^{\BBR^{N-1}}_{2-\frac{2+\ga_+}{2q'},q'}$ denotes the Besov capacity associated to the Besov space $B^{2-\frac{2+\ga_+}{2q'},q'}(\BBR^{N-1})$.\medskip

\noindent{\bf Theorem D} {\it  Assume $q\geq q_c$ and $\gm$ is a positive Radon measure on $\prt\Gw$. Then problem (\ref{Non4}) admits a weak solution if and only if $\gm$ vanishes on Borel sets $E\subset\prt\Gw$ such that $C^{\BBR^{N-1}}_{2-\frac{2+\ga_+}{2q'},q'}(E)=0$.}\medskip

Note that a special case of this result is proved in (\cite{MaNg}) when $\gm=\gd_a$ for a boundary point and $q\geq q_c$. In that case $\gd_a$  does not vanish on $\{a\}$ although $C^{\BBR^{N-1}}_{2-\frac{2+\ga_+}{2q'},q'}(\{a\})=0$. \smallskip

This capacity plays a fundamental for characterizing the removable compact boundary sets which exist only in the {\it supercritical range $q\geq q_c$}.\medskip

\noindent{\bf Theorem E} {\it  Assume $q\geq q_c$ and $K\subset\prt\Gw$ is compact. Then any function $u\in C(\overline \Gw\setminus K)$ which satisfies
\be\label{Non5}\BA {llll}
-\Gd u-\myfrac{\xk }{d^2(x)}u+|u|^{q-1}u=0\qquad\text{in }\;\Gw\\[2mm]
\phantom{-\Gd -\myfrac{\xk }{d^2(x)}u+|u|^{q-1}u}
u=0\qquad\text{in }\;\prt\Gw\setminus K,
\EA\ee
is identically zero if and only if $C^{\BBR^{N-1}}_{2-\frac{2+\ga_+}{2q'},q'}(K)=0$.}
\medskip

We show that any positive solution $u$ of (\ref{IE1}) admits a boundary trace, and more precisely we prove that the following dichotomy holds: et \medskip

\noindent{\bf Theorem F} {\it Let $u$ be a positive solution of  (\ref{IE1})  in $\Gw$ and $a\in\prt\Gw$. Then\smallskip

\noindent (i) either for any $\ge>0$
\be\label{Tr1}\BA {llll}\displaystyle
\lim_{\gd\to 0}\myint{\Gs_\gd\cap B_\ge(a)}{}ud\gw_{\Gw'_\gd}^{x_0}=\infty,
\EA\ee
where $\Gw'_\gd=\{x\in\Gw:d(x)>\gd\}$, $\Gs_\gd=\prt\Gw'_\gd$ and $\gw_{\Gw'_\gd}^{x_0}$ is the harmonic measure in $\Gw'_\gd$,
\smallskip

\noindent (ii) or there exists $\ge_0>0$ and  a positive Radon measure $\gl$ on $\prt\Gw\cap B_{\ge_0}(a)$ such that for any $Z\in C(\overline\Gw)$ with support in $\Gw\cup(\prt\Gw\cap B_{\ge_0}(a))$, there holds
\be\label{Tr2}\BA {llll}\displaystyle
\lim_{\gd\to 0}\myint{\Gs_\gd\cap B_\ge(a)}{}Zud\gw_{\Gw'_\gd}^{x_0}=\myint{\prt\Gw\cap B_\ge(a)}{}Zd\gl.
\EA\ee.} \medskip

The set of points $a\in\prt\Gw$ such that (i) (resp. (ii)) holds is closed (resp. relatively open) and denoted by $\CS_u$ (resp $\CR_u$). There exists a unique radon measure $\gm_u$ on $\CR_u$ such that, for any $Z\in C(\overline\Gw)$ with support in $\Gw\cup\CR_u$ there holds
\be\label{Tr3}\BA {llll}\displaystyle
\lim_{\gd\to 0}\myint{\Gs_\gd}{}Zud\gw_{\Gw'_\gd}^{x_0}=\myint{\CR_u}{}Zd\gm_u.
\EA\ee
 The couple $(\CS_u,\gm_u)$ is called {\it the boundary trace of $u$} and denoted by $Tr_{\prt\Gw}(u)$. A notion of normalized boundary trace of positive  {\it moderate solutions} of (\ref{IE1}), i.e. solutions such that $u\in L^q(\gf_\gk)$,  is developed in \cite{MaNg}. They proved the existence of a boundary trace $\gm\approx (\{\emptyset\},\gm_u)$ and corresponding representation of $u$ via the Martin and Green kernels.
\medskip

If $1<q<q_c$ we denote by $u_{k\gd_a}$  positive solution of (\ref{IE1}) with $\gm=k\gd_a$ for some $a\in\prt\Gw$ and $k\geq 0$. There exists $\lim_{k\to\infty}u_{k\gd_a}=u_{\infty,a}$. We prove the following\medskip

\noindent{\bf Theorem G} {\it  Assume $1<q< q_c$ and $a\in\prt\Gw$. Then If $u$ is a positive solution of (\ref{IE1}) such that
$a\in \CS_u$, then $u\geq u_{\infty,a}$.}\medskip

In order to go further in the study of boundary singularities, we construct separable solutions of (\ref{IE1})  in
$\BBR^N_+=\{x=(x',x_N):x_N>0\}=\{(r,\gs)\in \BBR_+\ti S^{N-1}_+\}$ which vanish on
$\prt\BBR^N_+\setminus\{0\}$ under the form $u(r,\gs)=r^{-\frac{2}{q-1}}\gw(\gs)$, where $r>0$, $\gs\in S^{N-1}_+$. They are solutions of
\be\label{Tr5}\BA {llll}
-\Gd_{S^{N-1}}\gw-\ell_{q,N}\gw-\myfrac{\xk }{{\bf e}_N.\gs}\gw+|\gw|^{q-1}\gw=0\qquad&\text{in }S^{N-1}_+\\
\phantom{-\Gd_{S^{N-1}}\gw-\ell_{q,N}\gw-\myfrac{\xk }{{\bf e}_N.\gs}\gw+|\gw|^{q-1}}
\gw=0&\text{in }\prt S^{N-1}_+
\EA\ee
where $\Gd_{S^{N-1}}$ is the Laplace-Beltrami operator, ${\bf e}_N$ the unit vector pointing toward the North pole and $\ell_{q,N}$ is a positive constant. We prove that if $1<q<q_c$ problem (\ref{Tr5}) admits a unique positive solution $\gw_\gk$ while no such solution exists if $q\geq q_c$. To this phenomenon is associated a result of classification of positive solutions of (\ref{IE1}) in $\Gw$ which vanishes of $\prt\Gw\setminus\{0\}$ (here we assume that $0\in\prt\Gw$ and that the tangent plane to $\prt\Gw$ at $0$ is $\{x:x.{\bf e}_N=0\},$ there exists $r_0>0$ such that $B_{r_0}(r_0{\bf e}_N) \subset \xO,$  $B_{r_0}(r_0{\bf e}_N)\subset\{x:x.{\bf e}_N\geq 0\}$ and $d(r_0{\bf e}_N)=|r_0{\bf e}_N|=r_0$)\medskip

\noindent{\bf Theorem H} {\it Assume $1<q< q_c$ and let $u\in C(\overline\Gw\setminus\{a\}$ be a solution of (\ref{IE1}) in $\Gw$ which vanishes of $\prt\Gw\setminus\{a\}$. Then\smallskip

\noindent (i) Either $u=u_{\infty,a}$ and
\be\label{Tr6}\BA {llll}
\lim_{r\to 0}r^{\frac{2}{q-1}}u(r,.)=\gw_\gk
\EA\ee
locally uniformly in $S^{N-1}_+$.\smallskip

\noindent (ii) Or there exists $k\geq 0$ such that $u=u_{k\gd_a}$ and
\be\label{Tr7}\BA {llll}
u(x)=kK_{\CL_{\xk }}(x,a)(1+o1))\qquad\text{as }x\to 0
\EA\ee

}. \medskip

If $1<q< q_c$ we prove that to any couple $(F,\gm)$ where $F$ is a closed subset of $\prt\Gw$ and $\gm$ a positive Radon measure on $R=\prt\Gw\setminus F$ we can associate a positive solution $u$ of (\ref{IE1}) in $\Gw$ with $Tr_{\prt\Gw}(u)=(F,\gm)$.

\section{The linear operator $\CL_{\xk }=-\xD-\frac{\xk }{d^2(x)}$}
\setcounter{equation}{0}

Throughout this article $c_j$  (j=1,2,...) denote positive constants the value of which may change from one occurrence to another. The notation $\xk $ is reserved to the value of the coefficient of the Hardy potential

\subsection {The eigenvalue problem}
We recall some known results concerning the eigenvalue problem (see \cite {dd}, \cite {F.M.T2}).\smallskip

\noindent 1- Since $\Gw$ is convex, $c_\Gw=\frac{1}{4}$ and for any $\xk \in (0,\frac{1}{4}]$ there exists
$$\gl_{\xk }=\inf_{u\in H_0^1(\Gw)}\myfrac{\myint{\Gw}{}\left(|\nabla u|^2-\myfrac{\xk }{d^2}u^2\right)dx}
{\myint{\Gw}{} u^2 dx}.
$$

\noindent 2-  If $0<\xk <\frac{1}{4}$ the minimizer $\gf_{\xk }$ belongs $H^1_0(\Gw)$ and it satisfies
\be\label{Lin1}
\gf_{\xk }\approx d^{\frac{\ga_+}{2}}(x),
\ee
where $\ga_+$ (as well as $\ga_-$) are defined by (\ref{alpha}). \smallskip

\noindent 3- If $\xk =\frac{1}{4}$, there exists a non-negative minimizer $\xf_{\frac{1}{4}}\in H_{loc}^1(\xO)$  such that
\be\label{Lin2}\xf_{\frac{1}{4}}\approx d^\frac{1}{2}(x).\ee
Furthermore, the function $\psi_{\frac{1}{4}}=d^{-\frac{1}{2}}$ belongs to $H^1_0(\Gw;d(x)dx)$
\smallskip

\noindent 4-  Let $H_0^1(\xO,d^\xa(x)dx)$ denote the closure of $C_0^\infty(\xO)$ functions under the norm
\be\label{behd(x)}||u||_{H_0^1(\xO,d^\xa(x)dx)}^2=\int_\xO|\nabla u|^2d^\xa(x)dx+\int_\xO |u|^2d^\xa(x)dx.\ee
If $\ga\geq 1$ there holds \cite[Th. 2.11]{F.M.T2}
\be\label{test}
H_0^1(\xO,d^{\xa}(x)dx)=H^1(\xO,d^\xa(x)dx)\qquad\forall\xa\geq1.
\ee
\smallskip

\noindent 5- Let $0<\gk\leq\frac{1}{4}$. Let ${\bf H}_\gk(\Gw)$ be the subset of functions of $H^1_{loc}(\Gw)$ satisfying
\be\label{Wnorm1}
\myint{\Gw}{}\left(|\nabla\gf|^2-\frac{\gk}{d^2}\gf^2\right)dx<\infty.
\ee
Then the mapping
\be\label{Wnorm2}
\gf\mapsto \left(\myint{\Gw}{}\left(|\nabla\gf|^2-\frac{\gk}{d^2}\gf^2\right)dx\right)^{\frac{1}{2}}
\ee
is a norm on ${\bf H}_\gk(\Gw)$. The closure ${\bf W}_\gk(\Gw)$ of  $C_0^\infty(\xO)$ into ${\bf H}_\gk(\Gw)$ satisfies
\be\label{Wnorm3}
{\bf W}_{\gk}(\Gw)= H^{1}_0(\Gw)\quad\forall 0<\gk<\frac{1}{4}\,\text{ and }\;{\bf W}_{\frac{1}{4}}(\Gw)\subset W^{1,q}_0(\Gw)\quad\forall 1\leq q<2,
\ee
see \cite[Th B]{BFT1}. As a consequence ${\bf W}_{\gk}(\Gw)$ is compactly imbedded into $L^r(\Gw)$ for any $r\in [1,2^*)$.\smallskip

\noindent 6- Let $\ga>0$ and $\Gw\subset\BBR^N$ be a bounded domain. There exists $c^*>0$ depending on diam($\xO$), N and $\ga$ such that for any $v\in C^{\infty}_0(\Gw)$
\be\label{Sobolev}
\left(\myint{\Gw}{}|v|^{\frac{2(N+\ga)}{N+\ga-2}}d^\ga dx\right)^{\frac{N+\ga-2}{N+\ga}}
\leq c^*\myint{\Gw}{}|\nabla v|^2d^\ga dx.
\ee
For a proof see \cite[Th. 2.9]{F.M.T2}.\medskip

The boundary behaviour of the first eigenfunction yield two-side similar estimates of the Green kernel
for Schr\"odinger operators with a general Hardy type potentials \cite[Corollary 1.9]{F.M.T2}.

\begin{prop}
Consider the operator $E:=-\xD -V,$ in $\xO$ where $V=V_1+V_2,$
with
$$|V_1|\leq\frac{1}{4d^2(x)}\quad\mathrm{and }\;V_2\in L^p(\xO),\;p>\frac{N}{2}.$$
We also assume that

$$0<\xl_1:=\inf_{u\in H_0^1(\xO)}\myfrac{\myint{\xO}{}\left(|\nabla u|^2dx- Vu^2\right)dx}{\myint{\xO}{}u^2dx},$$
and that to $\xl_1$ is associated a positive eigenfunction $\xf_1$. If, for some  $\xa\geq1$ and $C_1,C_2>0$, there holds
$$c_1 d^\frac{\xa}{2}(x)\leq \xf_1(x)\leq c_2 d^\frac{\xa}{2}(x)\qquad\forall x\in\Gw,$$
then the Green kernel $G^\Gw_E$ associated to $E$ in $\xO$ satisfies
\be\label{greenest}G^\Gw_E(x,y)\approx c_3\min\left(\frac{1}{|x-y|^{N-2}},\frac{d^\frac{\xa}{2}(x)d^\frac{\xa}{2}(y)}{|x-y|^{N+\xa-2}}\right).\ee\label{green}
\end{prop}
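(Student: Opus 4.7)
The plan is to derive the Green function two-sided bound through heat kernel estimates, using the ground state as a Doob h-transform. This is the standard Davies--Simon strategy refined by Filippas--Moschini--Tertikas to accommodate a Hardy-type singularity at the boundary. Throughout, the hypothesis $\alpha\geq 1$ is essential because it is the precise range in which the weighted Sobolev inequality \eqref{Sobolev} holds and in which $\phi_1^2\approx d^{\alpha}$ is an $A_\infty$-type weight on $\Omega$.

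First I would carry out the ground state transformation. Setting $u=\phi_1 v$ transforms $Eu=f$ into
\begin{equation*}
-\frac{1}{\phi_1^2}\nabla\cdot(\phi_1^2\nabla v)+\lambda_1 v=\frac{f}{\phi_1},
\end{equation*}
so that $Lv:=-\phi_1^{-2}\nabla\cdot(\phi_1^2\nabla v)$ is a symmetric divergence form operator on $L^2(\Omega;\phi_1^2dx)$. The associated semigroup $\tilde P_t=e^{-t(L+\lambda_1)}$ has a kernel $\tilde p_t(x,y)$ related to the heat kernel $p_t$ of $E$ by
\begin{equation*}
p_t(x,y)=e^{-\lambda_1 t}\,\phi_1(x)\phi_1(y)\,\tilde p_t(x,y).
\end{equation*}
Green kernels are related accordingly: $G^\Omega_E(x,y)=\phi_1(x)\phi_1(y)\tilde G(x,y)$.

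Next I would derive heat kernel bounds for $\tilde p_t$. Because $\phi_1^2\approx d^{\alpha}$, the weighted Sobolev inequality \eqref{Sobolev} applied to the transformed Dirichlet form gives an ultracontractive estimate of dimension $N+\alpha$, namely $\tilde p_t(x,y)\leq C t^{-(N+\alpha)/2}$ for $t\in(0,1]$. Standard Davies perturbation then yields the Gaussian upper bound
\begin{equation*}
\tilde p_t(x,y)\leq \frac{C}{\phi_1^2(x_t(x,y))\,t^{N/2}}\,e^{-c|x-y|^2/t},
\end{equation*}
where $x_t(x,y)$ is a point interpolating between $x$ and $y$ at scale $\sqrt t$, and a matching lower bound via a Moser-type Harnack chain; the Kato-class piece $V_2\in L^p$ with $p>N/2$ is absorbed by a routine perturbation argument and does not affect these estimates. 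For large $t$ the intrinsic ultracontractivity consequence $\tilde p_t(x,y)\to 1$ as $t\to\infty$ holds uniformly on $\Omega\times\Omega$, so $\tilde p_t(x,y)\approx 1$ for $t\geq t_0$.

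Finally I would recover the Green function by integrating in time, splitting at $t\approx |x-y|^2$. The short-time integral contributes the Newtonian piece $|x-y|^{2-N}$, weighted by factors $(\phi_1(x)\phi_1(y))/\phi_1^2(x_t)$ which translate, after estimating $d(x_t)$, into the competing boundary factor $d^{\alpha/2}(x)d^{\alpha/2}(y)/|x-y|^{\alpha}$; the long-time integral contributes a term of order $\phi_1(x)\phi_1(y)$, dominated by the short-time part whenever $|x-y|$ is of order at most $\mathrm{diam}(\Omega)$. Combining the two regimes and using $\phi_1\approx d^{\alpha/2}$ yields exactly \eqref{greenest}. The main obstacle is the boundary Gaussian bound in the presence of the Hardy singularity: controlling $\tilde p_t$ near $\partial\Omega$ uniformly in $\alpha$ down to the critical value $\alpha=1$ requires the full strength of the weighted Sobolev inequality \eqref{Sobolev} and a delicate choice of the Davies perturbation weight adapted to $\phi_1$.
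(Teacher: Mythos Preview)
Your outline is correct and matches the approach of the cited source: the paper does not prove this proposition at all but simply quotes it as \cite[Corollary~1.9]{F.M.T2}, and the method you describe---ground state (Doob) transform $u=\phi_1 v$, weighted Sobolev inequality \eqref{Sobolev} yielding ultracontractivity in effective dimension $N+\alpha$, Davies' Gaussian bounds on the transformed kernel, then time integration---is exactly the Filippas--Moschini--Tertikas argument behind that corollary. There is nothing to add; your sketch is in fact more detailed than what the paper provides.
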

We set
\be\label{foliat}\xO_\xd=\{x\in\xO:\;d(x)<\xd\}\,,\; \xO'_\xd=\{x\in\xO:\;d(x)>\xd\}\,\text{and }\; \xS_\xd=\{x\in\xO:\;d(x)=\xd\}.\ee
\begin{defin}
Let $G\subset\xO$
 and let $H^1_c(G)\subset H^1(G)$ denote the subspace of functions with
compact support.
A function $h\in W^{1,1}_{loc}(G)$ is $\CL_{\xk }$-harmonic in $G$ if
$$\int_G\nabla h.\nabla\psi dx-\xk \int_\xO\frac{1}{d^2(x)}h\psi dx=0\qquad\forall\psi\in H_c^1(G).$$
A function $\underline{h}\in  H^1_{loc}(G)\cap C(G)$ is $\CL_{\xk }$-subharmonic in $G$ if
$$\int_G\nabla \underline{h}.\nabla\psi dx-\xk \int_\xO\frac{1}{d^2(x)}h\psi dx\leq0\qquad\forall\psi\in H_c^1(G), \;\psi\geq 0.$$
We say that $\underline{h}$ is a local $\CL_{\xk }$-subharmonic function if there exists $\xd>0$
 such that
$\underline{h}\in H^1_{loc}(\xO_\xd)\cap C(\xO_\xd)$ is $\CL_{\xk }$-subharmonic in
$\xO_\xd.$ Similarly, (local) $\CL_{\xk }$-superharmonics $\overline{h}$ are
defined with $"\geq"$ in the above inequality.
\end{defin}
Note that  $\CL_{\xk }$-harmonic functions are $C^2$ in $G$ by standard elliptic equations regularity theory. The Phragmen-Lindel\"of principle yields the following alternative.
\begin{prop}Let $\xk \leq\frac{1}{4}$. If $\underline h$ is a local $\CL_{\xk }$-subharmonic function, then the following alternative holds:\smallskip

\noindent (i) either for every local positive $\CL_{\xk }$-superharmonic function $\overline h$
\begin{equation}\label{PhLi1}
\limsup_{d(x)\to 0}\myfrac{\underline h(x)}{\overline h(x)}>0,
\end{equation}
\smallskip

\noindent (ii) or for every local positive $\CL_{\xk }$-superharmonic function $\overline h$
\begin{equation}\label{PhLi2}
\limsup_{d(x)\to 0}\myfrac{\underline h(x)}{\overline h(x)}<\infty.
\end{equation}
\end{prop}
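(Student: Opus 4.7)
The plan is to reduce the dichotomy to a single maximum-principle argument based on (\ref{subsol}), with the canonical barrier $W$ of (\ref{W}) playing the central role.

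First, I would reformulate the assertion. The statement ``(i) or (ii)'' fails exactly when there simultaneously exist local positive $\CL_\xk$-superharmonic functions $\overline h_0$ and $\overline h_\infty$ with
\[
\limsup_{d(x)\to 0}\frac{\underline h(x)}{\overline h_0(x)}\le 0 \qquad\text{and}\qquad \limsup_{d(x)\to 0}\frac{\underline h(x)}{\overline h_\infty(x)}=+\infty.
\]
It therefore suffices to prove that the existence of any $\overline h_0$ with $\limsup \underline h/\overline h_0\le 0$ already forces $\underline h\le 0$ in $\xO$: once this is granted, $\underline h/\overline h\le 0<\infty$ for every positive local $\CL_\xk$-superharmonic $\overline h$, which excludes the existence of $\overline h_\infty$ and yields alternative (ii).

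To establish $\underline h\le 0$, I would fix $\xe>0$ and consider $v_\xe:=\underline h-\xe\,\overline h_0$. This function is $\CL_\xk$-subharmonic as the difference of a subharmonic and a positive superharmonic, and the hypothesis on $\overline h_0$ gives $\limsup_{d(x)\to 0}v_\xe/\overline h_0\le -\xe$. In order to invoke the comparison principle (\ref{subsol}) I must upgrade this to
\[
\limsup_{d(x)\to 0}\frac{v_\xe(x)}{W(x)}\le 0.
\]
This reduces to a local lower bound of the form $\overline h_0(x)\ge c\,W(x)$ in a one-sided boundary tubular neighborhood $\xO_\xd$. Granted this bound, (\ref{subsol}) gives $v_\xe\le 0$ in $\xO$, so $\underline h\le \xe\,\overline h_0$ throughout, and sending $\xe\to 0$ produces $\underline h\le 0$.

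The principal technical obstacle is precisely the lower bound $\overline h_0\ge c\,W$ near $\prt\xO$ for an arbitrary positive local $\CL_\xk$-superharmonic function. Because $\xO$ is convex we have $-\xD d\ge 0$ in the sense of distributions, and a direct calculation using the identity $(\ga_\pm/2)(1-\ga_\pm/2)=\xk$ shows that $W$ is itself a local positive $\CL_\xk$-superharmonic function on $\xO_\xd$ for $\xd>0$ small. The strategy is then: Harnack's inequality for $\CL_\xk$ on compact subsets of $\xO$ produces a uniform positive lower bound for $\overline h_0$ on the interior slice $\xS_\xd$, and the minimum principle applied on $\xO_\xd$ to the $\CL_\xk$-superharmonic function $\overline h_0-c\,W$ (for $c$ chosen small enough that the inequality holds on $\xS_\xd$, and using that $W$ vanishes on $\prt\xO$ while $\overline h_0>0$) delivers the desired estimate. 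Carrying out this barrier comparison rigorously — especially in the critical case $\xk=\frac14$ where $W$ carries the logarithmic correction — is the heart of the argument.
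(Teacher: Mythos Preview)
The paper does not actually prove this proposition; it is stated as a direct consequence of ``the Phragmen--Lindel\"of principle'' and left without argument. Your reformulation of the dichotomy is correct, but the central implication you try to establish --- that $\limsup_{d(x)\to 0}\underline h/\overline h_0\le 0$ for \emph{some} positive local $\CL_\kappa$-superharmonic $\overline h_0$ forces $\underline h\le 0$ --- is false. Take $\underline h$ to be any positive $\CL_\kappa$-harmonic function on the strip with $\underline h\approx d^{\,\alpha_+/2}$ (for instance the function $Z_\kappa$ of Proposition~\ref{exist}) and $\overline h_0=W$; then $\underline h/W\to 0$ while $\underline h>0$ throughout. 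The flaw in your argument is that the maximum principle you invoke, (\ref{subsol}) (Propositions~\ref{max1} and~\ref{max}), applies to subsolutions defined on \emph{all} of $\Omega$, whereas by definition local sub/superharmonics live only on a strip $\Omega_\delta$, which has an inner boundary $\Sigma_\delta$ where you impose no control. Incidentally, your proposed barrier estimate $\overline h_0\ge cW$ is also false in general --- take $\overline h_0=\phi_\kappa\approx d^{\,\alpha_+/2}=o(W)$ --- though you would not even need it for your route, since $\limsup v_\varepsilon/\overline h_0\le -\varepsilon$ already gives $v_\varepsilon<0$ near $\partial\Omega$ directly, hence $\limsup v_\varepsilon/W\le 0$ trivially.

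The standard Phragmen--Lindel\"of argument handles the inner boundary by bringing the \emph{second} supersolution $\overline h$ into the comparison function. Choose $c\ge 0$ with $\underline h\le c\,\overline h$ on the compact surface $\Sigma_\delta$, and set $w_\varepsilon=\underline h-c\,\overline h-\varepsilon\,\overline h_0$. Then $w_\varepsilon\le -\varepsilon\,\overline h_0<0$ on $\Sigma_\delta$ and, since $\limsup w_\varepsilon/\overline h_0\le -\varepsilon$, also $w_\varepsilon<0$ in a collar of $\partial\Omega$; hence $\{w_\varepsilon>0\}$ is compactly contained in the open strip $\Omega_\delta$. On that set the ratio $w_\varepsilon/\overline h_0$ is a subsolution of the uniformly elliptic operator $-\mathrm{div}(\overline h_0^{\,2}\nabla\,\cdot\,)$ (this is the substitution used in Proposition~\ref{maincomp1}), vanishing on its boundary, so the ordinary maximum principle forces $\{w_\varepsilon>0\}=\emptyset$. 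Therefore $\underline h\le c\,\overline h+\varepsilon\,\overline h_0$ on $\Omega_\delta$, and letting $\varepsilon\to 0$ yields $\limsup_{d(x)\to 0}\underline h/\overline h\le c<\infty$, which is alternative~(ii).
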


\begin{defin} If a local $\CL_{\xk }$-subharmonic function $\underline h$ satisfies (i) (resp. (ii)) it is called a large $\CL_{\xk }$-subharmonic ((resp. a small $\CL_{\xk }$-subharmonic).
\end{defin}

The next statement is \cite[Theorem 2.9]{1}.

\begin{prop}
Let $\underline{h}$ be a small local $\CL_{\xk }$-subharmonic
of $\CL_{\xk }.$

\noindent (i) If $\xk <\frac{1}{4}$ then the following alternative holds:
$$\mathrm{either}\;\;\;\;\limsup_{x\rightarrow\partial\xO}\frac{\underline{h}}{{d^{\frac{\xa_-}{2}}}}>0
\quad\mathrm{or}\;\;\;\;\limsup_{x\rightarrow\partial\xO}\frac{\underline{h}}{d^{\frac{\xa_+}{2}}}<\infty.$$
(ii) If $\xk =\frac{1}{4}$ then the following alternative holds:
$$\mathrm{either}\;\;\;\;\limsup_{x\rightarrow\partial\xO}\frac{\underline{h}}{d^{\frac{1}{2}}\log(\frac{1}{d})}>0
\quad\mathrm{or}\;\;\;\;\limsup_{x\rightarrow\partial\xO}\frac{\underline{h}}{d^{\frac{1}{2}}}<\infty.$$
\end{prop}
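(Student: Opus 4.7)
The plan is to deduce both branches of the alternative directly from the smallness hypothesis by testing the defining condition (\ref{PhLi2}) against a single explicit positive $\CL_\kappa$-superharmonic function whose boundary asymptotics are already established, namely the first eigenfunction $\gf_\kappa$ of $\CL_\kappa$ on $\Gw$.

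For part (i), $\kappa<1/4$: the convexity of $\Gw$ forces $c_\Gw=\tfrac14>\kappa$, and combining Hardy's inequality with Poincar\'e's inequality on $H^1_0(\Gw)$ yields $\lambda_\kappa>0$. The associated minimizer $\gf_\kappa\in H^1_0(\Gw)$ is strictly positive and satisfies $\CL_\kappa\gf_\kappa=\lambda_\kappa\gf_\kappa\geq 0$ weakly; in particular $\gf_\kappa$ is an admissible global positive $\CL_\kappa$-superharmonic function, and its restriction to any boundary strip $\xO_\xd$ is a legitimate comparator in the smallness condition on $\underline h$. Applying (\ref{PhLi2}) with $\overline h=\gf_\kappa$ yields
$$
\limsup_{d(x)\to 0}\frac{\underline h(x)}{\gf_\kappa(x)}<\infty,
$$
and invoking the two-sided estimate $\gf_\kappa\approx d^{\ga_+/2}$ from (\ref{Lin1}) converts this immediately into the second alternative $\limsup_{x\to\partial\Gw}\underline h(x)/d^{\ga_+/2}(x)<\infty$. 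For part (ii), $\kappa=\tfrac14$, the same strategy applies using the non-negative minimizer $\gf_{1/4}\in H^1_{loc}(\Gw)$, which satisfies $\gf_{1/4}\approx d^{1/2}$ by (\ref{Lin2}) together with $\lambda_{1/4}\geq 0$; testing smallness against $\gf_{1/4}$ gives $\limsup\underline h/d^{1/2}<\infty$.

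The argument establishes the second branch unconditionally; the ``either-or'' phrasing simply accommodates the additional subharmonics for which the sharper first branch also holds (for instance, positive constants are $\CL_\kappa$-subharmonic and trivially satisfy $\limsup\underline h/d^{\ga_-/2}=+\infty$, so the first branch applies). The only technical point to verify is that $\gf_\kappa$ qualifies as a positive $\CL_\kappa$-superharmonic in the weak sense of the preceding definition, i.e.\ that
$$
\int_\Gw\ana\gf_\kappa\cdot\ana\ps\,dx-\kappa\int_\Gw\frac{\gf_\kappa\ps}{d^2}\,dx=\lambda_\kappa\int_\Gw\gf_\kappa\ps\,dx\geq 0
$$
for every non-negative $\ps\in H^1_c(\Gw)$; this follows at once from the eigenvalue equation and the non-negativity of $\lambda_\kappa$. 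Given this, the proof collapses to a one-step comparison, so the principal obstacle is not analytic but structural: recognising that, on a convex domain, the critical boundary exponent realised by an admissible positive $\CL_\kappa$-superharmonic is precisely $\ga_+/2$ (resp.\ $1/2$ in the critical case), as already encoded in the asymptotics (\ref{Lin1})-(\ref{Lin2}) of the eigenfunction.
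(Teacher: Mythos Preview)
Your argument is correct. The paper does not give a proof of this proposition; it simply cites it as \cite[Theorem~2.9]{1}. Given the framework already in place---in particular that ``small'' means condition (\ref{PhLi2}) holds for \emph{every} local positive $\CL_\kappa$-superharmonic---your one-line comparison against $\overline h=\gf_\kappa$ is legitimate: $\gf_\kappa$ is positive, lies in $H^1_{loc}(\Gw)\cap C(\Gw)$, and $\CL_\kappa\gf_\kappa=\lambda_\kappa\gf_\kappa\geq 0$, so the smallness hypothesis applied to $\gf_\kappa$ together with (\ref{Lin1})--(\ref{Lin2}) delivers the second branch directly.

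One correction to your side remark: positive constants are \emph{not} small subharmonics---testing against $\gf_\kappa$ gives $\limsup c/\gf_\kappa=+\infty$---so they fall outside the hypothesis of the proposition and cannot illustrate the first branch. In fact your argument shows more than you state: for every small $\underline h$ the second branch holds, and since $d^{\ga_+/2}=o(d^{\ga_-/2})$ (resp.\ $d^{1/2}=o(d^{1/2}|\log d|)$) near $\partial\Gw$, this forces $\limsup \underline h/d^{\ga_-/2}\leq 0$ (resp.\ $\limsup \underline h/(d^{1/2}|\log d|)\leq 0$). So the first branch is never realised once ``small'' is assumed; the disjunction is degenerate under the stated hypothesis. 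The substantive content in \cite{1} lies in the Phragm\'en--Lindel\"of alternative (Proposition~2.3) itself, which the paper also imports without proof; your reduction shows that Proposition~2.5, as formulated here, is an immediate corollary of that alternative together with the eigenfunction asymptotics.
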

\begin{defin}
Let $f_0 \in L^2_{loc}(\xO)$. We say that a function $u\in H^1_{loc}(\xO)$ is a solution of
\begin{equation}\label{ELin1}
\CL_{\xk }u=f_0\qquad\text{in }\,\Gw
\end{equation}
if there holds
\begin{equation}\label{ELin2}
\int_\xO\nabla u.\nabla\psi dx-\xk \int_\xO\frac{1}{d^2(x)}u\psi dx=\int_\xO f_0\psi dx\qquad\forall\psi\in C_0^\infty(\xO).
\end{equation}
\end{defin}
\subsection{Preliminaries}

In this part we study some regularity properties of solutions of linear equations involving $\CL_{\xk }$.
\begin{lemma}\label{space}
(i) If $\xa>1$ and $d^{-\frac{\xa}{2}}u\in H^1(\xO,d^{\xa}(x)dx),$ then $u\in H_0^1(\xO).$

\noindent(ii) If $\xa=1$ and $d^{-\frac{1}{2}}u\in H^1(\xO,d(x)dx),$ then $u\in W_0^{1,p}(\xO),\;\forall p<2.$
\end{lemma}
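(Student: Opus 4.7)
The plan is to prove both parts by approximating $v := d^{-\xa/2} u$ by smooth, compactly supported functions. By (\ref{test}) we have $H^1(\xO, d^\xa(x)dx) = H_0^1(\xO, d^\xa(x)dx)$ for $\xa \geq 1$, so there exists a sequence $v_n \in C_0^\infty(\xO)$ with $v_n \to v$ in $H^1(\xO, d^\xa(x)dx)$. Set $u_n := d^{\xa/2} v_n$; each $u_n$ is Lipschitz with compact support in $\xO$, hence lies in $H_0^1(\xO)\cap W_0^{1,p}(\xO)$ for every $p\in[1,\infty)$. A direct computation gives
\begin{equation*}
\nabla u_n = \tfrac{\xa}{2}\, d^{\xa/2-1} v_n \nabla d + d^{\xa/2}\nabla v_n, \qquad u_n - u_m = d^{\xa/2}(v_n - v_m),
\end{equation*}
so the task reduces to showing $\{u_n\}$ is Cauchy in the target Sobolev space.

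For part (i), the contribution $d^{\xa/2}\nabla v_n$ is controlled in $L^2$ by $\int d^\xa|\nabla v_n|^2 dx$. To control $d^{\xa/2-1}v_n$ in $L^2$ I would invoke the weighted Hardy inequality
\begin{equation*}
\int_\xO d^{\xa-2} w^2 \, dx \leq C_\xa \int_\xO d^\xa|\nabla w|^2\, dx \qquad \forall w \in C_0^\infty(\xO),
\end{equation*}
which is valid for $\xa>1$; it follows by multiplying $-\xD(d^\xa) = -\xa(\xa-1)d^{\xa-2}|\nabla d|^2 - \xa d^{\xa-1}\xD d$ by $w^2$ and integrating by parts, using $|\nabla d|=1$ a.e. and $-\xD d\geq 0$ (convexity of $\xO$). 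Applied to $v_n$ and to $v_n - v_m$ this gives both a uniform bound and the Cauchy property in $H^1(\xO)$. Since each $u_n\in H_0^1(\xO)$, the limit $u$ lies in $H_0^1(\xO)$.

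Part (ii), with $\xa = 1$, is the delicate case because the above Hardy inequality degenerates at the endpoint. The strategy is a perturbation trick. Given $p<2$, fix $\xe\in(0,(2-p)/p)$ and set $\xa' := 1+\xe>1$, so that part (i)'s inequality yields
\begin{equation*}
\int d^{\xe-1} w^2\, dx \leq C_\xe \int d^{1+\xe}|\nabla w|^2\, dx \leq C_\xe(\mathrm{diam}\,\xO)^\xe \int d\,|\nabla w|^2\, dx.
\end{equation*}
Writing $d^{-p/2} w^p = (d^{\xe-1} w^2)^{p/2}\cdot d^{-\xe p/2}$ and applying H\"older with conjugate exponents $2/p$ and $2/(2-p)$,
\begin{equation*}
\int d^{-p/2} w^p\, dx \leq \Big(\int d^{\xe-1} w^2\, dx\Big)^{p/2} \Big(\int d^{-\xe p/(2-p)}\, dx\Big)^{(2-p)/2},
\end{equation*}
and the second factor is finite precisely because $\xe p/(2-p)<1$. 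Combined with the trivial bound $\int d^{p/2}|\nabla v_n|^p\, dx \leq |\xO|^{1-p/2}\big(\int d\,|\nabla v_n|^2\, dx\big)^{p/2}$, this produces $\|u_n\|_{W^{1,p}} \leq C(p,\xe,\xO)\|v_n\|_{H^1(\xO,d(x)dx)}$ uniformly in $n$; the same estimate applied to differences gives the Cauchy property in $W^{1,p}_0(\xO)$, so $u\in W_0^{1,p}(\xO)$. The main obstacle here is bypassing the endpoint failure of the weighted Hardy inequality at $\xa=1$; once handled by this $\xe$-trick, the rest of the argument is parallel to part (i).
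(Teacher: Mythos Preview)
Your overall strategy matches the paper's: approximate $v=d^{-\alpha/2}u$ by $v_n\in C_0^\infty(\Omega)$ via (\ref{test}), set $u_n=d^{\alpha/2}v_n$, and show $\{u_n\}$ is Cauchy in the target space through a weighted Hardy inequality. The paper does exactly this, with the extra step of localizing near $\partial\Omega$ by a cutoff $\psi_\beta$, which you omit.

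There is, however, a genuine gap in your justification of the key inequality $\int d^{\alpha-2}w^2\le C_\alpha\int d^\alpha|\nabla w|^2$. Integrating $-\Delta(d^\alpha)$ against $w^2$ yields the identity
\[
(\alpha-1)\int d^{\alpha-2}w^2 \;=\; \int d^{\alpha-1}(-\Delta d)\,w^2 \;-\; 2\int d^{\alpha-1}w\,\nabla d\cdot\nabla w,
\]
and for convex $\Omega$ the term $\int d^{\alpha-1}(-\Delta d)w^2$ is \emph{nonnegative}. Convexity therefore gives a \emph{lower} bound on the left-hand side, not the upper bound you need; the sign goes the wrong way. The inequality is still true, but the correct proof---which is what the paper does---uses the $C^2$ regularity of $\partial\Omega$ rather than convexity: after localizing to $\{d<\beta\}$ one has $|\Delta d|\le c$, hence $d^{\alpha-1}|\Delta d|w^2\le c\beta\,d^{\alpha-2}w^2$, and for $\beta$ small this term is absorbed into the left side. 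Since your part (ii) quotes the inequality of part (i) with exponent $1+\varepsilon$, it inherits the same defect.

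Your treatment of (ii) via the $\varepsilon$-perturbation and H\"older is otherwise a valid and rather clean alternative to the paper's route. The paper instead appeals directly to an $L^p$ Hardy inequality $\int d^{-p/2}|w|^p\le c\int d^{p/2}|\nabla w|^p$ for $p<2$ (obtained by the same integration-by-parts scheme, with $1-\tfrac{p}{2}>0$ playing the role of $\alpha-1>0$) and then passes to $\int d|\nabla w|^2$ by H\"older. Your approach avoids needing the $L^p$ version at all, at the cost of one extra H\"older step.
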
  

\begin{proof}
There exists $\gb_0>0$ such that $d\in C^2(\overline{\xO_{\gb_0}})$ and set $u=d^{\frac{\xa}{2}}v$. In the two cases (i)-(ii), our assumptions imply
\begin{equation}\label{ELin5}
u\in L^2(\Gw)\quad\text{and }\;\nabla u-\frac{\ga}{2}u d^{-1}\nabla d\in L^2(\Gw).
\end{equation}
(i)  Since $v\in H^1(\xO,d^{\xa}(x)dx),$ by \eqref{test} there exists a sequence $v_n\in C^\infty_0(\xO)$ such that $v_n\rightarrow v$ in $H^1(\xO,d^{\xa}(x)dx).$ Set $u_n=d^\xa v_n.$ Let $0<\xb\leq \frac{\gb_0}{2}$  and $\psi_\xb$ be a cut of function such that $\psi_\xb=0$ in $\xO_{\gb}'$ and $\psi_\xb=1$ in $\xO_{\frac{\gb}{2}}.$
Then $u_n=d^{\frac{\xa}{2}}(\psi_\xb v_n+(1-\psi_\xb)v_n).$ Thus it is enough to prove that $\widetilde{u}_n=d^{\frac{\xa}{2}}\psi_\xb v_n$ remains bounded in $H^1(\xO)$ independently of $n$. Set $w_n=\psi_\xb v_n$, then
$$\int_\xO|\nabla \widetilde{u}_n|^2dx=\int_{\xO_{\gb}}|\nabla w_n|^2dx\leq c_4\left(\int_{\xO_{\gb}} {d^{\xa}}|\nabla w_n|^2dx+\int_{\xO_{\gb}} d^{\xa-2}w_n^2dx \right).$$
Note that $\xa-2>-1.$ Now
$$\int_{\xO_{\gb}} d^{\xa-2}w_n^2dx=\frac{1}{\xa-1}\int_{\xO_{\gb}} w_n^2\mathrm{div} (d^{\xa-1}\nabla d) dx-\frac{1}{\xa-1}\int_{\xO_{\gb}} (d^{\xa-1} (\xD d)w_n^2dx.$$
Now since $|\xD d(x)|<c_5,\;\forall x\in{\xO_{\gb_0}},$ we have
$$\left|\frac{1}{\xa-1}\int_{\xO_{\gb}} d^{\xa-1} (\xD d)w_n^2dx\right|\leq \frac{c_5\gb_{0}^{\xa-1}}{\xa-1}\int_{\xO_{\gb}}w_n^2dx.$$
Also
\begin{align}\nonumber
\left|\int_{\xO_\xb} w_n^2\mathrm{div} (d^{\xa-1}\nabla d)dx\right|&=2\left|\int_{\xO_\xb} w_n d^{\frac{\xa}{2}}d^{\frac{\xa}{2}-1}\nabla d.\nabla w_n dx\right|\\ \nonumber
&\leq c_6\int_{\xO_\xb} d^{\xa}|\nabla w_n|^2dx +\xd\int_{\xO_\xb} d^{\xa-2}w_n^2 dx.
\end{align}
where $c_6=c_6(\gd)>0$, and the result follows in this case, if we choose $\xd$ small enough and then we send $n$ at infinity.\smallskip

\noindent (ii) By the same calculations we have
$$\int_\xO d^{-\frac{p}{2}}|w_n|^pdx\leq c_7\int_{\xO_\xb} d^{\frac{p}{2}}|\nabla w_n|^pdx\leq
c_7\left(\int_\xO d(x)dx\right)^{\frac{p}{2}}
\int_{\xO_\xb} d|\nabla w_n|^2dx.$$
\end{proof}

In the following statement we prove regularity up to the boundary for the function $\frac{u}{\gf_{\xk }}$.

\begin{prop}\label{exi}
Let $f_0\in L^2(\xO).$ Then there exists a unique $u\in H^1_{loc}(\xO)$ such that $\xf^{-1}_{\xk }u\in H^1(\xO,d^{\xa_+}(x)dx)$,  satisfying (\ref{ELin1}).
Furthermore,  if $f_0\in L^q(\xO),\;q>\frac{N+\xa}{2}$, then there exists $0<\xb<1$ such that
\begin{equation}\label{ELin3}\sup_{x,y\in\xO,\;x\neq y}|x-y|^{-\xb}\left|\frac{u(x)}{\gf_{\xk }(x)}-\frac{u(y)}{\gf_{\xk }(y)}\right|<
c_8||f_0||_{L^q}.
\end{equation}
\end{prop}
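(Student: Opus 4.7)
The plan is to substitute $v := u/\gf_{\gk}$ and rewrite the equation as a degenerate divergence-form problem. Since $\CL_\gk \gf_\gk = \gl_\gk \gf_\gk$, a direct computation gives
\[
\gf_\gk\,\CL_\gk(\gf_\gk v) = -\mathrm{div}(\gf_\gk^2 \nabla v) + \gl_\gk \gf_\gk^2 v,
\]
so that $u = \gf_\gk v$ solves $\CL_\gk u = f_0$ if and only if
\[
-\mathrm{div}(\gf_\gk^2 \nabla v) + \gl_\gk \gf_\gk^2 v = \gf_\gk f_0.
\]
Because $\gf_\gk^2 \approx d^{\ga_+}$ by \eqref{Lin1}--\eqref{Lin2}, this equation is naturally set in the weighted Sobolev space $\mathbf{H} := H^1(\Gw, d^{\ga_+}(x)dx)$, which coincides with $H^1_0(\Gw, d^{\ga_+}(x)dx)$ by \eqref{test} since $\ga_+ \geq 1$.

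For existence and uniqueness of $v \in \mathbf H$, I would apply Lax-Milgram. The bilinear form
\[
B(v,\psi) = \int_\Gw \gf_\gk^2 \nabla v \cdot \nabla \psi\, dx + \gl_\gk \int_\Gw \gf_\gk^2 v\psi\, dx
\]
is continuous and coercive on $\mathbf H$ (because $\gl_\gk > 0$ and $\gf_\gk^2 \approx d^{\ga_+}$), while $\psi \mapsto \int_\Gw \gf_\gk f_0 \psi\, dx$ is bounded on $\mathbf H$ by Cauchy-Schwarz together with the weighted Sobolev-Poincaré inequality \eqref{Sobolev}. Lemma \ref{space} then yields $u = \gf_\gk v \in H^1_0(\Gw)$ when $\ga_+ > 1$ and $u \in W^{1,p}_0(\Gw)$ for every $p < 2$ when $\ga_+ = 1$; in either case $u \in H^1_{\mathrm{loc}}(\Gw)$, and reversing the calculation shows $u$ is a weak solution of \eqref{ELin1} in the sense of \eqref{ELin2}. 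Uniqueness in the stated class is inherited from the uniqueness of $v$.

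For the Hölder estimate, the exponent $\frac{N+\ga_+}{2}$ is the weighted analogue of the classical Sobolev threshold $\frac{N}{2}$: it is the dual exponent of the weighted critical exponent $\frac{2(N+\ga_+)}{N+\ga_+-2}$ appearing in \eqref{Sobolev}. The plan is a Moser iteration for $v$ on the weighted equation: test with truncated powers $|v|^{2\gg-1}\mathrm{sgn}(v)$ and bootstrap $L^p(\Gw, d^{\ga_+}dx)$ integrability up to $p = \infty$ using \eqref{Sobolev}, obtaining $\|v\|_{L^\infty(\Gw)} \leq c\,\|f_0\|_{L^q(\Gw)}$. Interior Hölder regularity of $v$ then follows from standard theory, the operator being uniformly elliptic away from $\partial\Gw$; boundary regularity would be obtained by flattening $\partial\Gw$ locally and running an oscillation-decay argument of De Giorgi/Moser type adapted to the model weight $x_N^{\ga_+}$ on the half-space.

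The hard part will be precisely this boundary regularity: the weight $d^{\ga_+}$ with $\ga_+ \geq 1$ is \emph{not} in the Muckenhoupt class $A_2$, so the Fabes-Kenig-Serapioni theory for degenerate elliptic PDEs does not apply off the shelf. I would address this by exploiting the one-dimensional, purely power-type character of the degeneration to push the oscillation argument all the way down to $\partial\Gw$; alternatively, one can bypass the weighted theory by representing $u(x) = \int_\Gw G_{\CL_\gk}(x,y) f_0(y)\,dy$ and using the sharp two-sided estimates \eqref{alpha-green} to analyse $u/\gf_\gk$ as a Riesz-type potential.
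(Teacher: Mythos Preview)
Your approach is essentially the same as the paper's: the same substitution $v=u/\gf_\gk$, the same degenerate divergence-form equation for $v$, existence and uniqueness in $H^1(\Gw,\gf_\gk^2\,dx)=H^1_0(\Gw,\gf_\gk^2\,dx)$ via Lax--Milgram (the paper invokes Riesz, which is equivalent here since the form is symmetric), and Moser iteration based on the weighted Sobolev inequality \eqref{Sobolev} for the $L^\infty$ bound and H\"older continuity.

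One remark on the ``hard part'' you flag. Your observation that $d^{\ga_+}$ with $\ga_+\geq 1$ is not an $A_2$ weight, so that the Fabes--Kenig--Serapioni theory does not apply directly, is correct and well spotted. The paper does not work around this itself; it simply cites \cite{F.M.T2} (Filippas--Moschini--Tertikas), where the Moser scheme and boundary H\"older regularity are carried out specifically for the distance weights $d^\ga$ on smooth domains, exploiting exactly the one-dimensional power structure you mention. So your first proposed workaround is the one actually used in the literature; the Green-kernel alternative you suggest would also work but is not needed here.
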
  

\begin{proof}
If there exists a solution $u$, then $\psi=\frac{u}{\xf_{\xk }}$ satisfies
\be
-\xf^{-2}_{\xk }div(\xf^2_{\xk }\nabla \psi)+\xl_{\xk }\psi=\xf^{-1}_{\xk }f_0.\label{doom}
\ee
and we recall that  $\ei(x)\approx d^\frac{\xa_+}{2}(x)$. We endow the space $H^1(\xO,\xf^2_{\xk }dx)$ with the inner product
$$\langle a,b\rangle=\myint{\Gw}{}(\nabla a.\nabla b+\xl_{\xk }ab)\;\xf^2_{\xk }dx.
$$
By a solution $\psi$ of (\ref{doom}) we mean that $\psi\in H_0^1(\xO,\xf^2_{\xk } dx)$ satisfies
\begin{equation}\label{ELin4}
\langle \nabla \psi,\nabla\xz\rangle=\int_\xO\nabla \psi.\nabla\xz \,\ei^2dx+\xl_{\xk }\int_\xO \psi\xz \ei^2 dx=\int_\xO f_0\xz \ei dx\qquad\forall\xz\in H_0^1(\xO,\xf^2_{\xk } dx).
\end{equation}
By Riesz's representation theorem we derive the existence and uniqueness of the solution in this space. Since $H^1(\xO,\xf^2_{\xk }dx)=H_0^1(\xO,\xf^2_{\xk }dx)$ by \cite[Th 2.11]{F.M.T2}, any weak solution $u$ of (\ref{ELin1}) such that $\xf^{-1}_{\xk }u\in H^1(\xO,\xf^2_{\xk }dx)$ is obtained by the above method.
\smallskip

Finally if $f_0\in L^q(\xO),$ where $q>\frac{N+\xa}{2},$ we can apply Moser's approach thanks to (\ref{Sobolev}) and prove first the estimate
\begin{equation}\label{ELin5'}
||\psi||_{L^\infty(\xO)}\leq c_8||f_0||_{L^q(\xO}
\end{equation}
where $c_8=c_8(\Gw,\xk ,q)$, and then to derive the H\"{o}lder regularity up to the boundary
(see e.g. \cite{F.M.T2}).
\end{proof}\medskip

In the next results we make more precise the rate of convergence of a solution of (\ref{ELin1}) to its boundary value.

\begin{prop}\label{reglemma}
Assume $\xk <\frac{1}{4}$. If $f_0\in L^2(\xO)$ and $h\in H^1(\xO)$
there exists a unique weak solution $u$  of (\ref{ELin1}) in $\in H^1_{loc}(\xO)$ and such that
$d^{-\frac{\xa_+}{2}}(u-d^{\frac{\xa_-}{2}}h)\in H^1(\xO,\;d^{\xa_+}(x)dx).$
Furthermore, if $f_0\in L^q(\xO),\;q>\frac{n+\xa}{2}$ and $h\in C^2(\overline{\xO}),$ then there exists $0<\xb<1$ such that
$$\lim_{x\in\xO,\;x\rightarrow y\in\partial\xO}\frac{u(x)}{(d(x))^{\frac{\xa_-}{2}}}=h(y)\qquad\forall y\in\partial\xO,$$
uniformly with respect to $y$,
$$\left|\!\left|\frac{u}{d^{\frac{\xa_-}{2}}}\right|\!\right|_{L^\infty(\xO)}\leq c_9\left(||h||_{C^2(\overline{\xO})}+||f_0||_{L^q(\xO)}\right),$$
and
\be
\sup_{x,y\in\xO,\;x\neq y}|x-y|^{-\xb}\left|\frac{u(x)}{(d(x))^{\frac{\xa_-}{2}}}-\frac{u(y)}{(d(y))^{\frac{\xa_-}{2}}}\right|<c_{10}.\label{regularity}
\ee
with $c_9$ and $c_{10}$ depending on $\xO$, N, q, and $\xk $.
\end{prop}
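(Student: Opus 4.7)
The strategy is to lift the boundary profile by the ansatz $u=\chi\, d^{\alpha_-/2}h+v$, with $\chi$ a smooth cutoff equal to $1$ in a neighbourhood of $\partial\Omega$, so that the corrector $v$ satisfies $v/\phi_\kappa\in H^1(\Omega,\phi_\kappa^2 dx)$ and is produced by the weighted Riesz argument of Proposition~\ref{exi}.

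The decisive calculation is the cancellation
\begin{equation*}
\mathcal{L}_\kappa(d^{\alpha_-/2}h) \;=\; -\tfrac{\alpha_-}{2}\,h\,d^{\alpha_-/2-1}\Delta d \;-\; \alpha_-\,d^{\alpha_-/2-1}\nabla d\cdot\nabla h \;-\; d^{\alpha_-/2}\Delta h \qquad\text{on }\Omega_{\beta_0},
\end{equation*}
in which the would-be singular contributions $\pm\kappa\, d^{\alpha_-/2-2}h$ annihilate thanks to $\tfrac{\alpha_-}{2}\bigl(\tfrac{\alpha_-}{2}-1\bigr)=-\kappa$, which is just $\alpha_-\alpha_+=4\kappa$ (use $|\nabla d|=1$ near $\partial\Omega$). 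Hence the datum $\tilde f_0:=f_0-\mathcal{L}_\kappa(\chi\, d^{\alpha_-/2}h)$ has worst singularity $d^{\alpha_-/2-1}=d^{-\alpha_+/2}$, so $\tilde f_0\,\phi_\kappa\in L^2(\Omega)$ since $\phi_\kappa\approx d^{\alpha_+/2}$. This is precisely the integrability demanded by the Riesz argument in $H_0^1(\Omega,\phi_\kappa^2 dx)$ behind Proposition~\ref{exi}: it yields a unique $\psi\in H_0^1(\Omega,\phi_\kappa^2 dx)$ with $\mathcal{L}_\kappa(\phi_\kappa\psi)=\tilde f_0$. Setting $u:=\chi\,d^{\alpha_-/2}h+\phi_\kappa\psi$ solves $\mathcal{L}_\kappa u=f_0$, and since $(\chi-1)d^{\alpha_-/2}h$ is smooth and compactly supported in $\Omega$ one has $d^{-\alpha_+/2}(u-d^{\alpha_-/2}h)\in H^1(\Omega,d^{\alpha_+}dx)$, as claimed. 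Uniqueness follows because the difference $w$ of two solutions satisfies $\mathcal{L}_\kappa w=0$ with $d^{-\alpha_+/2}w\in H^1(\Omega,d^{\alpha_+}dx)$, and the uniqueness part of Proposition~\ref{exi} then forces $w\equiv 0$.

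For the quantitative statement, assume $f_0\in L^q$ with $q>(N+\alpha_+)/2$ and $h\in C^2(\overline\Omega)$. The same formula shows $\tilde f_0\in L^q(\Omega)$ with norm controlled by $\|f_0\|_{L^q}+\|h\|_{C^2}$: the only new term $d^{-\alpha_+/2}$ lies in $L^q$ as soon as $q<2N/\alpha_+$, and the interval $((N+\alpha_+)/2,\,2N/\alpha_+)$ is nonempty for $\alpha_+<2$. The H\"older estimate \eqref{ELin3} of Proposition~\ref{exi} then yields a $C^\beta(\overline\Omega)$ representative of $\psi=v/\phi_\kappa$ which, belonging to $H_0^1(\Omega,\phi_\kappa^2 dx)$, vanishes on $\partial\Omega$. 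Writing
\begin{equation*}
\frac{u(x)}{d^{\alpha_-/2}(x)} \;=\; h(x) \;+\; \psi(x)\,\frac{\phi_\kappa(x)}{d^{\alpha_-/2}(x)} \qquad\text{near }\partial\Omega,
\end{equation*}
and using $\phi_\kappa/d^{\alpha_-/2}\approx d^{(\alpha_+-\alpha_-)/2}=d^{\sqrt{1-4\kappa}}$, which is H\"older up to $\partial\Omega$ and vanishes there, one obtains the uniform boundary limit $u/d^{\alpha_-/2}\to h$ together with the $L^\infty$ and H\"older estimates.

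The main obstacle is that the lift $d^{\alpha_-/2}h$ is \emph{not} in $H^1(\Omega)$ when $\alpha_-<1$ (its gradient behaves like $d^{\alpha_-/2-1}\notin L^2$), so every manipulation must be carried out inside the weighted scale $H^1(\Omega,d^{\alpha_+}dx)$ provided by Lemma~\ref{space}. One has to check, using $|\nabla d|=1$ and the $C^2$--regularity of $d$ on $\Omega_{\beta_0}$, that the algebraic cancellation above holds as an identity in that weighted space, and that the weighted weak equation actually satisfied by $\psi$ is exactly the one to which Proposition~\ref{exi} applies.
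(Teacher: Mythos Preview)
Your existence argument is essentially the paper's: both write $u=\eta h+\phi_\kappa v$ with $\eta\sim d^{\alpha_-/2}$ near $\partial\Omega$, exploit the cancellation $\tfrac{\alpha_-}{2}(\tfrac{\alpha_-}{2}-1)+\kappa=0$, and solve for $v$ by Riesz representation in $H^1(\Omega,\phi_\kappa^2\,dx)=H_0^1(\Omega,\phi_\kappa^2\,dx)$. The paper is slightly more careful about the weak formulation (the term $\eta\Delta h$ is integrated by parts so that only $\nabla h\in L^2$ is used), but you flag exactly this point as ``the main obstacle,'' so there is no disagreement here.

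There is, however, a genuine gap in your regularity argument. You claim $d^{-\alpha_+/2}\in L^q(\Omega)$ for $q<2N/\alpha_+$; this is false. The singularity of $d^{-\alpha_+/2}$ is along $\partial\Omega$, which has codimension~$1$, so
\[
\int_\Omega d^{-q\alpha_+/2}\,dx\ \approx\ \int_0^{\beta_0} t^{-q\alpha_+/2}\,dt
\]
is finite iff $q<2/\alpha_+$. Since $\alpha_+>1$ for $\kappa<\tfrac14$, one has $2/\alpha_+<2$, whereas the H\"older estimate of Proposition~\ref{exi} requires $q>(N+\alpha_+)/2\ge 2$ for $N\ge 3$. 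Hence the range $\bigl((N+\alpha_+)/2,\,2/\alpha_+\bigr)$ is empty for $N\ge 3$ (and, for $N=2$, nonempty only when $\alpha_+<\sqrt5-1$). Consequently $\tilde f_0\notin L^q(\Omega)$ in general, and Proposition~\ref{exi} cannot be invoked as a black box for the second part.

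The paper avoids this trap by \emph{not} asking that the lift contribution lie in $L^q$. Instead it keeps the weak form \eqref{weak}, observes that the boundary-lift terms are bounded by $c\|h\|_{C^2}\int_\Omega|\psi|\,dx$, and controls $\int_\Omega|\psi|$ by the weighted gradient via the Hardy-type inequality \eqref{test1}: $\int_{\Omega_\beta}\psi^2\,dx\le c\int_{\Omega_\beta}d^{\alpha_+}|\nabla\psi|^2\,dx$. Moser's iteration is then run directly in the weighted setting with the genuine datum $f_0\in L^q$; the lift contributes only a constant to the energy inequality. To repair your argument you must do likewise: separate $\tilde f_0$ into $f_0\in L^q$ plus the lift terms, and absorb the latter through \eqref{test1} rather than through an (unavailable) $L^q$ bound.
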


\noindent{\bf Remark}. By Lemma \ref{space} we already know that $u-d^{\frac{\xa_-}{2}}h\in H_0^1(\xO).$
\begin{proof}
Let $\xb\leq \gb_0$ and  $\eta\in C^2(\xO)$ be a function such that
$\eta=d^{\frac{\xa_-}{2}}(x)$ in $\Gw_\gb$ and $\eta(x)>c>0,$ if $x\in \Gw'_\gb$.
We set $u=\xf_{\xk }v+\eta h.$ Then $v$ is a weak solution of
\be
-\frac{\mathrm{div}(\xf^2_{\xk }\nabla v)}{\ei^2}+\xl_{\xk } v=\frac{1}{\ei}\left(f_0+(\xD\eta+\xk \frac{\eta}{d^2})h+2\nabla\eta.\nabla h+\eta\xD h\right),
\ee\label{ELin6}
in the sense that
\begin{align}\nonumber
\int_\xO\nabla v.\nabla\psi \,\ei^2 dx+\xl_{\xk }\int_{\xO} v\;\psi\,\ei^2 dx&=\int_\xO\left(f_0+(\xD\eta+\xk \frac{\eta}{d^2})h+2\nabla\eta.\nabla h\right)\psi\;\ei dx\\
&\qquad-\int_\xO\nabla h.\nabla\left(\eta\psi\,\ei\right) dx\qquad\forall \psi\in C_0^\infty(\xO).\label{weak}
\end{align}
Let $\psi\in C_0^\infty(\xO_\xb)$. By  an argument similar to the one in the proof of Lemma \ref{space} we have
$$\int_\xO\psi^2dx=\int_{\xO_\gb}\psi^2dx=\int_{\xO_\xb} \mathrm{div} (d\nabla d)|\psi|^2dx-\int_{\xO_\xb} d \xD d|\psi|^2dx,$$
which implies
\be
\int_{\xO_\gb}\psi^2dx\leq c'_{10}\int_{\xO_\gb} d^2|\nabla\psi|^2dx\leq c_{11}\int_{\xO_\gb} d^{\xa_+}|\nabla\psi|^2dx.\label{test1}
\ee
Now
$$\left|\int_{\xO_\gb}\left((\xD\eta+\xk \frac{\eta}{d^2})h+2\nabla\eta.\nabla h\right)\psi\,\ei dx\right|\leq c_{12}\int_{\xO_\gb}\psi^2dx,$$
and
$$\left|\int_{\xO_\gb}\nabla h.\nabla\left(\eta\psi\,\ei\right) dx\right|\leq c_{13}\left(\int_{\xO_\gb}|\nabla h|^2dx+\int\int_{\xO_\gb} d^{\xa_+}|\nabla \psi|^2dx+\int\int_{\xO_\gb}\psi^2dx\right).$$
By (\ref{test1}) we can take $\psi\in H^1(\xO,d^{\xa_+}(x)dx)$ for test function. Thus we can easily obtain that there exists a weak solution $v\in H^1(\xO,d^{\xa_+}(x)dx)$ of (\ref{weak}).

To prove (\ref{regularity}) we first obtain that if $\psi\in C_0^\infty(\xO_\xe)$
$$\int_\xO\psi dx=-\int_{\xO_\xe} d\nabla d.\nabla\psi dx-\int_{\xO_\xe} d \xD d\psi dx,$$
and since
\begin{align}\nonumber
\left|\int_\xO\left((\xD\eta+\xk \frac{\eta}{d^2})h+2\nabla\eta.\nabla h+\eta\xD h\right)\psi\,\ei dx\right|&\leq c_{14}||h||_{C^2(\overline{\xO})}\int_\xO|\psi|dx\\ \nonumber
&\leq \frac{1}{2} \int_{\xO_\xe} d^{\xa_+}|\nabla\psi|^2 dx+ c_{15}(\xO,\xk)||h||_{C^2(\overline{\xO})}.
\end{align}
Using again (\ref{Sobolev}) and Moser's iterative scheeme as in Proposition \ref{exi}, we obtain
$$||v||_{L^\infty(\xO)}\leq c_9\left(||h||_{C^2(\overline{\xO})}+||f_0||_{L^q(\xO)}\right),$$
where $c_9=c_9(\xO, q,\xk )>0$, from which it follows again that $v$ is H\"{o}lder continuous up to the boundary and the uniform convergence holds.
\end{proof}
\begin{prop}\label{reglemma1}
Assume $\xk =\frac{1}{4}$. If $f_0\in L^2(\xO)$ and $h\in H^1(\xO),$
there exists a unique function $u$ in  $H^1_{loc}(\xO)$ weak solution of
$$\CL_{\frac{1}{4}}u=f_0$$
verifying $d^{-\frac{1}{2}}(u-d^{\frac{1}{2}}|\log d|h)\in H^1(\xO,\;d(x)dx).$
Furthermore, if $f_0\in L^q(\xO),\;q>\frac{n+1}{2}$ and $h\in C^2(\overline{\xO}),$ then there exists $0<\xb<1$ such that
$$\lim_{x\in\xO,\;x\rightarrow y\in\partial\xO}\frac{u}{d^{\frac{1}{2}}|\log d|}(x)=h(y)\qquad\forall y\in\partial\xO,$$
uniformly with respect to $y$,
$$\left|\!\left|\frac{u}{\sqrt d\,|\log \frac{d}{D_0}|}\right|\!\right|_{L^\infty(\xO)}\leq c_{16}\left(||h||_{C^2(\overline{\xO})}+||f_0||_{L^q(\xO)}\right)$$
where $D_0=2\sup_{x\in\xO}d(x).$ Finally there holds

\be\label{regularity2}
\sup_{x,y\in\xO,\;x\neq y}|x-y|^{-\xb}\left|\frac{u(x)}{\sqrt{d(x)}|\log \frac{d(x)}{D_0}|}-\frac{u(y)}{\sqrt{d(y)}|\log \frac{d(y)}{D_0}|}\right|<c_{17}\left(||h||_{C^2(\overline{\xO})}+||f_0||_{L^q(\xO)}\right).
\ee
\end{prop}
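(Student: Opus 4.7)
The plan is to parallel the proof of Proposition \ref{reglemma}, adjusted for the critical value $\xk=\frac{1}{4}$ where the expected boundary behaviour changes from $d^{\frac{\xa_-}{2}}$ to $d^{\frac{1}{2}}|\log d|$. Fix $\xb_0>0$ such that $d\in C^2(\overline{\xO_{\xb_0}})$, and choose $\eta\in C^2(\xO)$ with $\eta(x)=d^{\frac{1}{2}}(x)|\log(d(x)/D_0)|$ on $\xO_{\xb_0/2}$ and $\eta\geq c>0$ on $\xO'_{\xb_0/2}$. Write $u=\gf_{\frac{1}{4}}v+\eta h$. A direct computation, using $|\nabla d|^2=1$ near $\prt\xO$ together with $\CL_{\frac{1}{4}}\gf_{\frac{1}{4}}=\xl_{\frac{1}{4}}\gf_{\frac{1}{4}}$, yields
\begin{equation*}
\CL_{\frac{1}{4}}\eta=d^{-\frac{1}{2}}\Delta d\,\bigl[\tfrac{1}{2}\log(d/D_0)+1\bigr]\qquad\text{on }\xO_{\xb_0/2},
\end{equation*}
the key cancellation being that the two singular terms $\tfrac{1}{4}d^{-\frac{3}{2}}\log(d/D_0)$ coming from $-\Delta\eta$ and $-\frac{\eta}{4d^2}$ annihilate exactly. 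Thus $v$ formally satisfies
\begin{equation*}
-\gf_{\frac{1}{4}}^{-2}\,\mathrm{div}(\gf_{\frac{1}{4}}^{2}\nabla v)+\xl_{\frac{1}{4}}v=\gf_{\frac{1}{4}}^{-1}\bigl(f_0-h\,\CL_{\frac{1}{4}}\eta+2\nabla\eta\cdot\nabla h+\eta\Delta h\bigr),
\end{equation*}
and the right-hand side carries a borderline singularity of order $d^{-1}|\log d|$.

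Testing the weak form of $\CL_{\frac{1}{4}}u=f_0$ against $\xz=\gf_{\frac{1}{4}}\psi$ with $\psi\in C_0^\infty(\xO)$ and using the identity
\begin{equation*}
\int_\xO\nabla(\gf_{\frac{1}{4}}v)\cdot\nabla(\gf_{\frac{1}{4}}\psi)\,dx-\tfrac{1}{4}\int_\xO\frac{\gf_{\frac{1}{4}}^{2}v\psi}{d^2}\,dx=\int_\xO\gf_{\frac{1}{4}}^{2}\nabla v\cdot\nabla\psi\,dx+\xl_{\frac{1}{4}}\int_\xO\gf_{\frac{1}{4}}^{2}v\psi\,dx,
\end{equation*}
I obtain a bilinear form which, since $\gf_{\frac{1}{4}}^{2}\approx d$, is coercive on the Hilbert space $H^1_0(\xO,d(x)dx)$ by the weighted Poincar\'e inequality. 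Continuity of the linear functional on this space is the delicate point: the dominant term $\int h\,\CL_{\frac{1}{4}}\eta\,\psi\,dx$ is handled via the weighted H\"older bound
\begin{equation*}
\int_\xO|\log d|\,|\psi|\,dx\leq\Bigl(\int_\xO|\log d|^{p'}d^{-p'/p}dx\Bigr)^{1/p'}\Bigl(\int_\xO|\psi|^{p}d\,dx\Bigr)^{1/p}\leq C\,\|\psi\|_{H^1_0(\xO,d\,dx)},
\end{equation*}
where $p=\frac{2(N+1)}{N-1}$ is the Sobolev exponent from (\ref{Sobolev}) with $\xa=1$ and $p'/p=\frac{N-1}{N+3}<1$ so that the first factor is finite. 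Riesz representation then yields a unique $v\in H^1_0(\xO,d(x)dx)$, and Lemma \ref{space}(ii) gives $u-\eta h\in W^{1,p}_0(\xO)$ for every $p<2$, so $u\in H^1_{loc}(\xO)$ is a weak solution with the required decomposition.

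For $f_0\in L^q(\xO)$ with $q>\frac{N+1}{2}$ and $h\in C^2(\overline\xO)$, I would apply Moser's iterative scheme to the equation for $v$ exactly as in the proofs of Propositions \ref{exi} and \ref{reglemma}, using the weighted Sobolev inequality (\ref{Sobolev}) at $\xa=1$. This produces
\begin{equation*}
\|v\|_{L^\infty(\xO)}\leq c_{16}\bigl(\|h\|_{C^2(\overline\xO)}+\|f_0\|_{L^q(\xO)}\bigr),
\end{equation*}
which, via $u=\gf_{\frac{1}{4}}v+\eta h$ with $\gf_{\frac{1}{4}}$ and $\eta/|\log(d/D_0)|\approx d^{\frac{1}{2}}$, gives the stated $L^\infty$ bound on $u/(\sqrt d\,|\log(d/D_0)|)$. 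Finally, the uniform boundary convergence and the H\"older estimate (\ref{regularity2}) follow from De~Giorgi--Nash--Moser theory applied to the degenerate-elliptic operator $v\mapsto-\mathrm{div}(\gf_{\frac{1}{4}}^{2}\nabla v)+\xl_{\frac{1}{4}}\gf_{\frac{1}{4}}^{2}v$, whose weight $\gf_{\frac{1}{4}}^{2}\approx d$ is locally of Muckenhoupt class $A_2$.

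The principal obstacle is the borderline logarithmic factor surviving in $\CL_{\frac{1}{4}}\eta$: unlike the subcritical case where $\CL_\xk\eta=\frac{\xa_-}{2}d^{-\frac{\xa_+}{2}}\Delta d$ is pointwise bounded against $\gf_\xk$, here naive Cauchy--Schwarz against the weight $d(x)dx$ fails because $\int_\xO d^{-1}(\log d)^2 dx=\infty$. The cure is precisely the asymmetric H\"older split displayed above, which exploits the weighted Sobolev embedding (\ref{Sobolev}) at its critical weight $\xa=1$. This is the single place where the analysis at $\xk=\frac{1}{4}$ is genuinely critical rather than a routine adaptation of the $\xk<\frac{1}{4}$ argument.
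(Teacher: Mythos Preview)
Your approach is essentially the same as the paper's: the decomposition $u=\gf_{\frac14}v+\eta h$ with $\eta=d^{\frac12}|\log d|$ near $\prt\Gw$, the weak formulation for $v$ in $H^1_0(\xO,d\,dx)$, and Moser iteration for the $L^\infty$ and H\"older bounds are exactly what the paper does (the paper's own proof is just a two-line reference back to Proposition~\ref{reglemma}).

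The one point where you diverge is the treatment of the term $\int(\xD\eta+\tfrac{\eta}{4d^2})h\,\psi\,\gf_{\frac14}\,dx$, whose integrand is of order $|\log d|\,|\psi|$. You claim that ``naive Cauchy--Schwarz against the weight $d(x)dx$ fails'' and then invoke the weighted Sobolev embedding (\ref{Sobolev}) with an asymmetric H\"older split. This works, but it is unnecessary: the paper simply uses that $|\log d|\in L^p(\xO)$ for every $p\geq 1$. Indeed, \emph{unweighted} Cauchy--Schwarz gives
\[
\int_\xO |\log d|\,|\psi|\,dx\leq \|\log d\|_{L^2(\xO)}\,\|\psi\|_{L^2(\xO)}\leq c\,\|\psi\|_{H^1_0(\xO,d\,dx)},
\]
the last step being (\ref{test1}) with $\xa_+=1$. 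Your weighted split fails only because you insisted on pairing $|\psi|$ with $d^{1/p}$; there is no need to do so. So the ``genuinely critical'' step you highlight is in fact handled by the most elementary estimate, which is precisely the paper's remark.
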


\begin{proof} Using again Lemma \ref{space}, we know that $u-d^{\frac{1}{2}}|\log d|h\in W_0^{1,p}(\xO),\;\;\forall p<2.$ The proof is very similar to the proof of Proposition \ref{reglemma}. The only differences are we impose $\eta=d^{\frac{1}{2}}|\log d|$ in $\Gw_\gb$ and we use the fact that $|\log d|\in L^p(\xO),\forall p\geq1.$
\end{proof}

In the next result we prove that the boundary Harnack inequality holds, provided the vanishing property of a solution is understood in a an appropriate way.
\begin{prop}\label{maincomp1}
Let $\xd>0$ be small enough, $\xi\in\partial\xO$ and $u\in H^1_{loc}(B_\xd(\xi)\cap\xO)\cap C(B_{\xd}(\xi)\cap\overline{\xO})$ be a positive $\CL_{\frac{1}{4}}$-harmonic function in $B_\xd(\xi)\cap\xO$
vanishing on $\partial\xO\cap B_\xd(\xi)$ in the sense
that
\be\label{zero}\lim_{\dist(x,K)\to 0}\frac{u(x)}{d^\frac{1}{2}(x)|\log d(x)|}=0\qquad\forall K\subset\prt\Gw\cap B_\gd(\xi)\,,\, K\text{ compact}.\ee
Then there exists a constant $c_{18}=c_{18}(N,\xO,\xk)>0$ such that
$$\frac{u(x)}{\xf_{\frac{1}{4}}(x)}\leq c_{18}\frac{u(y)}{\xf_\frac{1}{4}(y)}\qquad\forall x,y\in{\xO}\cap B_\frac{\xd}{2}(\xi).$$
\end{prop}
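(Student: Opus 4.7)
My plan is to establish the two-sided pointwise comparison
$$c\,\frac{u(x_0)}{\xf_{\frac{1}{4}}(x_0)}\xf_{\frac{1}{4}}(x)\le u(x)\le C\,\frac{u(x_0)}{\xf_{\frac{1}{4}}(x_0)}\xf_{\frac{1}{4}}(x)\qquad\forall x\in\xO\cap B_{\xd/2}(\xi),$$
for a fixed ``corkscrew'' reference point $x_0\in\xO$ with $|x_0-\xi|\approx\xd$ and $d(x_0)\approx\xd$. The desired Harnack inequality then follows by dividing the two bounds. Both directions rest on barrier arguments combining the maximum principle (\ref{subsol})(ii) with classical interior Harnack along a chain inside $\xO$.

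\medskip
For the upper bound, I would first apply Proposition \ref{reglemma1} with $f_0=0$ to produce a positive $\CL_{\frac{1}{4}}$-harmonic function $V$ in $\xO$ whose ratio $V/(\sqrt d\,|\log d|)$ extends continuously to $\partial\xO$ as a nonnegative function $h\in C^2(\overline\xO)$, chosen with $h\equiv 1$ on a neighborhood of $\overline{\partial\xO\cap B_{3\xd/4}(\xi)}$. Set $M:=\sup_{\xO\cap\partial B_{3\xd/4}(\xi)}u/\xf_{\frac{1}{4}}$, which is finite by continuity of $u$ on $\overline\xO\cap B_\xd(\xi)$ and strict positivity of $\xf_{\frac{1}{4}}$ in the interior. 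For each $\xe>0$, the function $w_\xe:=u-\xe V-M\xf_{\frac{1}{4}}$ satisfies $\CL_{\frac{1}{4}}w_\xe=-M\xl_{\frac{1}{4}}\xf_{\frac{1}{4}}\le 0$ on $D:=\xO\cap B_{3\xd/4}(\xi)$, is nonpositive on the interior piece $\xO\cap\partial B_{3\xd/4}(\xi)$ by definition of $M$, and by hypothesis (\ref{zero}) combined with the two limits $V/(\sqrt d\,|\log d|)\to h\ge 1$ and $\xf_{\frac{1}{4}}/(\sqrt d\,|\log d|)\to 0$ it obeys
$$\limsup_{x\to y}\frac{w_\xe(x)}{\sqrt{d(x)}\,|\log d(x)|}\le -\xe\quad\text{uniformly for }y\in\overline{\partial\xO\cap B_{3\xd/4}(\xi)}.$$
The maximum principle (\ref{subsol})(ii), localized to the subdomain $D$ (its interior boundary handled by ordinary continuity), then gives $w_\xe\le 0$ in $D$; sending $\xe\to 0^+$ yields $u\le M\xf_{\frac{1}{4}}$ throughout $D$. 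A classical Harnack chain for $u$ inside $\{d\ge c\xd\}\cap B_{7\xd/8}(\xi)$ bounds $M\le C\,u(x_0)/\xf_{\frac{1}{4}}(x_0)$.

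\medskip
For the lower bound, the upper estimate shows $\ps:=u/\xf_{\frac{1}{4}}$ is bounded on $D$, and a computation analogous to (\ref{doom}) shows $\ps>0$ is a weak solution of the weighted divergence-form equation $-\mathrm{div}(\xf^2_{\frac{1}{4}}\nabla\ps)+\xl_{\frac{1}{4}}\xf^2_{\frac{1}{4}}\ps=0$ in $D$. Since $\xf^2_{\frac{1}{4}}\approx d$ defines a Muckenhoupt $A_2$ weight on $B_\xd(\xi)$ (after extension by zero across $\partial\xO$, using the $C^2$ regularity of $\partial\xO$ near $\xi$), the Fabes--Kenig--Serapioni Harnack inequality for degenerate elliptic operators applies to $\ps$, delivering $\sup_{\xO\cap B_{\xd/2}(\xi)}\ps\le c_{18}\inf_{\xO\cap B_{\xd/2}(\xi)}\ps$, which is the desired estimate. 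The main obstacle is the rigorous justification of the weighted Harnack across the degenerate boundary arc $\partial\xO\cap B_{\xd/2}(\xi)$; to bypass it, one may mirror the argument of the previous paragraph by comparing $u$ from below with $c_0\xf_{\frac{1}{4}}-\xe V'$, where $V'$ is a positive $\CL_{\frac{1}{4}}$-harmonic function of ``large'' type built via Proposition \ref{reglemma1}, and applying the maximum principle to the resulting $\CL_{\frac{1}{4}}$-superharmonic difference, with the positivity of $u(x_0)$ propagated by interior Harnack providing the margin required on the interior boundary.
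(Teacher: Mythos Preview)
Your upper-bound argument has a genuine gap at its very first step. You define $M:=\sup_{\xO\cap\partial B_{3\xd/4}(\xi)}u/\xf_{\frac{1}{4}}$ and assert it is finite. But the arc $\xO\cap\partial B_{3\xd/4}(\xi)$ meets $\partial\xO$, and the hypothesis (\ref{zero}) only gives $u(x)=o\bigl(d^{1/2}(x)|\log d(x)|\bigr)$, while $\xf_{\frac{1}{4}}(x)\approx d^{1/2}(x)$. Hence a priori $u/\xf_{\frac{1}{4}}=o(|\log d|)$, which may diverge as $d\to 0$ along the arc; nothing prevents $M=+\infty$. The subsequent Harnack-chain step, which presupposes the supremum is attained in a region $\{d\ge c\xd\}$, then collapses. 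Since your lower-bound paragraph opens with ``the upper estimate shows $\ps:=u/\xf_{\frac{1}{4}}$ is bounded,'' the whole argument is circular: boundedness of $u/\xf_{\frac{1}{4}}$ up to $\partial\xO$ is precisely the content of the proposition.

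A secondary issue: even granting boundedness, to invoke the Fabes--Kenig--Serapioni Harnack inequality you must know $\ps\in H^1(B_{\xd}(\xi),\xf_{\frac{1}{4}}^2\,dx)$, not merely that $\ps$ solves the equation pointwise in the interior. You do not address this, and your proposed extension of the weight by zero across $\partial\xO$ is not $A_2$ (an $A_2$ weight must be a.e.\ positive). The paper's proof is organized around exactly these two difficulties: it first uses a supersolution $\zeta\approx d^{1/2}\log(1/d)\bigl(1+(\log(1/d))^{-\gb}\bigr)$ and a subsolution $h\approx d^{1/2}\log(1/d)\bigl(1-(\log(1/d))^{-\gb}\bigr)$---both matching the $d^{1/2}|\log d|$ scale of the hypothesis, not the $d^{1/2}$ scale of $\xf_{\frac{1}{4}}$---together with energy estimates (testing against $(u/\zeta - s)_+$) to show that $\eta^2 u$ lies in the Hardy space $\mathbf{W}(\Gw)$ and consequently $u/\xf_{\frac{1}{4}}\in H^1_0(\xO,d(x)dx)$ locally. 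Only after this membership is established does the paper apply Moser iteration for the degenerate operator (citing \cite{F.M.T2}) to obtain the Harnack inequality for $\tilde v=u/\xf_{\frac{1}{4}}$. Your barrier-and-maximum-principle route does not supply a substitute for this regularity step.
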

\begin{proof}
We already know that $u\in C^2(\xO)$. Let $\gd\leq\min(\gb_0,\frac{1}{2})$ such that $B_\xd(\xi)\cap\xO\subset\xO_\xd\subset\xO_{\xb_0}.$\smallskip

By \cite[Lemma 2.8]{1} there exists a  positive supersolution $\xz\in C^2(\xO_\xd)$ of (\ref{IE3}) in $\Gw_\gd$ with the following behaviour
$$\xz(x)\approx d^\frac{1}{2}(x)\log\frac{1}{d(x)}\left(1+\left(\log\frac{1}{ d(x)}\right)^{-\xb}\right),$$
for some $\xb\in (0,1)$
and $c_{19}=c_{19}(\xO)>0$. Set $v=\xz^{-1}u$, then it satisfies
\be
-\xz^{-2}\mathrm{div} (\xz^2\nabla v)\leq0\quad\hbox{in}\;\;B_\xd(\xi)\cap\xO.\label{doom1}
\ee
Let $\eta\in C_0^\infty(B_\xd(\xi))$ such that $0\leq\eta\leq1$ and $\eta=1$ in $B_{\frac{3\xd}{4}}(\xi).$ We set
$v_s=\eta^2 (v-s)_+$
Since by assumption $v_s$ has compact support in $B_\gd(\xi)\cap\Gw$,
 we can use it as a test function in (\ref{doom1}) and we get
 \be
\int_{B_\xd(\xi)\cap\xO}\xz^2\nabla v.\nabla v_sdx=\int_{B_\xd(\xi)\cap\xO}\xz^2\nabla (v-s)_+.\nabla v_s dx\leq0,\label{doom2}
\ee
which yields
$$\int_{B_\xd(\xi)\cap\xO}|\nabla (v-s)_+|^2 \xz^2\eta^2dx\leq4\int_{B_\xd(\xi)\cap\xO}|\nabla\eta|^2(v-s)_+^2\xz^2dx.$$
Letting $s\to 0$ we derive
$$\int_{B_\xd(\xi)\cap\xO}|\nabla v|^2 \xz^2\eta^2dx\leq4\int_{B_\xd(\xi)\cap\xO}|\nabla\eta|^2v^2\xz^2dx.$$
Since
$$|\nabla (v-s)_+|^2 \xz^2\eta^2\uparrow |\nabla v|^2 \xz^2\eta^2\quad\text{as } s\to 0,
$$
and convergence of $\nabla (v-s)_+$ to $\nabla v$ holds a.e. in $\Gw$,  it follows by the monotone convergence theorem
\be\label{cov}
\lim_{s\rightarrow0}\int_{B_\xd(\xi)\cap\xO}|\nabla (v-(v-s)_+)|^2\xz^2\eta^2dx= 0,
\ee
and finally $\xz v_s\to\eta^2 \xz v$ in $H^1(B_\xd(\xi)\cap\xO)$, which yields in particular $\eta^2u=\eta^2\xz v\in H_0^1(B_\xd(\xi)\cap\xO)$.

\smallskip

\noindent{\it Step 2.} By \cite[Lemma 2.8]{1} there exists a positive subsolution $h\in C^2(\xO_\xd)$ of (\ref{IE3}) in $\Gw_\gd$ with the following behaviour
$$h(x)\approx d^\frac{1}{2}(x)\log\frac{1}{d(x)}\left(1-\left(\log\frac{1}{d(x)}\right)^{-\xb}\right),$$
where $\xb\in (0,1)$
and $c_{20}=c_{20}(\xO)>0$. Set $w=h^{-1}u$ and $w_s=\eta^2(w-s)_+$. Then $w_s\to \eta^2w$ in $H^1(B_\xd(\xi)\cap\xO)$ by Step 1. Put $u_s=hw_s$, thus, for $0<s,s'$, we have
\begin{align}
&\int_{B_\xd(\xi)\cap\xO}|\nabla (u_s-u_{s'})|^2dx-\frac{1}{4}\int_{B_\xd(\xi)}\frac{|u_s-u_{s'}|^2}{d^2(x)}dx =\int_{B_\xd(\xi)\cap\xO}h^2|\nabla (w_s-w_{s'})|^2dx\\ \nonumber
&+
\int_{B_\xd(\xi)\cap\xO}|\nabla h|^2|w_s-w_{s'}|^2dx
+\int_{B_\xd(\xi)\cap\xO}h\nabla h.\nabla (u_s-u_{s'})^2dx-\frac{1}{4}\int_{B_\xd(\xi)\cap\xO}\frac{h^2|w_s-w_{s'}|^2}{d^2(x)}dx\\ \nonumber
&\phantom{\int_{B_\xd(\xi)\cap\xO}|\nabla (u_s-u_{s'})|^2dx-\frac{1}{4}\int_{B_\xd(\xi)}\frac{|u_s-u_{s'}|^2}{d^2(x)}dx}\leq \int_{B_\xd(\xi)\cap\xO}h^2|\nabla (w_s-w_{s'})|^2dx,
\end{align}
where, in the last inequality, we have performed by parts integration  and then used the fact that $h$ is a subsolution.
Thus we have by (\ref{cov}) that
\be
\lim_{s,s'\rightarrow0}\int_{B_\xd(\xi)}|\nabla (u_s-u_{s'})|^2dx-\frac{1}{4}\int_{B_\xd(\xi)}\frac{|u_s-u_{s'}|^2}{d^2(x)}dx=0.\label{caushy}
\ee
Let $\mathbf{W}(\Gw)$ denote the closure of $C_0^\infty(\xO)$ in the space of functions $\gf$ satisfying
$$||\gf||_{H}^2:=\int_\xO|\nabla \xF|^2dx-\frac{1}{4}\int_\xO\frac{|\xF|^2}{d^2(x)}dx<\infty.$$
Thus $\eta^2 u\in\mathbf{W}(\Gw)$,
which implies
$$\frac{\eta u}{\xf_{\frac{1}{4}}}\in H_0^1(\xO,\;d(x)dx).$$
Next we set $\tilde v=\xf^{-1}_{\frac{1}{4}}u$; then $\tilde v \in H^1(B_\frac{3\xd}{4}(\xi), d(x)dx)$ and it satisfies
$$-\xf_{\frac{1}{4}}^{-2}\mathrm{div}(\xf^2_{\frac{1}{4}}\nabla \tilde v)+\xl_{\frac{1}{4}} \tilde v=0.$$
By the same approach based on  Moser' iterative scheeme applied to degenerate elliptic operators as the in \cite[Theorem 1.5]{F.M.T2}, we see that $v$ satisfies a Harnack inequality up to the boundary of $\xO$. More precisely there exists a constant $c_{18}=c_{18}(\Gw)>0$ such that
$$v(x)\leq c_{18} v(y)\qquad\forall x,y\in B_{\frac{\xd}{2}}(\xi).$$
And the result follows.
\end{proof}

\medskip
In the case $\xk <\frac{1}{4}$, the boundary Harnack inequality is the following,
\begin{prop}\label{maincomp}
Let $\xd>0$ be small enough $\xi\in\xO,$ $\xk <\frac{1}{4}$ and $u\in H^1_{loc}(B_\xd(\xi)\cap\xO)\cap C(B_{\xd}(\xi)\cap\overline{\xO})$ be a nonnegative $\CL_{\xk }$-harmonic in $B_\xd(\xi)$
vanishing on $\partial\xO\cap B_\xd(\xi)$ in the sense
that
\be\label{zero1}\lim_{\dist(x,K)\to 0}\frac{u(x)}{(d(x))^{\frac{\xa_-}{2}}}=0\qquad\forall K\subset\prt\Gw\cap B_\gd(\xi)\,,\, K\text{ compact}.\ee
Then there exists $c_{21}=c_{21}(\xO,\xk )>0$ such that
$$\frac{u(x)}{\ei(x)}\leq c_{21}\frac{u(y)}{\ei(y)}\qquad\forall x,y\in\overline{\xO}\cap B_\frac{\xd}{2}(\xi).$$
\end{prop}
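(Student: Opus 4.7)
The plan is to closely mimic the proof of Proposition \ref{maincomp1}, adapted to the case $\xk<\frac{1}{4}$. The structure has three parts: (i) produce a positive supersolution whose behavior at $\partial\xO$ is slightly worse than $d^{\frac{\xa_-}{2}}$; (ii) produce a positive subsolution whose behavior is slightly better; (iii) deduce that $u/\gf_\xk$ solves a degenerate elliptic equation to which a Moser-type boundary Harnack inequality applies. The payoff of the weaker exponent condition $\xk<\frac{1}{4}$ is that now ${\bf W}_\xk(\xO)=H^1_0(\xO)$ by (\ref{Wnorm3}), which keeps step (ii) straightforward and makes the functional framework cleaner than in the critical case.

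For step (i), I invoke \cite[Lemma 2.8]{1} (or an explicit construction on $\xO_\gd$) to produce $\xz\in C^2(\xO_\gd)$ such that $\CL_\xk\xz\geq 0$ in $\xO_\gd$ with
$$\xz(x)\approx d^{\frac{\xa_-}{2}}(x)\bigl(1+d^{\xb}(x)\bigr)$$
for some $\xb\in(0,1)$ small enough (the term $d^{\xb}$ will produce a positive lower order term thanks to the gap between $\xa_-$ and $\xa_+$). By (\ref{zero1}), $v:=u/\xz$ satisfies $\lim_{x\to K}v(x)=0$ for each compact $K\subset\prt\xO\cap B_\gd(\xi)$. The function $v$ is a subsolution of the divergence form equation $-\xz^{-2}\text{div}(\xz^2\nabla v)\leq 0$. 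Let $\eta\in C_0^\infty(B_\gd(\xi))$ with $\eta=1$ on $B_{\frac{3\gd}{4}}(\xi)$; then $v_s:=\eta^2(v-s)_+$ has compact support in $B_\gd(\xi)\cap\xO$ and may be used as a test function. The standard Caccioppoli estimate gives
$$\int_{B_\gd(\xi)\cap\xO}|\nabla (v-s)_+|^2\xz^2\eta^2\,dx\leq 4\int_{B_\gd(\xi)\cap\xO}|\nabla\eta|^2(v-s)_+^2\xz^2\,dx,$$
and after sending $s\to 0$ and using monotone convergence, one obtains $\eta\xz v=\eta u\in H^1_{loc}(\xO)$ with a bound on $\int\xz^2\eta^2|\nabla v|^2\,dx$.

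For step (ii), by the same construction reference, produce a positive subsolution $h\in C^2(\xO_\gd)$ of $\CL_\xk h\leq 0$ with
$$h(x)\approx d^{\frac{\xa_-}{2}}(x)\bigl(1-d^{\xb}(x)\bigr).$$
Set $w=u/h$ and $w_s=\eta^2(w-s)_+$, and let $u_s=hw_s$. A direct expansion (integration by parts using $\CL_\xk h\leq 0$) gives, for $s,s'>0$,
\begin{align*}
\int_{B_\gd(\xi)\cap\xO}|\nabla(u_s-u_{s'})|^2\,dx-\xk\int_{B_\gd(\xi)\cap\xO}\myfrac{|u_s-u_{s'}|^2}{d^2(x)}\,dx\leq \int_{B_\gd(\xi)\cap\xO}h^2|\nabla(w_s-w_{s'})|^2\,dx,
\end{align*}
exactly mirroring the computation in the proof of Proposition \ref{maincomp1}. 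Combined with the Cauchy property of $\{w_s\}$ in $H^1$ from step (i), this shows $\{u_s\}$ is Cauchy in ${\bf W}_\xk(\xO)$, so $\eta^2 u\in{\bf W}_\xk(\xO)=H^1_0(\xO)$ by (\ref{Wnorm3}). Consequently $\eta u/\gf_\xk\in H^1_0(\xO,d^{\xa_+}(x)dx)$.

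For step (iii), set $\tilde v:=\gf_\xk^{-1}u$. Then $\tilde v\in H^1(B_{\frac{3\gd}{4}}(\xi)\cap\xO,d^{\xa_+}(x)dx)$ satisfies the degenerate elliptic equation
$$-\gf_\xk^{-2}\text{div}\bigl(\gf_\xk^2\nabla\tilde v\bigr)+\xl_\xk\tilde v=0.$$
Because $\gf_\xk^2\approx d^{\xa_+}$ is an $A_2$-type weight in a neighborhood of $\prt\xO$ and $\tilde v$ vanishes in a weighted $H^1$ sense on $\prt\xO\cap B_\gd(\xi)$, Moser's iterative scheme applied to degenerate elliptic operators, exactly as in \cite[Theorem 1.5]{F.M.T2}, yields a Harnack inequality up to the boundary: there is $c_{21}=c_{21}(\xO,\xk)>0$ with
$$\tilde v(x)\leq c_{21}\,\tilde v(y)\qquad\forall x,y\in\overline{\xO}\cap B_{\frac{\gd}{2}}(\xi),$$
which is the desired estimate. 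The main obstacle is step (ii): constructing the correct subsolution and then carrying out the integration by parts so that the bad $-\xk d^{-2}$ term is absorbed using $\CL_\xk h\leq 0$; once this is done, the rest is a faithful transcription of the critical case argument with the simplification that ${\bf W}_\xk(\xO)=H^1_0(\xO)$.
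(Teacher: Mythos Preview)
Your proposal is correct and follows essentially the same approach as the paper: the paper's own proof simply states that one repeats the argument of Proposition~\ref{maincomp1} with the sub- and supersolution pair $h\approx d^{\frac{\xa_-}{2}}(1-d^\xb)$, $\zeta\approx d^{\frac{\xa_-}{2}}(1+d^\xb)$ (the paper writes $d^{\xa_-}$, which appears to be a typo for $d^{\frac{\xa_-}{2}}$), and you have faithfully carried out that program. The only refinement worth noting is that the paper specifies the admissible range $\xb\in(0,\sqrt{1-4\xk})$, i.e.\ $\frac{\xa_-}{2}+\xb<\frac{\xa_+}{2}$, which is precisely the ``gap between $\xa_-$ and $\xa_+$'' you allude to.
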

\begin{proof}
The only difference with the preceding proof is that we take as subsolution  and supersolution (see  \cite[Lemma 2.8]{1})
$C^2(\xO)$ the functions $h$ and $\zeta$ respectively with the boundary behaviour
$$h\approx d^{\xa_-}(1-d^\xb)\qquad \zeta\approx d^{\xa_-}(1+d^\xb),$$
where $\xb\in(0,\sqrt{1-4 \xk }\,).$
\end{proof}


\begin{prop}\label{max1}
Let  $u\in H^1_{loc}(\xO)\cap C(\xO)$ be a $\CL_{\frac{1}{4}}$-subharmonic function
such that
 $$\limsup_{d(x)\to 0}\frac{u(x)}{d^\frac{1}{2}(x)|\log d(x)|}\leq0.$$
Then $u\leq0.$
\end{prop}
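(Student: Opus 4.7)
The idea is to compare $u$ with a suitably scaled positive $\CL_{\frac{1}{4}}$-supersolution $\zeta$ whose boundary asymptotic is exactly the critical rate $d^{\frac{1}{2}}|\log d|$, and then to exploit the fact that on the convex domain $\Omega$ the Hardy constant $c_\Omega=\frac{1}{4}$ is not attained, so the Hardy inequality is \emph{strict} on $H_0^1(\Omega)\setminus\{0\}$. The delicate point will be producing a global such $\zeta$, since \cite[Lemma 2.8]{1} only provides one near $\partial\Omega$.

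\emph{Global barrier.} Starting from the local $\CL_{\frac{1}{4}}$-supersolution $\zeta_{\mathrm{loc}}\in C^2(\Omega_\delta)$ of \cite[Lemma 2.8]{1} with $\zeta_{\mathrm{loc}}(x)\approx d^{\frac{1}{2}}(x)\log\frac{1}{d(x)}$ as $d(x)\to 0$, we glue it to a large multiple of $\xf_{\frac{1}{4}}$. Let $\chi$ be a smooth cut-off with $\chi\equiv 1$ on $\Omega_{\delta/4}$ and $\chi\equiv 0$ outside $\Omega_{\delta/2}$, and set $\zeta:=\chi\,\zeta_{\mathrm{loc}}+M\,\xf_{\frac{1}{4}}$. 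On the transition zone $\{\delta/4<d<\delta/2\}$ the distance $d$ is bounded below, so the commutator terms produced by $\nabla\chi$ and $\xD\chi$ are uniformly bounded; combined with $\CL_{\frac{1}{4}}\xf_{\frac{1}{4}}=\xl_{\frac{1}{4}}\xf_{\frac{1}{4}}>0$, a sufficiently large $M$ yields a positive $\zeta\in H^1_{\mathrm{loc}}(\Omega)\cap C(\Omega)$ satisfying $\CL_{\frac{1}{4}}\zeta\geq 0$ weakly on $\Omega$, while keeping $\zeta(x)\approx d^{\frac{1}{2}}(x)|\log d(x)|$ near $\partial\Omega$.

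\emph{Comparison and Hardy.} For $\varepsilon>0$ set $U_\varepsilon:=u-\varepsilon\zeta$; then $\CL_{\frac{1}{4}}U_\varepsilon\leq 0$ weakly. Using the hypothesis and the lower bound $\zeta\geq c\,d^{\frac{1}{2}}|\log d|$ near $\partial\Omega$,
$$\limsup_{d(x)\to 0}\frac{U_\varepsilon(x)}{\zeta(x)}\leq -\varepsilon<0,$$
hence $U_\varepsilon<0$ in a neighbourhood of $\partial\Omega$, so that $(U_\varepsilon)_+\in H^1_{\mathrm{loc}}(\Omega)$ is compactly supported in $\Omega$ and lies in $H_0^1(\Omega)$. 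Using this nonnegative function as test in the subsolution inequality gives
$$\int_\Omega|\nabla(U_\varepsilon)_+|^2\,dx\leq \frac{1}{4}\int_\Omega\frac{(U_\varepsilon)_+^2}{d^2}\,dx,$$
which, combined with the strict Hardy inequality $\int|\nabla\psi|^2\,dx>\frac{1}{4}\int\psi^2/d^2\,dx$ valid on $H_0^1(\Omega)\setminus\{0\}$ (no minimizer exists when $c_\Omega=\frac{1}{4}$, as recalled after (\ref{IE6})), forces $(U_\varepsilon)_+\equiv 0$. Therefore $u\leq\varepsilon\zeta$ in $\Omega$, and letting $\varepsilon\to 0^+$ we conclude $u\leq 0$.

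\emph{Alternative.} If one wishes to avoid the global gluing, the comparison argument can be performed only on $\Omega_\delta$ using $\zeta_{\mathrm{loc}}$ directly, yielding $u\leq \varepsilon\zeta_{\mathrm{loc}}$ on $\Omega_\delta$ and hence $u\leq 0$ on $\Sigma_\delta$ after $\varepsilon\to 0$; the conclusion $u\leq 0$ then propagates to $\Omega'_\delta$ via the classical maximum principle for $\CL_{\frac{1}{4}}$, whose potential $1/(4d^2)$ is bounded on $\Omega'_\delta$ and whose first Dirichlet eigenvalue there majorates $\xl_{\frac{1}{4}}>0$.
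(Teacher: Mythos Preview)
Your proof is correct, but it proceeds by a genuinely different mechanism from the paper's.

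The paper takes $v=u_+$ (still an $\CL_{\frac14}$-subsolution by Kato's inequality), performs the ground-state substitution $w=v/\zeta$ with the same supersolution $\zeta$, and observes that $w$ satisfies $-\zeta^{-2}\,\mathrm{div}(\zeta^{2}\nabla w)\le 0$. One then tests with $(w-s)_+$, which is compactly supported by the boundary hypothesis; running Step~1 of Proposition~\ref{maincomp1} with the cutoff $\eta\equiv 1$ collapses the energy inequality to $\int_{\Omega}\zeta^{2}|\nabla(w-s)_+|^{2}\,dx\le 0$, hence $(w-s)_+\equiv 0$, and letting $s\to 0$ gives $u_+=0$. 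No appeal to the Hardy constant or its non-attainment is made: positivity of the transformed operator does all the work, and the identical argument yields Proposition~\ref{max} for $\kappa<\tfrac14$.

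Your route---subtract $\varepsilon\zeta$, push the positive part into $H^{1}_{0}(\Omega)$, and invoke the \emph{strictness} of Hardy's inequality at the critical constant $c_\Omega=\tfrac14$---is perhaps more transparent but is specific to the borderline case; for $\kappa<\tfrac14$ you would have to replace that step by coercivity of $\CL_\kappa$ on $H^{1}_{0}$. Your explicit globalisation of the barrier (or the alternative: compare on $\Omega_\delta$ and propagate inward by the ordinary maximum principle, since the potential is bounded on $\Omega'_\delta$) is a point the paper glosses over in its one-line proof.
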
  
\begin{proof}
We set $v=\max(u,0)$ and we proceed as in Step 1 of the proof  of Proposition \ref{maincomp1} with $\eta=1$. The result follows by letting $s\to 0$.
\end{proof}
Similarly we have
\begin{prop}\label{max}
Let  $u\in H^1_{loc}(\xO)\cap C(\xO)$ be a $\CL_{\xk }$-subharmonic function
such that
$$\limsup_{d(x)\to 0}\frac{u(x)}{(d(x))^{\frac{\xa_-}{2}}}\leq0.$$
then $u\leq0.$
\end{prop}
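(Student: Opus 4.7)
The strategy is to mimic the proof of Proposition \ref{max1} line by line, replacing the $\xk=\frac{1}{4}$ local super/subsolutions from Proposition \ref{maincomp1} by their $\xk<\frac{1}{4}$ counterparts appearing in the proof of Proposition \ref{maincomp}. Set $v=\max(u,0)$; by Kato's inequality $v$ is still a nonnegative local $\CL_{\xk}$-subharmonic function, and it inherits the boundary decay $\limsup_{d(x)\to 0}v(x)/d^{\xa_-/2}(x)\le 0$.

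Invoke the positive $C^2$ local supersolution $\xz\in C^2(\xO_\xd)$ of $\CL_{\xk}\xz\ge 0$ with the two-sided bound
$$\xz(x)\approx d^{\frac{\xa_-}{2}}(x)\bigl(1+d^\gb(x)\bigr),\qquad \gb\in\bigl(0,\sqrt{1-4\xk}\,\bigr),$$
and set $\tilde v=v/\xz$ on $\xO_\xd$. The identity $\CL_{\xk}v=-\xz^{-1}\mathrm{div}(\xz^2\nabla\tilde v)+(\CL_{\xk}\xz)\tilde v$ combined with $\CL_{\xk}v\le 0$, $\CL_{\xk}\xz\ge 0$ and $\tilde v\ge 0$ produces the degenerate weak inequality
$$-\xz^{-2}\mathrm{div}\bigl(\xz^2\nabla\tilde v\bigr)\le 0\quad\text{in }\xO_\xd.$$
For every $s>0$ the truncation $\tilde v_s=(\tilde v-s)_+$ has support compactly contained in $\xO_\xd$ by the boundary hypothesis, so it is admissible as test function. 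Running Step 1 of the proof of Proposition \ref{maincomp1} with $\eta\equiv 1$ (the cutoff being superfluous thanks to this compact containment) yields
$$\int_{\xO_\xd}\xz^2|\nabla\tilde v_s|^2\,dx\le 0,$$
so $\tilde v_s\equiv 0$ and $\tilde v\le s$ on $\xO_\xd$; sending $s\to 0^+$ gives $u\le v\le 0$ in $\xO_\xd$.

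To propagate the inequality to the whole of $\xO$, observe that $v$ now vanishes on $\xO_\xd$ and therefore has support compactly contained in $\xO$; combined with $u\in H^1_{\mathrm{loc}}(\xO)\cap C(\xO)$ this shows $v\in H^1_0(\xO)$. Testing the subharmonic inequality $\CL_{\xk}u\le 0$ against the nonnegative admissible function $v$ and using $\nabla u\cdot\nabla v=|\nabla v|^2$ and $uv=v^2$ a.e. produces
$$\int_\xO\Bigl(|\nabla v|^2-\frac{\xk}{d^2}v^2\Bigr)\,dx\le 0,$$
while the variational characterization of $\xl_{\xk}>0$ supplies $\int_\xO(|\nabla v|^2-\xk v^2/d^2)\,dx\ge \xl_{\xk}\int_\xO v^2\,dx\ge 0$. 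Hence $v\equiv 0$ on $\xO$, i.e., $u\le 0$. The only subtle point in the whole argument is the rigorous use of $\tilde v_s$ without cutoff in the weighted weak formulation of the local step, which is handled exactly as in Step 1 of Proposition \ref{maincomp1}: one first uses $\eta\tilde v_s$ for a genuine cutoff $\eta\in C_0^\infty(\xO_\xd)$, derives the standard Caccioppoli-type bound, and then lets $\eta\uparrow 1$ by dominated convergence, which is legitimate precisely because $\{\tilde v\ge s\}\Subset\xO_\xd$ for every $s>0$.
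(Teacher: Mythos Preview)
There is a genuine gap in your first step. You claim that for every $s>0$ the set $\{\tilde v\ge s\}$ is compactly contained in $\Omega_\delta$, but the boundary hypothesis $\limsup_{d(x)\to 0}v(x)/d^{\alpha_-/2}(x)\le 0$ only forces $\tilde v\to 0$ as $d(x)\to 0$; it says nothing about the behaviour of $\tilde v$ near the inner boundary $\Sigma_\delta=\{d=\delta\}$. Since $u$ (and hence $\tilde v$) may be large there, $(\tilde v-s)_+$ need not vanish near $\Sigma_\delta$, and integrating the weak inequality against it in $\Omega_\delta$ produces an uncontrolled boundary term on $\Sigma_\delta$. The conclusion $\tilde v\le s$ in $\Omega_\delta$ therefore does not follow, and your Step~2 never gets off the ground.

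The paper's argument (``proceed as in Step~1 of Proposition~\ref{maincomp1} with $\eta=1$'') is meant to be carried out on all of $\Omega$, not on the strip $\Omega_\delta$. For that one needs a \emph{global} positive supersolution $\zeta\in C^2(\Omega)$ with $\zeta\approx d^{\alpha_-/2}$ near $\partial\Omega$. Such a $\zeta$ is easy to manufacture: extend the local supersolution of \cite[Lemma~2.8]{1} smoothly to a positive $C^2$ function on $\Omega$ and add a large multiple $M\phi_\kappa$; since $\mathcal L_\kappa\phi_\kappa=\lambda_\kappa\phi_\kappa>0$ and $\phi_\kappa$ is bounded below on $\Omega'_{\delta/2}$, the sum is a supersolution everywhere for $M$ large, and the added term is $O(d^{\alpha_+/2})=o(d^{\alpha_-/2})$ so the boundary behaviour is preserved. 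With this global $\zeta$, the quotient $\tilde v=\max(u,0)/\zeta$ satisfies $-\mathrm{div}(\zeta^2\nabla\tilde v)\le 0$ in all of $\Omega$, and now $(\tilde v-s)_+$ genuinely has compact support in $\Omega$. Testing gives $\int_\Omega\zeta^2|\nabla(\tilde v-s)_+|^2\,dx\le 0$ (the right-hand side of the Caccioppoli estimate vanishes because $\nabla\eta\equiv 0$), hence $\tilde v\le s$, and letting $s\to 0$ finishes. Your second step via coercivity is then superfluous.

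If you prefer to keep the two-step structure, Step~1 must retain a cutoff $\eta$ vanishing near $\Sigma_\delta$; the Caccioppoli estimate then gives only an energy bound, from which one extracts $v\in H^1_0(\Omega)$ (this is the actual content of Step~1 in Proposition~\ref{maincomp1}), and your Step~2 finishes. Either way, the claim $\{\tilde v\ge s\}\Subset\Omega_\delta$ must be abandoned.
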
  

The two next statements shows that comparison holds provided comparable boundary data are achieved in way which takes into
account the specific form of the $\CL_{\xk }$-harmonic functions
\begin{prop}\label{comp}
Assume $\xk <\frac{1}{4}$ and $h_i\in H^1(\xO)$ (i=1,2).  Let $u_i\in H^1_{loc}(\xO)$ be two  $\CL_{\xk }$-harmonic functions
such that $d^{-\frac{\xa_+}{2}}\left(u_i-d^{\frac{\xa_-}{2}}h_i\right)\in H^1(\xO,\;d^{\xa_+}(x)dx).$ Then\smallskip

\noindent If $h_1\leq h_2$ a.e. in $\xO$, there holds
$$u_1(x)\leq u_2(x)\qquad \forall x\in\xO.$$
If $h_1-h_2\in H_0^1(\xO),$  there holds
$$u_1(x)=u_2(x)\qquad \forall x\in\xO.$$
\end{prop}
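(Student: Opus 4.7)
Set $w=u_1-u_2$ and $h=h_1-h_2$. By linearity, $w$ is $\CL_\xk$-harmonic and by hypothesis $\psi:=d^{-\frac{\xa_+}{2}}(w-d^{\frac{\xa_-}{2}}h)\in H^1(\xO,d^{\xa_+}(x)dx)$, so
$$w=d^{\frac{\xa_-}{2}}h+d^{\frac{\xa_+}{2}}\psi.$$
By Lemma \ref{space}(i) (applicable since $\xa_+>1$ when $\xk<\frac{1}{4}$) the summand $d^{\frac{\xa_+}{2}}\psi$ lies in $H_0^1(\xO)$. The plan is, in each part, to produce an admissible test function in $H^1_0(\xO)=\mathbf{W}_\xk(\xO)$ (see \eqref{Wnorm3}) on which the coercivity estimate
$$\int_\xO\Bigl(|\nabla\zeta|^2-\tfrac{\xk}{d^2}\zeta^2\Bigr)dx\geq \xl_\xk\int_\xO\zeta^2 \,dx\qquad\forall\zeta\in H^1_0(\xO),\ \xl_\xk>0,$$
may be invoked.

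For the equality case, assume $h\in H^1_0(\xO)$. I would first check that $d^{\frac{\xa_-}{2}}h\in H^1_0(\xO)$: this is clear for $h\in C^\infty_c(\xO)$ by compact support, and for general $h$ it follows by approximating by $h_n\in C^\infty_c(\xO)$ in $H^1_0$ and using the boundedness of $d^{\xa_-}$ (since $0<\xa_-<1$) combined with Hardy's inequality $\int_\xO(h_n-h)^2/d^2\,dx\leq C\|\nabla(h_n-h)\|_{L^2}^2$ to prove convergence of $d^{\frac{\xa_-}{2}}h_n$ in $H^1(\xO)$. Hence $w\in H^1_0(\xO)$, and testing $\CL_\xk w=0$ against $w$ itself yields $\int_\xO(|\nabla w|^2-\xk w^2/d^2)\,dx=0$, which forces $w\equiv 0$ by the coercivity estimate.

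For the inequality, I would proceed by smooth approximation followed by the pointwise maximum principle of Proposition \ref{max}. Writing $h_2=h_1+g$ with $g:=h_2-h_1\geq 0$, select $h_1^{(n)}\in C^2(\overline\xO)$ converging to $h_1$ in $H^1(\xO)$ and nonnegative $g^{(n)}\in C^2(\overline\xO)$ converging to $g$ in $H^1(\xO)$ (mollifying the nonnegative function $g$), and set $h_2^{(n)}=h_1^{(n)}+g^{(n)}\geq h_1^{(n)}$. Proposition \ref{reglemma} then delivers $\CL_\xk$-harmonic solutions $u_i^{(n)}$ with $u_i^{(n)}(x)/d^{\frac{\xa_-}{2}}(x)\to h_i^{(n)}(y)$ uniformly as $x\to y\in\prt\xO$, hence
$$\limsup_{d(x)\to 0}\frac{u_1^{(n)}(x)-u_2^{(n)}(x)}{d^{\xa_-/2}(x)}=\sup_{\prt\xO}(h_1^{(n)}-h_2^{(n)})\leq 0,$$
so Proposition \ref{max} gives $u_1^{(n)}\leq u_2^{(n)}$ in $\xO$. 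Passing to the limit I would use the continuity of the solution operator $h\mapsto u$ from $H^1(\xO)$ into $L^2(\xO)$: this follows from the variational formulation in Proposition \ref{reglemma}, whose energy estimate controls $\|\psi\|_{H^1(\xO,d^{\xa_+}dx)}$ by $\|h\|_{H^1(\xO)}$; combined with Lemma \ref{space}(i) and the boundedness of $d^{\xa_-/2}$, this yields an $L^2$-estimate of $u_i^{(n)}-u_i$ in terms of $\|h_i^{(n)}-h_i\|_{H^1}$. Therefore $u_1\leq u_2$ almost everywhere, and the inequality is upgraded to pointwise by the $C^2$ regularity of $\CL_\xk$-harmonic functions.

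The main technical hurdle is the combined smooth and sign-preserving approximation of the $H^1$ boundary data in the first claim, together with the $L^2$-continuous dependence of the Dirichlet solution operator on the $H^1$ datum; both are essentially density and energy-estimate arguments, but they form the bulk of the verification.
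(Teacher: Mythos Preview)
Your argument is correct. For the equality part, showing $w=u_1-u_2\in H_0^1(\xO)$ via Lemma~\ref{space}(i) together with the Hardy-inequality argument for $d^{\xa_-/2}h$, and then testing $\CL_{\xk}w=0$ against $w$ itself, is a clean and valid route. For the inequality part, the approximation scheme works: sign-preserving $H^1$-approximation of $g=h_2-h_1\geq 0$ is available (extend to a nonnegative $H^1(\BBR^N)$ function via $(Eg)_+$ and mollify), the $L^2$-continuity of the solution map follows from the energy estimate underlying Proposition~\ref{reglemma}, and uniqueness (your equality case, proved first) identifies the limit with the given $u_i$.

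The paper takes a different and more uniform route across the two cases. It sets $w=\ei^{-1}(u_1-u_2)$, which transforms the equation into the manifestly coercive form
$$-\mathrm{div}(\ei^2\nabla w)+\xl_{\xk}\ei^2 w=0,$$
and then invokes the key identity $H^1(\xO,\ei^2\,dx)=H_0^1(\xO,\ei^2\,dx)$ from~\eqref{test} (valid since $\xa_+\geq 1$). That identity makes membership in the weighted $H^1$ space automatically sufficient for use as a test function, so testing against $w_+$ (respectively $w$) yields $w_+=0$ (respectively $w=0$) at once, with no approximation. Your approach trades this weighted-space machinery for the pointwise principle of Proposition~\ref{max} plus a stability argument; the paper's is shorter but, as written, is terse about why $w$---or rather $w_+$, which is what is really needed in the inequality case when $h_1-h_2$ has nonzero trace---actually lies in $H^1(\xO,\ei^2\,dx)$.
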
  
\begin{proof}
Set $w=\gf_{\xk }^{-1}(u_1-u_2)$, then $w\in H^1(\xO,\ei^2dx)$ and
$$-\mathrm{div}(\xf^2_{\xk }\nabla w)+\xl_{\xk }\ei^{2} w= 0 $$
Since $H^1(\xO,\ei^2dx)=H_0^1(\xO,\ei^2dx)$ by (\ref{test}) we derive that $w$  and $w$ belongs to $H_0^1(\xO,\ei^2dx)$ and, integrating by part, we derive $w_+=0$.
The proof of the second statement is similar.
\end{proof}
In the same way we have in the case $\xk =\frac{1}{4}.$

\begin{prop}\label{comp1}
Assume $\xk =\frac{1}{4}.$ Let $h_i\in H^1(\xO)$ (i=1,2) and let $u_i\in H^1_{loc}(\xO)$ be  two $\CL_{\frac{1}{4}}$-harmonic functions  such that $d^{-\frac{1}{2}}(u_i-d^{\frac{1}{2}}|\log d|h_i)\in H^1(\xO,\;d(x)dx).$\smallskip

\noindent (i) If $h_1\leq h_2$ a.e. in $\xO$, then
$$u_1(x)\leq u_2(x)\qquad \forall x\in\xO.$$\smallskip

\noindent (ii) If $h_1-h_2\in H_0^1(\xO)$, then
$$u_1(x)=u_2(x)\qquad \forall x\in\xO.$$
\end{prop}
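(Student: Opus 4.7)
The plan is to mirror the Doob-transform argument used in Proposition \ref{comp}, now for the critical weight $\gf_{\frac14}^2\approx d(x)$, which still lies in the range $\xa\geq 1$ covered by the weighted Sobolev identity (\ref{test}). First I would set $w=\gf_{\frac14}^{-1}(u_1-u_2)$. Since $u_i\in H^1_{loc}(\xO)$ and $\gf_{\frac14}$ is smooth and strictly positive in $\xO$, $w\in H^1_{loc}(\xO)$. Conjugating the relation $\CL_{\frac14}(u_1-u_2)=0$ by $\gf_{\frac14}$, exactly as in the proof of Proposition \ref{exi}, yields
\[
-\,\mathrm{div}(\gf_{\frac14}^2\nabla w) + \xl_{\frac14}\,\gf_{\frac14}^2\, w = 0
\]
weakly in $\xO$.

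Next I would establish the global integrability $w\in H^1(\xO, d(x)dx)$ in case (ii), respectively $w_+\in H^1(\xO,d(x)dx)$ in case (i). I write $u_i = v_i + \eta h_i$ with $\eta$ equal to $d^{\frac12}|\log d|$ on $\xO_\xb$ and smoothly extended, as in Proposition \ref{reglemma1}. The hypothesis forces $\gf_{\frac14}^{-1}v_i\in H^1(\xO, d\,dx)$ and
\[
w = \gf_{\frac14}^{-1}(v_1-v_2) + \gf_{\frac14}^{-1}\eta(h_1-h_2),
\]
in which the first summand automatically lies in $H^1(\xO,d\,dx)$. For the second summand, $\gf_{\frac14}^{-1}\eta\approx|\log d|$ near $\partial\xO$; using $|\log d|^p\in L^p(\xO,d\,dx)$ for every $p\geq 1$ (exploited already in Proposition \ref{reglemma1}) together with Hardy's inequality to absorb $(h_1-h_2)/d$, one shows that $|\log d|(h_1-h_2)\in H^1(\xO,d\,dx)$ whenever $h_1-h_2\in H_0^1(\xO)$. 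Under the hypothesis of (i) this second summand is nonpositive near $\partial\xO$, so it does not contribute to $w_+$; a truncation argument analogous to Step 1 of Proposition \ref{maincomp1} (testing with $\eta^2(w-s)_+$ and sending $s\to 0$) then gives $w_+\in H^1(\xO,d\,dx)$.

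Finally, invoking the identity $H^1(\xO,d\,dx)=H_0^1(\xO,d\,dx)$ from (\ref{test}) with $\xa=1$, the function $w_+$ (respectively $w$ in case (ii)) is admissible as a test function in the weak equation, and integration by parts gives
\[
\int_\xO|\nabla w_+|^2\gf_{\frac14}^2\,dx + \xl_{\frac14}\int_\xO w_+^2\gf_{\frac14}^2\,dx = 0.
\]
Since $\xl_{\frac14}>0$, we conclude $w_+\equiv 0$, proving (i); (ii) follows from the same identity applied to $w$ itself.

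The main obstacle is the middle step: rigorously controlling the logarithmic factor $\gf_{\frac14}^{-1}\eta\approx|\log d|$ in the weighted $H^1$-norm. Its gradient produces a contribution $\nabla\log d\cdot(h_1-h_2)$ whose $L^2(d\,dx)$ bound amounts to estimating $\int(h_1-h_2)^2 d^{-1}\,dx$, precisely at the threshold where Hardy's inequality for $H_0^1(\xO)$ becomes indispensable; this is the point at which the hypothesis $h_1-h_2\in H_0^1(\xO)$ in (ii), or the truncation trick in (i), is actually used.
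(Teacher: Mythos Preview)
Your strategy coincides with the paper's: Proposition~\ref{comp1} carries no separate proof there (only the sentence ``In the same way we have in the case $\xk=\frac14$''), and the proof of Proposition~\ref{comp} to which it defers is precisely the Doob transform $w=\ei^{-1}(u_1-u_2)$, the conjugated divergence-form equation, the identity $H^1=H^1_0$ from~(\ref{test}), and testing with $w_+$. Your argument for case~(ii) is correct and in fact more carefully justified than the paper's bare assertion ``then $w\in H^1(\xO,\ei^2dx)$''.

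In case~(i) your justification is slightly loose. The bound $B:=\gf_{\frac14}^{-1}\eta(h_1-h_2)\le 0$ gives only $w_+\le A_+$ pointwise; it does not by itself place $w_+$ in $H^1(\xO,d\,dx)$, since $\nabla w_+=\chi_{\{w>0\}}(\nabla A+\nabla B)$ still carries $\nabla B$, whose dangerous piece $(h_1-h_2)\,d^{-1}$ is not controlled under the sole hypothesis $h_i\in H^1(\xO)$. Likewise, the truncation $(w-s)_+$ is admissible in Step~1 of Proposition~\ref{maincomp1} only because the ratio there vanishes \emph{pointwise} at the boundary---a hypothesis you do not have here. The gap can be closed (for instance via $|B|<A$ on $\{w>0\}$ combined with a weighted Hardy inequality in $H^1_0(\xO,d\,dx)$), but it is worth noting that the paper's own proof of Proposition~\ref{comp} passes over exactly the same point.
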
  

We end with existence and uniqueness results for solving the Dirichlet problem associated to $\CL_{\xk }$.

\begin{prop} \label{mainlemma1}Assume $\xk =\frac{1}{4}.$
For any $h\in C(\partial\xO)$
there exists a unique $\CL_{\frac{1}{4}}$-harmonic function $u$ belonging to $H^1_{loc}(\xO)$  satisfying
$$\lim_{x\in\xO,\;x\rightarrow y\in\partial\xO}\frac{u(x)}{d^{\frac{1}{2}}(x)|\log d(x)|}=h(y)\qquad\text{uniformly for } y\in\partial\xO.$$
Furthermore  there exists a constant $c_{16}=c_{16}(\xO)>0>0$
$$\left|\!\left|\frac{u}{d^{\frac{1}{2}}|\log \frac{d}{D_0}|}\right|\!\right|_{L^\infty(\xO)}\leq c_{24}||h||_{C(\partial\xO)},$$
where $D_0=2\sup_{x\in\xO}d(x).$
\end{prop}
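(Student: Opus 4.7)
The natural strategy is to solve the problem first for a dense class of smooth boundary data, derive a uniform-in-$h$ estimate in the weighted sup-norm $\|\cdot/(d^{1/2}|\log(d/D_0)|)\|_{L^\infty}$, and then pass to the limit. Uniqueness will follow directly from the maximum principle already established in Proposition \ref{max1}.

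\smallskip
\emph{Approximation.} I would first extend $h\in C(\prt\Gw)$ to $\tilde h\in C(\overline\Gw)$ and then mollify to obtain a sequence $h_n\in C^2(\overline\Gw)$ with $h_n\to h$ uniformly on $\prt\Gw$. For each $n$, Proposition \ref{reglemma1} (applied with $f_0\equiv 0$ and boundary datum $h_n$) furnishes a unique $\CL_{\frac14}$-harmonic function $u_n\in H^1_{loc}(\Gw)$ with
$d^{-\frac12}(u_n-d^{\frac12}|\log d|h_n)\in H^1(\Gw,d(x)dx)$
and
$$\lim_{x\to y}\frac{u_n(x)}{d^{\frac12}(x)|\log d(x)|}=h_n(y)\qquad\text{uniformly in }y\in\prt\Gw.$$

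\smallskip
\emph{Uniform estimate via a supersolution.} The crucial step, and the main technical point, is to upgrade the $C^2$-estimate of Proposition \ref{reglemma1} to an estimate depending only on $\|h_n\|_{C(\prt\Gw)}$. For this I would use the positive supersolution $\xz\in C^2(\Gw_\gd)$ of $\CL_{\frac14}\xz\geq 0$ constructed in \cite[Lemma 2.8]{1}, satisfying $\xz(x)\approx d^{\frac12}|\log(1/d)|(1+(\log(1/d))^{-\xb})$ near $\prt\Gw$. Fix $M_{n,m}:=\|h_n-h_m\|_{C(\prt\Gw)}$ and a large constant $A>0$ chosen so that $A\xz/(d^{\frac12}|\log d|)\geq 1$ uniformly near $\prt\Gw$. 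The function $w_{n,m}:=u_n-u_m-A M_{n,m}\xz$ is $\CL_{\frac14}$-subharmonic (extended if needed by a harmless cutoff outside $\Gw_\gd$) and satisfies
$$\limsup_{d(x)\to 0}\frac{w_{n,m}(x)}{d^{\frac12}(x)|\log d(x)|}\leq \|h_n-h_m\|_\infty - A M_{n,m}\leq 0.$$
Proposition \ref{max1} therefore yields $w_{n,m}\leq 0$, and the symmetric argument gives
\be\label{keyest}
|u_n-u_m|(x)\leq c\,\|h_n-h_m\|_{C(\prt\Gw)}\,\sqrt{d(x)}\,\Bigl|\log\myfrac{d(x)}{D_0}\Bigr|\qquad\forall x\in\Gw.
\ee
Taking $h_m\equiv 0$ (so $u_m\equiv 0$) gives the required a priori bound $\|u_n/(\sqrt d|\log(d/D_0)|)\|_{L^\infty}\leq c\|h_n\|_{C(\prt\Gw)}$.

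\smallskip
\emph{Passing to the limit.} From (\ref{keyest}), $\{u_n\}$ is a Cauchy sequence in the weighted sup-norm, hence in $L^\infty_{loc}(\Gw)$; standard interior elliptic regularity (the potential $-\frac{1}{4d^2}$ is locally bounded away from $\prt\Gw$) upgrades this to $C^{2}_{loc}$-convergence to a function $u\in H^1_{loc}(\Gw)\cap C^2(\Gw)$ which is $\CL_{\frac14}$-harmonic and satisfies the asserted sup-norm estimate with $c_{24}=c$. To verify the uniform boundary limit, write
$$\frac{u(x)}{d^{\frac12}|\log d|}-h(y)=\frac{u(x)-u_n(x)}{d^{\frac12}|\log d|}+\frac{u_n(x)}{d^{\frac12}|\log d|}-h_n(y)+(h_n(y)-h(y));$$
(\ref{keyest}) (with $m\to\infty$) controls the first term by $c\|h_n-h\|_\infty$ uniformly in $x$, the third by $\|h_n-h\|_\infty$, and for fixed $n$ the middle term tends to $0$ as $x\to y$ uniformly. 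A standard $\ge/3$ argument produces the desired uniform limit.

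\smallskip
\emph{Uniqueness.} If $u_1,u_2$ are two solutions, $u_1-u_2$ is $\CL_{\frac14}$-harmonic and its weighted boundary trace is $0$ uniformly; Proposition \ref{max1} applied to $\pm(u_1-u_2)$ gives $u_1=u_2$. The main obstacle in this plan is really the construction (and correct handling away from $\prt\Gw$) of the supersolution $\xz$ needed to absorb the $C^2$-dependence in Proposition \ref{reglemma1}; everything else is routine once (\ref{keyest}) is in hand.
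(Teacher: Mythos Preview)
Your overall strategy is sound and the proof is essentially correct. It differs from the paper's, however: the paper does not introduce a barrier but applies the $L^\infty$ estimate of Proposition~\ref{reglemma1} directly to the difference $u_m-u_n$, invoking an extension $H_m\in C^2(\overline\Gw)$ of $h_m$ with $\|H_m\|_{C^2(\overline\Gw)}\leq c\|h_m\|_{L^\infty(\prt\Gw)}$ to turn the $C^2$-bound into a $C(\prt\Gw)$-bound. Your maximum-principle route via Proposition~\ref{max1} is more transparent and avoids that extension claim entirely; the paper's route is shorter on the page once the estimate of Proposition~\ref{reglemma1} is granted.

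One point in your argument needs tightening. The supersolution $\xz$ from \cite[Lemma~2.8]{1} lives only in a collar $\Gw_\gd$, and multiplying by a cutoff will in general destroy the supersolution property where the cutoff varies, so ``extended by a harmless cutoff'' is not quite enough to invoke Proposition~\ref{max1} on all of $\Gw$. The cleanest repair is to replace $\xz$ by the global $\CL_{\frac14}$-harmonic function $v_1$ obtained from Proposition~\ref{reglemma1} with the constant datum $h\equiv 1$ (trivially in $C^2(\overline\Gw)$): $v_1$ is defined and positive on all of $\Gw$ by Proposition~\ref{max1}, and $v_1/(\sqrt d\,|\log d|)\to 1$ uniformly at $\prt\Gw$. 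Then $w_{n,m}=u_n-u_m-M_{n,m}v_1$ is globally $\CL_{\frac14}$-harmonic (not merely subharmonic), its weighted boundary limit is $(h_n-h_m)-M_{n,m}\leq 0$, and Proposition~\ref{max1} gives (\ref{keyest}) with $c=\|v_1/(\sqrt d\,|\log(d/D_0)|)\|_{L^\infty}$. The rest of your argument (Cauchy property, local $C^2$ convergence, the $\ge/3$ boundary verification, and uniqueness) goes through unchanged.
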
  
\begin{proof}
\noindent Uniqueness is a consequence of Proposition \ref{max1}.
For existence let $m\in \mathbb{N}$ and $h_n$ be smooth functions such that $h_m\rightarrow h$ in $L^\infty(\partial\xO).$ Then we can find a function $H_m\in C^2(\overline{\xO})$
with trace $h_m$ on $\partial\xO,$ and $||H_m||_{C^2(\overline{\xO})}\leq c||h_m||_{L^\infty(\partial\xO)},$ where $c$ depends on $\xO.$ By Lemma \ref{reglemma1} there exists a unique weak solution $u_m$ of $\CL_{\frac{1}{4}}u=0$ satisfying
$$\lim_{x\in\xO,\;x\rightarrow y\in\partial\xO}\frac{u_m}{d^{\frac{1}{2}}|\log d|}(x)=h_m(y)\qquad\text{uniformly for } y\in\partial\xO.$$
By Proposition \ref{reglemma1} we have
$$\left|\!\left|\frac{u_m-u_n}{d^{\frac{1}{2}}|\log \frac{d}{D_0}|}\right|\!\right|_{L^\infty(\xO)}\leq c_{16}||h_m-h_n||_{C(\partial\xO)}.$$
Thus  there exists  $u$ such that $$\lim_{m\rightarrow\infty}\left|\!\left|\frac{u_m-u}{d^{\frac{1}{2}}|\log \frac{d}{D_0}|}\right|\!\right|_{L^\infty(\xO)}=0$$ and $u$ is a solution of $L_\frac{1}{4}u=0.$

Let $x\in \xO,$ with $d(x)<\frac{1}{2}$ and $y\in\partial\xO$
\begin{align}\nonumber
\left|\frac{u}{d^{\frac{1}{2}}|\log d|}(x)-h(y)\right|\leq\left|\frac{u}{d^{\frac{1}{2}}|\log d|}(x)-\frac{u_m}{d^{\frac{1}{2}}|\log d|}(x)\right|&+\left|\frac{u_m}{d^{\frac{1}{2}}|\log d|}(x)-h_m(y)\right|\\ \nonumber
&+|h(y)-h_m(y)|.
\end{align}
The result follows by letting successively $x\to y$ and $m\to\infty$.
\end{proof}

Similarly we have
\begin{prop}\label{mainlemma2}
Assume $\xk <\frac{1}{4}$. Then for any $h\in C(\partial\xO)$
there exists a unique $\CL_{\xk }$-harmonic function $u\in H^1_{loc}(\xO)$  satisfying
$$\lim_{x\in\xO,\;x\rightarrow y\in\partial\xO}\frac{u}{d^{\frac{\xa_-}{2}}}(x)=h(y)\qquad\text{uniformly for } y\in\partial\xO.$$
Furthermore there exists a constant $c_9=c_9(\xO,\xa)>0$ such that
$$\left|\!\left|\frac{u}{d^{\xa_-}}\right|\!\right|_{L^\infty(\xO)}\leq c_9||h||_{C(\partial\xO)}.$$
\end{prop}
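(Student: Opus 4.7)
\medskip
\noindent\textbf{Plan of proof.} The argument runs parallel to that of Proposition~\ref{mainlemma1}, with the weight $d^{\frac{1}{2}}|\log d|$ replaced throughout by $d^{\frac{\xa_-}{2}}$ and with the $\xk<\frac{1}{4}$ analogues of the previous results substituted for their $\xk=\frac{1}{4}$ counterparts. Uniqueness is immediate from Proposition~\ref{max}: if $u_1,u_2$ are two solutions, then $w:=u_1-u_2$ is $\CL_\xk$-harmonic and satisfies $\lim_{d(x)\to 0}w(x)/d^{\frac{\xa_-}{2}}(x)=0$ uniformly on $\partial\xO$, so applying the maximum principle to $\pm w$ yields $w\equiv 0$.

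For existence, I approximate $h\in C(\partial\xO)$ uniformly by smooth functions $h_m$ admitting $C^2(\overline\xO)$-extensions $H_m$ (via mollification in local boundary charts, possible since $\partial\xO$ is smooth). Proposition~\ref{reglemma}, applied with $f_0\equiv 0$ and boundary datum $H_m$, produces a weak solution $u_m\in H^1_{loc}(\xO)$ of $\CL_\xk u_m=0$ with $d^{-\frac{\xa_+}{2}}(u_m-d^{\frac{\xa_-}{2}}H_m)\in H^1(\xO,d^{\xa_+}dx)$, satisfying $u_m/d^{\frac{\xa_-}{2}}\to h_m$ uniformly on $\partial\xO$ together with H\"older regularity of $u_m/d^{\frac{\xa_-}{2}}$ up to the boundary. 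The crucial step is to replace the quantitative bound from Proposition~\ref{reglemma}, which involves $\|H_m\|_{C^2(\overline\xO)}$, by one depending only on $\|h_m\|_{C(\partial\xO)}$, so that passing $h_m\to h$ becomes feasible.

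This is achieved by a barrier construction. Applying Proposition~\ref{reglemma} once to the constant extension $H\equiv 1$ produces an $\CL_\xk$-harmonic function $v_0$ with $v_0/d^{\frac{\xa_-}{2}}\to 1$ uniformly on $\partial\xO$; Proposition~\ref{comp} yields $v_0\ge 0$ in $\xO$ by comparison with the zero solution, and the same proposition gives a constant $c_0=c_0(\xO,\xk)$ with $v_0\le c_0\,d^{\frac{\xa_-}{2}}$ in $\xO$. For any $H\in C^2(\overline\xO)$ with solution $u$ and $M:=\|H\|_{L^\infty(\partial\xO)}$, the function $u-Mv_0$ is $\CL_\xk$-harmonic with $\limsup_{d(x)\to 0}(u-Mv_0)/d^{\frac{\xa_-}{2}}\le 0$, so Proposition~\ref{max} gives $u\le Mv_0$, and symmetrically $-u\le Mv_0$, whence
\[
\bigl\|u/d^{\frac{\xa_-}{2}}\bigr\|_{L^\infty(\xO)}\le c_0\,\|H\|_{L^\infty(\partial\xO)}.
\]
Applied to $u_m-u_n$ (with boundary datum $h_m-h_n$), this shows that $(u_m/d^{\frac{\xa_-}{2}})_m$ is Cauchy in $L^\infty(\xO)$; the uniform limit $v$ of continuous functions on $\overline\xO$ is itself continuous with boundary trace $h$. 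Setting $u:=d^{\frac{\xa_-}{2}}v$, the convergence $u_m\to u$ locally uniformly in $\xO$ combined with standard interior elliptic regularity (the potential $\xk/d^2$ is smooth and bounded on compact subsets of $\xO$) permits passage to the limit in the weak formulation, giving $\CL_\xk u=0$. The announced $L^\infty$ estimate then follows directly from the barrier bound above. The principal technical obstacle is this barrier construction: it relies crucially on the sharp uniform asymptotics $v_0/d^{\frac{\xa_-}{2}}\to 1$ delivered by Proposition~\ref{reglemma}, without which Proposition~\ref{max} could not convert pointwise boundary control into a global $L^\infty$ bound.
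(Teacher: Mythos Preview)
Your proof is correct and follows essentially the same strategy as the paper's proof of the companion Proposition~\ref{mainlemma1}: uniqueness from the maximum principle, existence by smooth approximation of $h$, invoking Proposition~\ref{reglemma} for the approximants, and passing to the limit via a Cauchy estimate. Your barrier construction via $v_0$ and Proposition~\ref{max} is in fact a more careful justification of the Cauchy step than the paper's direct appeal to $\|H_m\|_{C^2(\overline\xO)}\leq c\|h_m\|_{L^\infty(\partial\xO)}$; one minor point is that the bound $v_0\le c_0\,d^{\frac{\xa_-}{2}}$ comes from the $L^\infty$ estimate in Proposition~\ref{reglemma} (applied with $H\equiv 1$, $f_0=0$), not from Proposition~\ref{comp}.
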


A useful consequence of \cite[Lemma 2.8]{1} and Propositions \ref{reglemma} and  \ref{reglemma1} is the following local existence result.
\begin{prop}\label{exist} There exists  a positive $\CL_{\xk }$-harmonic  function $Z_{\xk }\in C(\overline{\Gw_{\gb_0}})\cap C^2(\Gw_{\gb_0})$ satisfying
\be\label{zero2}
\lim_{d(x)\to 0} \frac{Z_{\frac{1}{4}}(x)}{\sqrt{d(x)}|\ln d(x)|}=0\ee
if $\xk =\frac{1}{4}$, and
\be\label{zero3}
\lim_{d(x)\to 0} \frac{Z_{\xk }(x)}{(d(x))^{\frac{\ga_-}{2}}}=0
\ee
if $0<\xk <\frac{1}{4}$.
\end{prop}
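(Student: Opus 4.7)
The plan is to realize $Z_\xk$ as the restriction to $\overline{\Gw_{\gb_0}}$ of a globally defined solution $u$ of $\CL_\xk u=f_0$ on $\Gw$, where $f_0\geq 0$ is a source supported strictly away from $\prt\Gw$ and where Propositions \ref{reglemma}/\ref{reglemma1} are invoked with boundary datum $h\equiv 0$. The point of this choice is that $h=0$ forces the solution produced by those propositions to vanish at $\prt\Gw$ at the \emph{small} rate, while a nontrivial $f_0$ supplies the positivity needed to keep $u>0$ in the strip.

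More concretely, I would fix $f_0\in C_c^\infty(\Gw)$ with $f_0\geq 0$, $f_0\not\equiv 0$, and $\supp f_0\subset \Gw\setminus\overline{\Gw_{\gb_0}}$. Since $f_0\in L^q(\Gw)$ for every $q$ and $h\equiv 0\in C^2(\overline\Gw)$, Proposition \ref{reglemma} (for $\xk<\tfrac14$) or Proposition \ref{reglemma1} (for $\xk=\tfrac14$) produces a unique $u\in H^1_{loc}(\Gw)$ solving $\CL_\xk u=f_0$, for which
\[
\lim_{x\to y}\myfrac{u(x)}{d^{\ga_-/2}(x)}=0\ \ \bigl(\xk<\tfrac14\bigr),\qquad \lim_{x\to y}\myfrac{u(x)}{\sqrt{d(x)}|\log d(x)|}=0\ \ \bigl(\xk=\tfrac14\bigr),
\]
uniformly in $y\in\prt\Gw$, and the H\"older bounds (\ref{regularity}) or (\ref{regularity2}) extend $u$ continuously by $0$ up to $\prt\Gw$. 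To obtain $u\geq 0$, I would apply the small-class maximum principle (Proposition \ref{max}, resp.\ \ref{max1}) to $-u$: it is $\CL_\xk$-subharmonic because $\CL_\xk(-u)=-f_0\leq 0$, and it satisfies the required small-limsup condition at $\prt\Gw$, hence $-u\leq 0$ in $\Gw$. Since $u\geq 0$ and $\CL_\xk u=f_0\geq 0$ imply $-\xD u\geq \xk d^{-2}u\geq 0$, the function $u$ is classically superharmonic, and the strong maximum principle (applied locally, where $\xk/d^2$ is bounded) upgrades $u\not\equiv 0$ to $u>0$ throughout $\Gw$.

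Because $f_0$ vanishes on $\Gw_{\gb_0}$, $u$ is $\CL_\xk$-harmonic there; standard interior elliptic regularity gives $u\in C^2(\Gw_{\gb_0})$, while the uniform convergence $u(x)/d^{\ga_-/2}(x)\to 0$ (or the analogous quotient when $\xk=\tfrac14$) together with the vanishing of $d^{\ga_-/2}$ (resp.\ $\sqrt{d}\,|\log d|$) at $\prt\Gw$ forces $u\to 0$ uniformly at $\prt\Gw$, so $u|_{\overline{\Gw_{\gb_0}}}\in C(\overline{\Gw_{\gb_0}})$. Taking $Z_\xk:=u|_{\overline{\Gw_{\gb_0}}}$ thus yields the required positive $\CL_\xk$-harmonic function, and (\ref{zero2})--(\ref{zero3}) are immediate from the limits above. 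The main delicate step is preserving positivity while simultaneously retaining the sharp small-order vanishing: this rests on the small-class maximum principles of Propositions \ref{max}--\ref{max1}, whose proofs in turn rely on the sub/supersolutions supplied by \cite[Lemma 2.8]{1}, which is precisely why the statement credits both \cite[Lemma 2.8]{1} and Propositions \ref{reglemma}, \ref{reglemma1} as the inputs to this local existence result.
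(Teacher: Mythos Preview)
Your proposal is correct and is a faithful, fully detailed realization of the one-line sketch the paper gives. The paper merely asserts that the result follows from \cite[Lemma 2.8]{1} together with Propositions \ref{reglemma} and \ref{reglemma1}; your construction---solving $\CL_\xk u=f_0$ with $h\equiv 0$ and $f_0\in C_c^\infty(\Gw'_{\gb_0})$, $f_0\geq 0$, $f_0\not\equiv 0$, then invoking Propositions \ref{max}/\ref{max1} for the sign and restricting to the strip---is exactly the natural way to make that sketch precise, and your justification of each step (small-rate vanishing from the $h=0$ case of \ref{reglemma}/\ref{reglemma1}, positivity via the small-class maximum principle, strict positivity via superharmonicity and the classical strong maximum principle, harmonicity on $\Gw_{\gb_0}$ because $f_0$ vanishes there) is sound.
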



\subsection{$\CL_{\xk }$-harmonic measure}

Let $x_0\in\xO,$ $h\in C(\partial\xO)$ and denote $L_{\xk ,x}(h):=v_h(x_0)$ where $v_h$ is the solution of the Dirichlet problem (see Propositions \ref{mainlemma1} and \ref{mainlemma2})
\begin{align}\nonumber
\CL_{\xk }v&=0\qquad\mathrm{in}\;\;\xO\\ \label{linear}
v&=h\qquad\mathrm{in}\;\;\partial\xO
\end{align}
where $v$ take the boundary data in the sense of Lemmas \ref{mainlemma1} and \ref{mainlemma2}. By Lemma's \ref{max} and \ref{max1}, the mapping $h\mapsto L_{\xk ,x_0}(h)$ is a linear positive functional on $C(\partial\xO).$ Thus there exists a unique Borel measure on $\partial\xO,$ called {\it $\CL_{\xk }$-harmonic measure} in $\xO,$ denoted by $\xo^{x_0}$, such that
$$v_{h}(x_0)=\int_{\partial\xO}h(y) d\xo^{x_0}(y).$$
Because of Harnack inequality the measures $\xo^x$ and $\xo^{x_0},$ $x_0,\;x\in \xO$ are mutually absolutely continuous. For every fixed $x$ we denote the Radon-Nikodyn derivative by
$$K(x,y):=\frac{dw^x}{dw^{x_0}}(y)\qquad\mathrm{for}\;\xo^{x_0}\text{- almost all }y\in\partial\xO.$$

It is wellknown that the following formula is an equivalent definition of the $\CL_{\xk }$-harmonic measure:
for any closed set $E\subset\partial\xO$
$$\xo^{x_0}(E)=\inf\left\{\psi:\;\psi\in C_+(\xO)\,,\;\CL_{\xk }\text{-superhamornic in } \xO\;\text{ s.t. }\liminf_{x\rightarrow E}\frac{\psi(x)}{W(x)}\geq1\right\},$$
where
$$W(x)=\Bigg\{\begin{array}{lll}&d^{\frac{\xa_-}{2}}(x)\qquad&\text{if}\;\xk <\frac{1}{4},\;\\\\
&d^{\frac{1}{2}}(x)|\log d(x)|\qquad&\text{if}\;\xk =\frac{1}{4}.
\end{array}
$$
The extension to open sets is standard.
Let $\xi\in\partial\xO.$ We set $\xD_r(\xi)=\partial\xO\cap B_r(\xi)$ and
$x_r=x_r(\xi)\in\xO,\;$ such that $d(x_r)=|x_r-\xi|=r.$ Also $x_{r}(\xi)=\xi-r{\bf n}_{\xi}$ where $\bf n_{\xi}$ is the unit outward normal vector to $\prt\Gw$ at $\xi$. We recall that $\xb_0=\xb_0(\xO)>0$ has been defined in Lemma \ref{space}.
\begin{lemma}\label{lem2.1}
There exists a constant $c_{25}>0$ which depends only on $\xO$ and $a$ such that if $0<r\leq \xb_0$ and $\xi\in\prt\Gw$, there holds
\be\label{00}\frac{\xo^x(\xD_r(\xi))}{W(x)}\geq c_{25}\qquad\forall x\in\xO\cap B_{\frac{r}{2}}(\xi).\ee
\end{lemma}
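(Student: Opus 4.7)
Fix $\xi\in\partial\Omega$ and $r\in(0,\beta_0]$. The plan is to replace the harmonic measure $\omega^x(\Delta_r(\xi))$ by the value of a concrete Dirichlet solution, and then obtain the pointwise lower bound for that solution by a scaling and compactness argument.

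First, I would pick $h\in C(\partial\Omega)$ with $0\le h\le 1$, $h\equiv 1$ on $\Delta_{r/2}(\xi)$ and $\mathrm{supp}\,h\subset\Delta_r(\xi)$, and let $v_h$ be the $\mathcal L_\kappa$-harmonic function supplied by Propositions \ref{mainlemma1} or \ref{mainlemma2} with $v_h/W\to h$ uniformly on $\partial\Omega$. Since $v_h(x)=\int_{\partial\Omega}h\,d\omega^x$ and $h\le\mathbf 1_{\Delta_r(\xi)}$, one has $v_h(x)\le\omega^x(\Delta_r(\xi))$. The lemma therefore reduces to establishing $v_h(x)\ge c_{25}\,W(x)$ for every $x\in\Omega\cap B_{r/2}(\xi)$ with $c_{25}$ independent of $r$ and $\xi$.

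The core step is to use the scale invariance of $\mathcal L_\kappa$. Set $y:=r^{-1}(x-\xi)$, $\tilde\Omega_r:=r^{-1}(\Omega-\xi)$, and, when $\kappa<\tfrac14$, $\tilde v(y):=r^{-\alpha_-/2}v_h(\xi+ry)$, with the obvious logarithmic adjustment when $\kappa=\tfrac14$. Because the potential $\kappa/d^2$ has the same $-2$-homogeneity as $\Delta$, $\tilde v$ is $\mathcal L_\kappa$-harmonic in $\tilde\Omega_r$ and $\tilde v/\tilde W\to\tilde h$ uniformly on $\partial\tilde\Omega_r$, where $\tilde h\equiv 1$ on $\tilde\Delta_{1/2}(0)$ and vanishes outside $\tilde\Delta_1(0)$; the target inequality becomes $\tilde v\ge c_{25}\tilde W$ on $\tilde\Omega_r\cap B_{1/2}(0)$. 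Because $\partial\Omega$ is $C^2$, as $r\to 0$ the rescaled pieces $\tilde\Omega_r\cap B_2(0)$ converge in $C^2$ to $H_\xi\cap B_2(0)$, where $H_\xi=\{y:y\cdot\mathbf n_\xi<0\}$, and $\tilde d$ converges to the half-space distance. The uniform $L^\infty$ bound on $\tilde v/\tilde W$ coming from Propositions \ref{mainlemma1}, \ref{mainlemma2}, together with interior H\"older estimates for the weighted equation satisfied by $\tilde v/\phi_\kappa$, yields a locally uniformly convergent subsequence $\tilde v\to\tilde v^\infty$ solving the corresponding Dirichlet problem in $H_\xi$. In the half-space the problem is explicit: $y_N^{\alpha_-/2}$ is $\mathcal L_\kappa$-harmonic since $(\alpha_-/2)(\alpha_-/2-1)+\kappa=0$, so writing $\tilde v^\infty=y_N^{\alpha_-/2}f$ reduces the equation to $\mathrm{div}(y_N^{\alpha_-}\nabla f)=0$ with $f\to\tilde h^\infty$ on $\{y_N=0\}$. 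Such $f$ is strictly positive and satisfies $f\ge 2c_{25}$ on $B_{1/2}(0)\cap H_\xi$ by the strong maximum principle and the positivity of the corresponding Poisson-type kernel. Unwinding the scaling gives the claim for small $r$, and for $r$ in the compact range $[r_1,\beta_0]$ the bound follows from compactness of $\partial\Omega$ together with the continuous dependence of $v_h$ on $(\xi,r)$.

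The main obstacle is uniformity of the H\"older regularity in the rescaled problem: the estimates in Propositions \ref{reglemma}, \ref{reglemma1} are stated in terms of $C^2$-norms of the boundary datum while $\tilde h$ is only continuous and depends on $r$. This is handled by working instead with the ratio $\tilde v/\phi_\kappa$, which satisfies a degenerate elliptic equation with weight $\phi_\kappa^2$ for which the interior H\"older estimates of \cite[Theorem 1.5]{F.M.T2} apply with constants depending only on the uniform $L^\infty$ bound, and the latter is $r$-independent thanks to Propositions \ref{mainlemma1}, \ref{mainlemma2}. Once this uniform H\"older control is available the normal-family passage to the half-space limit is routine and the constant $c_{25}$ is read off from the explicit half-space solution.
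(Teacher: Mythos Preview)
Your blow-up/compactness strategy is genuinely different from the paper's argument, and in the case $\kappa<\tfrac14$ it is plausible modulo the usual technicalities (uniform H\"older estimates for the rescaled quotient, identification of the half-space limit, and a contradiction structure to convert ``some subsequential limit is positive'' into a uniform constant). The paper's route is shorter and never leaves $\Omega$: one takes $h$ with $h\equiv 1$ on $\Delta_{3r/4}(\xi)$, observes that $v_1-v_h$ is $\CL_\kappa$-harmonic and vanishes on $\Delta_{3r/4}(\xi)$ in the sense of Propositions~\ref{maincomp1}--\ref{maincomp}, and applies the boundary Harnack inequality directly to $v_1-v_h$. Since $v_1/W\to 1$ uniformly (Propositions~\ref{mainlemma1}--\ref{mainlemma2}) and $(v_1-v_h)/\phi_\kappa$ is controlled at \emph{one} point $y$ with $d(y)\approx r$ by $(v_1-v_h)(y)\le v_1(y)\le c\,W(y)$, one obtains $v_h/W\ge v_1/W - c\,\phi_\kappa/W\ge \tfrac14$ in a thin strip, and then extends by an interior Harnack chain. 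No limit, no half-space model.

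The real gap in your proposal is the critical case $\kappa=\tfrac14$. There the two local solutions are $d^{1/2}$ and $d^{1/2}|\log d|$, and the latter is \emph{not} homogeneous under $x\mapsto \xi+ry$: one has $W(\xi+ry)=r^{1/2}\tilde d(y)^{1/2}\,|\log r+\log\tilde d(y)|$, so the quantity $v_h/W$ does not scale to a fixed functional of $\tilde v$. Your phrase ``the obvious logarithmic adjustment'' hides a genuine obstruction: any normalization (e.g.\ dividing by $r^{1/2}|\log r|$) makes the effective boundary weight $\tilde W_r$ depend on $r$, the limit equation in the half-space sees only the small solution $y_N^{1/2}$, and the ratio $\tilde v/\tilde d^{1/2}$ loses uniform control as $\tilde d\to 0$. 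This is precisely why the paper writes out only the case $\kappa=\tfrac14$ and handles the logarithm explicitly via the boundary Harnack comparison $(v_1-v_h)/\phi_{1/4}$ together with the elementary inequality $|\log(r/4)|/|\log d(x)|\le\tfrac14$ for $d(x)$ small enough, and then declares the case $\kappa<\tfrac14$ simpler. Your scaling argument, as written, does not survive the logarithm.
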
  
\begin{proof}
Let $h\in C(\partial\xO)$ be a function with compact support in $\xD_r(\xi),$ $0\leq h\leq1$ and $h=1$ on $\overline{\xD_\frac{3r}{4}(\xi)}.$ And let $v_h,v_1$ the corresponding  $\CL_{\xk }$-harmonic functions with $h$ and $1$ as boundary data respectively (in the sense of Lemmas \ref{mainlemma1} and \ref{mainlemma2}).
Then $v_1(x)\geq v_h(x)\geq0$ and $$\lim_{x\in\xO,\;x\rightarrow x_0}\frac{v_1(x)-v_h(x)}{W(x)}=0\qquad\forall x_0\in\xO\cap B_{\frac{3r}{4}}(\xi).$$
By Lemmas \ref{maincomp} and \ref{maincomp1}, and $\xf_{\xk }\approx d^{\frac{\xa_+}{2}},$ there exists $c_{26}=c_{26}(\xO)>0$ such that
 $$\frac{v_1(x)-v_h(x)}{d^{\frac{\xa_+}{2}}(x)}\leq c_{26}\frac{v_1(y)-v_h(y)}{d^{\frac{\xa_+}{2}}(y)},\qquad\forall x,y\in \xO\cap \overline{B_{\frac{r}{2}}}(\xi).$$
We consider first the case $\xk =\frac{1}{4}$. By Proposition \ref{reglemma1},
we have
$$0\leq\frac{v_1(y)-v_h(y)}{d^{\frac{1}{2}}(y)}\leq\frac{v_1(y)}{d^{\frac{1}{2}}(y)}\leq c_{24}|\log d(y) |.$$
Thus, combining all above we have that
$$\frac{v_1(x)}{d^{\frac{1}{2}}(x)|\log d(x)|}-c_{27}\frac{|\log d(y) |}{|\log d(x) |}\leq \frac{v_h(x)}{d^{\frac{1}{2}}(x)|\log d(x)|}.$$
Now by Lemma \ref{reglemma1}, there exists a $\xe_0>0$ such that $$\frac{v_1(x)}{d^{\frac{1}{2}}(x)|\log d(x)|}>\frac{1}{2}\qquad\forall x\in \xO_{\xe_0}.$$
Thus if we choose $y$ such that $d(y)=\frac{r}{4},$ there exists a constant $c_{27}=c_{27}(\xO)>0$ such that
$$c_{27}\frac{|\log d(y) |}{|\log d(x) |}= c_{27}\frac{|\log \frac{r}{4} |}{|\log d(x) |}\leq c_{27}\frac{|\log \frac{r}{4} |}{|\log \frac{r}{D_0} |}
\leq \myfrac{1}{4}\qquad\forall x\in \Gw_{\frac{r}{D_0}},$$
thus
\be
\frac{v_h(x)}{d^{\frac{1}{2}}(x)|\log d(x)|}\geq\frac{1}{4}\qquad\forall x\in \overline{B_{\frac{r}{2}}}(\xi)\cap\Gw_{\frac{r}{D_0}}.\label{111}
\ee
In particular
\be
\frac{v_h(x_{a^*r}(\xi))}{\sqrt{a^*r}|\log (a^*r)|}\geq\frac{1}{4}.\label{112}
\ee
where $a^*=(\max\{2,D_0\})^{-1}$. If $D_0\leq 2$ we obtain the claim. If not, set $k^*=\BBE[\frac{D_0}{2}]+1$ (recall that $\BBE[x]$ denotes the largest integer less or equal to $x$). If $x\in \overline{B_{\frac{r}{2}}}(\xi)\cap\Gw'_{\frac{r}{D_0}}$ there exists a chain of at most $4k^*$ points $\{z_j\}_{j=0}^{j=j_0}$ such that $z_j\in\overline{B_{\frac{r}{2}}}(\xi)\cap\Gw$, $d(z_j)\geq a^*r$, $ z_0=x_{a^*r}(\xi)$, $z_{j_0}=x$ and  $|z_j-z_{j+1}|\leq \frac{a^*r}{4}$. By  Harnack inequality (applied $j_0$-times)
\be\label{113}
v_h(x_{a^*r}(\xi))\leq c_{28}v_h(x).
\ee
Since
$$W(x_{a^*r}(\xi))\geq \left(a^*\right)^{\frac{1}{2}}W(x),
$$
we obtain finally
\be\label{114}
\frac{1}{4}\leq \frac{\gw^{x_{a^*r}(\xi)}(\Gd_r(\xi))}{\sqrt{a^*r}|\log (a^*r)|}\leq c_{28}\left(\frac{1}{a^*}\right)^{\frac{1}{2}}\frac{\gw^x(\Gd_r(\xi))}{W(x)}\qquad
\forall x\in \Gw\cap B_{\frac{r}{2}}(\xi).
\ee
In the  case $\xk <\frac{1}{4}$, the proof is simpler since no log term appears and we omit it.
\end{proof}

The next result is a Carleson type estimate valid for positive $\CL_{\xk }$-harmonic functions.

\begin{lemma}\label{lemharn}
There exists a constant $c_{29}$ which depends only on $\xO$ such that for any $\xi\in\prt\Gw$ and $0<r\leq s\leq \xb_0.$ ,
\be\label{carle}
\frac{\xo^x(\xD_{r}(\xi))}{W(x)}\leq c_{29}\frac{ \xo^{x_{s}(\xi)}(\xD_{r}(\xi))}{W(x_{s}(\xi))}\qquad\forall x\in\xO\setminus B_{s}(\xi).
\ee
\end{lemma}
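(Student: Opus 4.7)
Set $u(x):=\omega^{x}(\Delta_r(\xi))$. By construction (via approximation of the characteristic function of $\Delta_r(\xi)$ by continuous boundary data and the results of Lemmas \ref{mainlemma1}--\ref{mainlemma2}), $u$ is $\mathcal{L}_\kappa$-harmonic in $\Omega$, satisfies $0\leq u\leq 1$, and vanishes in the sense of (\ref{zero})/(\ref{zero1}) at every boundary point $y\in\partial\Omega\setminus\overline{\Delta_r(\xi)}$. Since $r\leq s$, in particular $u(z)/W(z)\to 0$ as $z\to y$ for every $y\in\partial\Omega$ with $|y-\xi|>s$.

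The plan is a two-step Carleson-type argument combining the boundary Harnack inequality (Propositions \ref{maincomp} and \ref{maincomp1}) with Harnack chaining. First, I would establish the estimate for $x\in\Omega\setminus B_s(\xi)$ that are \emph{non-tangential}, in the sense that $d(x)\geq \eta s$ for a small constant $\eta=\eta(\Omega)>0$ to be fixed later. Thanks to the smoothness (and convexity) of $\Omega$, such an $x$ can be joined to $x_s(\xi)$ by a Harnack chain contained in $\{z\in\Omega:d(z)\geq \eta s/2\}\setminus B_{s/2}(\xi)$ of length bounded by a constant depending only on $\Omega$ and $\kappa$, independently of $s$. Iterating the interior Harnack inequality for $\mathcal{L}_\kappa$-harmonic functions yields $u(x)\leq C\,u(x_s(\xi))$, and since $d(x)\geq \eta s$ one has $W(x_s(\xi))/W(x)\leq C'$, which gives the required bound.

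Second, suppose $d(x)<\eta s$, and let $\xi_x\in\partial\Omega$ be the point with $|x-\xi_x|=d(x)$. Then $|\xi_x-\xi|\geq s-d(x)>(1-\eta)s$, so, for $\eta$ sufficiently small and $\rho:=\frac{(1-\eta)s-r}{2}\asymp s$, the ball $B_\rho(\xi_x)$ satisfies $B_\rho(\xi_x)\cap\overline{\Delta_r(\xi)}=\emptyset$; hence $u$ vanishes on $\partial\Omega\cap B_\rho(\xi_x)$ in the sense required by Proposition \ref{maincomp}/\ref{maincomp1}. Boundary Harnack gives
\[
\frac{u(x)}{\phi_\kappa(x)}\leq c\,\frac{u(\tilde x)}{\phi_\kappa(\tilde x)},\qquad \tilde x:=x_{\rho/2}(\xi_x).
\]
Using $\phi_\kappa(z)\approx d(z)^{\alpha_+/2}$ together with $W(z)=d(z)^{\alpha_-/2}$ (respectively $W(z)=\sqrt{d(z)}|\log d(z)|$ when $\kappa=\frac14$), and the fact that $d(x)<\eta s\leq d(\tilde x)$, the ratio $(d(x)/d(\tilde x))^{\sqrt{1-4\kappa}}$ (or its logarithmic analogue when $\kappa=\frac14$) is bounded, so one converts the above inequality into
\[
\frac{u(x)}{W(x)}\leq c'\,\frac{u(\tilde x)}{W(\tilde x)}.
\]
Since $\tilde x$ belongs to the non-tangential regime of Step 1, applying Step 1 to $\tilde x$ concludes the proof.

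The main obstacle is to coordinate the three small parameters $\eta$, $\rho$ and the length of the Harnack chain so that all constants are uniform in $(r,s)$, especially when $r$ is close to $s$: then $\overline{\Delta_r(\xi)}$ nearly fills $\overline{\Delta_s(\xi)}$ and one has to take $\eta$ quite small to keep the ball $B_\rho(\xi_x)$ disjoint from $\overline{\Delta_r(\xi)}$ at a controlled scale. The convexity of $\Omega$ is what makes this possible, since it provides a uniform corkscrew structure at every boundary point and every scale $\leq \beta_0$. In the critical case $\kappa=\frac14$, the proof goes through verbatim once the additional $|\log d|$ factors are tracked exactly as in the proof of Lemma \ref{lem2.1}.
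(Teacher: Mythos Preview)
Your non-tangential step contains a genuine gap: the claim that a Harnack chain of \emph{bounded} length (independent of $s$) joins an arbitrary point $x\in\Omega\setminus B_s(\xi)$ with $d(x)\geq\eta s$ to $x_s(\xi)$ is false. If $x$ lies on the far side of $\Omega$ from $\xi$, at distance of order $\mathrm{diam}(\Omega)$ and with $d(x)$ comparable to $\eta s$, then any chain staying in $\{d\geq\eta s/2\}$ has length at least of order $\mathrm{diam}(\Omega)/s$; even allowing the chain to climb into the interior and use larger balls, one still needs of order $\log(1/s)$ steps to go from depth $\eta s$ to depth comparable to $\mathrm{diam}(\Omega)$ and back. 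Iterating the interior Harnack inequality then gives only $u(x)\leq s^{-c}\,u(x_s(\xi))$, which is not uniform in $s$.

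The paper circumvents this by a maximum-principle reduction to $x\in\Omega\cap\partial B_s(\xi)$. The key is to replace $W$ by the genuine $\mathcal{L}_\kappa$-harmonic function $v_1$ (the solution with boundary datum $1$): the quotient $w_h:=v_h/v_1$ satisfies $-\mathrm{div}(v_1^{2}\nabla w_h)=0$ in $\Omega\setminus\overline{B_s(\xi)}$, is H\"older continuous up to $\partial\Omega$, and vanishes on $\partial\Omega\setminus\overline{B_s(\xi)}$. Testing against $(w_h-c\,w_h(x_s(\xi))-\epsilon)_+$ forces the supremum of $w_h$ over $\Omega\setminus B_s(\xi)$ to be attained on $\Omega\cap\partial B_s(\xi)$. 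On that sphere every point is at distance exactly $s$ from $\xi$, so the Carleson argument (boundary Harnack near $\partial\Omega$, then a Harnack chain of at most a fixed number of balls to $x_s(\xi)$) works with uniform constants. The regime $r$ close to $s$ is dealt with separately: when $r\leq s\leq 4r$ the paper simply combines the global bound $w_h\leq 1$ with $w_h(x_{r/2}(\xi))\geq c_{25}$ from Lemma~\ref{lem2.1} and a short Harnack chain from $x_{r/2}(\xi)$ to $x_s(\xi)$; no boundary Harnack is invoked, so the degeneration of your radius $\rho$ never arises.
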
  

\begin{proof}
Let $h\in C(\partial\xO)$ with compact support in $\xD_r(\xi))$ and $0\leq h\leq 1.$ We denote by $v_h,\;v_1,$ the solutions of \eqref{linear} with boundary data $h$ and $1$ respectively.
 By Propositions \ref{mainlemma1} and \ref{mainlemma2} there exists a constant $c_{30}>0$ such that for $0<r<\gb_0$,
\be\label{glob}
\frac{v_h}{W(x)}\leq\frac{\xo^x(\xD_r(\xi))}{W(x)}\leq \frac{\xo^x(\prt\Gw)}{W(x)}\leq c_{30}\qquad\forall x\in \xO.\ee
By Propositions \ref{mainlemma1} and \ref{mainlemma2}, there holds
\be\label{glob00}
\lim_{d(x)\to 0}\frac{v_1(x)}{W(x)}=1,
\ee
thus we can replace $W$ by $v_1$ in (\ref{carle}). Since $w_h=\frac{v_h(x)}{v_1(x)}$ is H\"older continuous in $\overline\Gw$ and  satisfies
\be\label{glob01}\BA{ll}
-div (v_1^2\nabla w_h)=0\qquad &\text{in }\Gw\setminus \overline {B_{s}}(\xi)\\
\phantom{ (;\nabla w)}
0\leq w_h\leq 1&\text{in }\Gw\setminus \overline {B_{s}}(\xi)\\\phantom{-div (v_1^2\nabla )}
w_h=0\qquad \qquad &\text{in }\prt\Gw\setminus \overline {B_{s}}(\xi)
\EA\ee
the maximum of $w_h$ is achieved on $\Gw\cap\prt B_{s}(\xi)$, therefore it is sufficient to prove the Carleson estimate
\be\label{glob02}
w_h(x)\leq c_{29}w_h(x_{s}(\xi))\qquad\forall x\in\xO\cap\prt B_{s}(\xi).
\ee
If $x$ such that $|x-\xi|=s$ is "far" from  $\prt\Gw$, $w_h(x)$ is "controled" by $w_h(x_{s}(\xi))$ thanks to Harnack inequality, while
if it is close to $\prt\Gw$, $w_h(x)$ is "controled by the fact that it vanishes on $\prt\xO\cap\prt B_{s}(\xi)$. \smallskip

We also note that (\ref{00}) can be written under the form
\be\label{00'}
w_h(x)\geq c_{25}\qquad\forall x\in\xO\cap B_{\frac{r}{2}}(\xi).
\ee

\noindent {\it Step 1}. : $r\leq s\leq 4r$. By Lemma \ref{lem2.1}, \eqref{glob} and the above inequality we have that
$$w_h(x_\frac{r}{2}(\xi))\geq \frac{c_{25}}{c_{30}}w_h(x) \qquad\forall x\in\Gw.
$$
Applying Harnack inequality to $w_h$ in the balls $B_{\frac{(2+j)r}{4}}(x_{\frac{(2+j)r}{4}}(\xi))$ for $j=0,...,j_0\leq14$ we obtain

$$w_h(x_{\frac{(2+j)r}{4}}(\xi))\geq c^{j}_{31}w_h(x_{\frac{r}{2}}(\xi))\qquad\text{for }j=1,...,j_0.
$$
This implies
\be\label{BHI0}
w_h(x_{s}(\xi))\geq c_{32}w_h(x) \qquad\forall x\in\Gw.
\ee
\noindent {\it Step 2}: $\gb_0\geq s> 4r$. We apply Propositions 2.11, 2.12 to $w_h$ in $B_{\frac{s}{2}}(\xi_1)\cap\Gw$ where
$\xi_1\in\prt\Gw$ is such that $|\xi-\xi_1|=s$ and we get
\be\label{BHI1}
w_h(x)\leq c_{18}w_h(x_{\frac{s}{4}}(\xi_1))\qquad\forall x\in B_{\frac{s}{4}}(\xi_1)\cap\Gw
\ee
Then we apply six times Harnack inequality to $w_h$ between $x_{\frac{s}{4}}(\xi_1)$ and $x_{s}(\xi)$ and obtain
\be\label{BHI2}w_h(x_{\frac{s}{4}}(\xi_1))\leq c_{33}w_h(x_{s}(\xi_1)).\ee
Combining (\ref{BHI1}) and (\ref{BHI2}) we derive (\ref{glob02}).
\smallskip

\noindent {\it Step 3}. For $\ge>0$, set $z_h=w_h-c_{33}w_h(x_{s}(\xi))-\ge$. Then  $z_h^+$ has compact support in $\Gw\setminus B_s(\xi)$ and thus belongs to $H^1_0(\Gw\setminus B_s(\xi))$. Integration by parts in (\ref{glob01}) leads to
\be\label{BHI3}
\myint{\Gw\setminus B_s(\xi)}{}v_1^2|\nabla z_h^+|^2dx=0.
\ee
Then $z_h^+=0$ by letting $\ge\to 0$. Combining with  (\ref{BHI0}) and $h\uparrow \chi_{\Gd_r(\xi)}$ implies (\ref{carle}).
\end{proof}

\begin{theorem}\label{lem2.2*}
There exists a constant $c_{34}$ which depends only on $\xO$ such that, for any $0<r\leq\gb_0$ and $\xi\in\prt\Gw$, there holds
\be\label{Rep0}
\frac{1}{c_{34}}r^{N-1-\frac{1}{2}}|\log r|G_{\CL_{\frac{1}{4}}}(x_r(\xi),x)\leq\xo^x(\xD_r(\xi))\leq c_{34}r^{N-1-\frac{1}{2}}|\log r|G_{\CL_{\frac{1}{4}}}(x_r(\xi),x)\qquad\forall x\in\xO\setminus B_{4r}(\xi).
\ee
\end{theorem}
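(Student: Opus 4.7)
Set $u(x):=\omega^x(\Delta_r(\xi))$ and $v(x):=G_{\CL_{\frac{1}{4}}}(x_r(\xi),x)$. Both are positive and $\CL_{\frac{1}{4}}$-harmonic in $D:=\Omega\setminus\overline{B_{2r}(\xi)}$ (since $x_r(\xi)\in\overline{B_r(\xi)}$), and both satisfy
$$\lim_{x\to y}\frac{u(x)}{W(x)}=\lim_{x\to y}\frac{v(x)}{W(x)}=0\qquad\forall y\in\partial\Omega\setminus\overline{\Delta_{2r}(\xi)};$$
for $u$ this is built into Proposition \ref{mainlemma1}, while for $v$ the upper bound in \eqref{alpha-green} (with $\ga_+=1$) gives $v(x)\le Cr^{1/2}d^{1/2}(x)/|x-x_r(\xi)|^{N-1}$, and since $|x-x_r(\xi)|$ stays bounded below as $x\to\partial\Omega\setminus\overline{\Delta_{2r}(\xi)}$ we obtain $v/W\to 0$. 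The strategy is to establish the sharp two-sided comparison on the spherical cap $\partial B_{2r}(\xi)\cap\Omega$ and then to propagate it to all of $D$ by a maximum principle.

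\textbf{Comparison on $\partial B_{2r}(\xi)\cap\Omega$.} I will show that
$$\frac{u(z)}{v(z)}\approx r^{N-3/2}|\log r|\qquad\forall z\in\partial B_{2r}(\xi)\cap\Omega,$$
with constants depending only on $\Omega$. When $d(z)\ge r/10$ an interior Harnack chain of bounded length joins $z$ to $x_{r/2}(\xi)$; Lemma \ref{lem2.1} combined with \eqref{glob} yields $u(x_{r/2}(\xi))\approx W(x_{r/2}(\xi))\approx r^{1/2}|\log r|$, whereas \eqref{alpha-green} gives $v(x_{r/2}(\xi))\approx r^{2-N}$ (both branches of the minimum being comparable). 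When $d(z)<r/10$, let $z^\ast\in\partial\Omega$ be the foot of $z$; then $|z^\ast-\xi|\in[1.9r,2.1r]$, hence $B_{r/2}(z^\ast)\cap\partial\Omega\subset\partial\Omega\setminus\overline{\Delta_r(\xi)}$, a set on which both $u$ and $v$ vanish in the $W$-weighted sense. Applying Proposition \ref{maincomp1} separately to $u/\phi_{\frac{1}{4}}$ and $v/\phi_{\frac{1}{4}}$ (and swapping the roles of $x$ and $y$ to obtain the reverse inequality) gives $u(z)/v(z)\approx u(A)/v(A)$ at the corkscrew $A:=x_{r/4}(z^\ast)$, and an interior Harnack chain from $A$ to $x_{r/2}(\xi)$ reduces this quotient to the interior case, again yielding $r^{N-3/2}|\log r|$.

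\textbf{Transport by maximum principle.} Let $c_1\le C_1$ be the constants just obtained and define
$$w_+(x):=u(x)-C_1 r^{N-3/2}|\log r|\,v(x),\qquad w_-(x):=c_1 r^{N-3/2}|\log r|\,v(x)-u(x).$$
Each $w_\pm$ is $\CL_{\frac{1}{4}}$-harmonic in $D$, nonpositive on $\Omega\cap\partial B_{2r}(\xi)$ by the previous step, and satisfies $\limsup_{x\to y}w_\pm(x)/W(x)\le 0$ for every $y\in\partial\Omega\setminus\overline{B_{2r}(\xi)}$. Extending $\max(w_\pm,0)$ by zero across $\Omega\cap\partial B_{2r}(\xi)$ yields a nonnegative $\CL_{\frac{1}{4}}$-subharmonic function on all of $\Omega$ whose $W$-weighted limsup vanishes at every boundary point (the extension is continuous and preserves the subharmonicity because the interface values are zero). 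Proposition \ref{max1} then forces the extension to be identically zero, so $c_1 r^{N-3/2}|\log r|\,v\le u\le C_1 r^{N-3/2}|\log r|\,v$ throughout $D$, and in particular on $\Omega\setminus B_{4r}(\xi)$.

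\textbf{Main obstacle.} The delicate step is the pointwise comparison on $\partial B_{2r}(\xi)\cap\Omega$ in the regime $d(z)\ll r$: the naive interior Harnack chain is blocked since $z$ lies too close to $\partial\Omega$, and one must combine the boundary Harnack principle of Proposition \ref{maincomp1} (applied to both $u$ and $v$ simultaneously, via the quotient with $\phi_{\frac{1}{4}}$) with a Harnack chain from the corkscrew $A=x_{r/4}(z^\ast)$ to $x_{r/2}(\xi)$ in order to extract exactly the logarithmic factor $|\log r|$ that is characteristic of the critical Hardy coefficient $\gk=\frac{1}{4}$.
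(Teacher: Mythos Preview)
Your argument is correct and takes a genuinely different route from the paper's, particularly for the upper bound.

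The paper obtains the upper bound by a representation-formula argument rather than by comparison: it sets $\psi=\sqrt d\,(-\ln d)$, multiplies by a cutoff $\eta\in C_c^\infty(B_{2r}(\xi))$ with $\eta\equiv 1$ on $B_r(\xi)$, computes $\CL_{1/4}(\eta\psi)=f$ explicitly (so that $|f|\le \tfrac{c}{r}\,\tfrac{|\ln d|}{\sqrt d}\,\chi_{\Omega\cap B_{2r}(\xi)}$), and invokes the Green representation $0=(\eta\psi)(x)=\int_\Omega G_{\CL_{1/4}}(x,y)f(y)\,dy+\int_{\partial\Omega}\eta\,d\omega^x$ for $x\notin B_{2r}(\xi)$. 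The bound $\omega^x(\Delta_r(\xi))\le\int_{\Omega\cap B_{2r}(\xi)}G_{\CL_{1/4}}(x,y)|f(y)|\,dy$ then follows, and the right-hand side is estimated using the Harnack comparison $G_{\CL_{1/4}}(x,y)\le c\,G_{\CL_{1/4}}(x,x_r(\xi))$ for $y\in B_{2r}(\xi)$ together with the explicit integral $\int_0^{2r}t^{-1/2}|\ln t|\,dt\approx r^{1/2}|\ln r|$. For the lower bound the paper argues much as you do: it checks the inequality on $\partial B_{4r}(\xi)\cap\Omega$ via \eqref{alpha-green} and Lemma~\ref{lem2.1}, then propagates by Harnack and the Step~3 device of Lemma~\ref{lemharn}.

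Your boundary-Harnack approach (Proposition~\ref{maincomp1} applied simultaneously to $u$ and $v$ through the quotient with $\phi_{1/4}$) treats both directions symmetrically and is closer in spirit to the Caffarelli--Fabes--Mortola--Salsa framework the paper cites. The paper's route has the advantage that the factor $|\log r|$ emerges from a concrete integral without ever invoking Proposition~\ref{maincomp1}; your route has the advantage that no explicit test function needs to be built, and the argument carries over verbatim to $\kappa<\tfrac14$ by replacing $\phi_{1/4},W$ accordingly. One technical remark: your claim that the zero extension of $\max(w_\pm,0)$ remains $\CL_{1/4}$-subharmonic across $\Omega\cap\partial B_{2r}(\xi)$ is correct but not quite immediate from ``interface values are zero''; the cleanest way to close this is to mimic Step~3 of Lemma~\ref{lemharn} directly in $D$, i.e.\ divide by $v_1$ (the $\CL_{1/4}$-harmonic function with boundary datum $1$), observe that $w_\pm/v_1$ satisfies $-\mathrm{div}(v_1^2\nabla(\cdot))=0$, extends continuously by nonpositive values to $\partial D$, and test against $(w_\pm/v_1-\epsilon)_+\in H^1_0(D)$.
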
  
\begin{proof}
Let $\eta\in C_0^\infty(B_{2r}(\xi))$ such that $0\leq \eta\leq1$ and $\eta=1$ in $B_{r}(\xi).$ We set
$$u=\eta (-\ln d)\sqrt d:=\eta \psi,$$
(we assume that $4r<1$), in order to have
$$\lim_{x\rightarrow x_0}\frac{u(x)}{\psi(x)}=\eta\lfloor_{\prt\Gw}(x_0)=\gz(x_0)\qquad\forall x_0\in\prt\xO,$$
uniformly with respect to $x_0$. Since
$$-\xD \psi-\frac{1}{4}\frac{\psi}{d^2(x)}=\myfrac{2+\ln d}{2\sqrt d}\Gd d=-(N-1)\myfrac{2+\ln d}{2\sqrt d}K
$$
where $K$ is the mean curvature of $\prt\Gw$. Also we have
$$|\nabla\eta|\leq c_0\chi_{\Gw\cap B_{2r}(\xi)}\frac{1}{r}\quad\text{and}\quad|\xD\eta(x)|\leq c_0\chi_{\Gw\cap B_{2r}(\xi)}\frac{1}{r^2}\leq c_0\chi_{\Gw\cap B_{2r}(\xi)}\frac{1}{r}d^{-1}(x),$$

then $u$ satisfies
\begin{align} \nonumber
-\xD u-\frac{1}{4}\frac{u}{d^2(x)}=-\psi\Gd\eta+\myfrac{2+\ln d}{2\sqrt d}\left(2\nabla d.\nabla\eta-(N-1)K\eta\right):=f\qquad&\text{in }\xO\\ \nonumber
u=\gz\qquad&\text{on }\partial\xO.
\end{align}
Then $|f|\leq \frac{c_{35}}{r}(-\frac{\ln d}{\sqrt d})\chi_{\Gw\cap B_{2r}(\xi)}$ since $\eta$ vanishes outside $B_{2r}(\xi)$. We have by the representation formula \cite{F.M.T2}
\be\label{Rep1}
0=u(x)=\int_\xO G_{\CL_{\frac{1}{4}}}(x,y)fdy+\int_{\partial\xO}h(y)d\xo^x(y)\qquad\forall x\in \xO\setminus B_{2r}(\xi).
\ee
By Lemma \ref{green}, we have that for any $x\in\xO\setminus B_{4r}(\xi)$ and $y\in B_{2r}(\xi)$
$$G_{\CL_{\frac{1}{4}}}(x,y)\leq c_{36}G_{\CL_{\frac{1}{4}}}(x,x_r(\xi)),
$$
thus
\be\label{Rep2}\BA {lll}\displaystyle
\displaystyle\xo^x(\xD_r(\xi))\leq \int_{\xO\cap B_{2r}(\xi)} G_{\CL_{\frac{1}{4}}}(x,y)|f(y)|dy\\ [4mm]
\phantom{\xo^x(\xD_r(\xi))}
\displaystyle\leq \frac{c_{37}}{r}G_{\CL_{\frac{1}{4}}}(x,x_r(\xi))\int_{\xO\cap B_{2r}(\xi)}\frac{|\ln d(y)|}{\sqrt {d(y)}}dy\\ [4mm]
\phantom{\xo^x(\xD_r(\xi))}
\displaystyle\leq c_{38}G_{\CL_{\frac{1}{4}}}(x,x_r(\xi))r^{N-1-\frac{1}{2}}|\ln r|,
\EA\ee
since
$$\int_{\xO\cap B_{2r}(\xi)}\frac{|\ln d(y)|}{\sqrt {d(y)}}dy\leq c_{39}r^{N-1}\int_0^{2r}\frac{|\ln t|dt}{\sqrt {t}}
\leq 2c_{39}r^{N-\frac{1}{2}}|\ln r|.
$$
This implies the right-hand side part of (\ref{Rep0}).
For the opposite inequality we observe that if $x\in\prt B_{4r}(\xi)\cap\xO$, there holds by (\ref{00})
 $$\BA {ll}
 r^{N-1-\frac{1}{2}}|\log r|G_{\CL_\frac{1}{4}}(x_r(\xi),x)\leq c_{40}r^{N-1-\frac{1}{2}}|\log r|\min\left\{\myfrac{1}{|x-x_r(\xi)|^{N-2}},\myfrac{\sqrt{d(x)}\sqrt{d(x_r(\xi))}}{|x-x_r(\xi)|^{N-1}}\right\}\\[4mm]
 \phantom{r^{N-\frac{1}{2}}|\log r|G_{\CL_\frac{1}{4}}(x_r(\xi),x)}
 \leq c_{41}\sqrt{d(x)}|\log r|\\[4mm]
 \phantom{r^{N-\frac{1}{2}}|\log r|G_{\CL_\frac{1}{4}}(x_r(\xi),x)}
 \leq c_{42}W(x)
 \\[4mm]
 \phantom{r^{N-\frac{1}{2}}|\log r|G_{\CL_\frac{1}{4}}(x_r(\xi),x)}
 \leq \frac{c_{42}}{c_{25}}\xo^{x_{\frac{r}{8}}(\xi)}(\Gd_r(\xi)).
 \EA$$
We end the proof by Harnack inequality between $\xo^{x_{\frac{r}{8}}(\xi)}(\Gd_r(\xi))$ and $\xo^{x_{4r}(\xi)}(\Gd_r(\xi))$ and by Harnack inequality between $\xo^{x}(\Gd_r(\xi))$ and $\xo^{x_{4r}(\xi)}(\Gd_r(\xi))$ on $\partial B_{4r}(\xi)$ and an argument like in the step 3 in Lemma \eqref{lemharn}.
\end{proof}\medskip

Replacing, in the last proof, the function $\psi=\sqrt d(-\ln d)$ by  $\tilde\psi=d^{\frac{\ga_-}{2}}$, we obtain similarly.y
\begin{theorem}\label{lem2.2}
Assume $\xk <\frac{1}{4}$. There exists a constant $c_{42}$ which depends only on $\xO$ and $\xk $ such that, for any $0<r\leq\gb_0$ and $\xi\in\prt\Gw$, there holds
$$\frac{1}{c_{42}}r^{N-2+\frac{\xa_-}{2}}G_{\CL_{\xk }}(x_r(\xi),x)\leq\xo^x(\xD_r(\xi))\leq c_{42}r^{N-2+\frac{\xa_-}{2}}G_{\CL_{\xk }}(x_r(\xi),x)\qquad\forall x\in\xO\setminus B_{4r}(\xi).$$
\end{theorem}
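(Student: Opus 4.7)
The plan is to mimic the proof of Theorem \ref{lem2.2*}, replacing $\psi=\sqrt d\,|\ln d|$ by $\tilde\psi(x)=d^{\ga_-/2}(x)$, which is the appropriate ``small'' boundary weight in the regime $\xk<\frac14$. The key algebraic identity $\ga_-^2-2\ga_-+4\xk=0$ (together with $|\nabla d|=1$ in $\Gw_{\gb_0}$) forces the singular $d^{\ga_-/2-2}$ coefficient in $\CL_\xk\tilde\psi$ to vanish, leaving
\[
\CL_\xk\tilde\psi=-\tfrac{\ga_-}{2}\,d^{\ga_-/2-1}\Delta d,
\]
i.e.\ a bounded multiple of $d^{\ga_-/2-1}$. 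This cancellation is the only genuinely new point; everything else is a transcription of the $\xk=\frac14$ argument with the logarithmic factor dropped and the exponent $\frac12$ replaced by $\frac{\ga_-}{2}$.

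Next I fix $\eta\in C_0^\infty(B_{2r}(\xi))$ with $0\le\eta\le1$ and $\eta\equiv1$ on $B_r(\xi)$, and set $u=\eta\,\tilde\psi$. Then $u/\tilde\psi\to\gz:=\eta|_{\prt\Gw}$ uniformly on $\prt\Gw$, and $f:=\CL_\xk u$ is supported in $B_{2r}(\xi)\cap\Gw$ and satisfies
\[
|f|\le \frac{c}{r}\,d^{\ga_-/2-1}\,\chi_{\Gw\cap B_{2r}(\xi)},
\]
the dominant contributions being $\tilde\psi\,\Delta\eta$ and $\nabla\eta\cdot\nabla\tilde\psi$ (while $\eta\,\CL_\xk\tilde\psi$ contributes only $O(d^{\ga_-/2-1})$ without a $1/r$ factor, giving a lower-order piece after integration). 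Using the representation formula of Proposition \ref{reglemma} at a point $x\in\Gw\setminus B_{4r}(\xi)$, where $u(x)=0$, together with the Green kernel comparability from \eqref{alpha-green}, one obtains
\[
\gw^x(\Gd_r(\xi))\le\int_{\prt\Gw}\gz\,d\gw^x\le c\,G_{\CL_\xk}(x,x_r(\xi))\int_{\Gw\cap B_{2r}(\xi)}|f(y)|\,dy.
\]
A direct integration in Fermi coordinates along $\prt\Gw$ gives $\int_{\Gw\cap B_{2r}(\xi)}d^{\ga_-/2-1}dy\sim r^{N-1+\ga_-/2}$, and after the factor $\tfrac1r$ this yields the upper bound $\gw^x(\Gd_r(\xi))\le c\,r^{N-2+\ga_-/2}G_{\CL_\xk}(x_r(\xi),x)$.

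For the lower bound I proceed exactly as in Step 3 of the proof of Theorem \ref{lem2.2*}. From \eqref{alpha-green}, $|x-x_r(\xi)|\sim r$, and $d(x_r(\xi))=r$, one gets for $x\in\prt B_{4r}(\xi)\cap\Gw$
\[
r^{N-2+\ga_-/2}G_{\CL_\xk}(x_r(\xi),x)\le c\,d^{\ga_+/2}(x)\,r^{(\ga_--\ga_+)/2}\le c\,d^{\ga_-/2}(x)=cW(x),
\]
the last inequality because $d(x)\le 4r$ on $\prt B_{4r}(\xi)\cap\Gw$ makes $(d(x)/r)^{(\ga_+-\ga_-)/2}$ bounded. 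Combining this with Lemma \ref{lem2.1} evaluated at $x_{r/8}(\xi)$, a Harnack chain joining $x_{r/8}(\xi)$ to $x_{4r}(\xi)$, and the maximum-principle argument of Step 3 of Lemma \ref{lemharn}, one concludes $r^{N-2+\ga_-/2}G_{\CL_\xk}(x_r(\xi),x)\le c\,\gw^x(\Gd_r(\xi))$ for every $x\in\Gw\setminus B_{4r}(\xi)$. The only technical hurdle is the algebraic cancellation identified at the start; every remaining estimate is a routine analogue of those appearing in Theorem \ref{lem2.2*}.
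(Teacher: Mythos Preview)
Your argument is correct and follows exactly the route indicated in the paper, which simply says to rerun the proof of Theorem~\ref{lem2.2*} with $\tilde\psi=d^{\ga_-/2}$ in place of $\psi=\sqrt d\,(-\ln d)$. The algebraic cancellation $\tfrac{\ga_-}{2}\bigl(\tfrac{\ga_-}{2}-1\bigr)+\gk=0$ that you single out is precisely the reason this substitution works, and your integration $\int_{\Gw\cap B_{2r}(\xi)}d^{\ga_-/2-1}\,dy\sim r^{N-1+\ga_-/2}$ together with the lower-bound chain (Green estimate on $\prt B_{4r}(\xi)$, Lemma~\ref{lem2.1}, Harnack, and the Step~3 maximum-principle device from Lemma~\ref{lemharn}) mirrors the $\gk=\tfrac14$ proof verbatim. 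One nit: the representation identity you invoke is the Green/Poisson formula cited in \eqref{Rep1} (from \cite{F.M.T2}), not Proposition~\ref{reglemma} itself.
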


As a consequence of Theorems \ref{lem2.2*} and \ref{lem2.2} and the Harnack inequality, the harmonic measure for $\CL_{\xk }$ possesses the doubling property.
\begin{theorem}\label{Th10}
Let $0<\xk \leq\frac{1}{4}$. There exists a constant $c_{42}$ which depends only on $\xO,\xk $ such that for any $0<r\leq \gb_0$, there holds
$$\xo^x(\xD_{2r}(\xi))\leq c_{42}\xo^x(\xD_{r}(\xi))\qquad\forall x\in\xO\setminus B_{4r}(\xi).$$

\end{theorem}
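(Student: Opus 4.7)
The plan is to combine the two-sided estimates of Theorems~\ref{lem2.2*} and~\ref{lem2.2}, which read
\[
\xo^x(\xD_r(\xi))\asymp r^{N-2+\frac{\ga_-}{2}}F(r)\,G_{\CL_\xk}(x_r(\xi),x)\quad\text{for }x\in\xO\setminus B_{4r}(\xi),\;0<r\leq\xb_0,
\]
with $F\equiv 1$ if $\xk<\tfrac14$ and $F(t)=|\log t|$ if $\xk=\tfrac14$, and constants depending only on $(\xO,\xk)$, with interior and boundary Harnack. I would split $x\in\xO\setminus B_{4r}(\xi)$ into the \emph{outer} case $x\in\xO\setminus B_{8r}(\xi)$ and the \emph{annular} case $x\in\xO\cap(B_{8r}(\xi)\setminus B_{4r}(\xi))$.

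For the outer case, the displayed sandwich is applicable at both radii $r$ and $2r$. Dividing, the power prefactor $2^{N-2+\frac{\ga_-}{2}}$ is absolute, the ratio $F(2r)/F(r)$ is uniformly bounded for $r\leq\xb_0$ (by a constant depending only on $\xO$), and the Green function ratio $G_{\CL_\xk}(x_{2r}(\xi),x)/G_{\CL_\xk}(x_r(\xi),x)$ is controlled by interior Harnack applied to the positive $\CL_\xk$-harmonic function $y\mapsto G_{\CL_\xk}(y,x)$: both $x_r(\xi)$ and $x_{2r}(\xi)$ lie in $B_{3r}(\xi)$ at depth $\sim r$ from $\prt\xO$, while $x$ sits at Euclidean distance $\geq 5r$ from them, so a chain of Harnack balls of cardinality depending only on $N$ connects them inside $\xO\setminus\{x\}$ and away from $\prt\xO$.

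For the annular case, I would fix the reference point $x^\ast:=x_{16r}(\xi)\in\xO\setminus B_{8r}(\xi)$, to which the outer case applies and gives $\xo^{x^\ast}(\xD_{2r}(\xi))\leq C\,\xo^{x^\ast}(\xD_r(\xi))$. The two positive $\CL_\xk$-harmonic functions $u_j(y):=\xo^y(\xD_{jr}(\xi))$, $j=1,2$, are then transferred from $x^\ast$ to $x$: if $d(x)\geq r/4$, a bounded-length interior Harnack chain in $\xO\setminus\overline{B_{3r}(\xi)}$ gives $u_j(x)\asymp u_j(x^\ast)$ and we conclude; if $d(x)<r/4$, let $\xi_x\in\prt\xO$ be the closest boundary point to $x$, so $|\xi_x-\xi|>3r$ and $B_{r/2}(\xi_x)$ is separated from $\overline{\xD_{2r}(\xi)}$, making both $u_j$ positive $\CL_\xk$-harmonic in $B_{r/2}(\xi_x)\cap\xO$ and classically vanishing on $\prt\xO\cap B_{r/2}(\xi_x)$. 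Propositions~\ref{maincomp}/\ref{maincomp1} then yield $u_2(x)/u_1(x)\leq c\,u_2(z)/u_1(z)$ at $z:=x_{r/16}(\xi_x)$, and a short interior Harnack chain transfers from $z$ to $x^\ast$. The main obstacle is to verify the precise vanishing hypothesis $u_j(y)/W(y)\to 0$ along $\prt\xO\cap B_{r/2}(\xi_x)$ required by Propositions~\ref{maincomp}/\ref{maincomp1}: Lemma~\ref{lemharn} (Carleson) applied with $s=2r$ yields only boundedness of $u_j/W$ on $B_{r/4}(\xi_x)\cap\xO$, and the passage from boundedness to vanishing invokes the small/large alternative of the preliminary subsection, which forces $u_j\leq Cd^{\frac{\ga_+}{2}}$ (resp.\ $Cd^{1/2}$) locally, hence $u_j/W=O(d^{\sqrt{1-4\xk}})$ (resp.\ $O(|\log d|^{-1})$).
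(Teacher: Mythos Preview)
Your overall strategy---combine the two-sided Green-function comparisons of Theorems~\ref{lem2.2*}/\ref{lem2.2} with interior and boundary Harnack---is exactly what the paper indicates (the paper gives no detailed proof, merely recording the theorem as ``a consequence of Theorems~\ref{lem2.2*} and~\ref{lem2.2} and the Harnack inequality''). Your outer/annular split and the Harnack-chain bookkeeping in the outer case are correct.

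There is, however, a genuine gap in your annular-case justification of the vanishing hypothesis for Propositions~\ref{maincomp}/\ref{maincomp1}. The ``small/large alternative'' you invoke (Proposition~2.5) is a \emph{global} dichotomy along all of $\partial\Omega$: since $u_j=\omega^{\,\cdot}(\Delta_{jr}(\xi))$ satisfies $u_j/W\to 1$ on $\Delta_{jr}(\xi)$, the first branch $\limsup_{x\to\partial\Omega}u_j/d^{\alpha_-/2}>0$ already holds, so you cannot deduce the second branch locally near $\xi_x$; mere boundedness of $u_j/W$ does not force vanishing (witness $v_1/W\to 1$). The fix is simpler than the route you propose: choose $h\in C(\partial\Omega)$ with $\chi_{\overline{\Delta_{2r}(\xi)}}\le h\le 1$ and $h\equiv 0$ on $\partial\Omega\cap B_{r/2}(\xi_x)$; then $u_j\le v_h$, and Propositions~\ref{mainlemma1}/\ref{mainlemma2} give $v_h/W\to h=0$ uniformly on compacta of $\partial\Omega\cap B_{r/2}(\xi_x)$, hence $u_j/W\to 0$ there as required. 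With this correction your argument goes through.
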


\begin{lemma}\label{Lemm10}
Let $0<r\leq \gb_0$ and $u$ be a positive $\CL_{\xk }$-harmonic function such that\smallskip

\noindent (i) $u\in C(\overline{\xO\setminus B_r(\xi)}),$\smallskip

\noindent (ii) $$\lim_{x\rightarrow x_0}\frac{u(x)}{W(x)}=0\qquad\forall x_0\in\xO\setminus \overline{B_{r}(\xi)},$$
uniformy with respect to $x_0$.
\smallskip

\noindent Then
$$c_{42}^{-1}\frac{u(x_r(\xi))}{W(x_r(\xi))}w^x(\xD_r(\xi))\leq u(x)\leq c_{42}\frac{u(x_r(\xi))}{W(x_r(\xi))}w^x(\xD_r(\xi))\qquad\forall x\in\xO\setminus\overline{B_{2r}(\xi)},$$
with $c_{42}$ depends only on $\xk $ and $\xO.$
\end{lemma}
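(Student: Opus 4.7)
The plan is to compare $u$ with $v(x) := \gw^x(\xD_r(\xi))$ in the subdomain $D := \Gw \setminus \overline{B_{2r}(\xi)}$, via a boundary Harnack plus maximum principle argument. The lemma reduces to proving
$$c^{-1}\,\frac{u(x_r(\xi))}{v(x_r(\xi))}\, v(x) \leq u(x) \leq c\,\frac{u(x_r(\xi))}{v(x_r(\xi))}\, v(x) \qquad \forall x \in D, \qquad (\star)$$
because Lemma \ref{lem2.1} applied to the larger cap $\xD_{3r}(\xi)$, combined with the global bound $v \le c_{30}W$ from \eqref{glob} and interior Harnack inequality, gives $v(x_r(\xi)) \approx W(x_r(\xi))$. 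I would first check that $v$ is $\CL_{\xk }$-harmonic in $\Gw$ and vanishes in the $W$-sense on every compact subset of $\prt\Gw \setminus \overline{B_r(\xi)}$: this is done by approximating $\chi_{\xD_r(\xi)}$ from below by $h_n \in C(\prt\Gw)$ with support in $\xD_r(\xi)$, noting that each $v_{h_n}$ has the prescribed $W$-boundary behavior by Propositions \ref{mainlemma1} and \ref{mainlemma2}, and passing to the monotone limit using the uniform bound $v \leq c_{30}W$.

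The heart of the proof is to establish $(\star)$ on the inner sphere $\Gw \cap \prt B_{2r}(\xi)$. Points $z$ on that sphere with $d(z) \geq \eta r$ for a small universal constant $\eta$ are joined to $x_r(\xi)$ by an interior Harnack chain of uniformly bounded length, which immediately yields $u(z) \approx u(x_r(\xi))$, $v(z) \approx v(x_r(\xi))$, and hence the ratio comparison. For $z$ closer to $\prt\Gw$, pick $x_0 \in \prt\Gw$ with $|x_0 - \xi| \in [r, 3r]$ and $|z - x_0| \leq \eta r$; both $u$ and $v$ vanish in the $W$-sense on $\prt\Gw \cap B_{\gd}(x_0)$ for some $\gd \approx r$, so the boundary Harnack inequality (Propositions \ref{maincomp1} and \ref{maincomp}) applied separately to $u$ and $v$ gives
$$\frac{u(z)}{\ei(z)} \approx \frac{u(y_0)}{\ei(y_0)}, \qquad \frac{v(z)}{\ei(z)} \approx \frac{v(y_0)}{\ei(y_0)}$$
at a corkscrew point $y_0 \in B_{\gd/2}(x_0) \cap \Gw$ with $d(y_0) \approx r$. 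Dividing cancels $\ei$, and an interior Harnack chain from $y_0$ to $x_r(\xi)$ -- of length bounded independently of $r$ and $\xi$ by the $C^2$ regularity of $\prt\Gw$ -- delivers the required comparison on $\Gw \cap \prt B_{2r}(\xi)$.

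With $(\star)$ holding on $\Gw \cap \prt B_{2r}(\xi)$, set $M = c\, u(x_r(\xi))/v(x_r(\xi))$ with $c$ the constant just obtained and consider $\psi := Mv - u$ in $D$. Then $\CL_{\xk }\psi = 0$ in $D$, $\psi \geq 0$ on the inner boundary $\Gw \cap \prt B_{2r}(\xi)$, and $\limsup_{x \to y}\psi(x)/W(x) \leq 0$ for every $y \in \prt\Gw \setminus \overline{B_{2r}(\xi)}$. Adapting the energy argument of Proposition \ref{max1} (resp. \ref{max}) to the subdomain $D$ -- testing $\mathcal{L}_\kappa\psi=0$ against $\psi^-/\ei^{2}$ multiplied by a cutoff near $\prt\Gw$, and observing that $\psi^-$ vanishes identically on the inner boundary -- yields $\psi \geq 0$ throughout $D$, which is the upper half of the lemma. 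The lower bound follows symmetrically from $\psi' := u - c^{-1}(u(x_r(\xi))/v(x_r(\xi)))v$. The principal obstacle is the scale-invariance in the chaining step on $\Gw \cap \prt B_{2r}(\xi)$: one must guarantee that the number of balls in the Harnack + boundary Harnack chain from $y_0$ to $x_r(\xi)$ is bounded independently of $r$ and $\xi$, which reduces to the standard covering argument for a smooth bounded domain.
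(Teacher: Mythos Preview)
Your proposal is correct and follows essentially the same strategy as the paper: compare $u$ with $v=\gw^x(\xD_r(\xi))$ on $\Gw\cap\prt B_{2r}(\xi)$ via boundary Harnack (Propositions~\ref{maincomp1}, \ref{maincomp}) plus interior Harnack chains, normalize using $\gw^{x_r(\xi)}(\xD_r(\xi))\approx W(x_r(\xi))$ from Lemma~\ref{lem2.1} and \eqref{glob}, and then propagate to all of $\Gw\setminus\overline{B_{2r}(\xi)}$ by the energy/maximum-principle device of Step~3 of Lemma~\ref{lemharn}. The paper's write-up is terser (it applies the boundary Harnack comparison directly to the ratio $u/\gw^x(\xD_r(\xi))$ without splitting into the two cases $d(z)\ge\eta r$ and $d(z)<\eta r$), but the underlying mechanism is identical to yours.
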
  
\begin{proof}
By Propositions \ref{maincomp1}, \ref{maincomp} we have that there exists $C$ such that

$$\frac{1}{C}\frac{u(x_{2r}(\xi))}{w^{x_{2r}(\xi)}(\xD_r(\xi))}\leq \frac{u(x)}{w^{x}(\xD_r(\xi))}\leq C\frac{u(x_{2r}(\xi))}{w^{x_{2r}(\xi)}(\xD_r(\xi))},\quad\forall x\in \xO\cap \partial B_{2r}(\xi).$$
 by Harnack inequality between we have that
$$\frac{1}{C}\frac{u(x_{r}(\xi))}{w^{x_{r}(\xi)}(\xD_r(\xi))}\leq \frac{u(x)}{w^{x}(\xD_r(\xi))}\leq C\frac{u(x_{r}(\xi))}{w^{x_{r}(\xi)}(\xD_r(\xi))},\quad\forall x\in \xO\cap \partial B_{2r}(\xi).$$
Also by Harnack inequality we have that
$$w^{x_{r}(\xi)}(\xD_r(\xi))\geq C w^{x_{\frac{r}{2}}(\xi)}(\xD_r(\xi))> C_0W(x_r(\xi)),$$
where in the last inequality above we have used Lemma \ref{lem2.1}.

Combining all above we have that
$$C^{-1}\frac{u(x_r(\xi))}{W(x_r(\xi))}w^x(\xD_r(\xi))\leq u(x)\leq C\frac{u(x_r(\xi))}{W(x_r(\xi))}w^x(\xD_r(\xi)),\quad\forall x\in \xO\cap \partial B_{2r}(\xi).$$
The result follows by an argument like in step 3 in Lemma \ref{lemharn}.
\end{proof}

\subsection{The Poisson kernel of $\CL_{\xk }$}
In this section we state some properties of the Poisson kernel associated to $\CL_{\xk }.$
\begin{defin}
Fix $\xi\in\partial\xO.$ A function $K$ defined in $\xO$ is called a kernel function at $\xi$ with pole at $x_0\in\xO$ if\smallskip

\noindent(i) $K(\cdot,\xi)$ is $\CL_{\xk }$-harmonic in $\xO,$\smallskip

\noindent(ii) $K(\cdot,\xi)\in C(\overline{\xO}\setminus\{\xi\})$ and for any $\eta\in\prt\Gw\setminus\{\xi\}$
$$\lim_{x\rightarrow \eta}\frac{K(x,\xi)}{W(x)}=0,$$\smallskip
(iii) $K(x,\xi)>0$ for each $x\in\xO$ and $K(x_0,\xi)=1.$
\end{defin}
\begin{prop}
There exists one and only one kernel function for $\CL_{\xk }$ at $\xi$ with pole at $x_0.$
\end{prop}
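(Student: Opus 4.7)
The plan is to construct $K(\cdot,\xi)$ by a Martin-type normalization of Green functions and to verify the three defining conditions using the estimates already established; uniqueness will then follow from an iteration of the boundary Harnack inequality. The main obstacle lies in pinning the kernel down to a single function: the comparison of Lemma \ref{Lemm10} produces any two kernel functions which are comparable up to a fixed multiplicative constant $c_{42}^4$, and upgrading this into equality requires contracting the oscillation of the ratio $K_1/K_2$.

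For existence, pick a sequence $y_n \in \xO$ with $y_n \to \xi$ and set
$$k_n(x) := \frac{G_{\CL_{\xk}}(x,y_n)}{G_{\CL_{\xk}}(x_0,y_n)}.$$
Each $k_n$ is a positive $\CL_{\xk}$-harmonic function in $\xO \setminus \{y_n\}$ with $k_n(x_0) = 1$. The Harnack inequality on compact subsets of $\xO \setminus \{\xi\}$ (which eventually exclude $y_n$), combined with interior elliptic regularity, gives relative compactness of $\{k_n\}$ in $C^2_{loc}(\xO)$; a subsequence converges locally uniformly to a positive $\CL_{\xk}$-harmonic function $K(\cdot,\xi)$ with $K(x_0,\xi) = 1$. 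To verify the vanishing condition (ii) at $\eta \in \partial\xO \setminus \{\xi\}$, fix $\xe > 0$ with $B_{2\xe}(\eta) \cap B_{2\xe}(\xi) = \emptyset$: for $x \in \xO \cap B_\xe(\eta)$ and $y_n$ close to $\xi$, the upper bound in \eqref{alpha-green} gives
$$G_{\CL_{\xk}}(x,y_n) \leq C \frac{d^{\xa_+/2}(x)\, d^{\xa_+/2}(y_n)}{|x-y_n|^{N+\xa_+-2}} \leq C'\, d^{\xa_+/2}(x)\, d^{\xa_+/2}(y_n),$$
while the Green lower bound at the interior point $x_0$ yields $G_{\CL_{\xk}}(x_0,y_n) \geq C''\, d^{\xa_+/2}(y_n)$. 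Taking the ratio and passing to the limit gives $K(x,\xi) \leq C''' d^{\xa_+/2}(x)$ near $\eta$, and since $\xa_+ > \xa_-$ this forces $K(x,\xi)/W(x) \to 0$ uniformly as $x \to \eta$. Positivity is immediate from the Harnack inequality and (iii) is the chosen normalization.

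For uniqueness, let $K_1,K_2$ be two kernel functions normalized at $x_0$. Lemma \ref{Lemm10} applied to each and evaluated at $x_0$ (once $r$ is so small that $x_0 \in \xO \setminus B_{2r}(\xi)$) gives $K_i(x_r(\xi))/W(x_r(\xi)) \approx 1/\xo^{x_0}(\xD_r(\xi))$, so the ratio $f := K_1/K_2$ is bounded on $\xO$. Set $m := \inf_\xO f > 0$; then $u := K_1 - m K_2 \geq 0$ is $\CL_{\xk}$-harmonic in $\xO$. If $m$ is attained at an interior point, the strong maximum principle (Harnack applied to $u$) forces $u \equiv 0$, whence the normalization gives $m = 1$. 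Otherwise the infimum is approached only along sequences tending to $\partial\xO$; Lemma \ref{Lemm10} combined with the boundary vanishing of both $K_i$ at any $\eta \neq \xi$ shows that such sequences must accumulate at $\xi$. One then iterates the boundary Harnack comparison across dyadic scales $r_k = 2^{-k} r_0$, using the doubling property (Theorem \ref{Th10}) of $\xo^x$ to show that the oscillation of $f$ on $\xO \setminus B_{2r_k}(\xi)$ decays geometrically in $k$, forcing $f \equiv 1$. Quantifying this contraction, i.e. promoting the bounded-ratio estimate of Lemma \ref{Lemm10} to a vanishing-oscillation statement for $K_1/K_2$ near $\xi$, is the main technical hurdle.
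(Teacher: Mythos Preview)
The paper's own proof is simply a citation to Theorem 3.1 of Caffarelli--Fabes--Mortola--Salsa \cite{caffa}; what you have written is essentially an attempt to reproduce that argument in the present setting, so the route is the same. Your existence construction via normalized Green functions is correct; just note that the phrase ``since $\xa_+>\xa_-$'' covers only $\xk<\tfrac14$, and for $\xk=\tfrac14$ you should instead observe $d^{1/2}(x)/W(x)=|\log d(x)|^{-1}\to 0$.

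Your uniqueness sketch is incomplete, as you acknowledge, and the dyadic oscillation-decay you propose is more work than needed. The standard CFMS argument short-circuits it: once Lemma \ref{Lemm10} (evaluated at $x_0$) gives a universal two-sided bound $c^{-1}\leq K_1/K_2\leq c$, set $m=\inf_\xO K_1/K_2$ and, assuming $m<1$, form
\[
K_3:=\frac{K_1-mK_2}{1-m}.
\]
Then $K_3$ is nonnegative, $\CL_{\xk}$-harmonic, satisfies $K_3(x_0)=1$, inherits the boundary vanishing at every $\eta\neq\xi$ from $K_1,K_2$, and is strictly positive by the strong maximum principle; hence $K_3$ is again a kernel function. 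Applying the same universal bound to the pair $(K_2,K_3)$ yields $K_2\leq c\,K_3$, i.e.
\[
\frac{K_1}{K_2}\;\geq\; m+\frac{1-m}{c}\;>\;m,
\]
contradicting the definition of $m$. This single self-improvement replaces the dyadic iteration you outline and avoids the issue of locating where the infimum of $K_1/K_2$ is approached, which your sketch does not settle (the boundary vanishing of the $K_i$ at $\eta\neq\xi$ says nothing about the ratio there; it is the boundary Harnack that controls it).
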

\begin{proof}
The proof is the same as the one of Theorem 3.1 in \cite{caffa}.
\end{proof}
\begin{prop}
The kernel function $K_{\CL_{\xk }}(x,\xi),$ is continuous in $\xi$ on the boundary of $\xO.$
\end{prop}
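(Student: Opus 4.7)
\noindent\emph{Proof proposal.} The plan is to fix $x_0\in\Gw$ (the pole) and let $\xi_n\to\xi$ in $\prt\Gw$; I will show that $K_n:=K_{\CL_{\xk }}(\cdot,\xi_n)$ converges to $K_{\CL_{\xk }}(\cdot,\xi)$ locally uniformly in $\Gw$, so in particular pointwise in $x$, giving continuity in $\xi$. The strategy is a standard compactness argument combined with the uniqueness of the kernel function just established.

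\smallskip
\noindent\textbf{Step 1 (compactness in $\Gw$).} Each $K_n$ is a nonnegative $\CL_{\xk }$-harmonic function in $\Gw$ with $K_n(x_0)=1$. By the interior Harnack inequality for $\CL_{\xk }$ applied on compacta of $\Gw$, the family $\{K_n\}$ is locally uniformly bounded; by standard interior elliptic estimates it is precompact in $C^2_{loc}(\Gw)$. Any subsequential limit $\tilde K$ is $\CL_{\xk }$-harmonic in $\Gw$, nonnegative, and normalized $\tilde K(x_0)=1$. By Harnack and connectedness of $\Gw$, $\tilde K>0$ in $\Gw$.

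\smallskip
\noindent\textbf{Step 2 (uniform boundary decay away from $\xi$).} This is the main obstacle, because one must rule out that mass of the $K_n$ escapes to boundary points other than $\xi$. Fix $\eta\in\prt\Gw\setminus\{\xi\}$ and set $\gr=|\eta-\xi|/4$. For $n$ large, $|\xi_n-\eta|>3\gr$, so $K_n$ is $\CL_{\xk }$-harmonic in $\Gw\cap B_{2\gr}(\eta)$ and vanishes on $\prt\Gw\cap B_{2\gr}(\eta)$ in the sense required by (\ref{zero}) or (\ref{zero1}) since $\eta'\neq\xi_n$ there. I then invoke the boundary Harnack principle (Propositions \ref{maincomp1} and \ref{maincomp}) to obtain, for $x\in\Gw\cap B_\gr(\eta)$,
\[
\frac{K_n(x)}{\ei(x)}\;\leq\; c_{18}\,\frac{K_n(y_\eta)}{\ei(y_\eta)},
\]
where $y_\eta\in\Gw\cap B_\gr(\eta)$ is a fixed point with $d(y_\eta)\geq\gr/2$. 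Chaining Harnack balls from $y_\eta$ to $x_0$ yields $K_n(y_\eta)\leq C(\eta)K_n(x_0)=C(\eta)$, with $C(\eta)$ independent of $n$. Since $\ei\approx d^{\ga_+/2}$ while $W\approx d^{\ga_-/2}$ (resp. $\sqrt d|\ln d|$) and $\ga_+>\ga_-$, the ratio $\ei/W\to 0$ at $\prt\Gw$, giving $K_n(x)\leq C(\eta)\ei(x)$ on $\Gw\cap B_\gr(\eta)$ uniformly in $n$.

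\smallskip
\noindent\textbf{Step 3 (identification of $\tilde K$ and conclusion).} Passing to the limit in the bound of Step 2, $\tilde K(x)\leq C(\eta)\ei(x)$ on $\Gw\cap B_\gr(\eta)$, hence $\tilde K$ extends continuously by $0$ to $\prt\Gw\setminus\{\xi\}$ and satisfies $\lim_{x\to\eta}\tilde K(x)/W(x)=0$ for every $\eta\neq\xi$. Combined with Step 1, $\tilde K$ fulfills all three defining properties of a kernel function at $\xi$ with pole $x_0$. By the uniqueness statement of the preceding proposition, $\tilde K=K_{\CL_{\xk }}(\cdot,\xi)$. Since every $C^2_{loc}$-subsequential limit is the same, the full sequence $K_n$ converges locally uniformly in $\Gw$ to $K_{\CL_{\xk }}(\cdot,\xi)$, which proves continuity of $\xi\mapsto K_{\CL_{\xk }}(x,\xi)$.

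\smallskip
The delicate point is Step 2: one needs the boundary Harnack inequality to deliver a bound that is uniform in $n$ and, crucially, dominates $W$ at $\eta$, which is exactly what the $\ei$-vs-$W$ comparison in Propositions \ref{maincomp1}--\ref{maincomp} provides. Everything else is a routine compactness/uniqueness argument.
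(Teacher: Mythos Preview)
Your argument is correct and is precisely the proof that the paper invokes by citation: the paper simply writes ``the proof is the same as the one of Corollary 3.2 in \cite{caffa}'', and the Caffarelli--Fabes--Mortola--Salsa argument is exactly the compactness via interior Harnack plus identification of the limit via the boundary Harnack principle and uniqueness of the kernel function that you spell out. So the approach is the same; you have merely made explicit what the paper leaves to the reference.
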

\begin{proof}
The proof is the same as the one of  Corollary 3.2 in \cite{caffa}.
\end{proof}

We have proved the uniqueness of Poisson kernel. By (\ref{RD}), Theorems \ref{lem2.2*}, \ref{lem2.2} and Proposition \ref{green} we have the following result.
\begin{theorem}\label{poisson}
Assume $0<\xk \leq \frac{1}{4}$. There exists a positive constant $c_{43}$ such that
\be\label{poissonest}\frac{1}{c_{43}}\frac{d^\frac{\xa_+}{2}(y)}{|\xi-y|^{N+\xa_+-2}}\leq K_{\CL_{\xk }}(y,\xi)\leq c_{43}\frac{d^\frac{\xa_+}{2}(y)}{|\xi-y|^{N+\xa_+-2}}.
\ee
\end{theorem}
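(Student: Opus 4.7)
The plan is to combine three ingredients that are already in place: the Radon--Nikodym description (\ref{RD}) of the Poisson kernel, the harmonic-measure estimates of Theorems \ref{lem2.2*} and \ref{lem2.2}, and the two-sided Green-function bound (\ref{greenest}) in Proposition \ref{green}. Fix $y\in\Gw$ and $\xi\in\prt\Gw$ with $y\neq\xi$. Thanks to the doubling property established in Theorem \ref{Th10}, Lebesgue's differentiation theorem for the Radon measures on $\prt\Gw$ yields
\[
K_{\CL_{\xk}}(y,\xi)=\lim_{r\downarrow 0}\frac{\gw^y(\Gd_r(\xi))}{\gw^{x_0}(\Gd_r(\xi))}
\]
for $\gw^{x_0}$-almost every $\xi\in\prt\Gw$. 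Combined with the uniqueness of the kernel function and its continuity in $\xi$ (the two propositions immediately preceding the statement), this identity extends to every $\xi\in\prt\Gw$.

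For $r<\frac{1}{4}\min(|\xi-y|,|\xi-x_0|)$ both $y$ and $x_0$ lie in $\Gw\setminus B_{4r}(\xi)$, so Theorems \ref{lem2.2*} and \ref{lem2.2} apply at the same scale. The normalising prefactors $r^{N-2+\ga_-/2}$ (respectively $r^{N-2+1/2}|\log r|$ when $\xk=\frac{1}{4}$) do not depend on the source point, hence they cancel in the quotient and leave
\[
\frac{\gw^y(\Gd_r(\xi))}{\gw^{x_0}(\Gd_r(\xi))}\approx\frac{G_{\CL_{\xk}}(x_r(\xi),y)}{G_{\CL_{\xk}}(x_r(\xi),x_0)}
\]
with implicit constants depending only on $\Gw$ and $\xk$. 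Next I invoke Proposition \ref{green} with $\ga=\ga_+$: since $d(x_r(\xi))=r\to 0$ while $|x_r(\xi)-z|$ stays comparable to $|\xi-z|>0$ for $z\in\{y,x_0\}$, the second alternative in the $\min$ of (\ref{greenest}) is the smaller one as soon as $r$ is sufficiently small, and
\[
G_{\CL_{\xk}}(x_r(\xi),z)\approx\frac{r^{\ga_+/2}\,d^{\ga_+/2}(z)}{|x_r(\xi)-z|^{N+\ga_+-2}}.
\]
The factor $r^{\ga_+/2}$ cancels in the ratio, $|x_r(\xi)-z|\to|\xi-z|$, and passing to the limit $r\downarrow 0$ gives
\[
K_{\CL_{\xk}}(y,\xi)\approx\frac{|\xi-x_0|^{N+\ga_+-2}}{d^{\ga_+/2}(x_0)}\cdot\frac{d^{\ga_+/2}(y)}{|\xi-y|^{N+\ga_+-2}}.
\]
Since $x_0$ is fixed once and for all and $\xi$ ranges over the compact set $\prt\Gw$, the first factor is bounded above and below by positive constants depending only on $\Gw,x_0,\xk$; these may be absorbed into $c_{43}$.

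The only delicate point is bookkeeping of the smallness conditions on $r$: the harmonic-measure estimate requires $r<\frac{1}{4}\min(|\xi-y|,|\xi-x_0|)$, while the Green-function asymptotics require $r$ small enough that the second term of the $\min$ in (\ref{greenest}) is the smaller one, a quantitative condition depending on $|\xi-y|,|\xi-x_0|,d(y)$ and $d(x_0)$. Both are satisfied simultaneously for $r$ small, and they disappear when passing to the limit. Consequently the final constants in (\ref{poissonest}) depend only on $\Gw,\xk$ and $x_0$, uniformly in $(y,\xi)\in\Gw\ti\prt\Gw$, which is exactly the content of the statement.
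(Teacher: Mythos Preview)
Your proof is correct and follows precisely the route the paper indicates: the paper's own justification is the single sentence ``By (\ref{RD}), Theorems \ref{lem2.2*}, \ref{lem2.2} and Proposition \ref{green} we have the following result,'' and you have faithfully expanded that hint into a complete argument. The use of the doubling property (Theorem \ref{Th10}) to invoke Lebesgue differentiation, followed by the continuity of the kernel in $\xi$ to pass from an a.e.\ identity to a pointwise one, is the natural way to make (\ref{RD}) effective, and your bookkeeping of the smallness conditions on $r$ is accurate.
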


\begin{remark} As in \cite[Remark 3.9]{hunt}, the Martin kernel which is classical defined by
 $$\tilde K_{\CL_{\xk }}(x,\xi)=\lim_{x\rightarrow\xi}\frac{G_{\CL_{\xk }}(x,y)}{G_{\CL_{\xk }}(x,x_0)}\qquad\forall \xi\in\partial\xO,$$
 coincides with the Poisson kernel $K_{\CL_{\xk }}$.
 \end{remark}

\begin{theorem}\label{Lemm11}
Let $u$ be a positive $\CL_{\xk }$-harmonic in the domain $\xO.$ Then $u\in L^1_{\gf_\gk}(\Gw)$ and there exists a unique Radon measure $\xm$ on $\partial\xO$ such that
$$u(x)=\int_{\partial\xO}K_{\CL_{\xk }}(x,\xi)d\xm(\xi).$$
\end{theorem}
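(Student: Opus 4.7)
The plan is to follow the classical Martin--Choquet representation scheme, using as main ingredients the two-sided Martin kernel bounds (Theorem \ref{poisson}), the boundary Harnack principle (Propositions \ref{maincomp1}--\ref{maincomp}), and the comparison estimate of Lemma \ref{Lemm10}. These together identify the Martin boundary of $\CL_\xk$ in $\Gw$ with $\prt\Gw$ itself, each $\xi\in\prt\Gw$ being a minimal boundary point, so the representing measure will live on $\prt\Gw$ rather than on some abstract compactification.

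First I would exhaust $\Gw$ by the smooth subdomains $\Gw_n:=\Gw'_{1/n}$, on which the potential $\xk/d^2$ is bounded and classical elliptic theory applies. For each $n$, the restriction of $u$ to $\Gw_n$ admits the standard Poisson representation
\[
u(x)=\int_{\prt\Gw_n}u(y)\,d\gw^x_{\Gw_n}(y)=\int_{\prt\Gw_n}K_n(x,y)\,d\xm_n(y),\qquad x\in\Gw_n,
\]
where $K_n(x,y):=d\gw^x_{\Gw_n}/d\gw^{x_0}_{\Gw_n}$ and $\xm_n:=u\cdot\gw^{x_0}_{\Gw_n}$ is a finite positive Radon measure on $\prt\Gw_n$ of total mass exactly $u(x_0)$. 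I push $\xm_n$ forward to $\prt\Gw$ by the nearest-point projection (a diffeomorphism from $\prt\Gw_n$ onto $\prt\Gw$ for $n$ large, by smoothness of $\Gw$); after extraction of a subsequence, the resulting measures $\tilde\xm_n$ weak-$*$ converge to a positive Radon measure $\xm$ on $\prt\Gw$ of total mass at most $u(x_0)$. The definition (\ref{martin}) of the Martin kernel, combined with the two-sided estimates (\ref{alpha-green}) and (\ref{harm4}), implies that the normalized kernels $K_n(x,x_{1/n}(\cdot))$ converge to $K_{\CL_\xk}(x,\cdot)$ uniformly on $\prt\Gw$; passing to the limit yields
\[
u(x)=\int_{\prt\Gw}K_{\CL_\xk}(x,\xi)\,d\xm(\xi).
\]

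The integrability $u\in L^1_{\gf_\xk}(\Gw)$ then follows by Fubini from $\gf_\xk\approx d^{\ga_+/2}$ and (\ref{harm4}), using the uniform bound $\int_\Gw d^{\ga_+}(x)|x-\xi|^{2-N-\ga_+}\,dx\leq c$ for $\xi\in\prt\Gw$, which itself reduces, via $d(x)\leq|x-\xi|$, to the Newtonian estimate $\int_\Gw|x-\xi|^{2-N}\,dx\leq c$. Uniqueness of $\xm$ is the standard Martin--Choquet argument: each $K_{\CL_\xk}(\cdot,\xi)$ is minimal, a consequence of the boundary Harnack inequality combined with the single-point uniqueness of the kernel function already proved in this section. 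The main obstacle will be the uniform convergence of the approximating kernels $K_n$ to $K_{\CL_\xk}$ on $\prt\Gw$: since the pole $x_{1/n}(\xi)$ approaches $\xi$ exactly where the Hardy potential is most singular, a naive Harnack-chain argument breaks down. This is overcome by Lemma \ref{Lemm10} together with the doubling property of Theorem \ref{Th10}, which jointly furnish the quantitative comparison with the harmonic measure providing the equicontinuity required to cross to the limit.
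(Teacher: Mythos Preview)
Your proposal is correct and reconstructs precisely the argument the paper has in mind: the paper's own proof is the single sentence ``The proof is the same as the one of Theorem 4.3 in \cite{hunt}'', and what you have sketched is a constructive variant of that Hunt--Wheeden representation. The only stylistic difference is that Hunt--Wheeden proceed via the abstract Martin boundary machinery (identifying the Martin boundary with $\prt\Gw$ and showing each boundary point is minimal, then invoking Choquet), whereas you build the measure explicitly by exhausting $\Gw$ with the subdomains $\Gw'_{1/n}$ and passing to the limit; both routes rest on exactly the ingredients you cite (boundary Harnack, doubling, Lemma~\ref{Lemm10}), so the difference is cosmetic.
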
  
\begin{proof}
The proof is the same as the one of  Theorem 4.3 in \cite{hunt}.
\end{proof}

Actually the measure $\xm$ is the boundary trace of $u$. This boundary trace can be achieved in a {\it dynamic way} as in  \cite[Sect 2]{MV}. We consider a increasing sequence of bounded smooth domains $\{\xO_n\}$ such that $\overline{\xO_n}\subset \xO_{n+1}$, $\cup_n\xO_n=\xO$ and $\mathcal{H}^{N-1}(\xO_n)\rightarrow\mathcal{H}^{N-1}(\xO)$. such a sequence is a
{\it smooth exhaustion} of $\Gw$. For each $n$ the operator $\CL_{\xk }^{\Gw_n}$ defined by
\be\label{redu1}
\CL_{\xk }^{\Gw_n}u=-\xD u-\myfrac{\xk }{d^2(x)}u
\ee
is uniformly elliptic and coercive in $H^1_0(\Gw_n)$ and its first eigenvalue $\gl_{\xk }^{\Gw_n}$ is larger than $\gl_{\xk }$.
If $h\in C(\prt\Gw_n)$ the following problem
\be\label{sub}\BA{lll}
\CL_{\xk }^{\Gw_n}v=0\qquad&\text{in}\;\;\Gw_n\\
\phantom{\CL_{\xk }^{\Gw_n}}
v=h\qquad&\text{on}\;\;\prt\Gw_n
\EA
\ee
admits a unique solution which allows to define the $\CL_{\xk }^{\Gw_n}$-harmonic measure on $\prt\Gw_n$
by
\be\label{redu2}
v(x_0)=\myint{\prt \Gw_n}{}h(y)d\gw^{x_0}_{\Gw_n}(y).
\ee
Thus the Poisson kernel of $\CL_{\xk }^{\Gw_n}$ is
\be\label{redu2'}
K_{\CL_{\xk }^{\Gw_n}}(x,y)=\myfrac{d\gw^{x}_{\Gw_n}}{d\gw^{x_0}_{\Gw_n}}(y)\qquad\forall y\in\prt\Gw_n.
\ee

\begin{prop}\label{22222}
Assume $0<\xk \leq \frac{1}{4}$ and let $x_0\in\xO_1$. Then for every $Z\in C(\overline{\xO}),$
\be\label{2.27}
\lim_{n\rightarrow\infty}\int_{\partial\xO_n}Z(x)W(x)d\gw^{x_0}_{\Gw_n}(x)=\int_{\partial\xO}Z(x)d\gw^{x_0}(x).
\ee
\end{prop}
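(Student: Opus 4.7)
The plan is to interpret both sides of \eqref{2.27} as values of $\CL_\xk$-harmonic functions at $x_0$ and to compare them via the maximum principle on the approximating domains $\Gw_n$.

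Let $Z\in C(\overline\Gw)$. By Propositions \ref{mainlemma1} and \ref{mainlemma2} applied to $h=Z\lfloor_{\prt\Gw}$, there exists a unique $\CL_\xk$-harmonic function $v\in H^1_{loc}(\Gw)$ such that
$$\lim_{x\to y\in\prt\Gw}\frac{v(x)}{W(x)}=Z(y)\qquad\text{uniformly in }y\in\prt\Gw,$$
and by the very definition of $\gw^{x_0}$ one has $v(x_0)=\int_{\prt\Gw}Z\,d\gw^{x_0}$. On the other hand, for each $n$ let $u_n$ be the classical solution of the Dirichlet problem $\CL_\xk u_n=0$ in $\Gw_n$, $u_n=Z\cdot W$ on $\prt\Gw_n$ (which is well posed since $\CL_\xk$ is uniformly elliptic and coercive on $\Gw_n$, $\gl_\xk^{\Gw_n}\geq\gl_\xk>0$). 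Then by \eqref{redu2} one has $u_n(x_0)=\int_{\prt\Gw_n}ZW\,d\gw^{x_0}_{\Gw_n}$, which is precisely the left-hand side of \eqref{2.27}. Since $v$ is itself $\CL_\xk$-harmonic in $\Gw_n$, its own representation via $\gw^{x_0}_{\Gw_n}$ gives
$$v(x_0)-u_n(x_0)=\int_{\prt\Gw_n}\bigl(v(y)-Z(y)W(y)\bigr)\,d\gw^{x_0}_{\Gw_n}(y).$$

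The main step is to estimate this difference. Fix $\ge>0$. By the uniform convergence $v/W\to Z$ at $\prt\Gw$ combined with the uniform continuity of $Z$ on $\overline\Gw$ (and the fact that each $y\in\prt\Gw_n$ has $d(y)\to 0$ uniformly as $n\to\infty$), there exists $n_\ge$ such that for all $n\geq n_\ge$ and all $y\in\prt\Gw_n$
$$\left|\frac{v(y)}{W(y)}-Z(y)\right|\leq\ge,\qquad\text{equivalently}\quad|v(y)-Z(y)W(y)|\leq\ge W(y).$$
Hence, writing $w_n(x_0):=\int_{\prt\Gw_n}W\,d\gw^{x_0}_{\Gw_n}$, we obtain
$$|v(x_0)-u_n(x_0)|\leq \ge\, w_n(x_0)\qquad\forall n\geq n_\ge.$$

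The last ingredient is a uniform bound $w_n(x_0)\leq C$ independent of $n$. Denote by $v_1^*$ the $\CL_\xk$-harmonic function in $\Gw$ furnished by Propositions \ref{mainlemma1}--\ref{mainlemma2} with boundary datum $h\equiv1$, so that $v_1^*(x)/W(x)\to 1$ uniformly as $d(x)\to 0$. Then for $n$ large enough, $v_1^*\geq \tfrac12 W$ on $\prt\Gw_n$, i.e.\ $W\leq 2v_1^*$ on $\prt\Gw_n$; since $\CL_\xk^{\Gw_n}$ admits the classical maximum principle ($\gl_\xk^{\Gw_n}>0$) and $v_1^*$ is $\CL_\xk$-harmonic in $\Gw_n$, we conclude that $w_n\leq 2 v_1^*$ in $\Gw_n$, so $w_n(x_0)\leq 2v_1^*(x_0)$. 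For the remaining finitely many $n$, $\sup_{\prt\Gw_n}W<\infty$ and a direct application of the maximum principle on $\Gw_n$ yields a finite bound. Combining the two estimates, $u_n(x_0)\to v(x_0)=\int_{\prt\Gw}Z\,d\gw^{x_0}$, which is \eqref{2.27}.

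The main obstacle is the last bound on $w_n(x_0)$: one must simultaneously use the uniform convergence of $v_1^*/W$ at $\prt\Gw$ (to compare with $W$ on the inner boundary $\prt\Gw_n$) and the coercivity $\gl_\xk^{\Gw_n}>0$ (to invoke the classical maximum principle on $\Gw_n$, where $-\xk/d^2$ is bounded but not small). All other steps are routine consequences of the Dirichlet theory developed in Section 2.2 and the definitions of $\gw^x$ and $\gw^{x}_{\Gw_n}$.
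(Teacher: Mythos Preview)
Your proof is correct. It rests on exactly the same two ingredients as the paper's argument: (a) the uniform convergence $v/W\to Z\lfloor_{\prt\Gw}$ furnished by Propositions \ref{mainlemma1}--\ref{mainlemma2}, and (b) a uniform bound on $w_n(x_0)=\int_{\prt\Gw_n}W\,d\gw^{x_0}_{\Gw_n}$. Your representation identity $v(x_0)=\int_{\prt\Gw_n}v\,d\gw^{x_0}_{\Gw_n}$ is legitimate since $v\in C^2(\Gw)$ and the Dirichlet problem for $\CL_\gk$ on $\Gw_n$ is classically well posed, and the passage from $v/W\to Z\lfloor_{\prt\Gw}$ to $|v(y)-Z(y)W(y)|\leq\ge W(y)$ on $\prt\Gw_n$ indeed follows from uniform continuity of $Z$ on $\overline\Gw$ together with $\sup_{\prt\Gw_n}d\to 0$ (which holds for any smooth exhaustion).

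The route, however, differs from the paper's. The paper argues by weak-$\ast$ compactness: from the bound on $w_n(x_0)$ it extracts a subsequential narrow limit $\tilde\gw$ of $W\gw^{x_0}_{\Gw_n}$, supported on $\prt\Gw$, and then identifies $\tilde\gw=\gw^{x_0}$ by testing against the same function $v=\BBK_{\CL_\gk}[Z\gw^{x_0}]$ and using $v/W\to Z$ uniformly; uniqueness of the limit then gives full convergence. Your argument bypasses the compactness--identification step entirely by writing down the quantitative estimate $|v(x_0)-u_n(x_0)|\leq\ge\,w_n(x_0)\leq C\ge$ directly. This is more elementary and slightly shorter; the paper's version, on the other hand, makes the measure-level statement $W\gw^{x_0}_{\Gw_n}\rightharpoonup\gw^{x_0}$ explicit as an intermediate object, which is convenient for the subsequent Proposition \ref{Lemm12}.
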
  
\begin{proof}
We recall that   $d\in C^2(\overline{\xO}_{\xe})$ for any $0<\xe\leq \xb_0$ and  let $n_0\in \mathbb{N}$ be such that
$$\hbox{dist}(\partial\xO_n,\partial\xO)<\frac{\xb_0}{2},\quad\forall n\geq n_0.$$
For $n\geq n_0$ let $w_n$ be the solution of
\be\label{sub-n}\BA{lll}
\CL_{\xk }^{\Gw_n}w_n=0\qquad&\text{in}\;\;\Gw_n\\
\phantom{\CL_{\xk }^{\Gw_n}}
w_n=W\qquad&\text{on}\;\;\prt\Gw_n
\EA
\ee
It is straightforward to see that the proof of Propositions \ref{mainlemma1} and \ref{mainlemma2} it is inferred  that there exists a positive constant $c_{44}=c_{44}(\xO,\xk )$ such that
$$\|w_n\|_{L^\infty(\xO_n)}\leq c_{44},\quad \forall n\geq n_0.$$
Furthermore
\be
w_n(x_0)=\int_{\partial\xO_n}W(x)d\gw^{x_0}_{\Gw_n}(x)<c_{45}.\label{wn}
\ee
We extend $\gw^{x_0}_{\Gw_n}$ as a Borel measure on $\overline{\xO}$ by setting $\gw^{x_0}_{\Gw_n}(\overline{\xO}\setminus\xO_n) = 0,$ and keep
the notation $\gw^{x_0}_{\Gw_n}$ for the extension. Because of (\ref{wn}) the sequence $\{W\gw^{x_0}_{\Gw_n}\}$ is bounded in the space $\mathfrak M_b(\overline\Gw)$ of bounded Borel measures in $\overline\Gw$. Thus there exists
a subsequence (still denoted by $\{W(x)\gw^{x_0}_{\Gw_n}\}$ which converges narrowly to some
positive measure, say $\widetilde{\gw}$ which is clearly supported by $\partial\xO$ and satisfies
$\|\widetilde{\gw}\|_{\mathfrak M_b}\leq c_{45}$ as in (\ref{wn}). For every
$Z \in C(\overline{\xO})$ there holds
$$\lim_{n\rightarrow\infty}\int_{\partial\xO_n}Z(x)Wd \gw^{x_0}_{\Gw_n}=\int_{\partial\xO}Zd \widetilde{\gw}.$$
Let $\xz:=Z\lfloor_{\prt\Gw}$ and $$z(x):=\int_{\partial\xO}K_{\CL_{\xk }}(x,y)\xz(y)d \gw^{x_0}(y).$$ Then
$$\lim_{d(x)\to 0}\myfrac{z(x)}{W(x)}=\gz\;\;\text{ and }\;\;z(x_0)=\int_{\partial\xO}\gz d \gw^{x_0}.
$$

By Propositions \ref{mainlemma1} and \ref{mainlemma2}, $\frac{z}{W}\in C(\overline{\xO})$. Since $\frac{z}{W}\lfloor_{\prt\Gw_n}$ converges uniformly to $\gz$, there holds
$$z(x_0)=\int_{\partial\xO_n}z\lfloor_{\prt\Gw_n}d \gw^{x_0}_{\Gw_n}=\int_{\partial\xO_n}W\frac{z\lfloor_{\prt\Gw_n}}{W}d \gw^{x_0}_{\Gw_n}\to \int_{\partial\xO}\gz d \tilde\gw\;\text{ as }\;n\to\infty.$$
It follows
$$\int_{\partial\xO}\xz d \widetilde{\gw}=\int_{\partial\xO}\xz d \gw^{x_0},\quad\forall\xz\in C(\partial\xO). $$
Consequently $\widetilde{\gw}=d \gw^{x_0}.$ Because the limit does not depend on the
subsequence it follows that the whole sequence ${W(x)d\gw^{x_0}_{\Gw_n}}$ converges weakly to $w.$ This
implies \eqref{2.27}.
\end{proof}

In the same way we have
\begin{prop}\label{Lemm12}
Let $x_0\in\xO_1$ and $\xm\in\mathfrak{M}(\partial\xO).$ Put $$v:=\int_{\partial\xO}K_{\CL_{\xk }}(x,y)d\xm(y),$$ then for every $Z\in C(\overline{\xO}),$
\be
\lim_{n\rightarrow\infty}\int_{\partial\xO_n}Z(x)v d \gw^{x_0}_{\Gw_n}=\int_{\partial\xO}Z(x)d \xm.\label{2.28}
\ee
\end{prop}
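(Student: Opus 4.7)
The plan is to apply Fubini's theorem to convert the statement into a pointwise-convergence assertion on $\prt\Gw$, and then invoke dominated convergence, with Proposition \ref{22222} as the crucial input. By decomposing $\gm$ into positive and negative parts, I may assume $\gm\geq 0$. Fubini's theorem, justified by the total mass identity $v(x_0)=\gm(\prt\xO)<\infty$, gives
\[
\myint{\prt\xO_n}{}Z(x)v(x)d\gw^{x_0}_{\Gw_n}(x)=\myint{\prt\xO}{}F_n(y)d\gm(y),\qquad F_n(y):=\myint{\prt\xO_n}{}Z(x)K_{\CL_{\xk }}(x,y)d\gw^{x_0}_{\Gw_n}(x).
\]
For each $y\in\prt\xO$, since $K_{\CL_{\xk }}(\cdot,y)\in C(\overline{\xO_n})$ (as $y\notin\overline{\xO_n}$) and is $\CL_{\xk }$-harmonic in $\xO_n$, the mean-value property together with the normalization $K_{\CL_{\xk }}(x_0,y)=1$ yield $\int_{\prt\xO_n}K_{\CL_{\xk }}(\cdot,y)d\gw^{x_0}_{\Gw_n}=1$; hence $|F_n(y)|\leq\|Z\|_{L^\infty(\overline\xO)}$ and $F_n(y)-Z(y)=\int_{\prt\xO_n}(Z(x)-Z(y))K_{\CL_{\xk }}(x,y)d\gw^{x_0}_{\Gw_n}(x)$.

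The heart of the argument is the pointwise convergence $F_n(y)\to Z(y)$ for every $y\in\prt\xO$. Given $\xe>0$, choose $\chi\in C(\overline\xO)$ with $0\leq\chi\leq 1$, $\chi\equiv 0$ on $B_\xe(y)$ and $\chi\equiv 1$ on $\overline\xO\setminus B_{2\xe}(y)$. By Theorem \ref{poisson} and part (ii) of the kernel function definition, $K_{\CL_{\xk }}(\cdot,y)/W$ is bounded on $\overline\xO\setminus B_\xe(y)$ and extends continuously there with boundary value $0$ on $\prt\xO\setminus B_\xe(y)$; combined with the vanishing of $\chi$ on $B_\xe(y)$, the product
\[
\Phi(x):=(Z(x)-Z(y))\chi(x)\myfrac{K_{\CL_{\xk }}(x,y)}{W(x)}
\]
belongs to $C(\overline\xO)$ and has zero boundary trace on $\prt\xO$. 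Proposition \ref{22222} applied to $\Phi$ then yields $\int_{\prt\xO_n}(Z-Z(y))\chi K_{\CL_{\xk }}(\cdot,y)d\gw^{x_0}_{\Gw_n}\to 0$. Splitting $F_n(y)-Z(y)$ at $\prt\xO_n\cap B_{2\xe}(y)$, and using the mass-one identity, the contribution from $B_{2\xe}(y)\cap\prt\xO_n$ together with the one from the annulus $B_{2\xe}(y)\setminus B_\xe(y)$ is bounded by $2\gr_Z(2\xe)$, where $\gr_Z$ is the modulus of continuity of $Z$, while the complement integral tends to $0$. Letting $\xe\to 0$ gives $F_n(y)\to Z(y)$, and \eqref{2.28} then follows from dominated convergence applied to $\int F_n d\gm$.

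The main obstacle is the construction of the continuous test function $\Phi$: it relies on the precise decay rate of the Martin kernel relative to $W$ outside a neighborhood of the pole $y$, quantified by Theorem \ref{poisson} and the defining kernel-function properties, to make Proposition \ref{22222} applicable to the far-from-$y$ part of the integral; the near-$y$ contribution is then controlled by the continuity of $Z$ together with the normalization $\int K_{\CL_{\xk }}(\cdot,y)d\gw^{x_0}_{\Gw_n}=1$.
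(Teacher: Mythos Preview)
Your proof is correct. The paper does not give its own proof of this proposition; it simply writes ``The proof is same as the proof of Lemma 2.2 in \cite{MV} and we omit it.'' Your argument via Fubini, the pointwise identity $\int_{\prt\Gw_n}K_{\CL_\gk}(\cdot,y)\,d\gw^{x_0}_{\Gw_n}=K_{\CL_\gk}(x_0,y)=1$, and dominated convergence is a clean and self-contained route; the key step of feeding $\Gf=(Z-Z(y))\chi\,K_{\CL_\gk}(\cdot,y)/W$ into Proposition~\ref{22222} to kill the far-from-$y$ contribution is exactly the right idea. One cosmetic remark: in the case $\gk=\frac14$ the weight $W(x)=d^{1/2}|\log d|$ could vanish in the deep interior if $d(x)=1$ somewhere, so $\Gf$ may fail to be globally continuous; this is harmless since $\prt\Gw_n$ eventually lies in $\{d<\tfrac12\}$, and one can either scale the domain, replace $|\log d|$ by $|\log(d/D_0)|$ as done elsewhere in the paper, or multiply $\Gf$ by a further cutoff supported near $\prt\Gw$ without changing the integrals.
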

\begin{proof}
The proof is same as the proof of Lemma 2.2 in \cite{MV} and we omit it.
\end{proof}
\medskip

The next result is an analogous of Green formula for positive $\CL_\gk$-harmonic functions.

\begin{prop}\label{trlin}
Let $v$ be a positive $\CL_\gk$-harmonic function in $\Gw$ with boundary trace $\gm$. Let $Z\in C^2(\overline\Gw)$ and
$\tilde G\in C(\Gw)$ which coincides with $G_{\CL_\gk}(x_0,.)$ in $\Gw_\gd$ for some $0<\gd<\gb_0$ and some $x_0\notin
\overline\Gw_{\gb_0}$.  Assume
\be\label{action0}
|\nabla \tilde G.\nabla Z|\leq c'_{45}\gf_\gk.
\ee
Then, if we set $\gz=Z\tilde G$, there holds
\be\label{action}
\int_{\Gw}v\CL_\gk\gz dx=\myint{\prt\Gw}{}Zd\gm.
\ee
\end{prop}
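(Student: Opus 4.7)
The plan is to reduce (\ref{action}) to the pointwise identity $Z(\xi)=\int_\Gw K_{\CL_{\gk }}(y,\xi)\,\CL_{\gk }\gz(y)\,dy$ for each $\xi\in\prt\Gw$, and then integrate this against $d\gm$ using the Martin representation $v=\int_{\prt\Gw} K_{\CL_{\gk }}(\cdot,\xi)\,d\gm(\xi)$ from Theorem~\ref{Lemm11}. First I would verify that $v\,\CL_{\gk }\gz\in L^1(\Gw)$: expanding
$$\CL_{\gk }(Z\tilde G)=-\tilde G\,\Gd Z-2\nabla\tilde G\cdot\nabla Z+Z\,\CL_{\gk }\tilde G,$$
and using $\CL_{\gk }\tilde G=0$ in $\Gw_\gd$, the hypothesis $|\nabla\tilde G\cdot\nabla Z|\leq c'_{45}\gf_{\gk }$ combined with $\tilde G\approx\gf_{\gk }$ near $\prt\Gw$ yields $|\CL_{\gk }\gz|\lesssim\gf_{\gk }$ in $\Gw_\gd$ and boundedness elsewhere, and $v\in L^1_{\gf_{\gk }}(\Gw)$ (from Theorem~\ref{Lemm11}) supplies the rest.

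The central step is the Green representation
$$\gz(x)=\int_\Gw G_{\CL_{\gk }}(x,y)\,\CL_{\gk }\gz(y)\,dy\qquad\forall x\in\Gw.$$
Setting $\tilde\gz$ equal to the right-hand side, one has $\CL_{\gk }\tilde\gz=\CL_{\gk }\gz$, so $\tilde\gz-\gz$ is $\CL_{\gk }$-harmonic in $\Gw$. Using the Green kernel bound (\ref{alpha-green}) together with $|\CL_{\gk }\gz|\lesssim\gf_{\gk }$, one checks that $\tilde\gz(x)=O(d(x)^{\ga_+/2})$ as $x\to\prt\Gw$; combined with the fact that $\gz=Z\tilde G=O(d^{\ga_+/2})$ and $d^{\ga_+/2}/W\to 0$ at $\prt\Gw$ in both subcases $\gk<\tfrac14$ and $\gk=\tfrac14$, the maximum principles (Propositions~\ref{max} and \ref{max1}) applied to $\pm(\tilde\gz-\gz)$ force $\tilde\gz\equiv\gz$ in $\Gw$. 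This is the principal obstacle, as it requires controlling the boundary decay of the Green potential of $\CL_{\gk }\gz$ sharply enough to invoke the maximum principle for the class $W$.

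Finally, for fixed $\xi\in\prt\Gw$, divide the Green representation by $G_{\CL_{\gk }}(x,x_0)$. Since $\tilde G(x)=G_{\CL_{\gk }}(x,x_0)$ in $\Gw_\gd$, the left-hand side equals $Z(x)\to Z(\xi)$ as $x\to\xi$ through $\Gw_\gd$, while the Martin kernel characterization (remark after Theorem~\ref{poisson}) gives $G_{\CL_{\gk }}(x,y)/G_{\CL_{\gk }}(x,x_0)\to K_{\CL_{\gk }}(y,\xi)$. Dominated convergence, with the integrand uniformly majorized by a multiple of $K_{\CL_{\gk }}(y,\xi)\gf_{\gk }(y)$ via (\ref{harm4}), delivers the pointwise identity $Z(\xi)=\int_\Gw K_{\CL_{\gk }}(y,\xi)\,\CL_{\gk }\gz(y)\,dy$. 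Integrating against $d\gm(\xi)$ and applying Fubini, justified by $v\,|\CL_{\gk }\gz|\in L^1(\Gw)$, produces (\ref{action}).
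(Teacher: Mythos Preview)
Your approach is correct in spirit but differs from the paper's proof and has one technical point that needs more care.

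The paper argues via a smooth exhaustion $\{\Gw_j\}$ of $\Gw$: it builds $C^\infty$ functions $\tilde G_j$ coinciding with $G^{\Gw_j}_{\CL_\gk}(x_0,\cdot)$ near $\prt\Gw_j$, sets $\gz_j=Z\tilde G_j$, integrates by parts on $\Gw_j$ (where $\gz_j$ vanishes on $\prt\Gw_j$ and $-\prt_{\bf n}\gz_j=ZP^{\Gw_j}_{\CL_\gk}(x_0,\cdot)$), and obtains
\[
\int_{\Gw_j} v\,\CL_\gk\gz_j\,dx=\int_{\prt\Gw_j} vZ\,d\gw^{x_0}_{\Gw_j}.
\]
The right-hand side converges to $\int_{\prt\Gw}Z\,d\gm$ by Proposition~\ref{Lemm12}, and the left-hand side converges using $v\in L^1_{\gf_\gk}(\Gw)$ together with the uniform control $|\CL_\gk\gz_j|\lesssim\gf_\gk$. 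This is short because it leans directly on the dynamic trace result already established.

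Your route via the pointwise identity $Z(\xi)=\int_\Gw K_{\CL_\gk}(y,\xi)\,\CL_\gk\gz(y)\,dy$ and Fubini against the Martin representation is a legitimate alternative, and the Green-representation step for $\gz$ is fine. The weak spot is the final limit: the ratio $G_{\CL_\gk}(x,y)/G_{\CL_\gk}(x,x_0)$ is \emph{not} uniformly majorized by $C\,K_{\CL_\gk}(y,\xi)$ for $x$ near $\xi$; near the diagonal $y\approx x$ the Green singularity $|x-y|^{-(N-2)}$ beats the Martin kernel bound. What does hold is that the integral over $\{|y-\xi|<r\}$ is $O(r^2)$ uniformly in $x$ near $\xi$ (using $|\CL_\gk\gz|\lesssim\gf_\gk$ and the Green estimates), while on $\{|y-\xi|\geq r\}$ you do get a fixed dominant and can pass to the limit. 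So the conclusion is salvageable with a split argument rather than a single dominated-convergence step. The paper's exhaustion method sidesteps this issue entirely.
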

\begin{proof} Let $\{\Gw_j\}$ be a smooth exhaustion of $\Gw$ with Green kernel $G^{\Gw_j}_{\CL_\gk}$ and Poisson kernel
$P^{\Gw_j}_{\CL_\gk}=-\prt_{\bf n}G^{\Gw_j}_{\CL_\gk}$. We assume that $j\geq j_0$ where  $\overline\Gw'_\gd\subset \Gw_j$. Set $\gz_j=Z\tilde G_j$, where the functions $\tilde G_j$ are $C^\infty$ in $\Gw_j$, coincide with $G^{\Gw_j}_{\CL_\gk}(x_0,.)$ in $\Gw_j\cap\overline\Gw_\gd$ and satisfy $\tilde G_j\to \tilde G$ in $C^2(\Gw)$-loc and such that $|\nabla \tilde G_j.\nabla Z|\leq c'_{45}\gf_\gk$.
$$
\int_{\Gw_j}v\CL_\gk\gz_j dx=-\int_{\prt\Gw_j}v\frac{\prt \gz_j}{\prt {\bf n}} dS=
-\int_{\prt\Gw_j}vZ\frac{\prt \tilde G_j}{\prt {\bf n}} dS=\int_{\prt\Gw_n}vZP^{\Gw_j}_{\CL_\gk}(x_0,.)dS=\int_{\prt\Gw_j}vZd\gw^{x_0}_{\Gw_j}.
$$
By (\ref{2.28})
$$\int_{\prt\Gw_j}vZd\gw^{x_0}_{\Gw_j}\to \int_{\partial\xO}Z(x)d \xm\quad\text{as }\,j\to\infty.
$$
Next
$$\CL_\gk\gz_j=Z\CL_\gk\tilde G_j +\tilde G_j\Gd Z+2\nabla \tilde G_j.\nabla Z.
$$
Since $v\in L^1_{\gf_\gk}(\Gw)$, the proof follows .
\end{proof}

Similarly we can prove
\begin{prop}\label{remark2}
Let $v$ be a positive $\CL_\gk$-harmonic function in $\Gw$ with boundary trace $\gm$. Let $0\leq Z\in C^2(\overline\Gw)$ satisfy
\be\nonumber
|\nabla \tilde \ei.\nabla Z|\leq c'_{45}\gf_\gk.
\ee
Then, if we set $\gz=Z\ei$, there holds
\be\nonumber
\int_{\Gw}v\CL_\gk\gz dx\geq c_0\myint{\prt\Gw}{}Zd\gm,
\ee
where the constant $c_0>0$ depends on $\xO,\;N$ and $\xk.$
\end{prop}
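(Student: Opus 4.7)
The plan is to mimic the proof of Proposition \ref{trlin}, replacing the Green-kernel modification $\tilde G$ by an eigenfunction approximation of $\ei$. The equality in Proposition \ref{trlin} becomes an inequality here because the boundary Poisson-density of $\tilde G$ on $\prt\Gw_n$ is exactly $d\gw^{x_0}_{\Gw_n}$, whereas that of an eigenfunction approximation is only comparable to it from above and below.

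Fix a smooth exhaustion $\{\Gw_n\}$ with $x_0\in\Gw_1$ and $\mathrm{dist}(\prt\Gw_n,\prt\Gw)<\gb_0/2$. For each $n$ let $\tilde\ei_n$ be the positive first eigenfunction of $\CL_\gk$ in $\Gw_n$, normalized by $\tilde\ei_n(x_0)=\ei(x_0)$. Because $\overline{\Gw_n}\subset\Gw$, the potential $\gk/d^2$ is smooth and bounded there, so $\tilde\ei_n\in C^2(\overline{\Gw_n})$, $\tilde\ei_n\lfloor_{\prt\Gw_n}=0$, and by monotone variational arguments $\xl_\gk^{\Gw_n}\to\xl_\gk$ and $\tilde\ei_n\to\ei$ in $C^2_{\mathrm{loc}}(\Gw)$. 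Set $\gz_n=Z\tilde\ei_n$. Since $\gz_n\lfloor_{\prt\Gw_n}=0$, Green's second identity combined with $\CL_\gk v=0$ in $\Gw$ gives
\[
\int_{\Gw_n} v\CL_\gk\gz_n\,dx \;=\; -\int_{\prt\Gw_n} v\,\frac{\prt\gz_n}{\prt {\bf n}}\,dS \;=\; \int_{\prt\Gw_n} vZ\Bigl(-\frac{\prt\tilde\ei_n}{\prt {\bf n}}\Bigr)\,dS,
\]
and Hopf's lemma applied inside $\overline{\Gw_n}$ yields $-\prt_{\bf n}\tilde\ei_n>0$ on $\prt\Gw_n$.

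The key step is the uniform boundary comparison
\[
-\frac{\prt\tilde\ei_n}{\prt {\bf n}}(y) \;\geq\; c_0\Bigl(-\frac{\prt G^{\Gw_n}_{\CL_\gk}(x_0,\cdot)}{\prt {\bf n}}\Bigr)(y) \qquad\forall y\in\prt\Gw_n,
\]
with $c_0>0$ independent of $n$. Both functions are positive in $\Gw_n$, vanish on $\prt\Gw_n$, and satisfy an equation of the form $\CL_\gk u = f$ with $f$ either zero or proportional to $u$. At an interior reference point $y_1\in\Gw_1$ the quotient $\tilde\ei_n(y_1)/G^{\Gw_n}_{\CL_\gk}(x_0,y_1)$ is bounded below by a positive constant, uniformly in $n$, thanks to the two-sided estimate (\ref{alpha-green}) and the fact that $\ei\approx d^{\alpha_+/2}\approx G_{\CL_\gk}(x_0,\cdot)$ near $\prt\Gw$; the boundary Harnack inequalities (Propositions \ref{maincomp} and \ref{maincomp1}) then transport this lower bound up to $\prt\Gw_n$. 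Equivalently, $-\prt_{\bf n}\tilde\ei_n\,dS \geq c_0\,d\gw^{x_0}_{\Gw_n}$ as measures, so
\[
\int_{\Gw_n} v\CL_\gk\gz_n\,dx \;\geq\; c_0\int_{\prt\Gw_n} vZ\,d\gw^{x_0}_{\Gw_n}.
\]

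Finally I pass to the limit $n\to\infty$. By Proposition \ref{Lemm12} the right-hand side tends to $c_0\int_{\prt\Gw}Z\,d\gm$. On the left, using
\[
\CL_\gk\gz_n \;=\; \xl_\gk^{\Gw_n} Z\tilde\ei_n - (\Gd Z)\tilde\ei_n - 2\nabla Z.\nabla\tilde\ei_n,
\]
pointwise convergence on $\Gw$ is immediate, and the integrability hypothesis $|\nabla\tilde\ei.\nabla Z|\leq c'_{45}\gf_\gk$ together with $v\in L^1_{\gf_\gk}(\Gw)$ (Theorem \ref{Lemm11}) provides the $\gf_\gk v$-dominated majorant needed to justify dominated convergence, delivering the desired inequality.

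The principal obstacle is the uniform-in-$n$ boundary comparison of $\tilde\ei_n$ with $G^{\Gw_n}_{\CL_\gk}(x_0,\cdot)$ near $\prt\Gw_n$. Because the Hardy potential $\gk/d^2$ blows up on $\prt\Gw$, and hence becomes arbitrarily large on $\prt\Gw_n$ as $n\to\infty$, classical uniformly elliptic Harnack estimates are insufficient; one must exploit the explicit $d^{\alpha_+/2}$ boundary asymptotics and the sub/super-solutions of \cite[Lemma 2.8]{1} already underlying Propositions \ref{maincomp}--\ref{maincomp1} in order to obtain a constant $c_0$ that depends only on $\Gw$, $N$ and $\gk$.
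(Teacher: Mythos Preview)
Your approach is essentially what the paper has in mind: the paper simply says ``Similarly we can prove'' after Proposition~\ref{trlin}, so the argument is exactly the exhaustion-and-Green-identity scheme you describe, with $\gf_\gk$ playing the role of $\tilde G$ and the equality becoming an inequality because the normal derivative of the eigenfunction approximation is only one-sidedly comparable to the Poisson density.

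One remark on the step you flag as the principal obstacle. You propose transporting the interior comparison $\tilde\ei_n\gtrsim G^{\Gw_n}_{\CL_\gk}(x_0,\cdot)$ to $\prt\Gw_n$ via the boundary Harnack inequalities of Propositions~\ref{maincomp}--\ref{maincomp1}. That is awkward for two reasons: those propositions are stated for $\CL_\gk$-harmonic functions, whereas $\tilde\ei_n$ satisfies $\CL_\gk\tilde\ei_n=\xl_\gk^{\Gw_n}\tilde\ei_n$; and, as you note, the constants in a uniformly-elliptic boundary Harnack on $\Gw_n$ would a priori depend on $\|\gk/d^2\|_{L^\infty(\Gw_n)}$, which blows up. Both difficulties disappear if you replace boundary Harnack by a direct maximum-principle comparison. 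Since $\CL_\gk\tilde\ei_n=\xl_\gk^{\Gw_n}\tilde\ei_n>0$, the function $\tilde\ei_n$ is $\CL_\gk$-superharmonic in $\Gw_n$. Fix $\gr>0$ with $\overline{B_\gr(x_0)}\subset\Gw_1$; on $\prt B_\gr(x_0)$ one has $G^{\Gw_n}_{\CL_\gk}(x_0,\cdot)\le G_{\CL_\gk}(x_0,\cdot)\le C(\gr)$ while $\tilde\ei_n\to\ei>0$ uniformly there, so $\tilde\ei_n\ge c_0 G^{\Gw_n}_{\CL_\gk}(x_0,\cdot)$ on $\prt B_\gr(x_0)$ for all large $n$ with $c_0$ independent of $n$. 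The difference $w:=\tilde\ei_n-c_0 G^{\Gw_n}_{\CL_\gk}(x_0,\cdot)$ then satisfies $\CL_\gk w\ge 0$ in $\Gw_n\setminus\overline{B_\gr(x_0)}$, $w\ge 0$ on $\prt B_\gr(x_0)$, $w=0$ on $\prt\Gw_n$; the maximum principle (valid since $\CL_\gk$ is coercive on $\Gw_n$) gives $w\ge 0$, hence $-\prt_{\bf n}\tilde\ei_n\ge c_0\,P^{\Gw_n}_{\CL_\gk}(x_0,\cdot)$ on $\prt\Gw_n$ with $c_0=c_0(\Gw,N,\gk)$. This is simpler and sidesteps the uniformity issue you raise.
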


\section{The nonlinear problem with measures data}
\setcounter{equation}{0}
\subsection{The linear boundary value problem with $L^1$ data}

In the sequel we denote by $\xo=\xo^{x_0}$ the $\CL_{\xk }$-harmonic measure in $\xO$, for some fixed $x_0\in\xO$ and by $\mathfrak{M}_{\ei}(\xO)$ be the space of Radon measures $\xn$ in $\xO$ such that $\ei d|\xn|$ is a bounded measure. We also denote by  $\mathfrak{M}(\prt\xO)$ the space of Radon measures on $\prt\xO$ with respective norms $\|\gn\|_{\mathfrak{M}_{\ei}(\xO)}$ and
$\|\gm\|_{\mathfrak{M}(\prt\xO)}$. Their respective positive cones are denoted by $\mathfrak{M}^+_{\ei}(\xO)$ and $\mathfrak{M}^+(\prt\xO)$.
By Fubini's theorem and (\ref{greenest}), for any $\xn\in \mathfrak{M}_{\ei}(\xO)$ we can define
$$\mathbb{G}_{\CL_{\xk }}[\xn](x)=\int_{\xO}G_{\CL_{\xk }}(x,y)d\xn(y),$$
and  we have
 \be\label{poi1}
\|\mathbb{G}_{\CL_{\xk }}[\xn]\|_{L_{\ei}^1(\xO)}\leq c_{46}\|\gn\|_{\mathfrak{M}_{\ei}(\xO)}.
\ee
If $\gm\in \mathfrak{M}(\partial\xO),$  we set
$$\mathbb{K}_{\CL_{\xk }}[\gm](x)=\int_{\partial\xO}K_{\CL_{\xk }}(x,y)d\xm(y),$$
 \be\label{poi2}
\|\mathbb{K}_{\CL_{\xk }}[\gm]\|_{L_{\ei}^1(\xO)}\leq c_{47}\|\gm\|_{\mathfrak{M}(\prt\xO)}.
\ee
In the above inequalities $c_{46}$ and $c_{47}$ are positive constants depending on $\Gw$ and $\gk$.

For $0<\xk \leq\frac{1}{4}$, we define the space of test functions $\mathbf{X}(\xO)$ by
\be\label{testXO}\BA {ll}
\mathbf{X}(\xO)=\left\{\eta\in H^1_{loc}(\Gw):\frac{\eta}{d^{\frac{\xa_+}{2}}}\in H^1(\xO,d^{\xa_+}dx)\,,\;(\gf_{\xk })^{-1}\CL_{\xk }\eta\in L^{\infty}(\Omega)\right\}.
\EA\ee
The next statement follows immediately from Propositions (\ref{reglemma}) and (\ref{reglemma1}).
\begin{lemma} \label{ETA}Let $0<\gk\leq \frac{1}{4}$. If $m\in L^\infty(\Gw)$, the solution $\eta_m$ of
\begin{equation}\label{T*1}\BA {lll}
\CL_\gk\eta_m=m\gf_{\xk }\qquad&\text{in }\;\Gw\\
\phantom{\CL_\gk}
\eta_m=0\qquad&\text{on }\;\prt\Gw
\EA\end{equation}
obtained by Propositions (\ref{reglemma}) and (\ref{reglemma1}) with $f_0=m$ and $h=0$ belongs to $\mathbf{X}(\xO)$. Furthermore
\begin{equation}\label{T0}
- \frac{\|m_-\|_{L^\infty(\Gw)}}{\gl_\gk}\gf_\gk\leq -\eta_{m_-}\leq\eta_m\leq \eta_{m_+}\leq
\frac{\|m_+\|_{L^\infty(\Gw)}}{\gl_\gk}\gf_\gk.
\end{equation}
\end{lemma}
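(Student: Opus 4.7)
The plan is to obtain $\eta_m$ as a direct application of Propositions~\ref{reglemma} and \ref{reglemma1}, and then to deduce the pointwise bounds by combining the eigenfunction identity $\CL_\kappa \phi_\kappa = \lambda_\kappa \phi_\kappa$ with the comparison and maximum principles already established earlier in this section.

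First I would note that since $m\phi_\kappa \in L^\infty(\Omega) \subset L^q(\Omega)$ for every $q \in [1,\infty]$, I can invoke Proposition~\ref{reglemma} when $0<\kappa < \tfrac14$, and Proposition~\ref{reglemma1} when $\kappa = \tfrac14$, with right-hand side $f_0 := m\phi_\kappa$ and boundary datum $h := 0$. This produces a unique $\eta_m \in H^1_{loc}(\Omega)$ with $d^{-\alpha_+/2}\eta_m \in H^1(\Omega,\, d^{\alpha_+}dx)$ solving \eqref{T*1}. The first condition defining $\mathbf{X}(\Omega)$ is precisely this weighted Sobolev bound (using $\phi_\kappa \approx d^{\alpha_+/2}$), while the second is immediate, since by construction $\phi_\kappa^{-1}\CL_\kappa \eta_m = m \in L^\infty(\Omega)$.

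For the outer inequalities in \eqref{T0}, I would introduce the barriers $V_\pm := \frac{\|m_\pm\|_{L^\infty(\Omega)}}{\lambda_\kappa}\phi_\kappa \mp \eta_{m_\pm}$. A direct computation using $\CL_\kappa \phi_\kappa = \lambda_\kappa \phi_\kappa$ gives
\[
\CL_\kappa V_\pm \;=\; \bigl(\|m_\pm\|_{L^\infty(\Omega)} \mp m_\pm\bigr)\phi_\kappa \;\geq\; 0,
\]
so $-V_\pm$ is locally $\CL_\kappa$-subharmonic. Because $\phi_\kappa \approx d^{\alpha_+/2}$ and $\eta_{m_\pm}$ has zero boundary data in the sense supplied by the regularity propositions, both quantities decay faster than $W(x)$ at $\partial\Omega$, so $-V_\pm$ satisfies the vanishing hypothesis of Proposition~\ref{max} (respectively \ref{max1}). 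The maximum principle then yields $V_\pm \geq 0$, which is exactly the outer pair of inequalities in \eqref{T0}.

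For the middle inequality $-\eta_{m_-} \leq \eta_m \leq \eta_{m_+}$, I would apply the linear comparison results of Propositions~\ref{comp}/\ref{comp1} to the differences $\eta_{m_+}-\eta_m$ and $\eta_m + \eta_{m_-}$, noting that they satisfy $\CL_\kappa(\eta_{m_+}-\eta_m) = (m_+-m)\phi_\kappa \geq 0$ and $\CL_\kappa(\eta_m+\eta_{m_-}) = m_+\phi_\kappa \geq 0$, both with vanishing boundary data in the required sense and with the weighted Sobolev regularity provided in the first step. There is no substantial obstacle in the argument; the only point requiring a small verification is that $\phi_\kappa$ itself decays faster than $W$ at $\partial\Omega$ (which holds since $\phi_\kappa/W \approx d^{\sqrt{1-4\kappa}}$ if $\kappa<\tfrac14$ and $\phi_\kappa/W\approx 1/|\log d|$ if $\kappa=\tfrac14$), so that the Hardy-tailored maximum principle, in which subsolutions are weighed against $W$ rather than against $1$, is genuinely applicable to $-V_\pm$.
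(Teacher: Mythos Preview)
Your proof is essentially correct and is precisely the natural elaboration of what the paper leaves implicit (the paper gives no proof, only the remark that the lemma ``follows immediately'' from Propositions~\ref{reglemma} and~\ref{reglemma1}). Two small slips are worth fixing. First, both outer inequalities in \eqref{T0} have the \emph{same} form $\eta_{m_\pm}\le \frac{\|m_\pm\|_{L^\infty}}{\lambda_\kappa}\phi_\kappa$, so the correct barrier in each case is $V_\pm:=\frac{\|m_\pm\|_{L^\infty}}{\lambda_\kappa}\phi_\kappa-\eta_{m_\pm}$; your $\mp$ makes $V_-\ge 0$ yield only $\eta_{m_-}\ge -\frac{\|m_-\|_{L^\infty}}{\lambda_\kappa}\phi_\kappa$, which is not the leftmost inequality. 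Second, Propositions~\ref{comp}/\ref{comp1} concern $\CL_\kappa$-\emph{harmonic} functions, whereas the differences $\eta_{m_+}-\eta_m$ and $\eta_m+\eta_{m_-}$ satisfy $\CL_\kappa(\cdot)\ge 0$; the right tool for the middle inequalities is again the maximum principle (Propositions~\ref{max}/\ref{max1}), exactly as you used for the outer ones. Incidentally, you are right that one must take $f_0=m\phi_\kappa$ (not $f_0=m$ as written in the statement) when invoking Propositions~\ref{reglemma}/\ref{reglemma1}.
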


In the next Proposition we give some key estimates for the weak solutions of
\begin{equation}\label{T2}\BA {lll}
\CL_\gk u=f\qquad&\text{in }\;\Gw\\
\phantom{\CL_\gk}
u=h\qquad&\text{on }\;\prt\Gw
\EA\end{equation}
\begin{prop}\label{2.6}
For any $(f,h)\in L_{\ei}^1(\xO)\times L^1(\prt\xO,d\gw)$ there exists a unique $u:=u_{f,h}\in L_{\ei}^1(\xO)$ such that
\be\label{2.42}
\int_{\xO}u \CL_{\xk }\eta dx=\int_\xO f\eta dx+\myint{\Gw}{}\mathbb{K}_{\CL_{\xk }}[h\gw]\CL_{\xk }\eta dx
\qquad\forall \eta\in\mathbf{X}(\xO).
\ee
There holds
\be\label{2.43}
u=\mathbb{G}_{\CL_{\xk }}[f]+\mathbb{K}_{\CL_{\xk }}[h\gw]
\ee
and
\be\label{poi3}
\|u\|_{L_{\ei}^1(\xO)}\leq c_{46}\|f\|_{L^1_{\ei}(\xO)}+ c_{47}\|h\|_{L^1(\prt\xO,d\gw)}.
\ee
Furthermore, for any $\eta\in \mathbf{X}(\xO)$, $\eta\geq 0$, we have
\be\label{poi4}
\myint{\Gw}{}|u|\CL_{\xk }\eta dx\leq \myint{\Gw}{}f\eta\rm{sgn} (u)dx+
\myint{\Gw}{}\mathbb{K}_{\CL_{\xk }}[|h|\gw]\CL_{\xk }\eta dx,
\ee
and
\be\label{poi5}
\myint{\Gw}{}u_+\CL_{\xk }\eta dx\leq \myint{\Gw}{}f\eta\rm{sgn}_+ (u)dx+
\myint{\Gw}{}\mathbb{K}_{\CL_{\xk }}[h_+\gw]\CL_{\xk }\eta dx,
\ee
\end{prop}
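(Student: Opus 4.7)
The plan is to exploit Lemma \ref{ETA} as the organizing principle: the map $m\mapsto \eta_m$ provides a bijection between $L^\infty(\xO)$ and $\mathbf{X}(\xO)$ via $m=\gf_\gk^{-1}\CL_\gk\eta$, where surjectivity is exactly Lemma \ref{ETA} and injectivity follows from the comparison Propositions \ref{comp}--\ref{comp1} applied to a solution of $\CL_\gk\eta=0$ in $\mathbf{X}$. Once this bijection is in hand, (2.42) needs only be checked on the dense class of test functions $\{\eta_m:m\in L^\infty\}$.

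For existence and (\ref{2.43}), I would define $u=\mathbb{G}_{\CL_\gk}[f]+\mathbb{K}_{\CL_\gk}[h\gw]$, which lies in $L^1_{\ei}(\xO)$ by (\ref{poi1}) and (\ref{poi2}), yielding (\ref{poi3}) immediately. To verify (\ref{2.42}), it suffices, by the bijection above, to plug $\eta=\eta_m$: the Martin piece contributes $\int_\xO \mathbb{K}_{\CL_\gk}[h\gw]\cdot m\gf_\gk\,dx$ on both sides and cancels, so the identity reduces to $\int_\xO \mathbb{G}_{\CL_\gk}[f]\cdot m\gf_\gk\,dx=\int_\xO f\eta_m\,dx$, which is a direct Fubini computation using the symmetry $G_{\CL_\gk}(x,y)=G_{\CL_\gk}(y,x)$ together with the uniqueness identity $\eta_m=\mathbb{G}_{\CL_\gk}[m\gf_\gk]$. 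Uniqueness of $u$ follows analogously: if $u\in L^1_{\ei}$ satisfies (\ref{2.42}) with $f=0$ and $h=0$, then $\int_\xO u\,m\gf_\gk\,dx=0$ for every $m\in L^\infty$, so $u\equiv 0$.

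For the Kato-type inequalities (\ref{poi4}) and (\ref{poi5}), my approach is approximation and passage to the limit. Pick $f_n\in L^\infty(\xO)$ with $f_n\to f$ in $L^1_{\ei}$ and $h_n\in C(\prt\xO)$ with $h_n\to h$ in $L^1(\prt\xO,d\gw)$, and let $u_n=\mathbb{G}_{\CL_\gk}[f_n]+\mathbb{K}_{\CL_\gk}[h_n\gw]$, which is classical in $\xO$ by Propositions \ref{exi}, \ref{reglemma}, \ref{reglemma1}. Using a smooth even convex approximation $J_\ge$ of $|\cdot|$ with $J_\ge(0)=0$ and $|J_\ge'|\le 1$, one computes
\[
\CL_\gk J_\ge(u_n)\le J_\ge'(u_n)f_n+\myfrac{\gk}{d^2}\bigl(u_n J_\ge'(u_n)-J_\ge(u_n)\bigr),
\]
and the last bracket vanishes as $\ge\to 0$ since $tJ_\ge'(t)\to|t|\cdot\mathrm{sgn}(t)\cdot t/|t|=|t|=\lim J_\ge(t)$. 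Passing $\ge\to 0$ yields the distributional Kato inequality $\CL_\gk|u_n|\le \mathrm{sgn}(u_n)f_n$, while the boundary trace of $|u_n|$ is controlled by $|h_n|$, so pairing against $\eta\in \mathbf{X}$, $\eta\ge 0$, and using the already-established weak formulation for the auxiliary datum $(|f_n|\mathrm{sgn}(u_n),|h_n|)$ (combined with comparison) delivers
\[
\myint{\xO}{}|u_n|\CL_\gk\eta\,dx\le\myint{\xO}{}f_n\eta\,\mathrm{sgn}(u_n)\,dx+\myint{\xO}{}\mathbb{K}_{\CL_\gk}[|h_n|\gw]\CL_\gk\eta\,dx.
\]
Finally, pass $n\to\infty$: (\ref{poi3}) implies $u_n\to u$ in $L^1_{\ei}$, so along a subsequence $u_n\to u$ a.e., hence $\mathrm{sgn}(u_n)\to\mathrm{sgn}(u)$ on $\{u\ne 0\}$; the boundedness $\|\gf_\gk^{-1}\CL_\gk\eta\|_\infty<\infty$ gives $|\CL_\gk\eta|\le C\gf_\gk$, which supplies an $L^1_{\ei}$-dominating envelope on both sides and lets Lebesgue's theorem carry the inequality to the limit. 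The argument for $u_+$ is identical with $J_\ge$ replaced by a smooth convex approximation of $t\mapsto t_+$.

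The main obstacle is the Kato step: one must confirm that the Hardy potential, which enters the linear equation with an unfavorable sign, contributes a nonnegative error in the convex-composition estimate that vanishes in the limit, and that the boundary behavior prescribed through $\mathbb{K}_{\CL_\gk}[h\gw]$ (in the weak sense encoded by $\mathbf{X}$) transfers to $|u|$ correctly. The bijection between $\mathbf{X}$ and $L^\infty(\xO)$ via $\gf_\gk^{-1}\CL_\gk$, together with the Fubini identity for $\mathbb{G}_{\CL_\gk}$, is what keeps all the boundary/weight exponents consistent throughout.
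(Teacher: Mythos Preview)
Your treatment of existence, uniqueness, (\ref{2.43}) and (\ref{poi3}) is correct and in fact more streamlined than the paper's: you define $u$ directly by the representation formula and verify (\ref{2.42}) via the Fubini/symmetry identity $\eta_m=\mathbb{G}_{\CL_\gk}[m\gf_\gk]$, whereas the paper first derives (\ref{poi3}) as an a~priori estimate by testing with $\eta_{\mathrm{sgn}(u)}$ and then obtains existence by approximation. Both arguments rest on the same bijection $m\leftrightarrow\eta_m$ from Lemma~\ref{ETA}.

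The Kato step, however, has a genuine gap. Your distributional inequality $\CL_\gk|u_n|\le\mathrm{sgn}(u_n)f_n$ is fine: the Hardy error term $\tfrac{\gk}{d^2}\bigl(u_nJ_\ge'(u_n)-J_\ge(u_n)\bigr)$ is nonnegative, bounded by $\gk\ge/d^2$, and therefore vanishes against compactly supported test functions. But $\eta\in\mathbf{X}(\Gw)$ does \emph{not} have compact support, and the step you label ``comparison'' with the auxiliary solution $v_n$ for data $(\mathrm{sgn}(u_n)f_n,|h_n|)$ only yields the pointwise bound $|u_n|\le v_n$. This does \emph{not} imply $\int_\Gw|u_n|\,\CL_\gk\eta\,dx\le\int_\Gw v_n\,\CL_\gk\eta\,dx$ once $\CL_\gk\eta$ changes sign, which it may for a general nonnegative $\eta\in\mathbf{X}(\Gw)$. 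What would close the argument is a Riesz-type representation $v_n-|u_n|=\mathbb{G}_{\CL_\gk}[\rho_n]$ for some nonnegative $\rho_n\in\mathfrak M_{\gf_\gk}(\Gw)$, after which Fubini gives $\int_\Gw(v_n-|u_n|)\CL_\gk\eta\,dx=\int_\Gw\eta\,d\rho_n\ge 0$; but this decomposition for $\CL_\gk$-superharmonic functions is not available in the paper and would itself need justification (note that $\CL_\gk|u_n|$ may carry a singular measure on $\{u_n=0\}$, so the source is not a priori in $L^1_{\gf_\gk}$).

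The paper avoids this by a different mechanism: it restricts to the subdomains $\Gw_n=\Gw'_{1/n}$, on which the Hardy potential is bounded and the \emph{classical} Kato inequality with boundary term (quoted from \cite{Ver1}) applies to the smooth solution against an approximate test function $z_n\ge 0$ vanishing on $\prt\Gw_n$. The boundary integral $-\int_{\prt\Gw_n}\frac{\prt z_n}{\prt{\bf n}}|h|W\,dS$ is then rewritten as $\int_{\Gw_n}\widetilde w_n\,\CL_\gk z_n\,dx$, where $\widetilde w_n$ is the $\CL_\gk$-harmonic extension of $W|h|$ to $\Gw_n$, and one lets $n\to\infty$ using $\widetilde w_n\to\mathbb{K}_{\CL_\gk}[|h|\gw]$ and $z_n\to\eta$. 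It is exactly this careful transport of the boundary contribution through the singular potential that your sketch elides.
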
  
\begin{proof} \noindent{\it Step 1: proof of estimate (\ref{poi3})}. Assume $u$ satisfies  (\ref{2.42}). If $\eta=\eta_{\rm{sgn} (u)}$, we have
$$\int_{\xO}|u|\gf_\gk dx=\int_{\xO}u \CL_{\xk }\eta dx= \int_\xO f\eta dx+\myint{\Gw}{}\mathbb{K}_{\CL_{\xk }}[h\gw]\rm{sgn} (u)\gf_\gk dx.
$$
By (\ref{poi1}), (\ref{poi2})
$$\int_\xO f\eta dx\leq \frac{1}{\gl_\gk}\int_\xO |f|\gf_\gk dx,
$$
$$\myint{\Gw}{}\mathbb{K}_{\CL_{\xk }}[h\gw]\rm{sgn} (u)\gf_\gk dx\leq c_{47}\myint{\prt\Gw}{}|h| d\gw,
$$
which implies (\ref{poi3}) and uniqueness. \smallskip

\noindent{\it Step 2: proof of existence}. If $f$ and $h$ are bounded, existence follows from Propositions \ref{reglemma}, \ref{reglemma1}. In the general case let $\{(f_n,h_n)\}$ be a sequence of bounded measurable functions in $\Gw$ and $\prt\Gw$ which converges to $\{(f,h)\}$ in $L^1_{\ei}(\xO)\times L^1(\prt\xO,d\gw)$. Let $\{u_n\}=\{u_{f_n,h_n}\}$ be the sequence weak solutions of (\ref{T2}). By estimate (\ref{poi3}) it is a Cauchy sequence in $L^1_{\ei}(\xO)$ which converges to $u$. Letting $n\to\infty$ in identity
\be\label{2.42n}
\int_{\xO}u_n \CL_{\xk }\eta dx=\int_\xO f_n\eta dx+\myint{\Gw}{}\mathbb{K}_{\CL_{\xk }}[h_n\gw]\CL_{\xk }\eta dx
\ee
where $ \eta\in\mathbf{X}(\xO)$ implies that $u=u_{f,h}$.\smallskip

 \noindent{\it Step 3: proof of estimates (\ref{poi4}), (\ref{poi5})}. We first assume that $f$ is bounded and $h$ is $C^2(\overline{\xO})$.
Set $\xO_n=\xO'_{\frac{1}{n}},$
Let $u_n$ be the unique solution of

\be\label{cor1}\BA{lll}
\CL_{\xk }u_n=f\qquad&\text{in}\;\;\Gw_n\\
\phantom{\CL_{\xk }^{\Gw_n}}
v_n=Wh\qquad&\text{on}\;\;\prt\Gw_n
\EA
\ee

Then $u_n$ can be written in the form $$u_n=\mathbb{G}_{\CL_{\xk }}^{n}[f](x)+w_n,$$
where $$\mathbb{G}_{\CL_{\xk }}^{n}[f](x)=\int_{\xO}G_{\CL_{\xk }}^{n}(x,y)f(y)dy,$$
$G_{\CL_{\xk }}^{n}$ is the Green Kernel of $\CL_{\xk }$ in $\xO_n,$ and $w_n$ satisfies
 \be\label{cor2}\BA{lll}
\CL_{\xk }v=0\qquad&\text{in}\;\;\Gw_n\\
\phantom{\CL_{\xk }^{\Gw_n}}
v=Wh\qquad&\text{on}\;\;\prt\Gw_n.
\EA
\ee

Now note that ${G}_{\CL_{\frac{1}{4}}}^{n}(x,y)\leq {G}_{\CL_{\frac{1}{4}}}(x,y):={G}_{\CL_{\frac{1}{4}}}^{\Gw}$,
and for any $x,y\in \Gw,$ $x\neq y$
\be
{G}_{\CL_{\frac{1}{4}}}^{n}(x,y)\uparrow {G}_{\CL_{\frac{1}{4}}}(x,y).
\ee
Also in view of the proof of Proposition \ref{22222} there exists $c_0>0$ which depends on $\xO,\;N,\;\xk,||h||_{C^2(\overline{\xO})}$ such that
$$\sup_{x\in\xO_n}|w_n|<c_0,\;\;\forall n\in \mathbb{N},$$

and $w_n\rightarrow\mathbb{K}_{\CL_{\xk }}[h\gw].$ Thus by the properties of Green kernel that we described above, we have that there exists a constant $c_{01}$ $\xO,\;N,\;\xk,||h||_{C^2(\overline{\xO})},||f||_{L^\infty(\xO)},$ such that 
$$\sup_{x\in\xO_n}|u_n|<c_0,\;\;\forall n\in \mathbb{N},$$ 
and $$u_n\rightarrow u= \mathbb{G}_{\CL_{\xk }}[f]+\mathbb{K}_{\CL_{\xk }}[h\gw].$$

Let $\eta\in \mathbf{X}(\xO)$ be nonnegative function and let $\eta_n$ be the solution of the problem
$$\CL_{\xk }v=\CL_{\xk }\eta,\qquad v=0\;\;\mathrm{on}\;\partial\xO_n.$$
Then there exists $c_0=c_0(||\xD\eta||_{L^\infty(\xO)},\xk,N,\xO)$ such that $|\eta_n|\leq c_0\ei$ and
$$\CL_{\xk }\eta_n\rightarrow L_{c_0}\eta,\qquad\eta_n\rightarrow\eta.$$
Let $z_n$ be the solution of
$$\CL_{\xk }v=\text{sgn}(\eta_n)\CL_{\xk }\eta,\qquad v=0\;\;\mathrm{on}\;\partial\xO_n.$$
Then  $z_n\geq \max(\eta_n,0)$ since 
$$\CL_{\xk }|\eta_n|\leq \text{sgn}(\eta_n)\CL_{\xk }\eta_n=\text{sgn}(\eta_n)\CL_{\xk }\eta,$$
and $|z_n|\leq c_0\ei,$
$$\CL_{\xk }z_n\rightarrow L_{c_0}\eta,\qquad z_n\rightarrow\eta.$$ 
Now note that $z_n\geq0$ and $z_n\in C^1(\overline{\xO}_n).$
Also, the following inequality holds (see eg. \cite{Ver1}), 
\begin{align}\nonumber
\int_{\xO}|u_n| L_{c_0}z_n dx&\leq\int_\xO f z_n\text{sgn}(u_n)-\int_{\partial\xO}\frac{\partial z_n}{\partial\xn}|h|W dx\\ 
&=\int_\xO f z_n\text{sgn}(u_n)+\int_{\xO}\widetilde{w}_n L_{c_0}z_n dx,
\label{2.51}
\end{align}
where $\widetilde{w}_n$ is the solution of 
\be\label{cor3}\BA{lll}
\CL_{\xk }v=0\qquad&\text{in}\;\;\Gw_n\\
\phantom{\CL_{\xk }^{\Gw_n}}
v=W|h|\qquad&\text{on}\;\;\prt\Gw_n.
\EA
\ee
In view of the proof of Proposition \ref{22222} there exists $c_{02}>0$ which depends on $\xO,\;N,\;\xk,||h||_{C^2(\overline{\xO})}$ such that
$$\sup_{x\in\xO_n}|\widetilde{w}_n|<c_0,\;\;\forall n\in \mathbb{N},$$

\noindent and $\widetilde{w}_n\rightarrow\mathbb{K}_{\CL_{\xk }}[|h|\gw].$
Thus combining all above and taking the limit in \eqref{2.51} we have the proof of (\ref{poi4}) in the case that $f$ is bounded and $h\in C^2(\overline{\xO}).$ We note here that for any $h\in C^2(\partial\xO)$ there exists $H_m\in C^2(\overline{\xO}),$ such that
$||H_m||_{C^2(\overline{\xO})}\leq c_{03}||h||_{L^\infty(\partial\xO)},$ for some constant $c_{03}$ which depends only on $\xO,$ and $H_m\rightarrow h$ in $L^\infty(\partial \xO)$. Thus it is not hard to prove that \eqref{22222} is valid if $f$ is bounded and $h\in C^2(\partial \xO).$ In the general case we consider a sequence $(f_n,h_n)\subset L^\infty(\Gw)\times C^2(\prt\Gw)$ which converges to $(f,h)$ in $L^1(\Gw)\times L^1(\prt\Gw,d\gw)$. Since $u_{f_n,h_n}$ converges to $u_{f,h}$ in $L^1_{\gf_\gk}(\Gw)$ we obtain (\ref{poi4}) from the inequality
verified by any $\eta\in\mathbf{X}(\xO)$
$$\myint{\Gw}{}|u_{f_n,h_n}|\CL_{\xk }\eta dx\leq \myint{\Gw}{}f_n\eta\rm{sgn} (u)dx+
\myint{\Gw}{}\mathbb{K}_{\CL_{\xk }}[|h_n|\gw]\CL_{\xk }\eta dx.$$
\end{proof}
The proof of (\ref{poi5}) is follows by adding \eqref{2.42} and \eqref{poi4}.
\subsection{General nonlinearities}
Throughout this section $\Gw$ is a smooth bounded domain and $\gk$ a real number in $(0,\frac{1}{4}]$. Let $g:\BBR\mapsto\BBR$ be a nondecreasing continuous function, vanishing at $0$ for simplicity. The problem under consideration is the following
 \begin{equation}\label{M1}\BA {lll}
-\Gd u-\myfrac{\xk }{d^2}u+g(u)=\gn\qquad&\text{in }\Gw\\[2mm]\phantom{-\Gd -\myfrac{\xk }{d^2}u+g(u)}
u=\gm&\text{in }\prt\Gw
\EA\end{equation}
where $\gn$ and $\gm$ are  Radon measures respectively in $\Gw$ and $\prt\Gw$. \medskip

\noindent {\bf Definition. }Let $\gn\in\mathfrak M_{\gf_\gk}(\Gw)$ and $\gm\in\mathfrak M(\prt\Gw)$. We say that  $u$ is a solution  of (\ref{M1}) if $u\in L^1_{\gf_\gk}(\Gw)$, $g(u)\in L^1_{\gf_\gk}(\Gw)$ and for any
$\eta\in \mathbf{X}(\xO)$ there holds
 \begin{equation}\label{M2-}\BA {lll}
\myint{\Gw}{}\left(u\CL_\gk\eta+g(u)\eta\right)dx=\myint{\Gw}{}\left(\eta d\gn+\mathbb{K}_{\CL_{\xk }}[\gm]\CL_{\xk }\eta\right)dx
\EA\end{equation}

Our main existence result for {\it  subcritical nonlinearities} is the following.

\begin{theorem}\label{gen} Assume $g$ satisfies
 \begin{equation}\label{M2}\BA {lll}
\myint{1}{\infty}\left(g(s)-g(-s)\right)s^{-2\frac{N-1+\frac{\ga_+}{2}}{N-2+\frac{\ga_+}{2}}}ds<\infty.
\EA\end{equation}
Then for any $(\gn,\gm)\in\mathfrak M_{\gf_\gk}(\Gw)\times\in\mathfrak M(\prt\Gw)$ problem (\ref{M1}) admits a unique solution
$u=u_{\gn,\gm}$. Furthermore the mapping $(\gn,\gm)\mapsto u_{\gn,\gm}$ is increasing and stable in the sense that if
$\{(\gn_n,\gm_n)\}$ converge to $(\gn,\gm)$ in the weak sense of measures, $\{u_{\gn_n,\gm_n}\}$ converges to $u_{\gn,\gm}$ in
$L_{\gf_\gk}^1(\Gw)$.
\end{theorem}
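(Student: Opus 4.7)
\medskip

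\noindent\textbf{Plan of proof.} The strategy is to combine the linear theory of Proposition \ref{2.6}, a sharp weak-type (Marcinkiewicz) estimate on the Green and Poisson operators of $\CL_\gk$, and a Schauder fixed-point scheme on bounded approximations, following the argument developed in \cite{GV} for the classical Laplacian. The subcritical condition \eqref{M2} is precisely calibrated to the Marcinkiewicz regularity of $\mathbb{K}_{\CL_\gk}[\gm]$ measured against the weight $\gf_\gk\,dx$.

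The first technical step is to prove that, setting $q_c=(N+\tfrac{\ga_+}{2})/(N+\tfrac{\ga_+}{2}-2)$, one has
$$\|\mathbb{K}_{\CL_\gk}[\gm]\|_{L^{q_c,\infty}(\Gw;\gf_\gk dx)}\leq c\|\gm\|_{\mathfrak M(\prt\Gw)},\qquad \|\mathbb{G}_{\CL_\gk}[\gn]\|_{L^{\tilde q,\infty}(\Gw;\gf_\gk dx)}\leq c\|\gn\|_{\mathfrak M_{\gf_\gk}(\Gw)}$$
for some $\tilde q\geq q_c$. The boundary estimate rests on Theorem \ref{poisson}: after flattening $\prt\Gw$ and using $\gf_\gk\approx d^{\frac{\ga_+}{2}}$ it reduces to computing the level sets of $x\mapsto x_N^{\ga_+/2}/|x-\xi|^{N+\ga_+-2}$ in a half-ball. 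Coupled with a layer-cake argument this yields the pivotal consequence: condition \eqref{M2} is equivalent to $\int_1^{\infty}(g(s)-g(-s))s^{-(q_c+1)}\,ds<\infty$, and it implies that $u\in L^{q_c,\infty}(\Gw;\gf_\gk dx)\Rightarrow g(u)\in L^1_{\gf_\gk}(\Gw)$, with uniform bounds and equi-integrability of $\{g(u_n)\gf_\gk\}$ whenever $\{u_n\}$ is bounded in that Marcinkiewicz space.

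With these two ingredients in hand, I would first solve \eqref{M1} for bounded data, say $\gf_\gk^{-1}\gn\in L^{\infty}(\Gw)$ and $\gm\in L^\infty(\prt\Gw,d\gw)$, by applying Schauder's theorem to the operator $T:L^1_{\gf_\gk}(\Gw)\to L^1_{\gf_\gk}(\Gw)$ defined by $T(v)=u_{\gn-g(v),\gm}$, where $u_{f,h}$ is the linear solution furnished by Proposition \ref{2.6}. The weak-type bounds of the previous step together with \eqref{poi3} guarantee invariance of a large ball in $L^1_{\gf_\gk}(\Gw)$, and compactness of $T$ follows from the compactness of the Green operator on $L^1_{\gf_\gk}(\Gw)$ (via Dunford--Pettis and the weak-$L^{\tilde q}$ estimate) combined with the continuity of $v\mapsto g(v)$ on this ball. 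For arbitrary $(\gn,\gm)\in\mathfrak M_{\gf_\gk}(\Gw)\times\mathfrak M(\prt\Gw)$, I would approximate by bounded $(\gn_n,\gm_n)$ of controlled mass, converging weakly to $(\gn,\gm)$. The Kato-type estimate \eqref{poi4} applied to $u_n-u_m$ with the test function $\eta_{\rm sgn(u_n-u_m)}$ shows that $\{u_n\}$ is Cauchy in $L^1_{\gf_\gk}(\Gw)$, while the equi-integrability produced by Step 2 lets Vitali's theorem pass to the limit in the weak identity \eqref{2.42n} modified to include $g(u_n)$.

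Uniqueness and monotonicity in $(\gn,\gm)$ follow by applying \eqref{poi5} to the difference of two solutions, using the test function $\eta=\eta_m$ from Lemma \ref{ETA} with $m\to \mathrm{sgn}_+(u_1-u_2)$ and invoking the monotonicity of $g$; stability under weak convergence of the data is exactly the limiting procedure of the previous paragraph applied to an arbitrary weakly convergent sequence. The main obstacle is the sharp Marcinkiewicz estimate on $\mathbb{K}_{\CL_\gk}[\gm]$: the weight $\gf_\gk\approx d^{\frac{\ga_+}{2}}$ and the Hardy-type singularity in $K_{\CL_\gk}$ make the underlying geometric computation more delicate than in the pure Laplacian case, but the doubling property of $\gw$ (Theorem \ref{Th10}) allows one to localise the analysis to the boundary balls $\Gd_r(\xi)$ and thereby reduce everything to the model half-space computation.
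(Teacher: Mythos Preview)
Your overall strategy is essentially the paper's, and you have correctly singled out the Marcinkiewicz estimate as the engine of the proof: Lemma \ref{Marcin} establishes precisely the weak-$L^{q_c}$ bound (with $q_c=\frac{N+\ga_+/2}{N-2+\ga_+/2}$) for both $\mathbb{K}_{\CL_\gk}[\gm]$ and $\mathbb{G}_{\CL_\gk}[\gn]$ against the weight $\gf_\gk\,dx$, and the layer-cake computation combined with \eqref{M2} then gives the uniform integrability of $\{g(u_n)\gf_\gk\}$ exactly as you outline.

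There is, however, one genuine gap. In the passage from bounded data to general measures you approximate $(\gn,\gm)$ \emph{weakly} by $(\gn_n,\gm_n)$ and then claim that Kato's inequality \eqref{poi4} applied to $u_n-u_m$ makes $\{u_n\}$ Cauchy in $L^1_{\gf_\gk}$. But subtracting the two weak formulations and using the monotonicity of $g$ only yields
\[
\gl_\gk\int_\Gw|u_n-u_m|\gf_\gk\,dx+\int_\Gw|g(u_n)-g(u_m)|\gf_\gk\,dx
\;\leq\;\|\gn_n-\gn_m\|_{\mathfrak M_{\gf_\gk}(\Gw)}+c\|\gm_n-\gm_m\|_{\mathfrak M(\prt\Gw)},
\]
and under weak convergence of measures the right-hand side need not tend to zero (take $\gm=\gd_a$ approximated by smooth bumps). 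The paper bypasses this entirely: it uses the local $L^1$ elliptic regularity theory to get relative compactness of $\{u_n\}$ in $L^1_{loc}(\Gw)$, extracts an a.e.\ convergent subsequence, and then passes to the limit via Vitali using the equi-integrability furnished by the Marcinkiewicz bounds (precisely the mechanism you call ``Step 2''). Uniqueness, proved separately from \eqref{poi4}--\eqref{poi5}, then identifies the full limit. So the fix is simply to drop the Cauchy claim and rely on compactness plus equi-integrability; the stability statement is then obtained by the very same argument.

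A secondary difference: for smooth bounded data the paper does not invoke Schauder. Since $g$ is nondecreasing the problem is convex, and the paper writes $u_n=w_n+\mathbb{K}_{\CL_\gk}[\gm_n\gw]$ and obtains $w_n$ directly as the minimiser over ${\bf W}_\gk(\Gw)$ of the convex coercive functional \eqref{M9J}. This avoids having to verify compactness of your map $T$ and is considerably shorter.
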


The proof is based upon estimates of $\mathbb{M}_{\CL_{\xk }}$ and $\mathbb{K}_{\CL_{\xk }}$ into Marcinkiewicz spaces.
\begin{lemma}\label{Marcin} Let $\gn\in\mathfrak M^+_{\gf_\gk}(\Gw)$, $\gm\in\mathfrak M^+(\prt\Gw)$ and for $s>0$,
$E_s(\gn)=\{x\in \Gw:\mathbb{G}_{\CL_{\xk }}[\gn](x)>s\}$ and $F_s(\gm)=\{x\in \Gw:\mathbb{K}_{\CL_{\xk }}[\gm](x)>s\}$. If we denote
$$\CE_s(\gn)=\int_{E_s(\gn)}\gf_\gk dx\quad\text{and }\;\;\CF_s(\gm)=\int_{F_s(\gm)}\gf_\gk dx,$$
there holds
 \begin{equation}\label{M3}\BA {lll}
\CE_s(\gn)+\CF_s(\gm)\leq c_{47}\left(\myfrac{\|\gn\|_{\mathfrak M_{\gf_\gk}(\Gw)}+\|\gm\|_{\mathfrak M(\prt\Gw)}}{s}\right)^{\frac{N+\frac{\ga_+}{2}}{N-2+\frac{\ga_+}{2}}}.
\EA\end{equation}
\end{lemma}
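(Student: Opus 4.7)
My plan is to deduce \eqref{M3} from two kernel-level distribution function estimates and a standard Marcinkiewicz truncation argument. Writing $d\tilde\gn = \gf_\gk d|\gn|$ (a finite positive measure of total mass $\|\gn\|_{\mathfrak{M}_{\gf_\gk}}$) and $\tilde k(x,y) := G_{\CL_\gk}(x,y)/\gf_\gk(y)$, I have $|\mathbb{G}_{\CL_\gk}[\gn]|(x) \leq \int_\Gw \tilde k(x,y)\,d\tilde\gn(y)$, and the first task is to show the weak-type kernel bounds: for every $y\in\Gw$ and $\xi\in\prt\Gw$,
\begin{equation}\label{Pl-Marc}
\int_{\{x:\tilde k(x,y)>t\}}\!\gf_\gk(x)\,dx \,\leq\, C\, t^{-q_c},\qquad \int_{\{x:K_{\CL_\gk}(x,\xi)>t\}}\!\gf_\gk(x)\,dx \,\leq\, C\, t^{-q_c},
\end{equation}
where $q_c = (N+\ga_+/2)/(N-2+\ga_+/2)$.

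Once \eqref{Pl-Marc} is in hand, I truncate $\tilde k = (\tilde k\wedge h) + (\tilde k-h)_+$ for a threshold $h>0$. The truncated piece gives $\int(\tilde k\wedge h)\,d\tilde\gn \leq h\|\tilde\gn\|$, so it vanishes on $\{\mathbb{G}_{\CL_\gk}[\gn]>s\}$ once $h = s/(2\|\tilde\gn\|)$. For the tail piece, Fubini and \eqref{Pl-Marc} give, since $q_c>1$,
$$\int_\Gw(\tilde k(x,y)-h)_+\gf_\gk(x)\,dx = \int_h^\infty\mu_{\gf_\gk}(\{\tilde k(\cdot,y)>\tau\})\,d\tau \leq \tfrac{C}{q_c-1}h^{1-q_c},$$
uniformly in $y$, where $d\mu_{\gf_\gk}=\gf_\gk dx$. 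Chebyshev then yields $\CE_s(\gn)\leq C(\|\gn\|_{\mathfrak M_{\gf_\gk}}/s)^{q_c}$; repeating with $K_{\CL_\gk}$ in place of $\tilde k$ gives $\CF_s(\gm)\leq C(\|\gm\|_{\mathfrak M(\prt\Gw)}/s)^{q_c}$, and adding the two produces \eqref{M3}.

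To establish \eqref{Pl-Marc}, I use \eqref{alpha-green} together with $\gf_\gk(x)\asymp d(x)^{\ga_+/2}$ to write, up to constants,
$$\{\tilde k(\cdot,y)>t\}\subset \bigl\{x:\min\{|x-y|^{2-N}d(y)^{-\ga_+/2},\,d(x)^{\ga_+/2}|x-y|^{2-N-\ga_+}\}>ct\bigr\},$$
and split this set according to $|x-y|\gtrless d(y)$. In the ``far'' region $|x-y|\geq d(y)$ one has $d(x)\leq 2|x-y|$, so the second alternative forces $|x-y|\leq R_t := c\,t^{-1/(N-2+\ga_+/2)}$; a direct integration of $|x-y|^{\ga_+/2}$ over $B(y,R_t)$ gives $R_t^{N+\ga_+/2} = Ct^{-q_c}$, which is exactly the Hardy--Marcinkiewicz scaling. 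In the ``close'' region $|x-y|<d(y)$, where $d(x)\asymp d(y)$, I distinguish $d(y)\leq R_t$ (the whole close region lies in $B(y,d(y))$ and contributes at most $Cd(y)^{N+\ga_+/2}\leq CR_t^{N+\ga_+/2}=Ct^{-q_c}$) from $d(y)>R_t$ (where I switch to the first branch of the min, obtaining $|x-y|\leq C(td(y)^{\ga_+/2})^{-1/(N-2)}$, and the combined volume and weight give $Cd(y)^{-\ga_+/(N-2)}t^{-N/(N-2)}\leq C t^{-q_c}$ after using $d(y)\geq R_t$). Estimate \eqref{Pl-Marc} for $K_{\CL_\gk}$ is the same computation but simpler, since $d(\xi)=0$ and only the ``far'' case arises.

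The main obstacle is the close region in the Green kernel estimate: using the second branch of \eqref{alpha-green} naively yields only an exponent $N/(N+\ga_+-2)<q_c$, which is not sharp. Recovery of $q_c$ requires the switch to the first branch when $d(y)>R_t$, and the arithmetic verifies that the threshold $d(y)=R_t$ is precisely what couples the Hardy scaling $\ga_+/2$ to the Marcinkiewicz exponent $(N+\ga_+/2)/(N-2+\ga_+/2)$; this is the delicate step on which the whole critical exponent hinges.
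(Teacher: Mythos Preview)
Your argument is correct and follows the same two-step route as the paper: first a kernel-level weak-type bound (the paper states these as $\CF_s(\gd_\xi)\leq c\,s^{-q_c}$ and $\CE_s(\gd_y)\leq c\,d(y)^{(\ga_+/2)q_c}s^{-q_c}$, which after your normalization by $\gf_\gk(y)$ is exactly your uniform-in-$y$ estimate on $\tilde k$) obtained via the same near/far geometric splitting of the two branches in \eqref{alpha-green}, and then passage to general measures by truncation and Fubini. The only difference is organizational: you divide $G_{\CL_\gk}(x,y)$ by $\gf_\gk(y)$ at the outset and go straight to Chebyshev, whereas the paper keeps the $d(y)$-dependence explicit and records the intermediate Lorentz-type inequalities \eqref{M5} and \eqref{M7}, which it reuses later in the proof of Theorem~\ref{gen} for uniform integrability.
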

\begin{proof}{\it Step 1: estimate of $\CF_s(\gn)$}. By estimate (\ref{poissonest}), for any $\xi\in\prt\Gw$,
$$F_s(\gd_\xi)\subset \tilde F_s(\gd_\xi):=\left\{x\in\Gw:\frac{d^{\frac{\ga_+}{2}}(x)}{|x-\xi|^{N+\ga_+-2}}\geq \frac{s}{c_{_{43}}}\right\}\subset B_{(\frac{c_{43}}{s})^{\gth}}(\xi),
$$
with $\gth=\frac{1}{N-2+\frac{\ga_+}{2}}$. From (\ref{Lin1}), (\ref{Lin2})
$$\CF_s(\gd_\xi)\leq \myint{B_{(\frac{c_{43}}{s})^{\gth}}(\xi)}{}\gf_\gk dx\leq c_{49}
\myint{B_{(\frac{c_{43}}{s})^{\gth}}(\xi)}{}|x-\xi|^{\frac{\ga_+}{2}} dx=c_{50}s^{-\frac{N+\frac{\ga_+}{2}}{N-2+\frac{\ga_+}{2}}}.
$$
Therefore, for any $s_0>0$ and any Borel set $G\subset\Gw$
$$\BA{lll}\myint{G}{}K_{\CL_\gk}(x,\xi)\gf_\gk dx\leq s_0\myint{G}{}\gf_\gk dx+\myint{F_{s_0}(\gd_\xi)}{}K_{\CL_\gk}(x,\xi)\gf_\gk dx\\[4mm]\phantom{\myint{G}{}K_{\CL_\gk}(x,\xi)\gf_\gk dx}
\leq s_0\myint{G}{}\gf_\gk dx-\myint{s_0}{\infty}sd\CF_s(\gd_\xi)
\\[4mm]\phantom{\myint{G}{}K_{\CL_\gk}(x,\xi)\gf_\gk dx}
\leq s_0\myint{G}{}\gf_\gk dx+c_{50}\myint{s_0}{\infty}s^{-\frac{N+\frac{\ga_+}{2}}{N-2+\frac{\ga_+}{2}}}ds
\\[4mm]\phantom{\myint{G}{}K_{\CL_\gk}(x,\xi)\gf_\gk dx}
\leq s_0\myint{G}{}\gf_\gk dx+c_{51}s_0^{-\frac{2}{N-2+\frac{\ga_+}{2}}}
\EA$$
Next we choose $s_0$ so that the two terms in the right part of the last inequality are equal and we get
 \begin{equation}\label{M4}\BA {lll}
\myint{G}{}K_{\CL_\gk}(x,\xi)\gf_\gk dx\leq c_{52}\left(\myint{G}{}\gf_\gk dx\right)^{\frac{2}{N+\frac{\ga_+}{2}}}.
\EA\end{equation}
Henceforth, for any $\gm\in \mathfrak M(\prt\Gw)$, there holds by Fubini's theorem,
 \begin{equation}\label{M5}\BA {lll}
\myint{G}{}\mathbb{K}_{\CL_{\xk }}[|\gm|]\gf_\gk dx= \myint{\Gw}{}\myint{G}{}K_{\CL_{\xk }}(x,\xi)\phi_\gk(x) dx d|\gm|(\xi)
\leq c_{52}\|\gm\|_{\mathfrak M(\prt\Gw)}\left(\myint{G}{}\gf_\gk dx\right)^{\frac{2}{N+\frac{\ga_+}{2}}}.
\EA\end{equation}
If we take in particular $G=F_s(|\gm|)$, we derive
$$s\CF_s(|\gm|)\leq c_{52}\|\gm\|_{\mathfrak M(\prt\Gw)}\left(\CF_s(|\gm|)\right)^{\frac{2}{N+\frac{\ga_+}{2}}},
$$
which yields to (\ref{M3}) with $\gn=0$.\smallskip

\noindent{\it Step 2: estimate of $\CE_s(\gn)$}. By estimate (\ref{greenest}), for any $y\in\Gw$,
$$E_s(\gd_y)\subset \tilde E_s(\gd_y):=\left\{x\in\Gw:\frac{d^{\frac{\ga_+}{2}}(y)d^{\frac{\ga_+}{2}}(x)}{|x-y|^{N+\ga_+-2}}\geq \frac{s}{c_{_{3}}}\right\}\bigcap\left\{x\in\Gw:\frac{1}{|x-y|^{N-2}}\geq \frac{s}{c_{_{3}}}\right\},
$$
A simple geometric verification shows that there exists an open  domain $\CO\subset\overline \CO\subset\Gw$ such that
$y\in \CO$, $\dist (y,\CO^c)>\gl_1d(y)$, $\CO\subset B_{\gl_2d(y)}(y)$ for some $0<\gl_1<\gl_2<1$ independent of $y$ with the following properties
$$\BA {lll}&x\in\CO\Longrightarrow \myfrac{d^{\frac{\ga_+}{2}}(y)d^{\frac{\ga_+}{2}}(x)}{|x-y|^{N+\ga_+-2}}\geq \myfrac{1}{|x-y|^{N-2}}\\[4mm]
& x\in \CO^c\Longrightarrow \myfrac{d^{\frac{\ga_+}{2}}(y)d^{\frac{\ga_+}{2}}(x)}{|x-y|^{N+\ga_+-2}}\leq \myfrac{1}{|x-y|^{N-2}}.
\EA$$
Notice that if
$\Gw=\BBR_+^N$ then $\CO=B_{\frac{\sqrt 5}{2}}(\tilde y)$ where $d(\tilde y)=\frac{3}{2}d(y)$.
Set
$$\tilde E^1_s(\gd_y)=\left\{x\in\Gw:\frac{1}{|x-y|^{N-2}}\geq \frac{s}{c_{_{3}}}\right\}\cap \CO
$$
and
$$\tilde E^2_s(\gd_y)=\left\{x\in\Gw\setminus\CO:\frac{d^{\frac{\ga_+}{2}}(y)d^{\frac{\ga_+}{2}}(x)}{|x-y|^{N+\ga_+-2}}\geq \frac{s}{c_{_{3}}}\right\}
$$

We can easily prove
\begin{align*}
\CE_s(\gd_y)=\myint{ E_s(\gd_y)}{}\gf_\gk dx&\leq \myint{\tilde E_s(\gd_y)}{}\gf_\gk dx\\ 
&\leq\myint{\tilde E_s^1(\gd_y)}{}\gf_\gk dx+\myint{\tilde E_s^2(\gd_y)}{}\gf_\gk dx\leq c_{53}s^{-\frac{N+\frac{\ga_+}{2}}{N-2+\frac{\ga_+}{2}}}(d(y))^{\frac{\ga_+(N+\frac{\ga_+}{2})}{2N-4+\ga_+}}.
\end{align*}
As in step 1, for any Borel subset $\Gth\subset\Gw$, we write
$$\BA{lll}\myint{\Gth}{}G_{\CL_\gk}(x,y)\gf_\gk dx\leq s_0\myint{\Gth}{}\gf_\gk dx+\myint{E_{s_0}(\gd_y)}{}G_{\CL_\gk}(x,y)\gf_\gk dx\\[4mm]\phantom{\myint{\Gth}{}K_{\CL_\gk}(x,y)\gf_\gk dx}
\leq s_0\myint{\Gth}{}\gf_\gk dx-\myint{s_0}{\infty}sd\CE_s(\gd_y)
\\[4mm]\phantom{\myint{\Gth}{}K_{\CL_\gk}(x,y)\gf_\gk dx}
\leq s_0\myint{\Gth}{}\gf_\gk dx+c_{53}(d(y))^{\frac{\ga_+(N+\frac{\ga_+}{2})}{2N-4+\ga_+}}\myint{s_0}{\infty}s^{-\frac{N+\frac{\ga_+}{2}}{N-2+\frac{\ga_+}{2}}}ds
\\[4mm]\phantom{\myint{\Gth}{}K_{\CL_\gk}(x,y)\gf_\gk dx}
\leq s_0\myint{\Gth}{}\gf_\gk dx+c_{54}(d(y))^{\frac{\ga_+(N+\frac{\ga_+}{2})}{2N-4+\ga_+}} s_0^{-\frac{2}{N-2+\frac{\ga_+}{2}}}
\EA$$
 \begin{equation}\label{M6}\BA {lll}
\myint{\Gth}{}G_{\CL_\gk}(x,y)\gf_\gk dx\leq c_{55}(d(y))^{{\frac{\ga_+}{2}}}\left(\myint{G}{}\gf_\gk dx\right)^{\frac{2}{N+\frac{\ga_+}{2}}}
\leq c_{56}\gf_\gk(y)\left(\myint{G}{}\gf_\gk dx\right)^{\frac{2}{N+\frac{\ga_+}{2}}}.
\EA\end{equation}
Thus, for any $\gn\in \mathfrak M_{\gf_\gk}(\Gw)$, we have
\begin{equation}\label{M7}\BA {lll}
\myint{\Gth}{}\mathbb{G}_{\CL_{\xk }}[|\gn|]\gf_\gk dx= \myint{\Gw}{}\myint{\Gth}{}G_{\CL_{\xk }}(x,y)\phi_\gk(x) dx d|\gn|(y)
\leq c_{55}\|\gn\|_{\mathfrak M_{\gf_\gk}(\Gw)}\left(\myint{\Gth}{}\gf_\gk dx\right)^{\frac{2}{N+\frac{\ga_+}{2}}}.
\EA\end{equation}
Thus (\ref{M3}) holds.
\end{proof}\medskip

\noindent{\it Proof of Theorem \ref{gen}}. {\it Step 1: existence and uniqueness}. Let $\{(\gn_n,\gm_n)\}\subset C(\overline\Gw)\times C^1(\prt\Gw)$ which converges to
$(\gn,\gm)$ in the weak sense of measures in $\mathfrak M_{\gf_\gk}(\Gw)\times \mathfrak M(\prt\Gw).$ Set $v_n=\mathbb{K}_{\CL_{\xk }}[\gm_n\xo],$ then $v_n\in L^\infty(\xO)$ and it is $\CL_{\xk}$-harmonic. Set $\widetilde{g}(t,x)=g(t+v_n(x))-g(v_n(x))$ and $\widetilde{f}(x)=\xn_n(x)-g(v_n(x)).$
Let  $J_\gk$ be the functional defined in $L^2(\Gw)$ by the expression
\begin{equation}\label{M9J}\BA {lll}
\CJ_\gk(w)=\myfrac{1}{2}\myint{\Gw}{}\left(|\nabla w|^2-\myfrac{\gk}{d^2}w^2+2J(w)\right)dx-\myint{\Gw}{}\widetilde{f}w\gf_\gk dx
\EA\end{equation}
where $J(w)=\int_{0}^{w}\widetilde{g}(t)dt$ with domain
$$D(\CJ_\gk)=\{w\in {\bf H}_\gk(\Gw):J(w)\in L^1(\Gw)\},
$$
(see definition in 2.1-5). By (\ref{Wnorm3}), $\CJ_\gk$ is a convex lower semicontinuous and coercive functional over $L^2(\Gw)$.
Let $w_n=w_{\gn_n,\gm_n}$ be its minimum, then $u_n=u_{\gn_n,\gm_n}=w_n+v_n$ is the solution of
\begin{equation}\label{M8}\BA {lll}
\CL_{\gk}u_n+g(u_n)=\gn_n\qquad&\text{in }\Gw\\\phantom{\CL_{\gk}+g(u_n)}
u_n=\gm_n\qquad&\text{in }\prt\Gw,
\EA\end{equation}
and for any $\eta\in {\bf X}(\Gw)$, there holds
\begin{equation}\label{M9}\BA {lll}
\myint{\Gw}{}\left(u_n\CL_\gk\eta+g(u_n)\eta\right)dx=\myint{\Gw}{}\left(\gn_n\eta+\BBK_{\CL_\gk}[\gm_n\gw]\CL_\gk\eta\right)dx.
\EA\end{equation}
By Proposition \ref{2.6} (\ref{poi4}), there holds, with $\eta=\phi_\gk$,
\begin{equation}\label{M10}\BA {lll}
\myint{\Gw}{}\left(\gl_\gk|u_n|+|g(u_n)|\right)\gf_\gk dx\leq \myint{\Gw}{}\left(|\gn_n|+\BBK_{\CL_\gk}[|\gm_n|\gw]\right)\gf_\gk dx
\\\phantom{\myint{\Gw}{}\left(\gl_\gk|u_n|+|g(u_n)|\right)\gf_\gk dx}
\leq c_{46}\|\gn_n\|_{\mathfrak M_{\gf_\gk}(\Gw)}+c_{47}\|\gm_n\|_{\mathfrak M(\prt\Gw)}
\\\phantom{\myint{\Gw}{}\left(\gl_\gk|u_n|+|g(u_n)|\right)\gf_\gk dx}\leq c_{57}.
\EA\end{equation}
Moreover
\begin{equation}\label{M11}\BA {lll}
-\BBG_{\CL_\gk}[\gn_n^-]-\BBK_{\CL_\gk}[\gm_n^-\gw]\leq u_n\leq \BBG_{\CL_\gk}[\gn_n^+]+\BBK_{\CL_\gk}[\gm_n^+\gw].
\EA\end{equation}
By using the local $L^1$ regularity theory for elliptic equations we obtain that the sequence $\{u_n\}$ is relatively compact in the
$L^1$-local topology in $\Gw$ and that there exist a subsequence still denoted by $\{u_n\}$ and a function
$u\in L^1_{\gf_\gk}(\Gw)$ such that $u_n\to u$ a.e. in $\Gw$. By (\ref{M11})
\begin{equation}\label{M12}\BA {lll}
|g(u_n)|\leq g\left(\BBG_{\CL_\gk}[\gn_n^+]+\BBK_{\CL_\gk}[\gm_n^+\gw]\right)
-g\left(-\BBG_{\CL_\gk}[\gn_n^-]-\BBK_{\CL_\gk}[\gm_n^-\gw]\right).
\EA\end{equation}
We prove the convergence of $\{g(u_n)\}$ to  $g(u)$ in $L^1_{\gf_\gk}(\Gw)$ by the uniform integrability in the following way: let
$G\subset\Gw$ be a Borel subset.
Then for any $s_0>0$
$$\BA {lll}
\myint{G}{}|g(u_n)|\gf_\gk dx\leq \myint{G}{}\left(g\left(\BBG_{\CL_\gk}[\gn_n^+]\right)+g\left(\BBK_{\CL_\gk}[\gm_n^+\gw]\right)- g\left(-\BBG_{\CL_\gk}[\gn_n^-]\right)-g\left(-\BBK_{\CL_\gk}[\gm_n^-\gw]\right)\right)\gf_\gk dx
\\\phantom{\myint{G}{}|g(u_n)|\gf_\gk dx}
\leq s_0\myint{G}{}\gf_\gk dx+\myint{E_s(\gn^+)}{}g\left(\BBG_{\CL_\gk}[\gn_n^+]\right)\gf_\gk dx+
\myint{F_s(\gm^+)}{}g\left(\BBK_{\CL_\gk}[\gm_n^+]\right)\gf_\gk dx
\\\phantom{\myint{G}{}|g(u_n)|\gf_\gk dx\leq s_0\myint{G}{}\gf_\gk dx}
-\myint{E_s(\gn^-)}{}g\left(-\BBG_{\CL_\gk}[\gn_n^-]\right)\gf_\gk dx-\myint{F_s(\gm^-)}{}g\left(-\BBK_{\CL_\gk}[\gm_n^-]\right)\gf_\gk dx
\\\phantom{\myint{G}{}|g(u_n)|\gf_\gk dx}
\leq s_0\myint{G}{}\gf_\gk dx-\myint{s_0}{\infty}g(s)(d\CE_s(\gn_n^+)+d\CF_s(\gm_n^+))+
\myint{s_0}{\infty}g(-s)(d\CE_s(\gn_n^-)+d\CF_s(\gm_n^-)).
\EA$$
But,
$$\BA {lll}
-\myint{s_0}{\infty}g(s)d\CE_s(\gn_n^+)=g(s_0)\CE_{s_0}(\gn_n^+)+\myint{s_0}{\infty}\CE_s(\gn_n^+)dg(s)
\\\phantom{-\myint{s_0}{\infty}g(s)d\CE_s(\gn_n^+)}
\leq g(s_0)\CE_{s_0}(\gn_n^+)+c_{47}\left(\|\gn_n^+\|_{\mathfrak M_{\gf_\gk}}\right)^{\frac{N+\frac{\ga_+}{2}}{N-2+\frac{\ga_+}{2}}}
\myint{s_0}{\infty}s^{-\frac{N+\frac{\ga_+}{2}}{N-2+\frac{\ga_+}{2}}}dg(s)
\\\phantom{-\myint{s_0}{\infty}g(s)d\CE_s(\gn_n^+)}
\leq \frac{2N+\ga_+}{2N-4+\ga_+}c_{47}\left(\|\gn_n^+\|_{\mathfrak M_{\gf_\gk}(\Gw)}\right)^{\frac{N+\frac{\ga_+}{2}}{N-2+\frac{\ga_+}{2}}}\myint{s_0}{\infty}s^{-2\frac{N-1+\frac{\ga_+}{2}}{N-2+\frac{\ga_+}{2}}}g(s)ds.
\EA$$
All the other terms yields similar estimates which finally yields to
 \begin{equation}\label{M13}\BA {lll}
\myint{G}{}|g(u_n)|\gf_\gk dx\leq s_0\myint{G}{}\gf_\gk dx\\[4mm]
\phantom{\myint{G}{}|g(u_n)|}+c_{58}\left(\|\gn_n\|_{\mathfrak M_{\gf_\gk}(\Gw)}+\|\gm_n\|_{\mathfrak M(\prt\Gw)}\right)^{\frac{N+\frac{\ga_+}{2}}{N-2+\frac{\ga_+}{2}}}\myint{s_0}{\infty}s^{-2\frac{N-1+\frac{\ga_+}{2}}{N-2+\frac{\ga_+}{2}}}(g(s)-g(-s))ds
\EA\end{equation}
Since $\|\gn_n\|_{\mathfrak M_{\gf_\gk}(\Gw)}+\|\gm_n\|_{\mathfrak M(\prt\Gw)}$ is bounded independently of $n$, we obtain easily, using (\ref{M2}) and fixing $s_0$ first, that for any $\ge>0$, there exists $\gd>0$ such that
 \begin{equation}\label{M14}\BA {lll}
\myint{G}{}\gf_\gk dx\leq \gd\Longrightarrow \myint{G}{}|g(u_n)|\gf_\gk dx\leq\ge.
\EA\end{equation}
Since
$$|u_n| \leq \BBG_{\CL_\gk}[|\gn_n|]+\BBK_{\CL_\gk}[|\gm_n|\gw],$$
we have by (\ref{M5}), (\ref{M7})
 \begin{equation}\label{M15}\BA {lll}
\myint{G}{}|u_n|\gf_\gk dx\leq \left(c_{52}\|\gm_n\|_{\mathfrak M(\prt\Gw)}+c_{55}\|\gn_n\|_{\mathfrak M_{\gf_\gk}(\Gw)}
\right)\left(\myint{G}{}\gf_\gk dx\right)^{\frac{2}{N+\frac{\ga_+}{2}}}.
\EA\end{equation}
This implies the uniform integrability of the sequence $\{u_n\}$. Letting $n\to\infty$ in identity (\ref{M9}), we conclude that
(\ref{M2-}) holds. Uniqueness, as well as the monotonicity of the mapping $(\gn,\gm)\mapsto u_{\gn,\gm}$, is an immediate consequence of (\ref{poi4}), (\ref{poi5}) and the monotonicity of $g$. \smallskip

\noindent {\it Step 2: stability}. The stability is a direct consequence of inequalities (\ref{M13}) and (\ref{M15}) which show the uniform integrability of the sequence $(u_n,g(u_n))$ in $L^1_{\gf_\gk}(\Gw)\times L^1_{\gf_\gk}(\Gw)$.
{\hspace{10mm}\hfill $\square$}\medskip

The proof of the following result is similar as the one of \cite[Lemma 3.2, Def. 3.3]{MV2}.

\begin{prop}\label{equiv-def} Let $(\gn,\gm)\in\mathfrak M_{\gf_\gk}(\Gw)\times\in\mathfrak M(\prt\Gw)$ such that problem (\ref{M1}) admits a solution $u_{\gm,\gn}$. Then
 \begin{equation}\label{M16}\BA {lll}
 u_{\gm,\gn}=-\BBG_{\CL_\gk}[g(u_{\gm,\gn})]+\BBK_{\CL_\gk}[\gm].
\EA\end{equation}
Conversely, if $u\in L_{\phi_\gk}^1(\Gw)$ such that $g(u)\in L^1_{\phi_\gk}(\Gw)$ satisfies (\ref{M16}), it coincides with the solution
$u_{\gm,\gn}$ of problem (\ref{M1}).
\end{prop}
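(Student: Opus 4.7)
The strategy is to read off the representation formula (\ref{M16}) directly from the weak formulation (\ref{M2-}) by isolating the linear part and invoking the representation already established in Proposition \ref{2.6}, after extending it from $L^1_{\gf_\gk}$ data to Radon measures in $\mathfrak M_{\gf_\gk}(\Gw)$. The converse direction is purely a linear verification via Fubini.

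\textbf{Necessity.} Starting from the definition (\ref{M2-}), I would first rewrite it in the form
\[
\int_{\Gw}\bigl(u_{\gm,\gn}-\BBK_{\CL_\gk}[\gm]\bigr)\,\CL_\gk\eta\,dx \;=\; \int_{\Gw}\eta\,d\bigl(\gn-g(u_{\gm,\gn})\,dx\bigr)\qquad\forall\eta\in\mathbf X(\Gw),
\]
noting that $\gn-g(u_{\gm,\gn})dx$ lies in $\mathfrak M_{\gf_\gk}(\Gw)$ since $g(u_{\gm,\gn})\in L^1_{\gf_\gk}(\Gw)$ by hypothesis. The next task is to recognise the left-hand side as a weak solution, in the sense of Proposition \ref{2.6}, associated to zero boundary trace and right-hand side the measure $\gn-g(u_{\gm,\gn})dx$. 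Since Proposition \ref{2.6} is phrased only for data in $L^1_{\gf_\gk}(\Gw)$, I would extend it to $\mathfrak M_{\gf_\gk}(\Gw)$ by approximating $\gn$ by a sequence of bounded smooth functions $\{\gn_n\}$ converging weakly to $\gn$, applying (\ref{poi3}) and the dominated/monotone convergence theorems to pass to the limit in the representation $u_n=\BBG_{\CL_\gk}[\gn_n]$. Uniqueness of the weak solution (already contained in Step~1 of the proof of Proposition \ref{2.6}, which only uses $\eta=\eta_{\mathrm{sgn}(u)}$) then gives
\[
u_{\gm,\gn}-\BBK_{\CL_\gk}[\gm]\;=\;\BBG_{\CL_\gk}[\gn-g(u_{\gm,\gn})]\;=\;\BBG_{\CL_\gk}[\gn]-\BBG_{\CL_\gk}[g(u_{\gm,\gn})],
\]
which is the representation formula (read in the form corresponding to the full problem with source $\gn$).

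\textbf{Sufficiency.} Conversely, let $u\in L^1_{\gf_\gk}(\Gw)$ with $g(u)\in L^1_{\gf_\gk}(\Gw)$ satisfy the integral equation. For any $\eta\in\mathbf X(\Gw)$, I would use Fubini together with the defining identities
\[
\int_{\Gw}\BBG_{\CL_\gk}[\gs]\,\CL_\gk\eta\,dx=\int_{\Gw}\eta\,d\gs,\qquad\int_{\Gw}\BBK_{\CL_\gk}[\gm]\,\CL_\gk\eta\,dx=\int_{\Gw}\BBK_{\CL_\gk}[\gm]\,\CL_\gk\eta\,dx,
\]
which are precisely the extended Proposition~\ref{2.6} applied to $\gs\in\mathfrak M_{\gf_\gk}$. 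Substituting the integral equation for $u$ into $\int_\Gw u\,\CL_\gk\eta\,dx$ and applying these identities with $\gs=-g(u)dx$ and with the boundary measure $\gm$ recovers (\ref{M2-}), so $u$ is a weak solution. Uniqueness from Theorem \ref{gen} then forces $u=u_{\gm,\gn}$.

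\textbf{Anticipated difficulty.} The only nontrivial step is the extension of Proposition~\ref{2.6} from $L^1_{\gf_\gk}$ data to Radon measures in $\mathfrak M_{\gf_\gk}(\Gw)$, which requires a careful density argument together with the Marcinkiewicz-type estimates of Lemma~\ref{Marcin} to legitimise Fubini against arbitrary test functions $\eta\in\mathbf X(\Gw)$; once this linear point is settled, the equivalence is a direct manipulation exactly as in \cite[Lemma 3.2, Def.~3.3]{MV2}.
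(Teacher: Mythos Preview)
The paper does not actually give a proof of this proposition; it merely states that the argument is similar to \cite[Lemma 3.2, Def.~3.3]{MV2}. Your proposal correctly supplies the standard argument behind that reference: rewrite the weak identity (\ref{M2-}) as
\[
\int_{\Gw}\bigl(u_{\gm,\gn}-\BBK_{\CL_\gk}[\gm]\bigr)\,\CL_\gk\eta\,dx=\int_{\Gw}\eta\,d\bigl(\gn-g(u_{\gm,\gn})\,dx\bigr),
\]
recognise the right-hand side via the linear representation of Proposition~\ref{2.6} (extended to measure data by Fubini and the symmetry of $G_{\CL_\gk}$, using $\eta=\BBG_{\CL_\gk}[\CL_\gk\eta]$ for $\eta\in\mathbf X(\Gw)$), and conclude by uniqueness. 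The converse is the same computation run backwards. This is exactly the approach of the cited lemma, so your plan is on target.

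Two small remarks. First, the stated formula (\ref{M16}) omits the term $\BBG_{\CL_\gk}[\gn]$; your derivation correctly produces $u_{\gm,\gn}=\BBG_{\CL_\gk}[\gn]-\BBG_{\CL_\gk}[g(u_{\gm,\gn})]+\BBK_{\CL_\gk}[\gm]$, so either the proposition is tacitly for $\gn=0$ or there is a typo in the paper. Second, in the converse you invoke Theorem~\ref{gen} for uniqueness, but that theorem carries the subcritical hypothesis (\ref{M2}); it is cleaner to use the Kato-type estimate (\ref{poi4}) together with the monotonicity of $g$, which gives uniqueness of weak solutions of (\ref{M1}) without any growth restriction on $g$.
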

\subsection{The power case}

In this section we study in particular the following boundary value problem with $\gm\in\mathfrak M(\prt\Gw)$
 \begin{equation}\label{N1}\BA {lll}
\CL_\gk u+|u|^{q-1}u=0\qquad&\text{in }\Gw\\
\phantom{\CL_\gk u+|u|^{q-1}}
u=\gm\qquad&\text{in }\prt\Gw
\EA\end{equation}
A Radon measure for which this problem has a solution (always unique) is called a {\it good measure}. The solution, whenever it exists, is unique and denoted by $u_\gm$. For such a nonlinearity, the condition (\ref{M2}) is fulfilled if and only if
 \begin{equation}\label{N2}\BA {lll}
0<q<q_c:=\myfrac{N+\frac{\ga_+}{2}}{N-2+\frac{\ga_+}{2}}.
\EA\end{equation}
On the contrary, in the {\it supercritical case} i.e. if $q\geq q_c$, a continuity condition with respect to some Besov capacity is needed in order a measure be good. We recall some notations concerning Besov space. For $\gs>0$, $1\leq p<\infty$, we denote by $W^{\gs,p}(\BBR^d)$ the Sobolev space over $\BBR^d$. If $\gs$ is not an integer the Besov space $B^{\gs,p}(\BBR^d)$ coincides with $W^{\gs,p}(\BBR^d)$. When $\gs$ is an integer we denote $\Gd_{x,y}f=f(x+y)+f(x-y)-2f(x)$ and
$$B^{1,p}(\BBR^d)=\left\{f\in L^p(\BBR^d): \myfrac{\Gd_{x,y}f}{|y|^{1+\frac{d}{p}}}\in L^p(\BBR^d\times \BBR^d)\right\}
$$
with norm
$$\|f\|_{B^{1,p}}=\left(\|f\|^p_{L^p}+\myint{}{}\myint{\BBR^d\times \BBR^d}{}\myfrac{|\Gd_{x,y}f|^p}{|y|^{p+d}}dxdy\right)^{\frac{1}{p}}.
$$
Then
$$B^{m,p}(\BBR^d)=\left\{f\in W^{m-1,p}(\BBR^d): D_x^\ga f\in B^{1,p}(\BBR^d)\;\forall\ga\in \BBN^d\,|\ga|=m-1\right\}
$$
with norm
$$\|f\|_{B^{m,p}}=\left(\|f\|^p_{W^{m-1,p}}+\sum_{|\ga|=m-1}\myint{}{}\myint{\BBR^d\times \BBR^d}{}\myfrac{|D_x^\ga\Gd_{x,y}f|^p}{|y|^{p+d}}dxdy\right)^{\frac{1}{p}}.
$$
These spaces are fundamental because they are stable under the real interpolation method as it was developed by Lions and Petree.
For $\ga\in\BBR$ we defined the Bessel kernel of order $\ga$ by $G_\ga(\xi)=\CF^{-1}(1+|.|^2)^{-\frac{\ga}{2}}\CF(\xi)$, where $\CF$ is the Fourier transform of moderate distributions in $\BBR^d$. The Bessel space $L_{\ga,p}(\BBR^d)$ is defined by
$$L_{\ga,p}(\BBR^d)=\{f=G_\ga\ast g:g\in L^{p}(\BBR^d)\},
$$
with norm
$$\|f\|_{L_{\ga,p}}=\|g\|_{L^p}=\|G_{-\ga}\ast f\|_{L^p}.
$$
It is known that if $1<p<\infty$ and $\ga>0$, $L_{\ga,p}(\BBR^d)=W^{\ga,p}(\BBR^d)$ if $\ga\in\BBN$ and $L_{\ga,p}(\BBR^d)=B^{\ga,p}(\BBR^d)$ if $\ga\notin\BBN$, always with equivalent norms. The Bessel capacity is defined for compact subset
$K\subset\BBR^d$ by
$$C^{\BBR^d}_{\ga,p}=\inf\{\|f\|^p_{L_{\ga,p}}, f\in\CS'(\BBR^d),\,f\geq \chi_K\}.
$$
It is extended to open set and then any set by the fact that it is an outer measure.
Our main result is the following

\begin{theorem}\label{supcr} Assume $0<\gk\leq\frac{1}{4}$. Then $\gm\in\mathfrak M^+(\prt\Gw)$ is a good measure if and only if it is absolutely continuous with respect to the Bessel capacity $C^{\BBR^{N-1}}_{2-\frac{2+\ga_+}{2q'},q'}$ where $q'=\frac{q}{q-1}$, that is
 \begin{equation}\label{N3}\BA {lll}
\forall E\subset\prt\Gw,\,E\text { Borel }, C^{\BBR^{N-1}}_{2-\frac{2+\ga_+}{2q'},q'}(E)=0\Longrightarrow \gm(E)=0.
\EA\end{equation}
\end{theorem}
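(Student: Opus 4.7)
The proof will rest on reducing the good-measure property to the single analytic condition
\begin{equation}\label{PlanA}
\BBK_{\CL_\gk}[\gm]\in L^q_{\gf_\gk}(\Gw),
\end{equation}
and then identifying \eqref{PlanA} with the capacitary absolute continuity \eqref{N3}. The first reduction is done by a monotone approximation combined with Theorem \ref{gen}; the second reduction is a potential-theoretic calculation based on the two-sided Martin kernel estimate \eqref{poissonest}.

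\textbf{Key capacitary lemma.} Using \eqref{poissonest}, $\gf_\gk\asymp d^{\frac{\ga_+}{2}}$, and the tubular parametrization $x=x_r(\gs)$ of $\Gw$ near $\prt\Gw$, I would establish the two-sided equivalence
\[
\int_{\Gw}(\BBK_{\CL_\gk}[\gm])^{q}\gf_\gk\,dx\;\asymp\;\int_{\prt\Gw}\!\int_{\prt\Gw}\frac{d\gm(\xi)\,d\gm(\eta)}{|\xi-\eta|^{N-1-(2-\frac{2+\ga_+}{2q'})q'}}+\|\gm\|^{q}_{\mathfrak M(\prt\Gw)}.
\]
The radial integration over $r=d(x)\in(0,r_0)$ produces the exponent $2-\tfrac{2+\ga_+}{2q'}$; the tangential integration is a convolution on $\prt\Gw\simeq\BBR^{N-1}$ against a Riesz kernel of that order. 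By the Adams-Hedberg theory for Bessel potentials, the right-hand side is finite if and only if $\gm$ does not charge sets of zero $C^{\BBR^{N-1}}_{2-\frac{2+\ga_+}{2q'},q'}$-capacity; and more generally this equivalence localizes so that \eqref{PlanA} is equivalent to the capacitary absolute continuity. I expect this computation to be the longest but most routine step, modeled on the classical Adams-Hedberg/Marcus-V\'eron scheme.

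\textbf{Sufficiency.} Assume \eqref{N3} holds. By the key lemma, $\BBK_{\CL_\gk}[\gm]\in L^q_{\gf_\gk}(\Gw)$. Approximate $\gm$ from below by an increasing sequence $\{\gm_n\}$ of bounded measures with smooth densities (obtained by truncation of $d\gm/d\gw^{x_0}$). For each $n$, approximate $|t|^{q-1}t$ by a nondecreasing sequence $g_{n,k}$ satisfying the subcritical condition \eqref{M2}, and apply Theorem \ref{gen} to produce solutions $u_{n,k}$. By Proposition \ref{equiv-def} and the comparison/monotonicity from Proposition \ref{2.6}, $0\leq u_{n,k}\leq \BBK_{\CL_\gk}[\gm_n]\leq \BBK_{\CL_\gk}[\gm]$, so $g_{n,k}(u_{n,k})^{q/1}\leq \BBK_{\CL_\gk}[\gm]^q\in L^1_{\gf_\gk}(\Gw)$. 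Dominated convergence yields $u_{n,k}\to u_n$ as $k\to\infty$, solving \eqref{N1} with datum $\gm_n$. Monotone convergence in $n$, again dominated by $\BBK_{\CL_\gk}[\gm]^q\in L^1_{\gf_\gk}$, produces $u_\gm$ and passes to the limit in \eqref{M2-}.

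\textbf{Necessity.} Assume $u_\gm$ exists. Then by Proposition \ref{equiv-def}, $u_\gm+\BBG_{\CL_\gk}[u_\gm^q]=\BBK_{\CL_\gk}[\gm]$ with $u_\gm^q\in L^1_{\gf_\gk}(\Gw)$. Given a compact $K\subset\prt\Gw$ with $C^{\BBR^{N-1}}_{2-\frac{2+\ga_+}{2q'},q'}(K)=0$ and $\vge>0$, choose $\psi\in C^\infty(\prt\Gw)$ with $\psi\geq\chi_K$ and $\|\psi\|_{B^{2-\frac{2+\ga_+}{2q'},q'}}<\vge$. Via Proposition \ref{reglemma} (or \ref{reglemma1}) and Lemma \ref{ETA}, build a test function $\eta_\psi\in\mathbf X(\Gw)$ such that $\eta_\psi/\gf_\gk$ has trace $\psi$ and $\CL_\gk\eta_\psi=\gf_\gk m_\psi$ with $m_\psi$ controlled. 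Using $\eta_\psi$ in \eqref{M2-} and applying H\"older's inequality in $L^q_{\gf_\gk}\times L^{q'}_{\gf_\gk}$ together with the capacitary key lemma in its dual form yields
\[
\gm(K)\leq \int_{\prt\Gw}\psi\,d\gm\;\leq\;C\|u_\gm^q\|_{L^1_{\gf_\gk}}^{1/q}\,\|\psi\|_{B^{2-\frac{2+\ga_+}{2q'},q'}}\leq C'\vge.
\]
Letting $\vge\to 0$ gives $\gm(K)=0$, and outer regularity of $\gm$ extends this to all Borel sets of zero capacity.

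\textbf{Main obstacle.} The delicate step is the two-sided potential-capacity equivalence (the key lemma), because it must simultaneously absorb the anisotropy introduced by $d^{\frac{\ga_+}{2}}$ in $\gf_\gk$ and in $K_{\CL_\gk}$, and produce the sharp tangential exponent $2-\tfrac{2+\ga_+}{2q'}$. The construction of the dualizing test function $\eta_\psi$ in $\mathbf X(\Gw)$ controlling a prescribed boundary weight is the other technically sensitive point, since $\mathbf X(\Gw)$ is defined via weighted-energy conditions tied precisely to $\gf_\gk$.
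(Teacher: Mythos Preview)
Your ``key capacitary lemma'' is wrong on two counts, and this breaks the sufficiency direction.

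First, the displayed two-sided equivalence cannot hold: the left side is homogeneous of degree $q$ in $\gm$, while your double Riesz integral is homogeneous of degree $2$. The correct potential estimate (which the paper proves as Theorem~\ref{poten}, using \cite[Th.~3.1]{MV2}) is
\[
\int_\Gw(\BBK_{\CL_\gk}[\gm])^q\gf_\gk\,dx\;\asymp\;\|\gm\|_{B^{-2+\frac{2+\ga_+}{2q'},q}(\prt\Gw)}^q,
\]
a negative-order Besov norm rather than a bilinear energy.

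Second, and more seriously, you assert that \eqref{PlanA} is \emph{equivalent} to the capacitary absolute continuity \eqref{N3}. This is false: $\BBK_{\CL_\gk}[\gm]\in L^q_{\gf_\gk}(\Gw)$ is equivalent to $\gm\in B^{-2+\frac{2+\ga_+}{2q'},q}(\prt\Gw)$, which is strictly stronger than ``$\gm$ does not charge sets of zero $C^{\BBR^{N-1}}_{2-\frac{2+\ga_+}{2q'},q'}$-capacity''. A measure can vanish on all zero-capacity sets yet have infinite Besov norm, so your dominated-convergence step in the sufficiency argument has no dominating function. What is actually needed is the capacitary decomposition theorem of Dal~Maso / Feyel--de~la~Pradelle \cite{DaM,FDeP}: any positive $\gm$ satisfying \eqref{N3} is the increasing limit of measures $\gm_n\in B^{-2+\frac{2+\ga_+}{2q'},q}(\prt\Gw)$. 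For each $\gm_n$ one has \eqref{PlanA}, hence a solution $u_{\gm_n}$; the monotone limit $u$ then satisfies $u,u^q\in L^1_{\gf_\gk}(\Gw)$ by testing against $\gf_\gk$, and one passes to the limit in the weak formulation. Your truncation of $d\gm/d\gw^{x_0}$ does give measures with bounded densities, but this does not yield the uniform $L^q_{\gf_\gk}$ control required for the dominated-convergence argument you invoke, precisely because $\BBK_{\CL_\gk}[\gm]$ itself need not lie in $L^q_{\gf_\gk}$.

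Your necessity outline is essentially on the right track and matches the paper's strategy: build a test function from a capacitary competitor $\psi$ and control the bad terms by the Besov norm of $\psi$. The paper does this with an explicit heat-semigroup lifting $R[\eta](x)=H[\eta](d(x)^2,\gs(x))h(d(x))$ (Lemma~\ref{heat}), which allows direct real-interpolation estimates to recover the $B^{2-\frac{2+\ga_+}{2q'},q'}$ norm; your suggestion of using Propositions~\ref{reglemma}/\ref{reglemma1} would require additional work to obtain the same sharp Besov control on $L[\eta]$.
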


The striking aspect of the proof is that it is based upon potential estimates which have been developed by Marcus and V\'eron in the study of the supercritical boundary trace problem in polyhedral domains \cite{MV2}.
Before proving this result we need a key potential estimate.
\begin{theorem}\label{poten} Assume $0<\gk\leq\frac{1}{4}$ and $q\geq q_c$. There exists a constant $c_{59}>1$ dependning on $\Gw$, $q$, and $\gk$ such that for any $\gm\in\mathfrak M^+(\prt\Gw)$ there holds
 \begin{equation}\label{N4}\BA {lll}
\myfrac{1}{c_{59}}\|\gm\|^q_{B^{-2+\frac{2+\ga_+}{2q'},q}}\leq \myint{\Gw}{}\left(\BBK_{\CL_\gk}[\gm]\right)^q\phi_\gk dx
\leq c_{59}\|\gm\|^q_{B^{-2+\frac{2+\ga_+}{2q'},q}}
\EA\end{equation}
\end{theorem}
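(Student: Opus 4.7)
My plan is to use the sharp two-sided Martin kernel estimates from Theorem~\ref{poisson} to reduce the integral $\myint{\Gw}{}(\BBK_{\CL_\gk}[\gm])^q\gf_\gk\,dx$ to a concrete potential expression on a half-space, and then match that expression to a classical Besov semigroup identity, in the spirit of the supercritical estimates developed in \cite{MV2}.

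First I would localize. Using a finite partition of unity on $\prt\Gw$ together with the tubular coordinates $x\mapsto(\gx_x,d(x))$ on $\Gw'_{r_0}$, split $\gm$ into pieces supported in small boundary charts and straighten the boundary in each chart. After this change of variables, $\Gw$ becomes locally a half-slab $\{(y',t):y'\in\BBR^{N-1},\,0<t<\gd_0\}$ with $d(x)\approx t$ and $\gf_\gk(x)\approx t^{\ga_+/2}$, and for $\gx\in\prt\Gw$ identified with $(\gx,0)$ we have $|x-\gx|^2\approx|y'-\gx|^2+t^2$. Combining with Theorem~\ref{poisson}, the integral $\myint{\Gw}{}(\BBK_{\CL_\gk}[\gm])^q\gf_\gk\,dx$ is two-sided equivalent (up to interior terms dominated by $\|\gm\|_{\mathfrak{M}(\prt\Gw)}^q$, itself controlled by any reasonable Besov norm of $\gm$) to the model quantity
$$J(\gm):=\myint{0}{\gd_0}\myint{\BBR^{N-1}}{}t^{(q+1)\ga_+/2}\left(\myint{\BBR^{N-1}}{}\frac{d\gm(\gx)}{(|y'-\gx|^2+t^2)^{(N+\ga_+-2)/2}}\right)^{\!q}dy'\,dt.$$

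Second I would identify $J(\gm)$ with a Besov norm. Introducing the $L^1$-normalized dilations $\tilde P_t(w):=t^{-(N-1)}\tilde P_1(w/t)$ of the Poisson-type kernel $\tilde P_1(z):=(1+|z|^2)^{-(N+\ga_+-2)/2}$, the inner integral in $J(\gm)$ equals $t^{-(\ga_+-1)}\,\tilde P_t\ast\gm(y')$ up to a harmless constant, so that
$$J(\gm)\approx\myint{0}{\infty}t^{q(2-(2+\ga_+)/(2q'))-1}\|\tilde P_t\ast\gm\|_{L^q(\BBR^{N-1})}^q\,dt.$$
The classical semigroup characterization of Besov spaces (cf.\ Triebel) identifies the right-hand side with $\|\gm\|^q_{B^{-2+(2+\ga_+)/(2q'),\,q}(\BBR^{N-1})}$, yielding the two-sided bound (\ref{N4}). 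The particular index $-2+(2+\ga_+)/(2q')$ is forced by dimensional homogeneity: the cumulative $t$-weight $(q+1)\ga_+/2$ from $\gf_\gk K^q$, combined with the $-q(\ga_+-1)$ from extracting the normalized convolution, lines up with $qb-1$ precisely when $b=2-(2+\ga_+)/(2q')$.

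The main obstacle is verifying that the anisotropic kernel $\tilde P_1$ satisfies the hypotheses of the Besov semigroup characterization: one needs enough decay, positivity and moment/cancellation to ensure $\{\tilde P_t\}$ is an admissible approximate identity for Besov spaces of arbitrary (possibly negative) order. Since $N+\ga_+-2\geq N-1$ for $\gk\in(0,\frac14]$, $\tilde P_1$ is integrable, radially symmetric and decaying like a Poisson kernel, so the classical estimates of \cite{MV2} apply with only minor adaptations to account for the extra weight $t^{\ga_+/2}$. Re-pasting the local pieces via the partition of unity introduces only lower-order errors dominated by $\|\gm\|_{\mathfrak{M}(\prt\Gw)}^q$, and these are absorbed because in the supercritical range $q\geq q_c$ one has $-2+(2+\ga_+)/(2q')\leq 0$, so the Besov norm always controls the total mass.
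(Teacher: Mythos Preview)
Your strategy coincides with the paper's: localize via a finite cover of $\prt\Gw$, straighten the boundary by a $C^2$ diffeomorphism, reduce $\myint{\Gw}{}(\BBK_{\CL_\gk}[\gm])^q\gf_\gk\,dx$ to the half-space potential integral $J(\gm)$, identify the latter with $\|\gm\|^q_{B^{-2+(2+\ga_+)/(2q'),q}}$, and then reassemble. The only structural difference is that the paper does not rederive the Besov identification: it invokes \cite[Th.~3.1]{MV2} directly for the two-sided bound on $J(\gm)$, and the global step is a straightforward sum over overlapping pieces.

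There is one genuine slip in your write-up. You assert that $\tilde P_1(z)=(1+|z|^2)^{-(N+\ga_+-2)/2}$ is integrable on $\BBR^{N-1}$ because $N+\ga_+-2\geq N-1$. Integrability requires the strict inequality $\ga_+>1$, which fails at $\gk=\tfrac14$, where $\ga_+=1$ and $\tilde P_1(z)=(1+|z|^2)^{-(N-1)/2}\notin L^1(\BBR^{N-1})$. In that borderline case your ``$L^1$-normalized approximate identity'' framing breaks down, and the semigroup characterization you invoke does not apply as stated. The conclusion $J(\gm)\approx\|\gm\|^q_{B^{-2+(2+\ga_+)/(2q'),q}}$ is still correct, but one must appeal to the weighted potential estimate of \cite{MV2} (which is what the paper does) or to a Littlewood--Paley characterization that tolerates kernels with critical decay, rather than to an $L^1$-convolution semigroup.
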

\begin{proof} {\it Step 1: local estimates}. Denote by $\xi=(\xi_1,\xi')$ the coordinates in $\BBR_+^{N}$, $\xi_1>0$, $\xi'\in\BBR^{N-1}$
The ball of radius $R>0$ and center $a$ in $\BBR^{N-1}$ is denoted by $B'_{R}(a)$ (by $B'_{R}$ if $a=0$).
Let $R>0$, $\gn\in \mathfrak M^+(\BBR_+^{N-1})$ with support in $B'_{\frac R2}$ and
 \begin{equation}\label{N5}\BA {lll}
{\bf K}[\gn](\xi)=\myint{B'_{\frac R2}}{}\myfrac{d\gn(\gz')}{(\xi^2_1+|\xi'-\gz'|^2)^{\frac{N-2+\ga_+}{2}}}
\EA\end{equation}
Then, by \cite[Th 3.1]{MV2},
 \begin{equation}\label{N6}\BA {lll}
\frac1{c_{60}}\|\gm\|^q_{B^{-2+\frac{2+\ga_+}{2q'},q}}\leq\myint{0}{R}\myint{B'_{R}}{}\xi_1^{(q+1)\frac{\ga_+}{2}}\left(\myint{B'_{\frac R2}}{}\myfrac{d\gn(\gz')}{(\xi^2_1+|\xi'-\gz'|^2)^{\frac{N-2+\ga_+}{2}}}\right)^{q}d\xi'd\xi_1
\\[4mm]\phantom{-------\frac1{c_{60}}\|\gm\|^q_{B^{-2+\frac{2+\ga_+}{2q'},q}}\leq\myint{0}{R}\myint{B'_{R}}{}}
\leq c_{60}\left(1+R^{(q+1)\frac{\ga_+}{2}}\right)
\|\gm\|^q_{B^{-2+\frac{2+\ga_+}{2q'},q}}.
\EA\end{equation}
There exists $R>0$ such that for  any $y_0\in\prt\Gw$, there exists a $C^2$ diffeomorphism $\Gth:=\Gth_{y_0}$ from
$B_{R}(y_0)$ into $\BBR^N$ such that $\Gth(y_0)=0$, $\Gth_{y_0}(B_{R}(y_0))=B_{R}$  and
$$\Gth(\Gw\cap B_{R}(y_0))=B^+_{R}:=B_{R}\cap \BBR_+^{N}\,,\;
\Gth(\prt\Gw\cap B_{\frac R2}(y_0))=B'_{\frac R2}\,,\;\Gth(\prt\Gw\cap B_{R}(y_0))=B'_{R}.
$$
Moreover, $\Gth$ has bounded distortion, in the sense that since
$$\gf_\gk(x)\myint{\prt\Gw\cap B_R(y_0)}{}\myfrac{d\gm(z)}{|x-z|^{N-2+\ga_+}}=
\gf_\gk\circ\Gth^{-1}(\xi)\myint{B'_{R}}{}\myfrac{d(\gm\circ\Gth^{-1})(\gz)}{|\Gth^{-1}(\xi)-\Gth^{-1}(\gz)|^{N-2+\ga_+}},
$$
there holds

$$\BA {lll}
\myfrac{\xi_1^{\frac{\ga_+}{2}}}{c_{61}}\myint{B'_\frac R2}{}
\myfrac {d(\gm\circ\Gth^{-1})(\gz)}{(\xi^2_1+|\xi'-\gz'|^2)^{\frac{N-2+\ga_+}{2}}}\\[4mm]
\phantom{-------}\leq
\gf_\gk\circ\Gth^{-1}(\xi)\myint{B'_\frac R2}{}\myfrac{d(\gm\circ\Gth^{-1})(\gz)}{|\Gth^{-1}(\xi)-\Gth^{-1}(\gz)|^{N-2+\ga_+}}\\[4mm]
\phantom{-------------------}\leq
c_{61}\xi_1^{\frac{\ga_+}{2}}\myint{B'_\frac R2}{}
\myfrac {d(\gm\circ\Gth^{-1})(\gz)}{(\xi^2_1+|\xi'-\gz'|^2)^{\frac{N-2+\ga_+}{2}}}
\EA$$
Since $\gm\mapsto \gm\circ\Gth^{-1}$ is a $C^2$ diffeomorphism between
$\mathfrak M^+(\prt\Gw\cap B_{\frac{R}{2}}(y_0))\cap B^{-2+\frac{2+\ga_+}{2q'},q}(\prt\Gw\cap B_{\frac{R}{2}}(y_0))$ and
$\mathfrak M^+(B'_{\frac{R}{2}})\cap B^{-2+\frac{2+\ga_+}{2q'},q}(B'_{\frac{R}{2}})$, we derive, using (\ref{poissonest}) and
(\ref{N6})
 \begin{equation}\label{N7}\BA {lll}
\frac{1}{c_{62}}\|\gm\|^q_{B^{-2+\frac{2+\ga_+}{2q'},q}}
\leq \myint{\Gw\cap B_R(y_0)}{}(\BBK_{\CL_\gk}[\gm])^q\phi_\gk dx\leq c_{62}\|\gm\|^q_{B^{-2+\frac{2+\ga_+}{2q'},q}}
\EA\end{equation}
Clearly the left-hand side inequality (\ref{N4}) follows. Combining Harnack inequality and boundary Harnack inequality we obtain
 \begin{equation}\label{N8}\BA {lll}
\myint{\Gw}{}(\BBK_{\CL_\gk}[\gm])^q\phi_\gk dx\leq c_{63}\myint{\Gw\cap B_R(y_0)}{}(\BBK_{\CL_\gk}[\gm])^q\phi_\gk dx
\EA\end{equation}
which implies the  left-hand side inequality (\ref{N4}) when $\gm$ has it support in a ball $B_{\frac R2}(y_0)\cap\partial\xO.$\smallskip

\noindent {\it Step 2: global estimates}. We write $\gm=\sum_{j=1}^{j_0}\gm_j$ where the $\gm_j$ are positive measures on
$\prt\Gw$ with support in some ball $B_{\frac R2}(y_j)$ with $y_j\in\prt\Gw$ and such that
$$\frac{1}{c_{64}}\|\gm\|_{B^{-2+\frac{2+\ga_+}{2q'},q}}\leq \|\gm_j\|_{B^{-2+\frac{2+\ga_+}{2q'},q}}\leq c_{64}\|\gm\|_{B^{-2+\frac{2+\ga_+}{2q'},q}}. $$
Then
$$
\|\BBK_{\CL_\gk}[\gm]\|_{L^q_{\phi_\gk}}\leq \sum_{j=1}^{j_0}\|\BBK_{\CL_\gk}[\gm_j]\|_{L^q_{\phi_\gk}}
\leq c_{59}^{\frac{1}{q}}\sum_{j=1}^{j_0}\|\gm_j\|^q_{B^{-2+\frac{2+\ga_+}{2q'},q}}\leq j_0c_{64}c_{59}^{\frac{1}{q}}\|\gm\|_{B^{-2+\frac{2+\ga_+}{2q'},q}}.
$$
On the opposite side
$$\BA {lll}\|\BBK_{\CL_\gk}[\gm]\|_{L^q_{\phi_\gk}}\geq\max_{1\leq j\leq j_0}\|\BBK_{\CL_\gk}[\gm_j]\|_{L^q_{\phi_\gk}}\\[4mm]
\phantom{\|\BBK_{\CL_\gk}[\gm]\|_{L^q_{\phi_\gk}}}
\geq \frac{1}{c_{59}^{\frac{1}{q}}}\max_{1\leq j\leq j_0}\|\gm_j\|_{B^{-2+\frac{2+\ga_+}{2q'},q}}
\\[4mm]
\phantom{\|\BBK_{\CL_\gk}[\gm]\|_{L^q_{\phi_\gk}}}
\geq \frac{1}{j_0c_{59}^{\frac{1}{q}}}\sum_{j=1}^{j_0}\|\gm_j\|_{B^{-2+\frac{2+\ga_+}{2q'},q}}
\\[4mm]
\phantom{\|\BBK_{\CL_\gk}[\gm]\|_{L^q_{\phi_\gk}}}
\geq \frac{1}{c_{64}c_{59}^{\frac{1}{q}}}\|\gm\|_{B^{-2+\frac{2+\ga_+}{2q'},q}},
\EA$$
which ends the proof.
\end{proof}
\noindent{\it Proof of Theorem \ref{supcr}: The condition is sufficient.} Let $\gm$ be a boundary measure such that $|\BBK_{\CL_\gk}[\gm]|^q\in L^1_{\ei}(\xO).$ For $k>0$ set $g_k(u)=$ sgn$(u)\min\{|u|^q,k^q\}$ and let $u_k$  be the solution of
 \begin{equation}\label{N9}\BA {lll}
\CL_\gk u_k+g_k(u_k)=0\qquad&\text{in }\Gw\\
\phantom{\CL_\gk u_k+g_k()}
u_k=\gm\qquad&\text{in }\prt\Gw,
\EA\end{equation}
which exists a is unique by Theorem \ref{gen}. Furthermore $k\mapsto u_k$ is decreasing,
$$0\leq u_k\leq \BBK_{\CL_\gk}[\gm]$$
and
$$0\leq g_k(u_k)\leq g_k(\BBK_{\CL_\gk}[\gm])\leq (\BBK_{\CL_\gk}[\gm])^q,$$
and the first terms on the right of the two previous inequalities are integrable for the measure $\gf_\gk dx$ by Theorem \ref{poten}. Finally for any $\eta\in {\bf X}_\gk(\Gw)$, there holds
$$\myint{\Gw}{}\left(u_k\CL_\gk\eta+g_k(u_k)\eta\right)dx=\myint{\Gw}{}\BBK_{\CL_\gk}[\gm]\CL_\gk\eta dx.
$$
Since $u_k$ and $g_k(u_k)$ converge respectively to $u$ and $g(u) $ a.e. and in $L^1_{\ei}(\Gw)$; we conclude that
$$\myint{\Gw}{}\left(u\CL_\gk\eta+u^q\eta\right)dx=\myint{\Gw}{}\BBK_{\CL_\gk}[\gm]\CL_\gk\eta dx.
$$
If $\gm$ is a positive measure which vanishes on Borel sets $E\subset\prt\Gw$ with $C^{\BBR^{N-1}}_{2-\frac{2+\ga_+}{2q'},q'}$-capacity zero, there exists an increasing sequence of positive measures in $B^{-2+\frac{2+\ga_+}{2q'},q}(\prt\Gw)$ $\{\gm_n\}$ which converges to $\gm$ (see \cite{DaM}, \cite {FDeP}). Let $u_{\gm_n}$ be the solution of (\ref{N1})  with boundary data
$\gm_n$. The sequence $\{u_{\gm_n}\}$ is increasing with limit $u$. Since, by taking $\phi_\gk$ as test function, we obtain
$$\myint{\Gw}{}\left(\gl_\gk u_{\gm_n}+g(u_{\gm_n})\right)\gf_\gk dx=\gl_\gk\myint{\Gw}{}\BBK_{\CL_\gk}[\gm_n]\gf_\gk dx,
$$
it follows that $u,g(u)\in L^1_{\gf_\gk}(\xO)$. Thus
$$\myint{\Gw}{}\left(u\CL_\gk\eta+g(u)\eta\right)dx=\myint{\Gw}{}\BBK_{\CL_\gk}[\gm]\CL_\gk\eta dx\quad\forall \eta\in {\bf X}_\gk(\Gw),
$$
and therefore $u=u_\gm.$\medskip

\noindent{\bf Definition} A smooth lifting is a continuous linear operator $R[.]$ from  $C^2_0(\prt\Gw)$ to $C^2_0(\overline\Gw)$ satisfying
 \begin{equation}\label{N10}\BA {ll}
(i)\qquad 0\leq\eta\leq 1\Longrightarrow 0\leq R[\eta]\leq 1\,,\; R[\eta]\lfloor_{\prt\Gw}=\eta\\[2mm]
(ii)\qquad  |\nabla \gf_\gk.\nabla R[\eta]|\leq c_{65}\gf_\gk
 \EA
\end{equation}
 where $c_{65}$ depends on the $C^1$-norm of $\eta$.\medskip

Our proof are based upon modification of an argument developed by Marcus and V\'eron in \cite {MV-JMPA01}.
\begin{lemma}\label{lift} Assume there exists a solution $u_\gm$ of (\ref{N1}) with $\gm\geq 0$. For $\eta\in C^2(\Gw)$, $0\leq\eta\leq 1$ set $\gz=\gf_\gk (R[\eta])^{q'}$ where $R$ is a smooth lifting. Then
 \begin{equation}\label{N10-1}
\left(\myint{\prt\Gw}{}\eta d\gm\right)^{q'}\leq c_{67}
\myint{\Gw}{}u^q\gz dx+c_{67}\left(\myint{\Gw}{}u^q\gz dx\right)^{\frac{1}{q}}\left(\left(\myint{\Gw}{}\gf_\gk dx\right)^{\frac{1}{q'}}+q'\left(\myint{\Gw}{}(L[\eta])^{q'}dx\right)^{\frac{1}{q'}}\right)
\end{equation}
where
 \begin{equation}\label{N10-2}
 L[\eta]=(R[\eta])^{q'-1}\left(2\gf_\gk^{-\frac{1}{q}}|\nabla\gf_\gk.\nabla R[\eta]|+\gf_\gk^{\frac{1}{q'}}|\Gd R[\eta]|\right)
\end{equation}
and $c_{67}$ depends on $\xO,\xl_\xk,q,\xk,N.$
\end{lemma}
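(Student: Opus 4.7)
The proof combines the weak formulation of (\ref{N1}) with the test function $\gz = \gf_\gk(R[\eta])^{q'}$, the lower bound of Proposition~\ref{remark2}, and a conjugate-exponent Hölder inequality. First, I verify that $\gz\in\mathbf{X}_\gk(\Gw)$: by the product rule
\begin{equation*}
\CL_\gk\gz = \gl_\gk\gz - 2q'(R[\eta])^{q'-1}\nabla\gf_\gk\cdot\nabla R[\eta] - q'(q'-1)\gf_\gk(R[\eta])^{q'-2}|\nabla R[\eta]|^2 - q'\gf_\gk(R[\eta])^{q'-1}\Gd R[\eta],
\end{equation*}
and the gradient control $|\nabla\gf_\gk\cdot\nabla R[\eta]|\leq c_{65}\gf_\gk$ built into the definition of smooth lifting, together with the $C^2$-boundedness of $R[\eta]$, implies $\gf_\gk^{-1}\CL_\gk\gz\in L^\infty(\Gw)$.

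Next, test (\ref{N1}) against $\gz$ to obtain $\int_\Gw u^q\gz\,dx + \int_\Gw u\,\CL_\gk\gz\,dx = \int_\Gw v\,\CL_\gk\gz\,dx$ with $v:=\BBK_{\CL_\gk}[\gm]$. Proposition~\ref{remark2} with $Z=(R[\eta])^{q'}$ (for which the required gradient condition is inherited from $R[\eta]$) bounds the right side from below by $c_0\int_{\prt\Gw}\eta^{q'}\,d\gm$; the Hölder inequality $(\int_{\prt\Gw}\eta\,d\gm)^{q'} \leq \gm(\prt\Gw)^{q'-1}\int_{\prt\Gw}\eta^{q'}\,d\gm$ on the boundary then converts this into the desired $(\int\eta\,d\gm)^{q'}$, with $\gm(\prt\Gw)$ bounded via the special case $\eta\equiv 1$ and absorbed into constants.

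To upper-bound $\int_\Gw u\CL_\gk\gz\,dx$, note that the $|\nabla R[\eta]|^2$ summand of $\CL_\gk\gz$ is nonpositive against $u\geq 0$ and may be dropped. The algebraic identities $q'-1=q'/q$ and $q(q'-1)=q'$ regroup the remaining lower-order terms into $q'u\gf_\gk^{1/q}L[\eta]$, and Hölder with exponents $(q,q')$—with the factor $(R[\eta])^{q'-1}$ distributed so that the first Hölder factor, raised to the power $q$, equals exactly $u^q\gz = u^q\gf_\gk(R[\eta])^{q'}$—delivers
\begin{align*}
\int_\Gw u\gz\,dx &\leq \Bigl(\int_\Gw u^q\gz\,dx\Bigr)^{1/q}\Bigl(\int_\Gw\gf_\gk\,dx\Bigr)^{1/q'},\\
\int_\Gw u\gf_\gk^{1/q}L[\eta]\,dx &\leq \Bigl(\int_\Gw u^q\gz\,dx\Bigr)^{1/q}\Bigl(\int_\Gw L[\eta]^{q'}\,dx\Bigr)^{1/q'}.
\end{align*}
Combining these with the earlier bound and raising to the $q'$-th power (using $(a+b+c)^{q'}\leq C(a^{q'}+b^{q'}+c^{q'})$) yields the stated estimate, with $c_{67}$ absorbing $c_0^{-q'}$, $\gl_\gk^{q'}$, $(q')^{q'}$, and numerical constants.

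The principal technical obstacle is the second Hölder estimate above: the Hölder decomposition must split the factor $(R[\eta])^{q'-1}$ across both factors in exactly the right proportion so that the first factor matches $u^q\gz$ while the second recovers the full $L[\eta]=(R[\eta])^{q'-1}M[\eta]$ (rather than the naively larger $M[\eta]$). This is the delicate bookkeeping step that adapts the Marcus–V\'eron strategy from \cite{MV-JMPA01} to the Hardy-potential setting; the rest is a transcription of the classical argument.
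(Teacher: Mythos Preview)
Your approach is essentially the paper's: compute $\CL_\gk\gz$, invoke Proposition~\ref{remark2} for the lower bound $c_0\int_{\prt\Gw}\eta^{q'}\,d\gm$, drop the favourable $-q'(q'-1)\gf_\gk(R[\eta])^{q'-2}|\nabla R[\eta]|^2$ term, and estimate the remaining pieces of $\int u\CL_\gk\gz$ by H\"older with exponents $(q,q')$. Two bookkeeping points, however, do not go through as written.

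First, the passage from $\int\eta^{q'}d\gm$ to $(\int\eta\,d\gm)^{q'}$ via Jensen introduces the factor $\gm(\prt\Gw)^{q'-1}$, which depends on $\gm$ (hence on $u$) and therefore \emph{cannot} be absorbed into a constant $c_{67}$ depending only on $\Gw,\gl_\gk,q,\gk,N$. The paper's proof does not attempt this step: it stops at $c_{66}\int\eta^{q'}d\gm\leq\int(u\CL_\gk\gz+u^q\gz)$, which is all that is needed downstream (in the application one takes $\eta=1$ on $K$, so $\gm(K)\leq\int\eta^{q'}d\gm$). The power $q'$ on the left of~\eqref{N10-1} appears to be a misprint.

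Second, your ``principal technical obstacle'' cannot in fact be resolved in the way you describe. Writing the cross terms as $q'u\gf_\gk^{1/q}(R[\eta])^{q'-1}M[\eta]$ with $M[\eta]=2\gf_\gk^{-1/q}|\nabla\gf_\gk\!\cdot\!\nabla R[\eta]|+\gf_\gk^{1/q'}|\Gd R[\eta]|$, any H\"older split that makes the $L^q$ factor equal to $(\int u^q\gz)^{1/q}$ must place $(R[\eta])^{q'/q}=(R[\eta])^{q'-1}$ entirely in that factor, leaving the $L^{q'}$ factor as $(\int M[\eta]^{q'})^{1/q'}$ rather than $(\int L[\eta]^{q'})^{1/q'}$. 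Since $0\leq R[\eta]\leq 1$ this is the \emph{larger} of the two, so the stated inequality with $L[\eta]$ does not follow; the paper's own proof commits the same conflation. The discrepancy is harmless for the application, because Lemma~\ref{heat} actually bounds $\int M[\eta]^{q'}$ (its proof never uses the $(R[\eta])^{q'-1}$ prefactor).
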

\begin{proof}  There holds
$$\CL_\gk\gz=\gl_\gk(R[\eta])^{q'}\gf_\gk-2q'(R[\eta])^{q'-1}\nabla\gf_\gk.\nabla R[\eta]-q'(R[\eta])^{q'-2}\gf_\gk\left(R[\eta]\Gd R[\eta]-(q'-1)|\nabla R[\eta]|^2\right).
$$
Then $\gz\in{\bf X}_\gk(\Gw)$ because of (\ref{N10})-(ii) and by Proposition \ref{remark2}
$$c_{66}\myint{\prt\Gw}{}\eta^{q'}d\gm\leq \myint{\Gw}{}\left(u\CL_\gk\gz+u^q\gz\right)dx.
$$
Since
$$u\CL_\gk\gz\leq u\left(\gl_\gk(R[\eta])^{q'}\gf_\gk+2q'(R[\eta])^{q'-1}|\nabla\gf_\gk.\nabla R[\eta]|+q'(R[\eta])^{q'-1}\gf_\gk|\Gd R[\eta]|\right)
$$
we obtain
$$\myint{\Gw}{}u\CL_\gk\gz dx\leq \left(\myint{\Gw}{}u^q\xz dx\right)^{\frac{1}{q}}
\left(\left(\myint{\Gw}{}\gf_\gk dx\right)^{\frac{1}{q'}}+q'\left(\myint{\Gw}{}(L[\eta])^{q'}dx\right)^{\frac{1}{q}}\right),
$$
where $L[\eta]$ is defined by (\ref{N10-2}).

\end{proof}

\begin{lemma}\label{heat} There exist a smooth lifting $R$ such that $\eta\mapsto L[\eta]$ is continuous from $B^{2-\frac{2+\ga_+}{2q'},q'}(\prt\Gw)$ into $L^{q'}(\Gw)$. Furthermore,
 \begin{equation}\label{N10-1'}
\|L[\eta]\|_{L^{q'}(\Gw)}\leq c'_{66}\|\eta\|^{q'-1}_{L^\infty(\prt\Gw)}\|\eta\|_{B^{2-\frac{2+\ga_+}{2q'},q'}(\prt\Gw)}.
\end{equation}
\end{lemma}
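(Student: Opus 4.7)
Since the right-hand side of (\ref{N10-1'}) involves a Besov norm on $\prt\Gw$, the natural construction of $R$ is through a Poisson-type extension of $\eta$, suitably modified so that conditions (i)--(ii) of the definition of smooth lifting hold. The weighted $L^{q'}$ bounds on $L[\eta]$ will then follow from the classical Littlewood--Paley characterization of Besov spaces via the Poisson semigroup. The approach mimics \cite{MV-JMPA01}.

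\textbf{Step 1 (localization).} Cover $\prt\Gw$ by finitely many coordinate patches on which a $C^2$ diffeomorphism flattens the boundary, and work in the resulting half-space coordinates $(x',t)\in\BBR^N_+$, so that $t$ is comparable to $d(x)$. Let $\{\psi_j\}$ be a subordinate smooth partition of unity on $\prt\Gw$ and write $\eta=\sum_j\psi_j\eta$. It suffices to build and estimate each local lifting $R[\psi_j\eta]$ separately and sum.

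\textbf{Step 2 (local construction).} Set $s:=2-\frac{2+\ga_+}{2q'}$. Given $\tilde\eta\in B^{s,q'}(\BBR^{N-1})\cap L^\infty$, let $u(x',t)=(P_t*\tilde\eta)(x')$ be its harmonic extension to the half-space by the Poisson kernel $P_t$, and define
\begin{equation*}
\widetilde R[\tilde\eta](x',t)=\chi(t)\bigl(u(x',t)-t\,\prt_t u(x',t)\bigr),
\end{equation*}
where $\chi\in C^\infty([0,\infty))$ satisfies $\chi(0)=1$, $\chi'(0)=0$ and $\chi\equiv 0$ beyond the chart width. Direct computation gives $\widetilde R|_{t=0}=\tilde\eta$ and $\prt_t\widetilde R|_{t=0}=0$. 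Transporting back to $\Gw$ via the inverse boundary flattening and summing yields $R[\eta]\in C_0^2(\overline\Gw)$ with $R[\eta]|_{\prt\Gw}=\eta$ and vanishing normal derivative on $\prt\Gw$. Post-composition with a smooth truncation into $[0,1]$ that fixes the boundary values supplies property (i); the vanishing normal derivative, together with $|\nabla\gf_\gk|\lesssim d^{\ga_+/2-1}$ and the fact that $\nabla d$ is essentially normal near $\prt\Gw$, gives property (ii) with constant controlled by $\|\eta\|_{C^1}$.

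\textbf{Step 3 (weighted integral estimates).} Because $u$ is harmonic in the half-space, one has
\begin{equation*}
\Gd(u-t\prt_t u)=-2\prt_t^2 u,\qquad\nabla(u-t\prt_t u)=\nabla u-t\nabla\prt_t u.
\end{equation*}
Using $\gf_\gk\approx d^{\ga_+/2}$ and $q'/q=q'-1$, and absorbing the bounded cutoff contributions (supported away from $\{t=0\}$, hence dominated by $\|\eta\|_{L^\infty}^{q'}$ through the maximum principle on $P_t$), the estimate (\ref{N10-1'}) reduces to proving
\begin{equation*}
I_1:=\int_0^\gd\!\int_{\BBR^{N-1}}t^{\ga_+/2}\,|\prt_t^2 u|^{q'}\,dx'\,dt\;\leq\; C\,\|\tilde\eta\|^{q'}_{B^{s,q'}},
\end{equation*}
\begin{equation*}
I_2:=\int_0^\gd\!\int_{\BBR^{N-1}}t^{\ga_+/2-q'}\,|\nabla u|^{q'}\,dx'\,dt\;\leq\; C\,\|\tilde\eta\|^{q'}_{B^{s,q'}}.
\end{equation*}
Both are direct consequences of the Poisson-semigroup characterization of Besov norms,
\begin{equation*}
\|\tilde\eta\|^{q'}_{B^{s,q'}(\BBR^{N-1})}\approx\|\tilde\eta\|^{q'}_{L^{q'}}+\int_0^\infty t^{q'(m-s)-1}\,\|\prt_t^m(P_t*\tilde\eta)\|^{q'}_{L^{q'}(\BBR^{N-1})}\,dt,
\end{equation*}
valid for any integer $m>s$. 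With $m=2$ one gets $q'(2-s)-1=\ga_+/2$, yielding $I_1$; with $m=1$ one gets $q'(1-s)-1=\ga_+/2-q'$, yielding $I_2$ after replacing the tangential components of $\nabla u$ by $\prt_t u$ (and conversely) using the $L^{q'}$-boundedness of the Riesz transforms on $\BBR^{N-1}$ coming from the conjugate-harmonic relationship.

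\textbf{Step 4 (assembly).} Combining Steps 2--3 and factoring the uniform bound $\|R[\eta]\|_{L^\infty}\leq\|\eta\|_{L^\infty}$ out of the prefactor $(R[\eta])^{q'-1}$ in $L[\eta]$, then summing over the partition of unity, yields (\ref{N10-1'}).

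The only delicate point is the exponent bookkeeping of Step 3: the precise value $s=2-\frac{2+\ga_+}{2q'}$ is exactly what makes both Besov--Poisson characterizations---with $m=2$ for the Laplacian term and $m=1$ for the gradient term---meet at the same space, and checking this match (together with the Riesz-transform reduction) is the main non-routine part; all remaining ingredients are standard.
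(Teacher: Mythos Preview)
Your Poisson-extension strategy is sound in spirit, but Step 3 contains an error that breaks the argument for a nontrivial range of exponents. The identity you write, $\nabla(u-t\prt_t u)=\nabla u-t\nabla\prt_t u$, is wrong: the product rule on $t\prt_t u$ produces an extra $\prt_t u$ in the normal slot, so in fact $\prt_t(u-t\prt_t u)=-t\,\prt_t^2u$. This cancellation is precisely why you built the lifting $u-t\prt_t u$ (and why property (ii) holds, as you correctly note in Step 2), but by discarding it and bounding $|\nabla\gf_\gk\cdot\nabla R|\leq|\nabla\gf_\gk|\,|\nabla R|$ you are led to $I_2$. That integral is in general \emph{divergent}: for smooth nonconstant $\eta$ one has $|\nabla u(\cdot,t)|\to|\nabla\eta|\neq0$ as $t\to0$, so $\int_0^\gd t^{\ga_+/2-q'}|\nabla u|^{q'}\,dt=\infty$ whenever $q'\geq 1+\ga_+/2$, equivalently $s:=2-\frac{2+\ga_+}{2q'}\geq1$. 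This regime is not empty in the supercritical range (e.g.\ $N=4$, $\gk=\tfrac14$, $q_c\leq q\leq 3$). Correspondingly the $m=1$ Poisson characterization you invoke needs $m>s$ and simply does not apply there.

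The repair is to keep the cancellation: since $\nabla\gf_\gk$ is essentially normal, $\gf_\gk^{-1/q}|\nabla\gf_\gk\cdot\nabla R|\approx t^{\ga_+/(2q')}|\prt_t^2u|$, and both contributions to $L[\eta]$ then reduce to your $I_1$, controlled by the $m=2$ characterization (valid for all $s<2$). The tangential curvature correction pairs $\nabla'\gf_\gk=O(t^{\ga_+/2})$ with $\nabla'R=\nabla'u+O(t)$ and contributes only $\int t^{\ga_+/2}|\nabla'u|^{q'}\leq c\,\|\eta\|_{B^{s-1,q'}}^{q'}$, a weaker norm. For comparison, the paper takes a cleaner route: it works in flow coordinates $(\gd,\gs)$ (no boundary flattening) and lifts via the \emph{heat} semigroup at time $\gd^2$, setting $R[\eta](x)=H[\eta](\gd^2,\gs(x))\,h(\gd)$. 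The chain rule then gives $\prt_\gd R=O(\gd)$ automatically, the parabolic maximum principle keeps $R[\eta]\in[0,1]$ without any post-truncation, and the heat-semigroup interpolation $[W^{2,q'}(\Gs),L^{q'}(\Gs)]_{1-(2+\ga_+)/(4q'),q'}=B^{s,q'}(\Gs)$ handles all $s<2$ with a single time-derivative.
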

\begin{proof} The construction of the lifting is originated into \cite[Sect 1]{MV}. For $0<\gd\leq \gb_0$, we set
$\Gs_\gd=\{x\in\Gw:d(x)=\gd\}$ and we identify $\prt\Gw$ with $\Gs:=\Gs_0$. The set $\{\Gs_\gd\}_{0<\gd\leq \gb_0}$ is a smooth foliation of $\prt\Gw$. For each $\gd\in (0,\gb_0]$ there exists a unique $\gs(x)\in \Gs_\gd$ such that $d(x)=\gd$ and $|x-\gs(x)|=\gd$. The set of couples $(\gd,\gs)$ defines a system of coordinates in $\Gw_{\gb_0}$ called the flow coordinates. The Laplacian obtain the following expression in this system
 \begin{equation}\label{N11}
\Gd=\myfrac{\prt^2}{\prt\gd^2}+b_0\myfrac{\prt}{\prt\gd}+\Gl_\Gs
\end{equation}
where $\Gl_\Gs$ is a linear second-order elliptic operator on $\Gs$ with $C^1$ coefficients. Furthermore
$b_0\to K$ and  $\Gl_\Gs\to \Gd_\Gs$, where $K$ is the mean curvature of $\Gs$ and $\Gd_\Gs$ the Laplace-Beltrami operator on $\Gs$. If $\eta\in B^{-2+\frac{2+\ga_+}{2q'},q}(\prt\Gw)$, we denote by $H:=H[\eta]$ the solution of
 \begin{equation}\label{N12}\BA {lll}
\phantom{(0,.)}
\myfrac{\prt H}{\prt s}+\Gd_\Gs H=0\qquad&\text{in }(0,\infty)\times\Gs\\
\phantom{\myfrac{\prt H}{\prt s}+\Gd_\Gs}
H(0,.)=\eta\qquad&\text{in }\Gs
\EA\end{equation}
Let $h\in C^\infty(\BBR_+)$ such that $0\leq h\leq 1$, $h'\leq 0$, $h\equiv 1$ on $[0,\frac{\gb_0}{2}]$, $h\equiv 0$ on
$[\gb_0,\infty]$. The lifting we consider is expressed by
  \begin{equation}\label{N13}
R[\eta](x)=\left\{\BA {lll}H[\eta](\gd^2,\gs(x))h(\gd)\qquad&\text{if }x\in \overline\Gw_{\gb_0}\\
0&\text{if }x\in \Gw'_{\gb_0},
\EA\right.
\end{equation}
with $x\approx (\gd,\gs):=(d(x),\gs(x)$. Mutatis mutandis, we perform the same computation as the one in \cite[Lemma 1.2]{MV-JMPA01}, using local coordinates $\{\gs_j\}$ on $\Gs$ and obtain
$$\nabla R[\eta]=2\gd h(\gd)\frac{\prt H}{\prt\gd}(\gd^2,\gs)\nabla\gd+\sum_{j=1}^{N-1}h(\gd)\frac{\prt H}{\prt\gs_j}(\gd^2,\gs)\nabla\gs_j+h'(\gd) H(\gd^2,\gs)\nabla\gd
$$
In $\Gw_{\frac{\gb_0}{2}}$ there holds
  \begin{equation}\label{N14}\nabla R[\eta].\nabla\gf_\gk=2\gd h(\gd)\frac{\prt H}{\prt\gd}(\gd^2,\gs)\nabla\gf_\gk.\nabla\gd+\sum_{j=1}^{N-1}h(\gd)\frac{\prt H}{\prt\gs_j}(\gd^2,\gs)\nabla\gs_j.\nabla\ei+h'(\gd) H(\gd^2,\gs)\nabla\gd.\nabla \ei
\end{equation}
Moreover $\gf_\gk(x)\leq c_2(d(x))^{\frac{\ga_+}{2}}=c_2\gd^{\frac{\ga_+}{2}}$ and $|\nabla \gf_\gk(x)|\leq c'_2(d(x))^{\frac{\ga_+}{2}-1}=c'_2\gd^{\frac{\ga_+}{2}-1}$. Similarly as in  \cite[(1.13)]{MV-JMPA01}
$$\nabla\gf_\gk=\frac{\prt\gf_\gk}{\prt\gd}{\nabla d}+\sum_{j=1}^{N-1}\frac{\prt\gf_\gk}{\prt\gs_j}(\gd^2,\gs)\nabla\gs_j,$$
thus
$$|\nabla\gf_\gk.\nabla\gs_j|\leq c_{68}\gd^{\frac{\ga_+}{2}},
$$
$$\gf^{-\frac{1}{q}}_\gk|\nabla R[\eta].\nabla\gf_\gk| \leq c_{69}\gd^{\frac{\ga_+}{2q'}}\left(\left|\frac{\prt H}{\prt\gd}(\gd^2,\gs)\right|+\sum_{j=1}^{N-1}\left|\frac{\prt H}{\prt\gs_j}(\gd^2,\gs)\right|-\frac{h'(\gd)}{\xd} H(\gd^2,\gs)\right).
$$
Thus
$$\BA {ll}\myint{\Gw}{}\gf^{-\frac{q'}{q}}_\gk|\nabla R[\eta].\nabla\gf_\gk|^{q'}dx
\leq c_{70}\myint{\Gw_{\gb_0}}{}\gd^{\frac{\ga_+}{2}}\left|\myfrac{\prt H}{\prt\gd}(\gd^2,\gs)\right|^{q'}dx\\[4mm]
\phantom{\myint{\Gw}{}\gf^{-\frac{q'}{q}}_\gk|\nabla R[\eta].\nabla\gf_\gk|^{q'}dx}
\displaystyle
+ c_{70}\sum_{j=1}^{N-1}\myint{\Gw_{\gb_0}}{}\gd^{\frac{\ga_+}{2}}\left|\frac{\prt H}{\prt\gs_j}(\gd^2,\gs)\right|^{q'}dx
\\[4mm]
\phantom{\myint{\Gw}{}\gf^{-\frac{q'}{q}}_\gk|\nabla R[\eta].\nabla\gf_\gk|^{q'}dx}
+c_{70}\myint{\Gw_{\gb_0}\setminus\Gw_{\frac{\gb_0}{2}}}{}\gd^{\frac{\ga_+}{2}}H^{q'}(\gd^2,\gs)dx
\EA$$
Then
  \begin{equation}\label{N15}\BA {ll}
\myint{\Gw}{}\gf^{-\frac{q'}{q}}_\gk|\nabla R[\eta].\nabla\gf_\gk|^{q'}dx
\leq c_{71}\myint{0}{\gb_0}\gd^{\frac{\ga_+}{2}}\myint{\Gs}{}\left|\myfrac{\prt H}{\prt \xd}(\gd^2,\gs)\right|^{q'}dSd\gd\\[4mm]
\phantom{\myint{\Gw}{}\gf^{-\frac{q'}{q}}_\gk|\nabla R[\eta].\nabla\gf_\gk|^{q'}dx}
\leq c_{71}\myint{0}{\gb^2_0}\myint{\Gs}{}\left(t^{\frac{2+\ga_+}{4q'}}\left\|\myfrac{\prt H}{\prt t}(t,.)\right\|_{L^{q'}(\Gs)}\right)^{q'}\myfrac{dt}{t}
\\[4mm]
\phantom{\myint{\Gw}{}\gf^{-\frac{q'}{q}}_\gk|\nabla R[\eta].\nabla\gf_\gk|^{q'}dx}
\leq c_{72}\|\eta\|^{q'}_{B^{2-\frac{2+\ga_+}{2q'},q'}(\Gs)}
\EA\end{equation}
by using the classical real interpolation identity
  \begin{equation}\label{N16}
  \left[W^{2,q'}(\Gs),L^{q'}(\Gs)\right]_{1-\frac{2+\ga_+}{4q'},q'}=B^{2-\frac{2+\ga_+}{2q'},q'}(\Gs).
  \end{equation}
  Similarly (see \cite[(1.17),(1.19)]{MV-JMPA01})
  \begin{equation}\label{N17}\BA {ll}\displaystyle
\sum_{j=1}^{N-1}\myint{\Gw_{\gb_0}}{}\gd^{\frac{\ga_+}{2}}\left|\frac{\prt H}{\prt\gs_j}(\gd^2,\gs)\right|^{q'}dx
+\myint{\Gw_{\gb_0}\setminus\Gw_{\frac{\gb_0}{2}}}{}\gd^{\frac{\ga_+}{2}}H^{q'}(\gd^2,\gs)dx
  \leq c_{72}\|\eta\|^{q'}_{W^{2-\frac{2+\ga_+}{2q'},q'}(\Gs)}.
  \EA\end{equation}
  Next we consider the second  term. Adapting in a straightforward manner the computation in \cite[p. 886-887 ]{MV-JMPA01} we obtain the following instead of \cite[(1.21)]{MV-JMPA01}
  \begin{equation}\label{N18}\BA {ll}\displaystyle
  \myint{\Gw}{}\gf_\gk|\Gd R[\eta]|^{q'}dx \leq
  c_{72}\myint{0}{\gb_0}\myint{\Gs}{}\left|\gd^{2+\frac{\ga_+}{2q'}}\myfrac{\prt^2H[\eta]}{\prt\gd^2}\right|^{q'}\!\!\!(\gd^2,\gs)d\gs d\gd\\[4mm]
  \phantom{  \myint{\Gw}{}\gf_\gk^{\frac{q'}{q}}|\Gd R[\eta]|^{q'}dx}+
  c_{72}\myint{0}{\gb_0}\myint{\Gs}{}\gd^{\frac{\ga_+}{2}}\left(  \left|\myfrac{\prt H[\eta]}{\prt\gd}\right|^{q'}+|H|^{q'}+|\Gl_\Gd-\Gl_\Gs|^{q'}\right)(\gd^2,\gs)dx
  \EA\end{equation}
  Then
    \begin{equation}\label{N19}\BA {ll}\displaystyle
 \myint{0}{\gb_0}\myint{\Gs}{}\left|\gd^{2+\frac{\ga_+}{2q'}}\myfrac{\prt^2H[\eta]}{\prt\gd^2}\right|^{q'}\!\!\!(\gd^2,\gs)d\gs d\gd
=\myint{0}{\gb^2_0}\myint{\Gs}{}\left|t^{2\left(1-\frac{4q'-\ga_+-2}{8q'}\right)}\myfrac{\prt^2H[\eta]}{\prt t^2}\right|^{q'}d\gs \frac{d t}{t}\\[2mm]
  \phantom{ \myint{0}{\gb_0}\myint{\Gs}{}\left|\gd^{2+\frac{\ga_+}{2q'}}\myfrac{\prt^2H[\eta]}{\prt\gd^2}\right|^{q'}(\gd^2,\gs)d\gs d\gd}
    \leq c_{73}\|\eta\|^{q'}_{B^{2-\frac{2+\ga_+}{2q'},q'}(\Gs)},
    \EA\end{equation}
by using the real interpolation identity
  \begin{equation}\label{N20i}
  \left[W^{4,q'}(\Gs),L^{q'}(\Gs)\right]_{\frac{4q'-\ga_+-2}{8q'},q'}=B^{2-\frac{2+\ga_+}{2q'},q'}(\Gs).
  \end{equation}
  The other term in the right-hand side of (\ref{N18}) yields to the same inequality as in (\ref{N17}).
\end{proof}

\noindent{\it Proof of Theorem \ref{supcr}: The condition is necessary.} Let $K\subset\prt\Gw$ be a compact set
and $\eta\in C^2_0(\prt\Gw)$ such that $0\leq\eta\leq 1$ and $\eta=1$ on $K$. Then, by (\ref{N10-1})
  \begin{equation}\label{N20}\BA {lll}
  (\gm(K))^{q'}\leq c_{67}\myint{\Gw}{}u^q(R[\eta])^{q'}\phi_\gk dx+\\[4mm]
  \phantom{-\myint{\Gw}{}u^q(R[\eta])^{q'}}
  c_{67}\left(\myint{\Gw}{}u^q(R[\eta])^{q'}\phi_\gk dx\right)^{\frac{1}{q}}\left(\left(\myint{\Gw}{}\gf_\gk dx\right)^{\frac{1}{q'}}+c'_{66}q'\|\eta\|_{B^{2-\frac{2+\ga_+}{2q'},q'}(\prt\Gw)}\right).
  \EA\end{equation}
From this inequality, we obtain classically the result since if $C^{\BBR^{N-1}}_{2-\frac{2+\ga_+}{2q'},q'}(K)=0$ there exists a sequence $\{\eta_n\}$ in $C^2_0(\prt\Gw)$ with the following properties:
  \begin{equation}\label{N20-1}\BA {lll}
  0\leq\eta_n\leq 1\,,\; \eta_n=1\; \text{in a neighborhood of $K$ and $\eta_n\to 0$ in } B^{2-\frac{2+\ga_+}{2q'},q'}(\prt\Gw)
  \text{ as } n\to\infty.
\EA\end{equation}
This implies that $u^q(R[\eta_n])^{q'}\to 0$ in $L^1_{\gf_\gk}(\Gw)$. Therefore the right-hand side of (\ref{N20}) tends to $0$ if we substitute  $\eta_n$ to $\eta$ and thus $\gm(K)=0$ for any $K$ compact with zero capacity and this relation holds for any Borel subset.
$\hfill$
$\Box$ \medskip

\noindent {\bf Definition}. We say that a compact set $K\subset\prt\Gw$ is {\it removable} if any positive solution $u\in C(\overline\Gw\setminus K)$ of
 \begin{equation}\label{N21}\BA {lll}
 \CL_\gk u+|u|^{q-1}u=0\qquad\text{in }\Gw\\
   \EA\end{equation}
such that
 \begin{equation}\label{N21-0}\BA {lll}
\myint{\Gw}{} (u\CL_\gk\eta +|u|^{q-1}u\eta)dx=0\quad\forall\eta\in {\bf X}^K_{\gk}(\Gw)
   \EA\end{equation}
where ${\bf X}^K_{\gk}(\Gw)=\{\eta\in {\bf X}_{\gk}(\Gw):\eta=0 \text{ in a neighborhood of } K\}$,
  is identically zero.
\begin{theorem}\label{remov} Assume $0<\gk\leq\frac{1}{4}$ and $q\geq 1$. A compact set $K\subset\prt\Gw$ is removable if and only if
$C^{\BBR^{N-1}}_{2-\frac{2+\ga_+}{2q'},q}(K)=0$.
\end{theorem}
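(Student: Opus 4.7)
The plan is to prove the two implications separately: necessity will rely on Theorem \ref{supcr} combined with the existence of capacitary measures, while sufficiency uses a capacitary test-function construction modeled on Lemmas \ref{lift}--\ref{heat} followed by a Young absorption.

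\textbf{Necessity.} Argue contrapositively: suppose $C^{\BBR^{N-1}}_{2-\frac{2+\ga_+}{2q'},q'}(K)>0$. Classical capacitary theory (Dal Maso; Feyel--De La Pradelle) yields a non-zero positive Radon measure $\gm$ carried by $K$ and absolutely continuous with respect to this Bessel capacity. By Theorem \ref{supcr}, problem (\ref{N1}) with boundary datum $\gm$ admits a positive solution $u_\gm\not\equiv 0$. Standard interior regularity, coupled with the boundary H\"older estimates of Propositions \ref{reglemma}--\ref{reglemma1} applied on balls $B_r(y_0)$ with $y_0\in\prt\Gw\setminus K$ and $r<\dist(y_0,K)$ (on which $\gm$ puts no mass), forces $u_\gm\in C(\overline\Gw\setminus K)$. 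For any $\eta\in{\bf X}^K_\gk(\Gw)$ the defining identity reads $\int_\Gw(u_\gm\CL_\gk\eta+u_\gm^q\eta)\,dx=\int_\Gw\BBK_{\CL_\gk}[\gm]\CL_\gk\eta\,dx$; by Proposition \ref{trlin} the right-hand side equals $\int_{\prt\Gw} Z\,d\gm$ with $Z$ the boundary profile of $\eta$, which is identically zero on $K=\supp\gm$ because $\eta$ vanishes in a neighborhood of $K$. Hence $u_\gm$ satisfies (\ref{N21-0}) but is not identically zero, contradicting removability.

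\textbf{Sufficiency.} Let $u\ge 0$, $u\in C(\overline\Gw\setminus K)$, satisfy (\ref{N21-0}). From $C^{\BBR^{N-1}}_{2-\frac{2+\ga_+}{2q'},q'}(K)=0$ select a pointwise decreasing sequence $\eta_n\in C^2_0(\prt\Gw)$ with $0\le\eta_n\le 1$, $\eta_n\equiv 1$ on nested open neighborhoods $V_n\downarrow K$ in $\prt\Gw$, and $\ge_n:=\|\eta_n\|_{B^{2-\frac{2+\ga_+}{2q'},q'}(\prt\Gw)}\to 0$. Put
$$\gz_n=\gf_\gk(1-R[\eta_n])^{q'},$$
with $R$ the Fermi-coordinate lifting of Lemma \ref{heat} modified by an $\Gw$-side cutoff so that $R[\eta_n]\equiv 1$ on an $\Gw$-neighborhood of $K$; this makes $\gz_n\equiv 0$ there, so $\gz_n\in{\bf X}^K_\gk(\Gw)$ and is admissible in (\ref{N21-0}). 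Expanding $\CL_\gk\gz_n$ mutatis mutandis as in Lemma \ref{lift}, the coefficient of $|\nabla R[\eta_n]|^2$ is $-q'(q'-1)\gf_\gk(1-R[\eta_n])^{q'-2}$, contributing \emph{negatively} to $u\CL_\gk\gz_n$ since $u\ge 0$; discarding it yields
$$\int_\Gw u^q\gz_n\,dx + \gl_\gk\int_\Gw u(1-R[\eta_n])^{q'}\gf_\gk\,dx \le \int_\Gw u\,|E_n|\,dx,$$
where $E_n = 2q'(1-R[\eta_n])^{q'-1}\nabla\gf_\gk\cdot\nabla R[\eta_n] + q'(1-R[\eta_n])^{q'-1}\gf_\gk\Gd R[\eta_n]$.

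\textbf{Absorption and conclusion.} The structural identity $q'-1=q'/q$ makes the power of $(1-R[\eta_n])$ in $E_n$ match the weight $\gz_n=\gf_\gk(1-R[\eta_n])^{q'}$ in the H\"older inequality, giving
$$\int_\Gw u\,|E_n|\,dx \le c\Bigl(\int_\Gw u^q\gz_n\,dx\Bigr)^{1/q}\ge_n,$$
with the last factor furnished by the estimates (\ref{N15}), (\ref{N17}), (\ref{N18}), (\ref{N19}) in the proof of Lemma \ref{heat}. Since $\supp\gz_n$ is a compact subset of $\overline\Gw\setminus K$ on which $u$ is bounded, $\int_\Gw u^q\gz_n\,dx<\infty$ for every fixed $n$, so Young's inequality absorbs the first factor into the left side,
$$\int_\Gw u^q\gz_n\,dx + \gl_\gk\int_\Gw u(1-R[\eta_n])^{q'}\gf_\gk\,dx \le c'\ge_n^{q'}.$$
With $\eta_n$ decreasing and $R$ positive linear, $\gz_n\uparrow\gf_\gk$ pointwise on $\Gw\setminus K$; monotone convergence then produces $\int_\Gw u^q\gf_\gk\,dx+\gl_\gk\int_\Gw u\gf_\gk\,dx=0$, forcing $u\equiv 0$.

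\textbf{Main obstacle.} The decisive point is the exact matching of weights in H\"older: one must bound $\int u|E_n|$ by $(\int u^q\gz_n)^{1/q}\ge_n$ rather than by $(\int u^q\gf_\gk)^{1/q}\ge_n$, since $\int u^q\gf_\gk$ may a priori be infinite. This matching is delivered by the algebraic identity $q'-1=q'/q$, which is intrinsic. A secondary technical hurdle is arranging $R[\eta_n]\equiv 1$ (not merely close to $1$) on an $\Gw$-neighborhood of $K$, which is achieved by multiplying Lemma \ref{heat}'s lifting by an interior cutoff that is $1$ on a tube around $K$; the extra terms this introduces are supported away from $K$ and absorbed into the same Besov estimate, but the computation requires careful bookkeeping of the signs in the $|\nabla R[\eta_n]|^2$ term.
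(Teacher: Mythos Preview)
Your necessity argument is essentially the paper's; the problem is in the sufficiency direction, where there is a genuine sign error that breaks the scheme.

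You write $\gz_n=\gf_\gk\psi^{q'}$ with $\psi=1-R[\eta_n]$ and compute that the coefficient of $|\nabla R[\eta_n]|^2$ in $\CL_\gk\gz_n$ is $-q'(q'-1)\gf_\gk\psi^{q'-2}$, hence this term contributes negatively to $u\CL_\gk\gz_n$.  That is correct, but the identity you are using is $\int u^q\gz_n=-\int u\CL_\gk\gz_n$.  A \emph{negative} contribution to $u\CL_\gk\gz_n$ becomes a \emph{positive} contribution to $-\int u\CL_\gk\gz_n$, so after moving the $\gl_\gk$ term you actually obtain
\[
\int_\Gw u^q\gz_n\,dx+\gl_\gk\int_\Gw u\psi^{q'}\gf_\gk\,dx=-\int_\Gw uE_n\,dx+q'(q'-1)\int_\Gw u\gf_\gk\psi^{q'-2}|\nabla R[\eta_n]|^2\,dx,
\]
and the last term sits on the right with a \emph{positive} sign.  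Discarding it gives a lower bound, not the upper bound you need.  Moreover this term cannot be absorbed by your H\"older step: the factor $\psi^{q'-2}$ does not match $\psi^{q'/q}=\psi^{q'-1}$, so the weight in $\gz_n^{1/q}$ is off by one power of $\psi$.  This is exactly why the paper takes the exponent $2q'$ rather than $q'$: with $\tilde\gz_n=\gf_\gk(R[\tilde\eta_n])^{2q'}$ one has $2q'-2=2q'/q$, so the $|\nabla R|^2$ term \emph{does} match and is controlled by $\|\eta_n\|_{B^{2-\frac{2+\ga_+}{2q'},q'}}$ via the Gagliardo--Nirenberg inequality (see the paper's (\ref{N27})--(\ref{N29})).

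A second, independent gap is your modification forcing $R[\eta_n]\equiv1$ on a tube around $K$.  Writing $1-\tilde R[\eta_n]=(1-\theta)(1-R[\eta_n])$ with a cutoff $\theta$, the cross terms $\nabla\theta\cdot(1-R[\eta_n])$ live on the fixed transition annulus of $\theta$, where $1-R[\eta_n]\to1$ (since $R[\eta_n]\to0$ away from $K$); they are therefore \emph{not} controlled by $\ge_n$ and do not vanish in the limit.  The paper avoids this entirely: instead of forcing $\gz_n\in{\bf X}^K_\gk(\Gw)$, it first proves the identity $\int(u\CL_\gk\tilde\gz_n+u^q\tilde\gz_n)=0$ holds for the unmodified $\tilde\gz_n$ by an approximation argument that relies crucially on the a~priori decay (\ref{N24}) from Proposition~\ref{prop19} together with the exponential heat-kernel bound (\ref{N22}) near $K$.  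It then deduces only $u^q\in L^1_{\gf_\gk}(\Gw)$, and concludes $u\equiv0$ in a separate Step~3 via the Martin representation and Theorem~\ref{supcr}.
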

\begin{proof} The condition is clearly necessary since, if a compact boundary set $K$ has positive capacity, there exists a capacitary measure $\gm_k\in \mathfrak M_+(\prt\Gw)\cap B^{-2+\frac{2+\ga_+}{2q'},q}(\prt\Gw)$ with support in $K$ (see e.g. \cite{AH}). For such a measure there exists a solution $u_{\gm_K}$ of (\ref{N1}) with $\gm=\gm_K$ by Theorem \ref{supcr}. Next we assume that $C^{\BBR^{N-1}}_{2-\frac{2+\ga_+}{2q'},q}(K)=0$. Then there exists a sequence $\{\eta_n\}$ in $C^2_0(\prt\Gw)$ satisfying (\ref{N20-1}). In particular, there exists a decreasing sequence $\{\CO_n\}$ of relatively open subsets of $\prt\Gw$, containing $K$ such that $\eta_n=1$ on $\CO_n$ and thus $\eta_n=1$ on $K_n:=\overline\CO_n$.
We set $\tilde\eta_n=1-\eta_n$ and $\tilde \gz_n=\gf_\gk( R[\tilde\eta_n])^{2q'}$ where $R$ is defined by (\ref{N13}).
Then $0\leq\tilde\eta_n\leq 1$  and $\tilde\eta_n=0$ on $K_n$. Therefore
 \begin{equation}\label{N22}\BA {lll}
\tilde \gz_n(x)\leq \gf_\gk\min\left\{1, c_{74}(d(x))^{1-N}e^{-(4d(x))^{-2}(\dist (x,K^c_n))^2}\right\}
   \EA\end{equation}
   Furthermore
    \begin{equation}\label{N22-1}\BA {lll}
(i)\qquad &|\nabla R[\tilde\eta_n]|\leq c_{75}\min\left\{1, (d(x))^{-2-N}e^{-(4d(x))^{-2}(\dist (x,K^c_n))^2}\right\}\\[4mm]
(ii)\qquad &|\Gd R[\tilde\eta_n]|\leq c_{75}\min\left\{1, (d(x))^{-4-N}e^{-(4d(x))^{-2}(\dist (x,K^c_n))^2}\right\}
   \EA\end{equation}
\smallskip

\noindent {\it Step 1}. We claim that
 \begin{equation}\label{N23}\BA {lll}
\myint{\Gw}{}\left(u\CL_\gk\tilde \gz_n+u^q\tilde\gz_n\right) dx=0.
   \EA\end{equation}
  By Proposition \ref{prop19} there exists $c_{74}>0$ such that
   \begin{equation}\label{N24}\BA {lll}
(i)\qquad &\phantom{|\nabla|}u(x)\leq c_{76}(d(x))^{\frac{\ga_+}{2}}(\dist(x,K))^{-\frac{2}{q-1}-\frac{\ga_+}{2}}\\
(ii)\qquad &|\nabla u(x)|\leq c_{76}(d(x))^{\frac{\ga_+}{2}-1}(\dist(x,K))^{-\frac{2}{q-1}-\frac{\ga_+}{2}}
   \EA\end{equation}
   for all $x\in\Gw$. As in the proof of Lemma 3.8,
   \begin{equation}\label{N25-}\BA {lll}
   |u\CL_\gk\tilde\gz_n|\leq c_{77}(R[\tilde\eta_n])^{2q'-2}u\left(\gf_\gk R^2[\tilde\eta_n]+R[\tilde\eta_n]|\nabla\gf_\gk.\nabla R[\tilde\eta_n]|\right.\\[2mm]
   \phantom{----------------}\left.+\gf_\gk(R[\tilde\eta_n]|\Gd R[\tilde\eta_n]|+|\nabla R[\tilde\eta_n]|^2)\right).
   \EA\end{equation}

Let $\CO$ be a relatively open neighborhood of $K$ such that $\overline \CO\subset \CO_n$. We set
$G_{\CO,\gb_0}=\{x\in\Gw_{\gb_0}:\gs(x)\in \CO\}$ and $ G_{\CO^c,\gb_0}=\Gw_{\gb_0}\setminus G_{\CO}$. If $x\in G_{\CO}$,
$\dist (x,K^c_n)\geq \gt>0$. Then, by (\ref{N24})-(i) and (\ref{N22}), $u^q\tilde\gz_n\in L^q( G_{\CO})$. Since $u(x)=\circ (W(x))$ in
$G_{\CO^c}$ it follows that $u^q\tilde\gz_n\in L^1(\Gw_{\gb_0})$ and thus $u^q\tilde\gz_n$ is integrable in $\Gw$ . Similarly, using ({N22-1})-(i) and (ii), $u\CL_\gk\tilde\gz_n\in L^1(\Gw)$. Since $\tilde\gz_n$ does not vanish in a neighborhood of $K$, we introduce a cut-off function  $\gth_\ge\in C^2(\overline\Gw)$ for $0<\ge\leq\frac{\gb_0}{2}$, with the following properties,
$$\BA{lll}0\leq\gth_\ge\leq 1\,,\;\gth_\ge(x)=0 \;\forall x\in G_{\CO,\ge}\,,\;\gth_\ge(x)=1 \;\forall x\in \overline\Gw\,\text{ s.t. } \;
\dist (x,G_{\CO,\ge})\geq\ge\\[2mm]
|\nabla\gth_\ge|\leq c_{78}\ge^{-1}\chi_{G_{\CO_\ge,\ge}\setminus G_{\CO,\ge}}\,\text{ and }\;|D^2\gth_\ge|\leq c_{78}\ge^{-2}
\chi_{G_{\CO_\ge,\ge}\setminus G_{\CO,\ge}},
\EA$$
where we have taken $\ge$ small enough so that
$$G_{\CO_\ge,\ge}:=\{x\in\Gw:\dist (x,G_{\CO,\ge})\leq\ge\}\subset G_{K_n,2\ge}=\{x\in\Gw_{2\ge}:\gs(x)\in K_n\}.$$
Clearly $\gth_\ge\tilde\gz_n\in {\bf X}_\gk^K(\Gw)$, thus
$$\myint{\Gw}{}\left(u\CL_\gk(\gth_\ge\tilde\gz_n)+u^q\gth_\ge\tilde\gz_n\right)dx=0.
$$
Next
$$\BA {lll}\myint{\Gw}{}\left(u\CL_\gk(\gth_\ge\tilde\gz_n)+u^q\gth_\ge\tilde\gz_n\right)dx=\myint{\Gw\setminus G_{\CO_\ge,\ge} }{}\left(u\CL_\gk(\gz_n)+u^q\tilde\gz_n\right)dx+\myint{G_{\CO_\ge,\ge} }{}\left(u\CL_\gk(\gth_\ge\tilde\gz_n)+u^q\gth_\ge\tilde\gz_n\right)dx\\[4mm]
\phantom{\myint{\Gw}{}\left(u\CL_\gk(\gth_\ge\tilde\gz_n)+u^q\gth_\ge\tilde\gz_n\right)dx}
=I_\ge+II_\ge
\EA$$
Clearly
$$\lim_{\ge\to 0}I_\ge=\myint{\Gw}{}\left(u\CL_\gk\tilde\gz_n+u^q\tilde\gz_n\right)dx$$
and
$$\lim_{\ge\to 0}\myint{G_{\CO_\ge,\ge} }{}u^q\gth_\ge\tilde\gz_n dx=0.
$$
Finally, since $\CL_\gk(\gth_\ge\tilde\gz_n)=\gth_\ge\CL_\gk\tilde\gz_n+\tilde\gz_n\Gd\gth_\ge+2\nabla\gth_\ge.\nabla \tilde\gz_n$, $\gth_\ge$ is constant outside $ G_{\CO_\ge,\ge}\setminus G_{\CO,\ge}$ and
$\dist (G_{\CO_\ge,\ge}\setminus G_{\CO,\ge},F_n^c)\geq \gt>0$, independent of $\ge$
 there holds, by (\ref{N22})
$$|\CL_\gk(\gth_\ge\tilde\gz_n)|\leq c_{79}\ge^{-{N+4}}e^{-\frac{\gt}{\ge^2}}.
$$
Using (\ref{N24})-(i) we derive
$$\lim_{\ge\to 0}\myint{G_{\CO_\ge,\ge} }{}u\CL_\gk(\gth_\ge\tilde\gz_n)dx=0,
$$
which yields to (\ref{N23}).\smallskip

\noindent {\it Step 2}. We claim that
 \begin{equation}\label{N26}\BA {lll}
\myint{\Gw}{}u^q\gf_\gk dx<\infty.
   \EA\end{equation}
   Using the expression of $\CL_\gk\gz_n$ in (\ref{N23}) where replace $\eta_n$ by $\tilde \eta_n$, we derive
   \begin{equation}\label{N25}\BA {lll}
   \myint{\Gw}{}u^q\tilde \gz_ndx= \myint{\Gw}{}\left(-\gl_\gk(R[\tilde\eta_n])^{2q'}\gf_\gk+4q'(R[\tilde\eta_n])^{2q'-1}\nabla\gf_\gk.\nabla R[\tilde\eta_n]+\right.
   \\[4mm]\phantom{-----\myint{\Gw}{}u^q\gz_ndx}2q'(R[\tilde\eta_n])^{2q'-2}\gf_\gk\left(R[\tilde\eta_n]\Gd R[\tilde\eta_n]
   \left.+(2q'-1)|\nabla R[\tilde\eta_n]|^2\right)\right)u dx
   \\[4mm]\phantom{\myint{\Gw}{}u^q\gz_ndx}
   \leq c_{79} \left(\myint{\Gw}{}u^q\tilde \gz_ndx\right)^{\frac{1}{q}} \left(\myint{\Gw}{}(\tilde L[\eta_n])^{q'}dx\right)^{\frac{1}{q'}},
   \EA\end{equation}
where we have set
   \begin{equation}\label{N25+}\BA {lll}
\tilde L[\eta]=(\gf_{\gk})^{-\frac{1}{q}}\nabla\gf_\gk.\nabla R[\eta_n]+(\gf_{\gk})^{\frac{1}{q'}}|\Gd R[\tilde\eta_n]|+
(\gf_{\gk})^{\frac{1}{q'}}|\nabla R[\tilde\eta_n]|^2
   \EA\end{equation}
   By Lemma \ref{heat} we know that
      \begin{equation}\label{N26+}\BA {lll}
\myint{\Gw}{}(\gf_{\gk})^{-\frac{q'}{q}}|\nabla\gf_\gk.\nabla R[\eta_n]|^{q'}+\gf_{\gk}|\Gd R[\tilde\eta_n]|^{q'}dx\leq (c_{72}+c_{73})
\|\eta_n\|^{q'}_{B^{2-\frac{2+\ga_+}{2q'},2}(\prt\Gw)}.
   \EA\end{equation}
   The last term is estimated in the following way
         \begin{equation}\label{N27}\BA {lll}
\myint{\Gw}{}\gf_{\gk}|\nabla R[\tilde\eta_n]|^{2q'}dx\leq c_{80}\myint{0}{\gb_0^2}\myint{\Gs}{}s^{q'+\frac{\ga_++2}{4}}\left|\myfrac{\prt H[\eta_n]}{\prt s}\right|^{2q'}dS\myfrac{ds}{s}\\[4mm]
\phantom{\myint{\Gw}{}\gf_{\gk}|\nabla H[\tilde\eta_n]|^{2q'}dx\leq c_{80}}
+c_{80}\myint{0}{\gb_0^2}\myint{\Gs}{}s^{\frac{\ga_++2}{4}}\left(|\nabla_\Gs H[\eta_n]|^{2q'}+(H[\eta_n])^{2q'}\right)dS\myfrac{ds}{s},
      \EA\end{equation}
 where $\nabla_\Gs$ denotes the covariant gradient on $\Gs$.     Since the following interpolation identity holds
      $$\left[W^{2,2q'}(\Gs),L^{2q'}(\Gs)\right]_{1-\frac{\ga_++2}{8q'},2q'}=B^{1-\frac{\ga_++2}{4q'},2q'}(\Gs)
      $$
we obtain
$$\myint{0}{\gb_0^2}\myint{\Gs}{}s^{q'+\frac{\ga_++2}{4}}\left|\myfrac{\prt H[\eta_n]}{\prt s}\right|^{2q'}\myfrac{ds}{s}
\leq c_{81}\|\eta_n\|^{2q'}_{B^{1-\frac{\ga_++2}{4q'},2q'}(\Gs)}
$$
By the Gagliardo-Nirenberg inequality
         \begin{equation}\label{N28}\BA {lll}\|\eta_n\|^{2q'}_{B^{1-\frac{\ga_++2}{4q'},2q'}(\Gs)}\leq c_{82}\|\eta_n\|^{q'}_{B^{2-\frac{\ga_++2}{2q'},q'}(\Gs)}
\|\eta\|^{q'}_{L^\infty(\Gs)}=c_{82}\|\eta_n\|^{q'}_{B^{2-\frac{\ga_++2}{2q'},q'}(\Gs)}.
      \EA\end{equation}
By the same inequality
         \begin{equation}\label{N29}\BA {lll}\myint{\Gs}{}\left(|\nabla_\Gs H[\eta_n]|^{2q'}+(H[\eta_n])^{2q'}\right)dS\leq
      c_{82}\|H[\eta_n]\|^{q'}_{L^\infty(\Gs)}\myint{\Gs}{}\left(|\Gd_\Gs H[\eta_n]|^{q'}+(H[\eta_n])^{q'}\right)dS.
      \EA\end{equation}
 Using the estimates on $L[\eta]$ in Lemma \ref{heat} and the fact that $0\leq H[\eta_n]\leq 1$, we conclude that
 $$\myint{0}{\gb_0^2}\myint{\Gs}{}s^{\frac{\ga_++2}{4}}\left(|\nabla_\Gs H[\eta_n]|^{2q'}+(H[\eta_n])^{2q'}\right)dS\myfrac{ds}{s} \leq c_{83}\|\eta_n\|^{q'}_{B^{2-\frac{\ga_++2}{2q'},q'}.(\Gs)}
 $$
It follows from (\ref{N25})
         \begin{equation}\label{N30}\BA {lll}
         \myint{\Gw_{\frac{\gb_0}{2}}}{}u^q(R[\tilde\eta_n])^{2q'}\ei dx
         \leq c_{84}\myint{\Gw_{\gb_0}}{}(\tilde L{\eta_n})^{q'}dx\leq c_{85}\|\eta_n\|^{q'}_{B^{2-\frac{\ga_++2}{2q'},q'}.(\Gs)}
       \EA\end{equation}
       Letting $n\to\infty$ and using the fact that $\eta_n\to 0$, we obtain by Fatou's lemma that
       $$ \myint{\Gw_{\frac{\gb_0}{2}}}{}u^q\gf_\gk dx=0.$$
       Combined with the fact that $u$ is bounded in $\Gw'_{\frac{\gb_0}{2}}$ we obtain (\ref{N26}). Notice that $\|u\|_{L^q_{\gf_\gk}}(\Gw)$ is bounded independently of $u$.
   \smallskip

\noindent {\it Step 3}. End of the proof. Since $u^q\in L^1_{\gf_\gk}(\Gw)$, by Proposition \ref{2.6} there exists a unique weak solution $v\in L^1_{\gf_\gk}(\Gw)$ of
         \begin{equation}\label{R1}\BA {lll}
\CL_\gk v=u^q\qquad&\text{in }\,\Gw\\[1mm]
\phantom{\CL_\gk v}v=0&\text{in }\,\prt\Gw,
   \EA\end{equation}
 and $v\geq 0$.  Then $w=u+v$ is $\CL_\gk$-harmonic in $\Gw$, and by Theorem \ref{Lemm11} there exists a unique positive    Radon measure $\gt$ on $\prt\Gw$ such that $w=\BBK_{\CL_\gk}[\gt]$. Since $v$ and $u$ vanish respectively on  on $\prt\Gw$ and $\prt\Gw\setminus K$, it follows from Propositions \ref{Lemm12} and \ref{trlin} that the support of $\gt$ is included in $K$. By
 Theorem \ref{supcr}, $\gt$ vanishes on Borel subsets with zero $C^{\BBR^{N-1}}_{2-\frac{2+\ga_+}{2q'},q'}$-capacity. Since
 $C^{\BBR^{N-1}}_{2-\frac{2+\ga_+}{2q'},q'}(K)=0$, $\gt=0$. This implies that $u$ is a weak solution of
          \begin{equation}\label{R2}\BA {lll}
\CL_\gk u+u^q=0\qquad&\text{in }\,\Gw\\[1mm]
\phantom{\CL_\gk +u^q}u=0&\text{in }\,\prt\Gw,
   \EA\end{equation}
   and therefore $u=0$.
\end{proof}

\noindent{\bf Remark}. Using the fact that $u^+$ and $u_-$ are subsolutions of (\ref{N21}), it is easy to check  that Theorem \ref{remov} remains valid for any signed solution of (\ref{N21}).\medskip

 \noindent{\bf Remark}. If $1<q<q_c$ (see (\ref{N2})) it follows from Sobolev imbedding theorem that only the empty set has
 zero $C^{\BBR^{N-1}}_{2-\frac{2+\ga_+}{2q'},q'}$-capacity. only the empty set As a consequence of the previous result, if $q\geq q_c$ any isolated boundary singularity of a solution of (\ref{N21}) is removable.

\section{Isolated boundary singularities}
\setcounter{equation}{0}
We denote by $\{{\bf e}_{_1},...,{\bf e}_{_N}\}$ the canonical basis in $\BBR^N=\{x=(x',x_N)\in \BBR^{N-1}\times\BBR\}$ and by $(r,\gs)$ the spherical coordinates therein. Then $\BBR_+^{N}=\{=(x',x_N):, x'\in\BBR^{N-1}, x_N>0\}$ . We although denote by  $S^{N-1}$ and $S_+^{N-1}$ the unit sphere and the upper hemisphere of $\BBR_+^{N}$, i.e.  $S^{N-1}:\cap \BBR_+^{N}$. In this section we study the behavior near $0$ of solutions of
\begin{equation}\label{Eq1}
-\Gd u-\myfrac{\xk }{d^2}u+|u|^{q-1}u=0
\end{equation}
in a bounded convex domain $\Gw$ of $\BBR^N$  with a smooth boundary containing $0$ where $d$ is the distance function to the boundary, $\xk $ a constant in $(0,\frac{1}{4}]$ and $q>1$. Although it is not bounded, the model case is $\Gw=\BBR_+^{N}=\{=(x',x_N):, x'\in\BBR^{N-1}, x_N>0\}$ which is represented by  $(r,\gs)$, $r>0$, $\gs\in S^{N-1}_+$  in spherical coordinates. Then
\begin{equation}\label{Eq1-2}
\CL_{\gk}u=-u_{rr}-\myfrac{N-1}{r}u_r-\myfrac{1}{r^2}\Gd_{S^{N-1}}u-\myfrac{\gk}{r^2({\bf e}_N.\gs)^2}u+|u|^{q-1}uá.
\end{equation}
We also denote by $\nabla'$ the covariant gradient on $S^{N-1}$ in the metric of $S^{N-1}$ obtained by the imbedding into $\BBR^N$.
\subsection{The spherical $\CL_{\xk }$-harmonic problem}

It is straightforward to check that the Poisson kernel  $K_{\CL_\gk}$ of $\CL_\gk$ in $\BBR^N_+$  has the following expression
 \begin{equation}\label{q1-3}
K_{\CL_\gk}(x,\xi)=c_{N,\gk}\myfrac{x_N^{\frac{\ga_+}{2}}}{|x-\xi|^{N+\ga_+-2}}.
\end{equation}
In spherical coordinates
$$K_{\CL_\gk}(x,0)=c_{N,\gk}r^{2-N-\frac{\ga_+}{2}}\psi(\gs)\qquad r>0\,,\;\gs\in S^{N-1}_+
$$
where $\psi_\gk(\gs)=\frac{x_N}{|x|}\lfloor_{S^{N-1}_+}^{\frac{\ga_+}{2}}=({\bf e}_{_N}.\gs)^{\frac{\ga_+}{2}}$ solves
\begin{equation}\label{Eq7}\BA {ll}
-\Gd_{S^{N-1}}\psi_\gk-\gm_{\xk }\psi_\gk-\myfrac{\xk }{({\bf e}_{_N}.\gs)^2}\psi_\gk=0\quad&\text{in }S^{N-1}_+
\\[2mm]\phantom{-\Gd_{S^{N-1}}\psi_\gk-\gl_{\xk }\psi_\gk-\myfrac{\xk }{({\bf e}_{_N}.\gs)^2}}
\psi_\gk=0\quad&\text{in }\prt S^{N-1}_+,
\EA\end{equation}
and
 \begin{equation}\label{Eq1-4}
\gm_{\xk }=\frac{\ga_+}{2}(N+\frac{\ga_+}{2}-2)
\end{equation}
Notice that equation $(\ref{Eq7})$ admits a unique positive solution with supremum $1$. We could have defined the first eigenvalue  $\gm_{\xk }$ of the operator
$$\gf\mapsto \CL'_{\xk }w:= -\Gd_{S^{N-1}}w-\frac{\xk }{({\bf e}_{_N}.\gs)^2}w$$
by
\begin{equation}\label{Eq4}
\gm_{\xk }=\displaystyle\inf\left\{
\myfrac{\int_{S^{N-1}_+}\left(|\nabla w|^2-\xk ({\bf e}_{_N}.\gs)^{-2}w^2\right)dS}{\int_{S^{N-1}_+}w^2dS}
:w\in H^1_0(S^{N-1}_+),w\neq 0\right\}.
\end{equation}
By \cite{XX} the infimum exists since $\gr(\gs)=x_{N}\lfloor_{S^{N-1}_+}={\bf e}_{_N}.\gs$ is the first eigenfunction of $-\Gd_{S^{N-1}}$ in $H^1_0(S^{N-1}_+)$. The minimizer $\psi_{\gk}$ belongs to $H^1_0(S^{N-1}_+)$ only if $1<\gk<\frac{1}{4}$. Furthermore
\begin{equation}\label{Eq4-1}
\psi_{\gk}\in {\bf Y}(S^{N-1}_+):=\{\gf\in H^1_{loc}(S^{N-1}):\gr^{-\frac{\ga_+}{2}}\phi\in H^1(S^{N-1}_+,\gr^{\ga_+})\}.
\end{equation}
We can also define $\gm_k$ by
\begin{equation}\label{Eq4-2}
\gm_k=\inf\left\{\myfrac{\int_{S^{N-1}_+}|\nabla'(\gr^{-\frac{\ga_+}{2}}\gw)|^2\gr^{\ga_+}dS}{\int_{S^{N-1}_+}\gw^2dS}:\gw\in{\bf Y}(S^{N-1}_+)\setminus\{0\}\right\}.
\end{equation}

We can use the symmetry of the operator to obtain the second eigenvalue and eigenfunction of $\CL'_{\xk }$ on $S^{N-1}_+$. We first notice that for $j=1,...,N-1$, the function
\begin{equation}\label{Eq4-3}
x\mapsto \myfrac{x_N^{\frac{\ga_+}{2}}x_j}{|x|^{N+\ga_+}}
\end{equation}
is $\CL_\gk$-harmonic in $\BBR^{N-1}_+$, positive (resp. negative) on $\{x=(x_1,...,x_N:x_j>0,x_N>0\}$ (resp. $\{x=(x_1,...,x_N:x_j<0,x_N>0\}$) and vanishes on $\{x=(x_1,...,x_N:x_j=0,x_N=0\}$.

\begin{prop}\label{2nd} For any $j=1,..,N-1$ the function
$$\gs\mapsto \psi_{\gk,j}(\gs)=({\bf e}_{N}.\gs)^{\frac{\ga_+}{2}}{\bf e}_{j}.\gs
$$
satisfies
\begin{equation}\label{Eq4-4}
\CL'_{\gk}\psi_{\gk,j}=(\gm_\gk+N-1+\ga_+)\gr_{\gk,j}
\end{equation}
in $S^{N-1}_+$. It is positive (resp. negative) on $S^{N-1}_+\cap\{x=(x_1,...,x_N)=x_j>0\}$ (resp. $S^{N-1}_+\cap\{x=(x_1,...,x_N)=x_j<0\}$) and it vanishes on $\prt S^{N-1}_+\cap \{x=(x_1,...,x_N)=x_j=0\}$. The real number
$$\gm_{\gk,2}=\gm_\gk+N-1+\ga_+=(\frac{\ga_+}{2}+1)(N+\frac{\ga_+}{2}-1)$$
is the second eigenvalue of $\CL'_{\gk}$ in ${\bf Y}(S^{N-1}_+)$.
\end{prop}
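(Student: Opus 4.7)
The approach is to verify the eigenvalue identity by direct computation, read off the sign and vanishing properties from the product form, and then use a weighted spectral decomposition to place $\gm_{\gk,2}$ as the second eigenvalue. I would first write $\psi_{\gk,j} = \psi_\gk\, q_j$ with $q_j(\gs) = {\bf e}_j \cdot \gs$, and apply the product rule
\begin{equation*}
\Gd_{S^{N-1}}(\psi_\gk q_j) = q_j\, \Gd_{S^{N-1}} \psi_\gk + \psi_\gk\, \Gd_{S^{N-1}} q_j + 2\,\nabla'\psi_\gk \cdot \nabla' q_j.
\end{equation*}
Three ingredients feed into this: $\Gd_{S^{N-1}}\psi_\gk = -\gm_\gk \psi_\gk - \gk({\bf e}_N\cdot\gs)^{-2}\psi_\gk$ from \eqref{Eq7}; $\Gd_{S^{N-1}} q_j = -(N-1) q_j$, since ${\bf e}_j \cdot \gs$ is a degree-one spherical harmonic; and $\nabla'\psi_\gk \cdot \nabla' q_j = -\frac{\ga_+}{2}\psi_\gk q_j$, which follows from the tangential gradient formulas $\nabla'({\bf e}_N \cdot \gs) = {\bf e}_N - ({\bf e}_N \cdot \gs)\gs$ and $\nabla' q_j = {\bf e}_j - q_j \gs$, whose inner product is $-({\bf e}_N \cdot \gs) q_j$. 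Assembling the terms yields $\CL'_\gk \psi_{\gk,j} = (\gm_\gk + N - 1 + \ga_+)\psi_{\gk,j}$, and a direct expansion confirms $\gm_\gk + N - 1 + \ga_+ = (\frac{\ga_+}{2}+1)(N + \frac{\ga_+}{2} - 1)$. The sign and vanishing statements are then immediate from the factored form of $\psi_{\gk,j}$.

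To show that $\gm_{\gk,2}$ is actually the second eigenvalue in ${\bf Y}(S^{N-1}_+)$, I would pass to the weighted formulation by substituting $\psi = ({\bf e}_N\cdot\gs)^{\ga_+/2}\, w$ (cf.\ \eqref{Eq4-2}). This turns $\CL'_\gk\psi = \gm\psi$ into the self-adjoint degenerate problem
\begin{equation*}
-({\bf e}_N\cdot\gs)^{-\ga_+}\, \mathrm{div}_{S^{N-1}}\!\bigl(({\bf e}_N\cdot\gs)^{\ga_+} \nabla' w\bigr) = (\gm - \gm_\gk)\, w
\end{equation*}
on $L^2(S^{N-1}_+, ({\bf e}_N\cdot\gs)^{\ga_+}\,dS)$. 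A weighted Sobolev compact imbedding analogous to \eqref{Sobolev} provides compact resolvent and a discrete spectrum $0 = \gn_0 < \gn_1 \leq \gn_2 \leq \ldots$, with $w \equiv 1$ giving $\gn_0 = 0$ (i.e.\ $\gm = \gm_\gk$) and $w = q_j$ giving the eigenvalue $N - 1 + \ga_+$ (i.e.\ $\gm = \gm_{\gk,2}$). It remains to prove that no eigenvalue lies in $(0, N - 1 + \ga_+)$.

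For this I would use the axial decomposition $\gs = \cos\gt\,{\bf e}_N + \sin\gt\,\xi$ with $\gt \in [0,\pi/2]$, $\xi \in S^{N-2}$, and expand $w(\gs) = \sum_{\ell\geq 0} f_\ell(\gt)\, Y_\ell(\xi)$ in spherical harmonics on $S^{N-2}$ (with $-\Gd_{S^{N-2}} Y_\ell = \ell(\ell + N - 3) Y_\ell$). This reduces the problem to the one-parameter family of singular Sturm--Liouville problems
\begin{equation*}
-\bigl(\cos^{\ga_+}\gt\,\sin^{N-2}\gt\,f_\ell'\bigr)' + \frac{\ell(\ell+N-3)}{\sin^2\gt}\cos^{\ga_+}\gt\, \sin^{N-2}\gt\, f_\ell = \gn\, \cos^{\ga_+}\gt\, \sin^{N-2}\gt\, f_\ell
\end{equation*}
on $(0,\pi/2)$, each with an increasing spectrum $\gn_{\ell,0} < \gn_{\ell,1} < \ldots$. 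Monotonicity of the centrifugal term in $\ell$ makes $\gn_{\ell,0}$ strictly increasing in $\ell$, and $f_1(\gt) = \sin\gt$ is exhibited explicitly as the $\ell = 1$, $n = 0$ mode with $\gn_{1,0} = N - 1 + \ga_+$. The main obstacle is the strict comparison $\gn_{1,0} < \gn_{0,1}$: any eigenfunction associated with $\gn_{0,1}$ must be orthogonal to the constants in the weighted measure and therefore possess a nodal point in $(0,\pi/2)$, and a Sturm comparison against the nodeless $f_1(\gt) = \sin\gt$ (which solves the $\ell = 1$ radial equation) delivers the strict inequality. Combined with the $\ell$-monotonicity this forces $\gn_1 = N - 1 + \ga_+$ and identifies $\gm_{\gk,2}$ as the second eigenvalue.
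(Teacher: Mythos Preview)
Your computation of the eigenvalue identity is essentially the paper's: both write $\psi_{\gk,j}=\psi_\gk\,({\bf e}_j\cdot\gs)$, use the product rule for $\Gd_{S^{N-1}}$, that ${\bf e}_j\cdot\gs$ is a first spherical harmonic, and that $\nabla'({\bf e}_j\cdot\gs)\cdot\nabla'({\bf e}_N\cdot\gs)=-({\bf e}_j\cdot\gs)({\bf e}_N\cdot\gs)$.

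For the claim that $\gm_{\gk,2}$ is the {\it second} eigenvalue the two routes diverge. The paper does not separate variables at all: it shows $\int_{S^{N-1}_+}\psi_{\gk,j}\psi_\gk\,dS=0$ by an odd-function argument in the $S^{N-2}$-variable, observes that any nonzero combination $\sum a_j\psi_{\gk,j}=\psi_\gk\,\big((\sum a_j{\bf e}_j)\cdot\gs\big)$ has exactly two nodal domains, and invokes the Courant nodal domain principle to place $\gm_{\gk,2}$ as the second eigenvalue. Your route through the weighted problem $-\rho^{-\ga_+}{\rm div}(\rho^{\ga_+}\nabla'w)=(\gm-\gm_\gk)w$ and the axial $S^{N-2}$-expansion is sound and more quantitative; the monotonicity of $\gn_{\ell,0}$ in $\ell$ is correct.

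The gap is in the final step, the inequality $\gn_{1,0}<\gn_{0,1}$. A Sturm comparison does not give this in the direction you need: the $\ell=1$ equation carries the extra centrifugal potential $\dfrac{N-2}{\sin^2\gt}\,p\ge 0$, so standard Sturm comparison only shows that $\ell=1$ solutions oscillate {\it no more} than $\ell=0$ solutions at the same spectral parameter, yielding $\gn_{1,n}\ge\gn_{0,n}$; it says nothing about $\gn_{1,0}$ versus $\gn_{0,1}$. Concretely, $f_1=\sin\gt$ has no interior zero while the $\gn_{0,1}$-eigenfunction $g$ has one, but you would need the comparison "larger potential $\Rightarrow$ more zeros", which is the wrong sign. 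The inequality is nevertheless true: the $\ell=0$ problem is a Jacobi equation with spectrum $\gn_{0,n}=2n(2n+\ga_++N-2)$, so $\gn_{0,1}=2(N+\ga_+)>N-1+\ga_+=\gn_{1,0}$. Either carry out this explicit computation, or use the paper's nodal-domain shortcut.
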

\begin{proof} There holds
$$\BA {lll}\CL'_{\gk}\psi_{\gk,j}={\bf e}_{j}.\gs\CL_{\gk}\psi_{\gk}+\psi_{\gk}\Gd_{S^{N-1}}{\bf e}_{j}.\gs+2\nabla'\psi_{\gk}.\nabla'{\bf e}_{j}.\gs\\\phantom{\CL_{\gk}\psi_{\gk,j}}
=(\gm_\gk+N-1)\psi_{\gk,j}-\ga_+({\bf e}_{N}.\gs)^{\frac{\ga_+}{2}-1}\nabla'({\bf e}_{j}.\gs).\nabla'({\bf e}_{N}.\gs).
\EA$$
Now
$$\nabla(\frac{x_j}{r})=(\frac{x_j}{r})_r\frac{x}{r}+\frac{1}{r}\nabla'(\frac{x_j}{r})=\frac{1}{r}\nabla'(\frac{x_j}{r})=
\frac{1}{r}{\bf e}_j-\frac{x_j}{r^3}x,
$$
thus
$$\nabla(\frac{x_j}{r}).\nabla(\frac{x_N}{r})=-\frac{x_jx_N}{r^4}=
\frac{1}{r^2}\nabla'(\frac{x_j}{r}).\nabla'(\frac{x_N}{r})=\frac{1}{r^2}\nabla'({\bf e}_{j}.\gs).\nabla'({\bf e}_{N}.\gs)
$$
which implies
$$\nabla'({\bf e}_{j}.\gs).\nabla'({\bf e}_{N}.\gs)=-\frac{x_jx_N}{r^2}=-({\bf e}_{j}.\gs)({\bf e}_{N}.\gs)
$$
and finally
\begin{equation}\label{Eq4-5}
\CL_{\gk}\psi_{\gk,j}=(\gm_\gk+N-1+\ga_+)\psi_{\gk,j}.
\end{equation}
Since $S^{N-1}_+=\{(\gs'\sin\gth,\cos\gth):\gs'\in S^{N-2},\gth\in[0,\frac{\gp}{2}]\}$,  ${\bf e}_{N}.\gs=\cos\gth$,
${\bf e}_{j}.\gs={\bf e}_{j}.\gs'\sin\gth$ and $dS=(\sin\gth)^{N-2}dS' d\gth$ where
$dS$ and $dS'$ are the volume element of $S^{N-1}$ and $S^{N-2}$ respectively, we derive from the fact that $\gs'\mapsto {\bf e}_{j}.\gs'$ is an odd function on $S^{N-2}$,
$$\BA {lll}\myint{S^{N-1}_+}{}\psi_{\gk,j}\psi_{\gk}dS=\myint{S^{N-1}_+}{}({\bf e}_{N}.\gs)^{\ga_+}{\bf e}_{j}.\gs dS
\\[4mm]\phantom{\myint{S^{N-1}_+}{}\gr_{\gk,j}\gr_{\gk}dS}
=\myint{0}{\frac{\gp}{2}}\left(\myint{S^{N-2}}{}{\bf e}_{j}.\gs' dS'\right)(\cos\gth)^{\ga_+}(\sin\gth)^{N-1}d\gth
\\[4mm]\phantom{\myint{S^{N-1}_+}{}\psi_{\gk,j}\gr_{\gk}dS}
=0.
\EA$$
Hence $\psi_{\gk,j}$ is an eigenvalue of $\CL'_\gk$ in ${\bf Y}(S^{N-1}_+)$ with two nodal domains and the space the $\psi_{\gk,j}$ span is (N-1)-dimensional and any linear combination of the $\psi_{\gk,j}$ has exactely two nodal domains since
$$\sum_{j=1}^{N-1}a_j\psi_{\gk,j}=({\bf e}_{N}.\gs)^{\frac{\ga_+}{2}}(\sum_{j=1}^{N-1}a_j{\bf e}_{j}).\gs.
$$
This implies that $\gm_{\gk,2}$ is the second eigenvalue.
\end{proof}
\subsection{The nonlinear eigenvalue problem}
 If we look for separable solutions under the  form
$$u(x)=u(r,\gs)=r^\ga\gw(\gs)
$$
then necessarily $\ga=-\frac{2}{q-1}$ and $\gw$ is a solution of
\begin{equation}\label{Eq2}\BA {ll}
-\Gd_{S^{N-1}}\gw-\ell_{q,N}\gw-\myfrac{\xk }{({\bf e}_{_N}.\gs)^2}\gw+|\gw|^{q-1}\gw=0\quad&\text{in }S^{N-1}_+
\\[2mm]\phantom{-\Gd_{S^{N-1}}\gw-\ell_{q,N}\gw-\myfrac{\xk }{({\bf e}_{_N}.\gs)^2}\gw+|\gw|^{q-1}}
\gw=0\quad&\text{in }\prt S^{N-1}_+,
\EA\end{equation}
\begin{equation}\label{Eq3}
\ell_{q,N}=\frac{2}{q-1}\left(\frac{2}{q-1}+2-N\right)
\end{equation}
 and $(\ref{Eq4})$ is transformed accordingly.  We denote by
 \begin{equation}\label{Eq3-1}
\CE_\gk=\left\{\gw\in {\bf Y}(S^{N-1}_+)\cap L^{q+1}(S^{N-1}_+)\,\text { s. t. (\ref{Eq2}) holds}\right\}
\end{equation}
 and by $\CE_\gk^+$ the set of the nonnegative ones. We also recall that $q_c:=\myfrac{2N+\ga_+}{2N-4+\ga_+}$ and we define a second critical value $q_e:=\myfrac{2N+2+\ga_+}{2N-2+\ga_+}$.\medskip

 The following result holds
 \begin{theorem}\label{struct} Assume $0<\gk\leq\frac{1}{4}$ and $q>1$, then\smallskip

 \noindent   (i) If $q\geq q_c$, $\CE_\gk=\{0\}$.\smallskip

 \noindent (ii) If $1<q<q_c$, $\CE_\gk^+$ is contains exactly two elements: $0$ and $\gw_{\xk }$. Furthermore $\gw_{\xk }$ depends only on the azimuthal angle $\gth$.\smallskip

 \noindent (iii) If $q_e\leq q<q_c$, $\CE_\gk$ contains three elements: $0$, $\gw_{\xk }$ and $-\gw_{\xk }$.
 \end{theorem}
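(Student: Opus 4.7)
\emph{Plan.} The three cases are separated by two spectral thresholds. Setting $s=2/(q-1)$ in $\ell_{q,N}=s(s+2-N)$, a direct calculation shows $\ell_{q_c,N}=\mu_\kappa$ (at $s=N-2+\alpha_+/2$, using the explicit expression for $\mu_\kappa$) and $\ell_{q_e,N}=\mu_{\kappa,2}$ (at $s=N-1+\alpha_+/2$); the map $q\mapsto\ell_{q,N}$ is strictly decreasing on the range $s>(N-2)/2$, hence $q\geq q_c\Leftrightarrow\ell_{q,N}\leq\mu_\kappa$ and $q\geq q_e\Leftrightarrow\ell_{q,N}\leq\mu_{\kappa,2}$.

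\emph{Parts (i) and (iii)} share a common spectral core. Let $\omega\in\mathcal{E}_\kappa\setminus\{0\}$ and denote its nodal domains by $A_\pm=\{\pm\omega>0\}$. By the $\mathbf{Y}$-regularity, $\omega^\pm\in H^1_0(A_\pm)$ solves $\mathcal{L}'_\kappa\omega^\pm=\ell_{q,N}\omega^\pm-(\omega^\pm)^q$ weakly on $A_\pm$. Writing $\lambda_1(A_\pm)$ for the first Dirichlet eigenvalue of $\mathcal{L}'_\kappa$ on $A_\pm$ with positive eigenfunction $\varphi^\pm$ and integrating by parts gives
\[
(\lambda_1(A_\pm)-\ell_{q,N})\int_{A_\pm}\omega^\pm\varphi^\pm\,dS=-\int_{A_\pm}(\omega^\pm)^q\varphi^\pm\,dS<0,
\]
so $\lambda_1(A_\pm)<\ell_{q,N}$. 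Strict domain monotonicity (from the variational characterization of $\mu_\kappa$ and the strict positivity of $\psi_\kappa$ in $S^{N-1}_+$) yields $\lambda_1(A_\pm)\geq\mu_\kappa$, with equality iff $A_\pm=S^{N-1}_+$. For part (i), $q\geq q_c$ gives $\ell_{q,N}\leq\mu_\kappa\leq\lambda_1(A_\pm)$, which together with $\lambda_1(A_\pm)<\ell_{q,N}$ is a contradiction in the sign-changing case (where the second inequality is strict) and in the positive case (where equality $\lambda_1(S^{N-1}_+)=\mu_\kappa=\ell_{q,N}$ forces $\int\omega^q\psi_\kappa\,dS=0$, impossible). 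For part (iii), if $\omega$ is sign-changing with $q\geq q_e$, both $A_\pm$ are proper, and $\varphi^\pm$ extended by zero span a two-dimensional subspace of $\mathbf{Y}(S^{N-1}_+)$ on which the Rayleigh quotient of $\mathcal{L}'_\kappa$ is bounded by $\max(\lambda_1(A_+),\lambda_1(A_-))$. The min-max characterization then yields $\mu_{\kappa,2}\leq\max(\lambda_1(A_+),\lambda_1(A_-))<\ell_{q,N}\leq\mu_{\kappa,2}$, a contradiction.

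\emph{Part (ii).} For $1<q<q_c$ (so $\ell_{q,N}>\mu_\kappa$), existence of a positive $\omega_\kappa$ follows from minimizing
\[
J(\omega)=\tfrac12\int_{S^{N-1}_+}\!\!\Big(|\nabla'\omega|^2-\tfrac{\kappa}{(\mathbf{e}_N\cdot\sigma)^2}\omega^2-\ell_{q,N}\omega^2\Big)dS+\tfrac{1}{q+1}\int_{S^{N-1}_+}|\omega|^{q+1}\,dS
\]
on the positive cone of the Nehari manifold in $\mathbf{Y}(S^{N-1}_+)\cap L^{q+1}(S^{N-1}_+)$. The required compactness of the embedding $\mathbf{Y}\hookrightarrow L^{q+1}$ is the spherical analogue of (\ref{Sobolev}) with weight $(\mathbf{e}_N\cdot\sigma)^{\alpha_+}$, and holds exactly for $q<q_c$; positivity of the minimizer follows by truncation and the strong maximum principle. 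Dependence on the polar angle $\theta$ alone is obtained by spherical Steiner symmetrization with respect to $\mathbf{e}_N$ (which strictly decreases $J$ unless the function is already axisymmetric) or, equivalently, by moving planes through every hyperplane containing $\mathbf{e}_N$. Reduction of (\ref{Eq2}) to an ODE on $(0,\pi/2)$ with $\omega'(0)=0$, $\omega(\pi/2)=0$ then yields uniqueness of the positive solution by a standard shooting argument.

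\emph{Main obstacle.} The most delicate step is the analytic groundwork for (ii): establishing the weighted Sobolev embedding $\mathbf{Y}(S^{N-1}_+)\hookrightarrow L^{q+1}(S^{N-1}_+)$ with sharp critical exponent $q_c$ and its compactness for $q<q_c$, and proving axisymmetry of the minimizer despite the singular potential at the equator $\partial S^{N-1}_+$. The spectral arguments of (i) and (iii) are then essentially linear consequences of the two identities $\ell_{q_c,N}=\mu_\kappa$ and $\ell_{q_e,N}=\mu_{\kappa,2}$.
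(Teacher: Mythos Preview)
Your outline is correct, but the route differs from the paper's in parts (ii) and (iii), and is more elaborate than needed in (i).

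For (i) the paper simply multiplies (\ref{Eq2}) by $\omega$ and integrates: the variational characterization of $\mu_\kappa$ gives $(\mu_\kappa-\ell_{q,N})\int\omega^2+\int|\omega|^{q+1}\leq 0$, forcing $\omega=0$ when $q\geq q_c$. Your nodal-domain framing is correct but unnecessary here. For uniqueness in (ii) the paper argues directly at the PDE level by a Picone-type comparison: given two positive solutions one first produces an ordered pair $\omega'_\kappa<\omega_\kappa<c\,\omega'_\kappa$ via sub/super solutions, then tests the two equations against $((\omega'_\kappa+\epsilon')^2-(\omega_\kappa+\epsilon)^2)_+/(\omega_\kappa+\epsilon)$ and its companion, and lets $\epsilon\to0$ to reach a contradiction. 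Axisymmetry then follows for free, since any rotation about $\mathbf{e}_N$ maps $\omega_\kappa$ to another positive solution. Your approach (axisymmetry first via moving planes or Steiner symmetrization, then ODE shooting) is valid, but note that Steiner symmetrization alone only treats minimizers, not arbitrary positive solutions, and moving planes with the singular Hardy weight at the equator is precisely the delicate point you flag; the paper's route avoids it entirely.

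For (iii) the paper does not use nodal domains. It takes any $\omega\in\mathcal{E}_\kappa$, sets $\bar\omega$ equal to its average over $S^{N-2}$ (so $\omega-\bar\omega\perp\psi_\kappa$), and combines the spectral gap $\int(\omega-\bar\omega)\mathcal{L}'_\kappa(\omega-\bar\omega)\geq\mu_{\kappa,2}\int(\omega-\bar\omega)^2$ with the monotonicity of $t\mapsto|t|^{q-1}t$ to obtain $\int\big[(\mu_{\kappa,2}-\ell_{q,N})(\omega-\bar\omega)^2+2^{1-q}|\omega-\bar\omega|^{q+1}\big]\leq 0$, hence $\omega=\bar\omega$ whenever $q\geq q_e$. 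Every solution is thus azimuthal, and the exact count $\{0,\pm\omega_\kappa\}$ for the resulting ODE is read off from a theorem of Berestycki. Your Courant min-max argument is a legitimate alternative and more self-contained (no external ODE result needed), but it is cleaner to use $\omega^\pm$ themselves as the orthogonal test functions rather than the first eigenfunctions $\varphi^\pm$ on $A_\pm$, whose existence and regularity near the singular equator would otherwise require separate justification.
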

 \begin{proof} We recall that  $q\geq q_c\Longleftrightarrow \ell_{q,N}\leq \gm_\gk$. Then non-existence follows by multiplying by $\gw$ and integrating on $S^{N-1}_+$. For existence, we consider the functional
 \begin{equation}\label{EX-1}
J_\gk(w)=\myint{S^{N-1}_+}{}\left(|\nabla'(w)|^2+(\gm_\gk-\ell_{q,N})w^2+\frac{2}{q+1}\psi^{q-1}_\gk|w|^{q+1}\right) \psi^2_\gk dS,
\end{equation}
defined in $H^1(S^{N-1}_+,\psi^2_\gk dS)\cap L^{q+1}(S^{N-1}_+,\psi^{q+1}_\gk dS)$. Since $\gm_\gk-\ell_{q,N}<0$, there exists a nontrivial minimum $w\gk>0$, which satisfies
 \begin{equation}\label{EX-2}-{\rm div}(\psi^2_\gk\nabla'w_\gk)+(\gm_\gk-\ell_{q,N})\psi^2_\gk w_\gk+\psi^{q+1}_\gk w_\gk^q=0
\end{equation}
If we set $\gw_\gk=\psi_\gk w_\gk$, then $\gw_\gk$ satisfies
 \begin{equation}\label{Eq3-2}
\CL'_\gk\gw_\gk-\ell_{q,N}\gw_\gk+\gw^q_\gk=0\qquad\text{in }S^{N-1}_+.
\end{equation}
By monotonicity we derive that $\gw_\gk\in L^p(S^{N-1}_+)$ for any $1<p<\infty $ and finally, that $\gw_\gk$ satisfies the regularity estimates
of Lemma \ref{reglemma} and Lemma  \ref{reglemma1}. Moreover $\gw_\gk>0$ by the maximum principle.

In the case $q\geq q_c$ or equivalently $\gm_\gk-\ell_{q,N}\geq 0$, nonexistence of nontrivial solution is clear from (\ref{EX-2}).\smallskip

\noindent{\it Uniqueness}. By Proposition \ref{exi} $\gw_\gk(x)\leq c_{86}(\gr(x))^{\frac{\ga_+}{2}}$ and
by standard scaling techniques $|\nabla\gw_\gk(x)|\leq c_{87}(\gr(x))^{\frac{\ga_+}{2}-1}$. Assume now two different positive solutions
of (\ref{Eq2}) $\gw_\gk$ and $\gw'_\gk$ exist. Since $\max\{\gw_\gk,\gw'_\gk\}$ and $\gw_\gk+\gw'_\gk$ are respectively a subsolution and a supersolution and they are ordered, we can assume that  $\gw'_\gk< \gw_\gk<c\gw'_\gk$ for some $c>1$. Let $\ge>0$ and $\ge'=c^{-1}\ge$, then
$\ge\gw'_\gk\geq\ge'\gw_\gk$. Set
$$\gv_{\ge}= \frac{((\gw'_\gk+\ge')^2-(\gw_\gk+\ge)^2)_+}{\gw_\gk+\ge}\,,\;
\gv_{\ge'}= \frac{((\gw'_\gk+\ge')^2-(\gw_\gk+\ge)^2)_+}{\gw'_\gk+\ge'},
$$
and $S_{\ge,\ge'}=\{\gs\in S^{N-1}_+:\gw'_{\gk}+\ge'>\gw_{\gk}+\ge\}$. The assume that $S_{\ge,\ge'}\neq\emptyset$ for any $\ge>0$. Then
$$\myint{S_{\ge,\ge'}}{}\left(\nabla \gw'_\gk.\nabla \gv_{\ge'}-\nabla \gw_\gk.\nabla \gv_{\ge}
-(\ell_{q,N}+\frac{\gk}{\gr^2})(\gw'_\gk.\gv_{\ge'}-\gw_\gk.\gv_{\ge})+\gw'^q_\gk\gv_{\ge'}-\gw^q_\gk\gv_{\ge}
\right) dS=0
$$
The first integrand on the l.h. side is equal to
$$\myint{S_{\ge,\ge'}}{}\left(\left |\nabla \gw'_\gk-\frac{\gw'_\gk+\ge'}{\gw_\gk+\ge}\nabla \gw_\gk
\right|^2+\left|\nabla \gw_\gk-\frac{\gw_\gk+\ge}{\gw'_\gk+\ge'}\nabla \gw'_\gk
\right|^2\right) dS\geq 0
$$
Since $\ge\gw'_\gk<\ge'\gw_\gk$ and $(\gw'_{\gk}+\ge')^2>(\gw_{\gk}+\ge)^2$,the second integrand on the l.h. side is equal to
$$-\myint{S_{\ge,\ge'}}{}(\ell_{q,N}+\frac{\gk}{\gr^2})\left(\frac{\gw'_\gk}{\gw'_\gk+\ge'}-\frac{\gw_\gk}{\gw_\gk+\ge}\right) ((\gw'_\gk+\ge')^2-(\gw_\gk+\ge)^2)dS\geq 0.
$$
At end, the last integrand is
$$
\myint{S_{\ge,\ge'}}{}\left(\frac{\gw'^q_\gk}{\gw'_\gk+\ge'}-\frac{\gw^q_\gk}{\gw_\gk+\ge}\right)((\gw'_\gk+\ge')^2-(\gw_\gk+\ge)^2)dS
$$
If we let $\ge\to 0$, we derive
$$
\myint{S^{N-1}_+}{}\left(\gw'^{q-1}_\gk-\gw^{q-1}_\gk\right)(\gw'^2_\gk-\gw_\gk^2)_+dS\leq 0
$$
This yields a contradiction. Therefore uniqueness holds.\medskip

\noindent{\it Case $q_e\leq q<q_c$}. Assume $\gw_\gk$ is a solution. Using the representation of $S^{N-1}_+$ already introduced in the proof of Proposition \ref{2nd}, with $\gs=(\gs',\gth)$
and
$$\Gd_{S^{N-1}}\gw_\gk=\frac{1}{(\sin\gth)^{N-2}}\frac{\prt}{\prt\gth}\left((\sin\gth)^{N-2}\frac{\prt\gw_\gk}{\prt\gth}\right)+\frac{1}{\sin^{2}\gth}\Gd_{S^{N-2}}\gw_\gk$$
where $\Gd_{S^{N-2}}$ is the Laplace-Beltrami operator on $S^{N-2}$,  we set
$$\bar\gw_\gk(\gth)=\myfrac{1}{|S^{N-2}|}\myint{S^{N-2}}{}\gw_\gk(\gs',\gth)dS'(\gs').
$$
Then $\bar\gw_\gk$ is independent of $\gs'\in S^{N-2}$ and furthermore
$$\myint{S^{N-1}_+}{}(\gw_\gk-\bar\gw_\gk)\psi_\gk dS=
\myint{0}{\frac{\gp}{2}}\left(\myint{S^{N-2}}{}(\gw_\gk-\bar\gw_\gk)dS'\right)(\sin\gth)^{N-2}(\cos\gth)^{\frac{\ga_+}{2}}d\gth=0,
$$
thus $\bar\gw_\gk$ is the projection of $\gw_\gk$ onto the first eigenspace of $\CL_\gk$ and
$$
\myint{S^{N-1}_+}{}(\gw_\gk-\bar\gw_\gk)\CL_\gk(\gw_\gk-\bar\gw_\gk dS
\geq \gm_{\gk,2}\myint{S^{N-1}_+}{}(\gw_\gk-\bar\gw_\gk)^2dS.
$$
At end, noting that
$$\myint{S^{N-2}_+}{}(\overline {g_q\circ\gw_\gk}-g_q\circ\bar\gw_\gk)(\gw_\gk-\bar\gw_\gk)dS'=0
$$
with $g_q\circ u=|u|^{q-1}u$,
$$\BA {lll}\myint{S^{N-1}_+}{}(g_q\circ\gw_\gk-\overline {g_q\circ\gw_\gk})(\gw_\gk-\bar\gw_\gk)dS
=\myint{0}{\frac{\gp}{2}}\myint{S^{N-2}_+}{}(g_q\circ\gw_\gk-\overline {g_q\circ\gw_\gk})(\gw_\gk-\bar\gw_\gk)dS'
(\sin\gth)^{N-2}d\gth\\[4mm]
\phantom{\myint{S^{N-1}_+}{}(g_q\circ\gw_\gk-\overline {g_q\circ\gw_\gk})(\gw_\gk-\bar\gw_\gk)dS}
=\myint{0}{\frac{\gp}{2}}\myint{S^{N-2}_+}{}(g_q\circ\gw_\gk)-g_q\circ\bar\gw_\gk)(\gw_\gk-\bar\gw_\gk)dS'
(\sin\gth)^{N-2}d\gth
\\[4mm]
\phantom{\myint{S^{N-1}_+}{}(g_q\circ\gw_\gk-\overline {g_q\circ\gw_\gk})(\gw_\gk-\bar\gw_\gk)dS}
\geq 2^{1-q}\myint{S^{N-1}_+}{}|\gw_\gk-\bar\gw_\gk|^{q+1}dS,
\EA$$
we derive that $w=\gw_\gk-\bar\gw_\gk$, satisfies
$$\myint{S^{N-1}_+}{}\left((\gm_{\gk,2}-\ell_{N,q})(\gw_\gk-\bar\gw_\gk)^2+2^{1-q}|\gw_\gk-\bar\gw_\gk|^{q+1}\right)dS,
\leq 0
$$
which implies $\gw_\gk=\bar\gw_\gk$ and it satisfies
 \begin{equation}\label{Eq3-3}\frac{1}{(\sin\gth)^{N-2}}\frac{d}{d\gth}\left((\sin\gth)^{N-2}\frac{d\gw_\gk}{d\gth}\right)
+\left(\ell_{q,N}+\frac{\gk}{\cos^{2}\gth}\right)\gw_\gk-g_q\circ \gw_\gk=0.
\end{equation}
Since $\gm_{\gk,1}<\ell_{q,N}\leq\gm_{\gk,2}$,  by \cite[Th. 4, Corol. 1]{Beres} this equation admits three solutions, $\gw_\gk$, $-\gw_\gk$ and $0$.
 \end{proof}

\noindent{\bf Remark}. For $\ge>0$ small enough the function $\ge\psi_\gk$ is a subsolution for problem (\ref{Eq2}). This implies
\be\label{SS}
\gw_\gk(\gs)\geq \ge\psi_\gk(\gs)\qquad\forall\gs\in S^{N-1}_+.
\ee


\subsection{Isolated boundary singularities}

Throughout this section we assume that $\Gw\subset\BBR^N_+$, $0\in\prt\Gw$ the tangent plane to $\prt\Gw$ at $0$ is $\prt \BBR^N_+$ and that $1<q<q_c$.
\begin{lemma}\label {IS-1} There holds
        \begin{equation}\label{Sg4}\BA {ll}
\lim_{|x|\to 0}\myfrac{\BBG_{\CL_\gk}[(K_{\CL_\gk}(.,0))^q](x)}{K_{\CL_\gk}(x,0)}=0
  \EA \end{equation}
\end{lemma}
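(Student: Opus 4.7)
The proof I have in mind rests on the two-sided pointwise estimates of the Green kernel \eqref{alpha-green} and of the Poisson kernel \eqref{harm4}. Writing
\begin{equation*}
\BBG_{\CL_\gk}[(K_{\CL_\gk}(\cdot,0))^q](x)=\int_\Gw G_{\CL_\gk}(x,y)(K_{\CL_\gk}(y,0))^q\,dy,
\end{equation*}
I would use the upper bounds
\begin{equation*}
G_{\CL_\gk}(x,y)\leq C\,d(x)^{\frac{\ga_+}{2}}d(y)^{\frac{\ga_+}{2}}|x-y|^{2-N-\ga_+},\quad K_{\CL_\gk}(y,0)\leq C\,d(y)^{\frac{\ga_+}{2}}|y|^{2-N-\ga_+},
\end{equation*}
together with the matching lower bound $K_{\CL_\gk}(x,0)\geq C^{-1}d(x)^{\ga_+/2}|x|^{2-N-\ga_+}$. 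After these substitutions the factor $d(x)^{\frac{\ga_+}{2}}$ cancels against the denominator $K_{\CL_\gk}(x,0)$, and the problem reduces to showing that an explicit integral in $y$, divided by a suitable power of $|x|$, tends to $0$.

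\medskip

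The next step is to split the integration domain $\Gw$ into three regions according to the two natural length scales $|y|$ and $|x|$: a near region $A_1=\{y\in\Gw:|y|<|x|/2\}$, an intermediate region $A_2=\{y\in\Gw:|x|/2\leq|y|\leq 2|x|\}$, and a far region $A_3=\{y\in\Gw:|y|>2|x|\}$. On $A_1$ one has $|x-y|\geq|x|/2$ and on $A_3$ one has $|x-y|\geq|y|/2$, so in these two regions the singular kernel $|x-y|^{2-N-\ga_+}$ is replaced by $|x|^{2-N-\ga_+}$ and $|y|^{2-N-\ga_+}$ respectively; each region is then handled by spherical integration around $0$, using $d(y)\leq|y|$ (valid because $0\in\prt\Gw$) together with the finiteness of the angular integral $\int_{S^{N-1}_+}({\bf e}_N\cdot\gs)^{\frac{(q+1)\ga_+}{2}}dS$. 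A careful bookkeeping of the exponents yields, in all three cases, an upper bound of the form
\begin{equation*}
\frac{1}{K_{\CL_\gk}(x,0)}\int_{A_j}G_{\CL_\gk}(x,y)(K_{\CL_\gk}(y,0))^q\,dy\leq C\,|x|^{\gth_q},\quad\gth_q:=N+\tfrac{\ga_+}{2}-q\bigl(N-2+\tfrac{\ga_+}{2}\bigr),
\end{equation*}
and the hypothesis $q<q_c=\frac{N+\ga_+/2}{N-2+\ga_+/2}$ is exactly the condition $\gth_q>0$, so each of the three contributions vanishes as $|x|\to 0$.

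\medskip

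The main technical point will be the intermediate region $A_2$, where neither $|x-y|$ nor $|y|/|x|$ admits a useful one-sided bound. There I would use directly the local integrability of the kernel $z\mapsto|z|^{2-N-\ga_+}$ over the ball of radius $3|x|$ centered at $x$, which yields a contribution of order $|x|^{2-\ga_+}$; the integrability itself holds because $\ga_+<2$ for every $\gk>0$ (with $\ga_+=1$ in the critical case $\gk=\frac14$). Combining this with $d(y)\leq|y|\approx|x|$ and $|y|^{-q(N+\ga_+-2)}\approx|x|^{-q(N+\ga_+-2)}$ on $A_2$, one would recover the same power $|x|^{\gth_q}$ as in the other two regions, completing the argument.
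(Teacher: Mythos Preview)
Your proposal is correct and uses the same kernel estimates as the paper; the only difference is organizational. The paper compresses your three-region decomposition into a single substitution $y=|x|\eta$, obtaining
\[
\frac{\BBG_{\CL_\gk}[(K_{\CL_\gk}(\cdot,0))^q](x)}{K_{\CL_\gk}(x,0)}
\leq c\,|x|^{N+\frac{\ga_+}{2}-q(N+\frac{\ga_+}{2}-2)}\int\frac{|\eta|^{\frac{(q+1)\ga_+}{2}}\,d\eta}{|e_x-\eta|^{N+\ga_+-2}|\eta|^{q(N+\ga_+-2)}},
\]
and then simply observes that this last integral is finite (by rotational invariance it does not depend on $x$). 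Your regions $A_1,A_2,A_3$ correspond, after the change of variables, to the neighborhoods of $0$, of $e_x$, and of $\infty$ in the $\eta$-integral; your three convergence checks are exactly the verification that the scaled integral is finite. So the two arguments are the same in substance, with the paper's version one step shorter. Note also that your mention of the angular integral over $S^{N-1}_+$ is unnecessary: the bound $d(y)\leq |y|$ already suffices, and you need not track the angular dependence at all.
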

\begin{proof}
We recall the following estimates (\ref{alpha-green}), (\ref{poissonest})
$$\BA {lll}
(i)\qquad G_{\CL_\gk}(x,y)\leq c_3\min\left\{\myfrac{1}{|x-y|^{N-2}},\myfrac{(d(x))^{\frac{\ga_+}{2}}(d(y))^{\frac{\ga_+}{2}}}{|x-y|^{N+\ga_+-2}}\right\}
\\ [4mm]
(ii)\qquad c^{-1}_3\myfrac{(d(x))^{\frac{\ga_+}{2}}}{|x|^{N+\ga_+-2}}\leq K_{\CL_\gk}(x,0)\leq c_3\myfrac{(d(x))^{\frac{\ga_+}{2}}}{|x|^{N+\ga_+-2}}.
\EA$$
Then
$$\BA {lll}\myfrac{\BBG_{\CL_\gk}[K_{\CL_\gk}^q(.,0)](x)}{K_{\CL_\gk}(x,0)}
\leq c_3^{q+2}|x|^{N+\ga_+-2}\myint{\Gw}{}\myfrac{(d(y))^{\frac{(q+1)\ga_+}{2}}dy}{|x-y|^{N+\ga_+-2}|y|^{q(N+\ga_+-2)}}\\[4mm]
\phantom{\myfrac{\BBG_{\CL_\gk}[K_{\CL_\gk}(.,0)](x)}{K_{\CL_\gk}(x,0)}}
\leq c_3^{q+2}|x|^{N+\frac{\ga_+}{2}-q(N+\frac{\ga_+}{2}-2)}\myint{\BBR^N}{}\myfrac{d\eta}{|e_x-\eta|^{N+\ga_+-2}|\eta|^{q(N+\ga_+-2)}}
\EA$$
where $e_x=|x|^{-1}x$. This last integral is finite and independent of $x$. Since $q<q_c$, (\ref{Sg4}) follows.
\end{proof}

\begin{coro}\label {IS-2} Let $u_{k\gd_0}$ be the unique solution of
        \begin{equation}\label{Sg5}\BA {ll}
\CL_{\gk}u+|u|^{q-1}u=0\qquad&\text{in }\Gw\\
\phantom{\CL_{\gk}u+|u|^{q-1}}
u=k\gd_0\qquad&\text{in }\prt\Gw.
  \EA \end{equation}
  Then
          \begin{equation}\label{Sg6}\BA {ll}
\displaystyle\lim_{x\to 0}\myfrac{u_{k\gd_0}}{K_{\CL_\gk}(x)}=k.
  \EA \end{equation}
\end{coro}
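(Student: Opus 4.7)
The plan is to derive the representation of $u_{k\gd_0}$ via the Green potential and the Poisson (Martin) kernel, and then use Lemma \ref{IS-1} to show that the Green part is negligible near $0$ compared with the Martin part. Since $1<q<q_c$, Theorem \ref{gen} applies with $\gm=k\gd_0$ and guarantees existence and uniqueness of $u_{k\gd_0}$, together with the monotonicity $0\le u_{k\gd_0}\le u_{\BBK_{\CL_\gk}[k\gd_0]}$-type bound that we will exploit.

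First, I invoke Proposition \ref{equiv-def} to write
\begin{equation*}
u_{k\gd_0}(x)=k\,K_{\CL_\gk}(x,0)-\BBG_{\CL_\gk}\!\bigl[u_{k\gd_0}^{\,q}\bigr](x)\qquad\forall x\in\Gw,
\end{equation*}
using that $\BBK_{\CL_\gk}[k\gd_0]=kK_{\CL_\gk}(\cdot,0)$ by definition of the Martin kernel. Next, by the monotonicity of the map $\gm\mapsto u_\gm$ (Theorem \ref{gen}) and the fact that $v\equiv \BBK_{\CL_\gk}[k\gd_0]$ is a supersolution of the nonlinear problem, one obtains $0\le u_{k\gd_0}\le kK_{\CL_\gk}(\cdot,0)$. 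Hence $u_{k\gd_0}^{\,q}\le k^q\bigl(K_{\CL_\gk}(\cdot,0)\bigr)^q$ pointwise, and by positivity of the Green kernel,
\begin{equation*}
0\le \BBG_{\CL_\gk}\!\bigl[u_{k\gd_0}^{\,q}\bigr](x)\le k^q\,\BBG_{\CL_\gk}\!\bigl[(K_{\CL_\gk}(\cdot,0))^q\bigr](x).
\end{equation*}

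Dividing the representation identity by $K_{\CL_\gk}(x,0)$ yields
\begin{equation*}
\frac{u_{k\gd_0}(x)}{K_{\CL_\gk}(x,0)}=k-\frac{\BBG_{\CL_\gk}\!\bigl[u_{k\gd_0}^{\,q}\bigr](x)}{K_{\CL_\gk}(x,0)},
\end{equation*}
and the above pointwise bound gives
\begin{equation*}
0\le \frac{\BBG_{\CL_\gk}\!\bigl[u_{k\gd_0}^{\,q}\bigr](x)}{K_{\CL_\gk}(x,0)}\le k^q\,\frac{\BBG_{\CL_\gk}\!\bigl[(K_{\CL_\gk}(\cdot,0))^q\bigr](x)}{K_{\CL_\gk}(x,0)}.
\end{equation*}
Lemma \ref{IS-1} asserts precisely that the right-hand side tends to $0$ as $x\to 0$, which yields \eqref{Sg6}.

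There is essentially no obstacle here beyond invoking the previously established facts: the subcritical existence/uniqueness theorem provides $u_{k\gd_0}$ and the integral representation, the comparison principle (Proposition \ref{2.6}, estimate \eqref{poi5}) provides the bound $u_{k\gd_0}\le kK_{\CL_\gk}(\cdot,0)$, and Lemma \ref{IS-1} kills the Green correction. The only minor point worth verifying is that the pointwise inequality $u_{k\gd_0}\le kK_{\CL_\gk}(\cdot,0)$ indeed holds in the weak-$L^1_{\gf_\gk}$ formulation used here; this follows from Proposition \ref{2.6} applied to $w:=u_{k\gd_0}-kK_{\CL_\gk}(\cdot,0)$, which satisfies $\CL_\gk w=-u_{k\gd_0}^q\le 0$ in $\Gw$ with zero boundary trace, together with the monotonicity of the mapping $(\gn,\gm)\mapsto u_{\gn,\gm}$ already stated in Theorem \ref{gen}.
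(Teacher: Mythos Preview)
Your proof is correct and follows essentially the same approach as the paper: the paper's proof simply cites the two-sided inequality $k\BBK_{\CL_\gk}[\gd_0]-k^q\BBG_{\CL_\gk}[(\BBK_{\CL_\gk}[\gd_0])^q]\le u_{k\gd_0}\le k\BBK_{\CL_\gk}[\gd_0]$ together with Lemma~\ref{IS-1}, which is exactly what you derive via the representation formula of Proposition~\ref{equiv-def} and the bound $0\le u_{k\gd_0}^q\le k^q(K_{\CL_\gk}(\cdot,0))^q$.
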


\begin{proof} This is a consequence of (\ref{Sg4}) and the inequality
\be
k\BBK_{\CL_\gk}[\gd_0](x)-k^q\BBG[(\BBK_{\CL_\gk}[\gd_0])^q](x)\leq u_{k\gd_0}(x)\leq k\BBK_{\CL_\gk}[\gd_0](x).\label{cor4}
\ee
\end{proof}

\begin{prop}\label {IS-3} There exists $u_{\infty,0}=\lim_{k\to\infty}u_{k\gd_0}$ and there holds
          \begin{equation}\label{Sg7}\BA {ll}
\displaystyle\lim_{\tiny\BA {cc}x\to 0,x\in\Gw\\
x|x|^{-1}\to\gs\EA}| x|^{\frac{2}{q-1}}u_{\infty,0}(x)=\gw_{\xk }(\gs),
  \EA \end{equation}
  uniformly on compact subsets of $S^{N-1}_+$.
\end{prop}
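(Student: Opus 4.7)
The plan is to combine monotonicity with a uniform upper bound to define the increasing pointwise limit $u_{\infty,0}=\lim_k u_{k\gd_0}$, and then perform a blow-up analysis at $0$ to identify the asymptotic profile with the unique positive self-similar solution $r^{-2/(q-1)}\gw_{\xk}(\gs)$ furnished by Theorem \ref{struct}. Monotonicity $k\mapsto u_{k\gd_0}$ is immediate from Theorem \ref{gen}, so the key first task is the universal bound $u_{k\gd_0}(x)\le C|x|^{-2/(q-1)}$ on $\Gw\cap B_{R_0}(0)$ uniform in $k$. Since $\Gw$ is convex with tangent plane $\{x_N=0\}$ at $0$ and contains an interior ball tangent to $\prt\BBR^N_+$ at $0$, the separable solution $U_0(x)=|x|^{-2/(q-1)}\gw_{\xk}(x/|x|)$ of the model equation on $\BBR^N_+$ provides, after a suitable multiplicative adjustment and a perturbation of the angular profile to compensate for the discrepancy between $d(x)$ and $x_N$ in $\Gw$, a local supersolution on $\Gw\cap B_{R_0}(0)$ dominating every $u_{k\gd_0}$ on $\Gw\cap \prt B_{R_0}(0)$. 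Comparison (Propositions \ref{comp}--\ref{comp1}) then yields the a priori bound, and passing to the pointwise limit together with the Hölder regularity of Propositions \ref{reglemma}--\ref{reglemma1} yields that $u_{\infty,0}$ is a positive solution of $(\ref{IE1})$ vanishing on $\prt\Gw\setminus\{0\}$.

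Next, I would capture the precise blow-up rate. From the lower bound in \eqref{cor4} and Lemma \ref{IS-1}, Corollary \ref{IS-2} gives $u_{k\gd_0}(x)/K_{\CL_{\xk}}(x,0)\to k$ as $x\to 0$; letting $k\to\infty$ yields $u_{\infty,0}(x)/K_{\CL_{\xk}}(x,0)\to\infty$, so the singularity at $0$ is strictly stronger than a Poisson one. For $\ell\in(0,\ell_0)$ define the rescalings
\[
v_\ell(y):=\ell^{\frac{2}{q-1}}u_{\infty,0}(\ell y), \qquad y\in\ell^{-1}\Gw,
\]
which a direct computation shows solve
\[
-\Gd v_\ell-\frac{\xk}{(\ell^{-1}d(\ell y))^2}\,v_\ell+v_\ell^q=0 \quad\text{in }\ell^{-1}\Gw,
\]
and note that $\ell^{-1}d(\ell\,\cdot)\to y_N$ locally uniformly on $\overline{\BBR^N_+}\setminus\prt\BBR^N_+$ while $\ell^{-1}\Gw\uparrow\BBR^N_+$. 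The a priori bound gives $v_\ell(y)\le C|y|^{-2/(q-1)}$ on compact subsets of $\overline{\BBR^N_+}\setminus\{0\}$ for $\ell$ small, so by the boundary and interior Hölder estimates of Propositions \ref{reglemma}--\ref{reglemma1} applied on annuli $\{R_1\le|y|\le R_2\}\cap\overline{\BBR^N_+}$, the family $\{v_\ell\}$ is relatively compact in $C_{loc}(\overline{\BBR^N_+}\setminus\{0\})$, and any subsequential limit $V$ is a positive solution of
\[
-\Gd V-\frac{\xk}{y_N^2}V+V^q=0 \;\text{ in }\BBR^N_+, \qquad V=0 \text{ on }\prt\BBR^N_+\setminus\{0\},
\]
with $V(y)\le C|y|^{-2/(q-1)}$ and, via transfer through the scaling of the strong-singularity statement, a singularity at $0$ stronger than Poisson.

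The main obstacle is the identification $V(y)=|y|^{-2/(q-1)}\gw_{\xk}(y/|y|)$. Exploiting the scaling invariance of the half-space equation, one has $T_\mu[v_\ell]=v_{\mu\ell}$ for any $\mu>0$, so if $V$ is the limit along a subsequence $\ell_n\to 0$, then $T_\mu[V]$ is the limit along $\mu\ell_n\to 0$. The classification of positive singular solutions of the half-space equation vanishing on $\prt\BBR^N_+\setminus\{0\}$ and with strong singularity at $0$---the half-space version of the statement of Theorem H, which reduces through Proposition \ref{exi} and boundary Hölder estimates to the uniqueness of $\gw_{\xk}$ furnished by Theorem \ref{struct}---forces $T_\mu[V]=V$ for all $\mu>0$, so $V$ is homogeneous of degree $-2/(q-1)$. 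Writing $V(y)=|y|^{-2/(q-1)}\gw(y/|y|)$ reduces the equation to $(\ref{Eq2})$ for $\gw$, and the uniqueness statement of Theorem \ref{struct} identifies $\gw=\gw_{\xk}$. Since the subsequential limit is the same for every subsequence, the full family $v_\ell$ converges in $C_{loc}(\overline{\BBR^N_+}\setminus\{0\})$; restriction to the unit sphere then gives $(\ref{Sg7})$ uniformly on compact subsets of $S^{N-1}_+$.
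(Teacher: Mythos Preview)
Your blow-up scheme is the right overall strategy, and the paper follows essentially the same route; however, your identification step has a real gap, and the paper closes it differently.

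\medskip

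\textbf{The main gap.} To pass from ``$T_\mu[V]$ is also a subsequential limit'' to ``$T_\mu[V]=V$'' you invoke a half-space classification (``the half-space version of Theorem~H''). That result is not available at this point; in the paper the classification of isolated boundary singularities (Theorem~\ref{class}) is proved \emph{after} Proposition~\ref{IS-3} and through Lemma~\ref{U_0}, which itself relies on Proposition~\ref{IS-3}. So the appeal is circular, and the sketch ``reduces \dots\ to the uniqueness of $\gw_\xk$'' hides exactly the nontrivial dichotomy one would need. The paper avoids this by first observing the structural identity
\[
T_\ell\bigl[u_{\infty,0}^{\Gw}\bigr]=u_{\infty,0}^{\Gw_\ell},
\]
i.e.\ the rescaled function is the $u_{\infty,0}$ of the rescaled domain. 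Since $\Gw_\ell\to\BBR^N_+$ and the half-space is scale invariant, the (unique) limit is $u_{\infty,0}^{\BBR^N_+}$, which is manifestly self-similar; Theorem~\ref{struct} then identifies its angular part with $\gw_\xk$. This replaces your unproved classification by a stability/identification argument built only on what is already established.

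\medskip

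\textbf{Non-triviality of the limit.} Your argument that the limit $V$ has a ``singularity stronger than Poisson'' is not justified: the order of limits $x\to0$ then $k\to\infty$ does not transfer to a pointwise lower bound for $v_\ell$ on a fixed compact set. The paper instead derives the quantitative bound \eqref{cor5}, namely $\ell^{2/(q-1)}u_{\infty,0}(\ell\,{\bf e})\ge c_{03}>0$, from the two-sided inequality \eqref{cor4} and the Green/Poisson estimates; this directly gives $V({\bf e})\ge c_{03}$ and rules out the trivial limit.

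\medskip

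\textbf{The upper bound.} Your construction of a supersolution ``after a suitable multiplicative adjustment and a perturbation of the angular profile'' is too vague to be checked; in particular, dominating $u_{k\gd_0}$ on $\Gw\cap\prt B_{R_0}(0)$ uniformly in $k$ already requires a boundary estimate of the type you are trying to prove. The paper bypasses this by applying Proposition~\ref{prop19} (with $F=\{0\}$), which yields directly the sharp two-scale bound \eqref{Sg8}--\eqref{Sg9} and hence the compactness needed for the blow-up.
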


\begin{proof} The correspondence $k\mapsto u_{k\gd_0}$ is increasing and by the Keller-Osserman estimate, it converges, when $k\to\infty$ to some smooth function $u_{\infty,0}$ defined in $\Gw$ where it satisfies (\ref{IE1}). By Proposition \ref{barr}, for any $0<R<R_0$, $u_{k\gd_0}$, and therefore $u_{\infty,0}$, vanishes on any compact subset of $\prt\Gw\setminus\{0\}$ and furthermore
$$u_{\infty,0}(x)\leq \left\{\BA{lll}c_{K,\gamma,\gk}(\dist(x,K))^{\gamma}\qquad\forall \gamma\in (\frac{\ga_-}{2},\frac{\ga_+}{2})\,&\text{ if }0<\gk<\frac{1}{4}\\[2mm]
c_{K}\sqrt{\dist(x,K)}\sqrt{\ln \left(\frac{{\rm {diam}}(\Gw)}{\rm {dist}(x,K)}\right)}&\text{ if }\gk=\frac{1}{4}
\EA\right.
$$
for all compact set $K\subset \prt\Gw\setminus\{0\}$. Combining this estimate with Propositions \ref{prop19} we obtain
          \begin{equation}\label{Sg8}\BA {ll}
  u_{\infty,0}(x)\leq c_{90}(d(x))^{\frac{\ga_+}{2}}|x|^{-\frac{2}{q-1}-\frac{\ga_+}{2}}\qquad\forall x\in\Gw,
  \EA \end{equation}
  and
            \begin{equation}\label{Sg9}\BA {ll}
|\nabla u_{\infty,0}(x)|\leq c_{90}(d(x))^{\frac{\ga_+}{2}-1}|x|^{-\frac{2}{q-1}-\frac{\ga_+}{2}}\qquad\forall x\in\Gw.
  \EA \end{equation}
Let $\ell_0>0$ be small enough such that $\ell\mathbf{e}\in\xO$ for any $0<\ell<\ell_0,$ where $\mathbf{e}=(0,...,0,1).$ Then by (\ref{alpha-green}), (\ref{poissonest}) and \eqref{cor4} we can easily prove that there exist positive constants $c_{01}$ and $c_{02}$ such that
$$\ell^{\frac{2}{q-1}}u_{\infty,0}(\ell\mathbf{e})\geq c_{01}k\ell^{\frac{2}{q-1}-N-\frac{\xa_+}{2}+2}-c_{02}k^q\ell^{2-q(N+\frac{\xa_+}{2}-2)+\frac{2}{q-1}},\quad\forall k>0.$$
Now we set $k=\frac{1}{M\ell^{\frac{2}{q-1}-N-\frac{\xa_+}{2}+2}},$
then we have that

$$\ell^{\frac{2}{q-1}}u_{\infty,0}(\ell\mathbf{e})\geq\frac{c_{01}}{M}-\frac{c_{02}}{M^q}.$$
Thus if we choose $M$ big enough, we can easily show that there exists $c_{03}>0$ which depends on $\xk,\xO,q,N$ such that
\be
\ell^{\frac{2}{q-1}}u_{\infty,0}(\ell\mathbf{e})\geq c_{03}>0,\quad\forall 0<\ell<\ell_0.\label{cor5}
\ee 
For $\ell>0$, we put $T_\ell[v](x)=\ell^{\frac{2}{q-1}}v(\ell x)$, $\Gw_\ell=\ell^{-1}\Gw$, $d_\ell(y)=\dist (y,\prt\Gw_\ell)$. If $v$ satisfies (\ref{Eq1}) in $\Gw$ and vanishes on $\prt\Gw\setminus\{0\}$, $T_\ell[v]$ vanishes on $\prt\Gw_\ell\setminus\{0\}$ and satisfies
          \begin{equation}\label{Sg9+}\BA {ll}
-\Gd T_\ell[v]-\myfrac{\gk}{d^2_\ell}T_\ell[v]+|T_\ell[v]|^{q-1}T_\ell[v]=0\qquad\in \Gw_\ell.
  \EA \end{equation}
In order to avoid ambiguity, we set $u_{k\gd_0}=u_{k\gd_0}^\Gw$, $v_{k\gd_0}=v_{k\gd_0}^\Gw$, $u_{\infty,0}=u_{\infty,0}^\Gw$ and $v_{\infty,0}=v_{\infty,0}^\Gw.$ Since inequalities (\ref{Sg8}) and (\ref{Sg9}) are invariant under the scaling transformation, the standard elliptic equations regularity theory yields the following estimates

            \begin{equation}\label{Sg10}\BA {ll}
              u_{\infty,0}^{\Gw_\ell}(y)\leq c_{92}(d_\ell(y))^{\frac{\ga_+}{2}}|y|^{-\frac{2}{q-1}-\frac{\ga_+}{2}}\qquad\forall y\in\Gw_\ell,
  \EA \end{equation}
  and
              \begin{equation}\label{Sg11}\BA {ll}
|\nabla u_{\infty,0}^{\Gw_\ell}(y)|\leq c_{92}(d_\ell(y))^{\frac{\ga_+}{2}-1}|y|^{-\frac{2}{q-1}-\frac{\ga_+}{2}}\qquad\forall y\in\Gw_\ell,
  \EA \end{equation}
  valid for any $0<\ell\leq 1$.
  If we let $k\to\infty$, we obtain
  $T_\ell[u_{\infty,0}^\Gw]=u_{\infty,0}^{\Gw_\ell}$ and because of the group property of the transformation $T_\ell$,
  $T_{\ell'}[u_{\infty,0}^{\Gw_{\ell}}]=u_{\infty,0}^{\Gw_{\ell'\ell}}$ for any $\ell,\ell'>0$. Estimates (\ref{Sg10}) and (\ref{Sg11}) imply that $\{u_{\infty,0}^{\Gw_\ell}\}$ is relatively compact for the  topology of convergence on compact subsets of $\BBR^N_+$. Therefore there exists a sequence $\{\ell_n\}$ tending to $0$ and a function $U$ such that $\{u_{\infty,0}^{\Gw_{\ell_n}}\}$ converges to $U$ uniformly on any compact subset of $\BBR^N_+$. By \eqref{cor5} this function is identically equal to zero. Therefore $U$ is a weak solution of
                \begin{equation}\label{Sg12}\BA {ll}
-\Gd U-\myfrac{\gk}{y_N^2}U+U^q=0\qquad\text{in }\BBR^N_+
  \EA \end{equation}
  Furthermore
              \begin{equation}\label{Sg13}\BA {ll}
  u_{\infty,0}^{\BBR^N_+}(y)\leq c_{92}y_N^{\frac{\ga_+}{2}}|y|^{-\frac{2}{q-1}-\frac{\ga_+}{2}}\qquad\forall y\in\BBR^N_+.
  \EA \end{equation}
  Since $T_{\ell'}[u_{\infty,0}^{\Gw_{\ell_n}}]=u_{\infty,0}^{\Gw_{\ell'\ell_n}}$ we derive $T_{\ell'}[U]=U$ for any $\ell'>0$, thus $U$ is self similar. Set $\gw(\frac{y}{|y|})=U(\frac{y}{|y|})$. If we set $\gs=\frac{y}{|y|}$ then there holds
                \begin{equation}\label{Sg14}\BA {ll}
 \gw(\gs)\leq c_{92}\psi_\gk(\gs)\qquad\forall \gs\in S^{N-1}_+.
  \EA \end{equation}
Therefore $\gw$ satisfies (\ref{Eq2}) and it coincides with the unique positive element $\gw_\gk$ of $\CE_\gk$, since by \eqref{cor5} $U(\mathbf{e})\geq c_{03}>0.$ Thus $u_{\infty,0}^{\Gw_{\ell}}$ converges to $U$ on compact subsets of $\BBR^N_+$. In particular (\ref{Sg7}) holds on compact subsets of $S^{N-1}_+$.
\end{proof}

\setcounter{equation}{0}
\section{The boundary trace of positive solutions}
As before we assume that $0<\gk\leq \frac{1}{4}$, $q>1$ and $\xO$ is a bounded smooth domain, convex if $\gk=\frac{1}{4}$. Although the construction of the boundary trace can be made in a more general framework, we restrict ourselves to the class $\CU_+(\Gw)$ of  positive smooth functions $u$ satisfying
        \begin{equation}\label{T1}\BA {ll}
\CL_\gk u+|u|^{q-1}u=0
  \EA \end{equation}
  in $\Gw$.
  \begin{lemma}\label{mode} Let $f\in L^1_{\phi_\gk}(\Gw)$. If $u$ is a nonnegative solution of
          \begin{equation}\label{T2*}\BA {ll}
\CL_\gk u=f\qquad\text{in }\Gw
  \EA \end{equation}
  there exists $\gm\in\mathfrak M_+(\prt\Gw)$ such that $u$ admits $\gm$ for boundary trace and
            \begin{equation}\label{T3}\BA {ll}
u=\BBG_{\CL_\gk}[f]+\BBK_{\CL_\gk}[\gm].
  \EA \end{equation}
  \end{lemma}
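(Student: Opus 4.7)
The strategy is to isolate in $u$ a Green potential absorbing the interior source $f$ and a positive $\CL_\gk$-harmonic remainder, to which Theorem \ref{Lemm11} applies. Fix $x_0\in\xO$ and take a smooth exhaustion $\{\xO_n\}$ of $\xO$ with $x_0\in\xO_1$ as in Section 2. On each $\xO_n$ the operator $\CL_\gk^{\xO_n}$ from (\ref{redu1}) is uniformly elliptic and coercive, and applying the classical interior representation formula to the smooth solution $u$ of $\CL_\gk u=f$ gives
\[
u(x)=V_n(x)+H_n(x),\quad V_n(x):=\int_{\xO_n}G^{\xO_n}_{\CL_\gk}(x,y)f(y)\,dy,\quad H_n(x):=\int_{\prt\xO_n}u(y)\,d\gw^{x}_{\xO_n}(y),
\]
for every $x\in\xO_n$; since $u\geq 0$, $H_n$ is nonnegative and $\CL_\gk$-harmonic in $\xO_n$.

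Next I would pass to the limit $n\to\infty$. Writing $f=f_+-f_-$ with $f_\pm\in L^1_{\gf_\gk}(\xO)$, the monotone convergence $G^{\xO_n}_{\CL_\gk}(x,\cdot)\uparrow G_{\CL_\gk}(x,\cdot)$ together with the two-sided bound (\ref{greenest}) yields $V_n(x)\to\BBG_{\CL_\gk}[f](x)$ for every $x\in\xO$. Then $H_n(x_0)=u(x_0)-V_n(x_0)$ remains bounded, and each $H_n$ is nonnegative and $\CL_\gk$-harmonic in $\xO_n$ (which eventually contains any compact $K\sbs\xO$), so Harnack's inequality produces a local uniform bound on $\{H_n\}$; interior elliptic regularity then extracts a $C^2_{loc}(\xO)$ subsequential limit $H$, still nonnegative and $\CL_\gk$-harmonic. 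The pointwise identity $H(x)=u(x)-\BBG_{\CL_\gk}[f](x)$ pins down the limit uniquely, hence the full sequence converges; Theorem \ref{Lemm11} then furnishes a unique $\gm\in\mathfrak{M}^+(\prt\xO)$ with $H=\BBK_{\CL_\gk}[\gm]$, which is exactly (\ref{T3}).

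It remains to identify $\gm$ as the boundary trace of $u$ in the sense of Theorem F. For any $Z\in C(\ovl\xO)$ and $0<\gd<\gb_0$, decompose
\[
\int_{\xS_\gd}Zu\,d\gw^{x_0}_{\xO'_\gd}=\int_{\xS_\gd}Z\,\BBG_{\CL_\gk}[f]\,d\gw^{x_0}_{\xO'_\gd}+\int_{\xS_\gd}Z\,\BBK_{\CL_\gk}[\gm]\,d\gw^{x_0}_{\xO'_\gd}.
\]
Proposition \ref{Lemm12} applied along $\{\xO'_\gd\}$ with $v=\BBK_{\CL_\gk}[\gm]$ drives the second term to $\int_{\prt\xO}Z\,d\gm$ as $\gd\to 0$. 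For the first, the representation of $\BBG_{\CL_\gk}[|f|]$ on $\xO'_\gd$ gives
$\int_{\xS_\gd}\BBG_{\CL_\gk}[|f|]\,d\gw^{x_0}_{\xO'_\gd}=\BBG_{\CL_\gk}[|f|](x_0)-\int_{\xO'_\gd}G^{\xO'_\gd}_{\CL_\gk}(x_0,y)|f(y)|\,dy\to 0$
by monotone convergence, and bounding $|Z|\leq\|Z\|_{L^\infty}$ kills the whole first integral in the limit.

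The main obstacle is this last step: although the decomposition of $u$ and the passage to the limit in $V_n$ and $H_n$ are rather standard, showing that $\BBG_{\CL_\gk}[f]$ carries no mass in the dynamic trace of Theorem F is delicate because $f$ has no sign and must be routed through the Green potential of $|f|$, using the two-sided estimate (\ref{greenest}) together with the hypothesis $\int_\xO\gf_\gk|f|\,dx<\infty$.
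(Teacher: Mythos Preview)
Your proof is correct and follows the same route as the paper's, which consists of the single line: ``Let $v=\BBG_{\CL_\gk}[f]$, then $u-v$ is $\CL_\gk$-harmonic and positive, thus the result follows'' (via Theorem \ref{Lemm11}). You have supplied the details the paper suppresses --- notably the exhaustion argument that justifies the nonnegativity of $u-\BBG_{\CL_\gk}[f]$ (asserted without proof in the paper) and the explicit verification, through Proposition \ref{Lemm12} and the vanishing of the Green-potential term, that $\gm$ is the dynamic boundary trace of $u$.
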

  \begin{proof} Let $v=\BBG_{\CL_\gk}[f]$, then $u-v$ is $\CL_\gk$-harmonic and positive thus the result follows.
  \end{proof}

\noindent{\bf Definition} Let $G\subset\Gw$ be a domain. A function $u\in L^q_{loc}(G)$ is a supersolution (resp. subsolution) of (\ref{T1}) if
        \begin{equation}\label{T4}\BA {ll}
\CL_\gk u+|u|^{q-1}u\geq 0\quad (\text{resp. }\;\;\CL_\gk u+|u|^{q-1}u\leq 0\;)
  \EA \end{equation}
  in the sense of distributions in $G$.\medskip

  The following comparison principle holds \cite[Lemma 3.2]{1}
    \begin{prop}\label{comp2} Let $G\subset\Gw$ be a smooth domain and $\bar u,\underline u$ a pair of nonnegative supersolution and subsolution respectively in $G$.\smallskip

    \noindent (i) If there holds
            \begin{equation}\label{T5}\BA {ll}
\displaystyle\limsup_{\dist(x,\prt G)\to 0}(\bar u(x)-\underline u(x))<0,
  \EA \end{equation}
  then $\underline u <\bar u$ in $G$.\smallskip

    \noindent (ii) Assume $\overline G\subset \Gw$ and $\bar u$ and $\underline u$ belong to $H^1(G)\cap C(\overline G)$. If
$\underline u \leq \bar u$ in $\prt G$, then $\underline u \leq\bar u$ in $G$.
      \end{prop}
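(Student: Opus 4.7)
The plan is to reduce the comparison to an energy inequality on the negative part of $w:=\bar u-\underline u$, using the monotonicity of $g(t):=|t|^{q-1}t$ together with the sharp Hardy inequality available because $\gk\leq\frac{1}{4}=c_\Gw$. Subtracting the defining inequalities for subsolution and supersolution yields, in $\CD'(G)$,
\[
\CL_\gk w + g(\bar u) - g(\underline u) \geq 0,
\]
and the goal is to test this against a suitable cut-off of $(\underline u-\bar u)_+$ belonging to an appropriate $H^1_0$ space.

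In case (ii), since $\overline G\Subset\Gw$ the Hardy potential $\gk/d^2$ is bounded on $G$, and the hypotheses $\underline u,\bar u\in H^1(G)\cap C(\overline G)$ with $\underline u\leq\bar u$ on $\prt G$ give directly $v:=(\underline u-\bar u)_+\in H^1_0(G)$. In case (i), interpreting the boundary condition so that $\underline u-\bar u$ is strictly negative near $\prt G$, I would use the $\ge$-truncation $v_\ge:=(\underline u-\bar u-\ge)_+$ for $\ge>0$; its support is then a compact subset of $G$, so $v_\ge\in H^1_c(G)$ and, after extension by zero, $v_\ge\in H^1_0(\Gw)$. Inserting $v_\ge$ (resp.~$v$) into the weak inequality and integrating by parts gives
\[
\myint{G}{}|\nabla v_\ge|^2\,dx - \myint{G}{}\frac{\gk}{d^2}v_\ge^2\,dx + \myint{G}{}\bigl(g(\underline u)-g(\bar u)\bigr)v_\ge\,dx \leq 0.
\]
The Hardy inequality (\ref{IE6}), valid on $H^1_0(\Gw)$ because $\gk\leq c_\Gw=\frac{1}{4}$, forces the first two terms to sum to a nonnegative quantity; monotonicity of $g$ makes the third term nonnegative as well. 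Each of the three terms must therefore vanish. The vanishing of the nonlinear integral, combined with strict monotonicity of $g$ on $[0,\infty)$, forces $v_\ge\equiv 0$, and letting $\ge\to 0$ in case (i) yields $\underline u\leq\bar u$. The strict inequality in (i) is then deduced from the strong maximum principle applied to the linear equation for $w$ after the substitution $w=\gf_\gk\tilde w$, which converts $\CL_\gk$ into the divergence-form operator $\tilde w\mapsto-\gf_\gk^{-2}\operatorname{div}(\gf_\gk^2\nabla\tilde w)+\gl_\gk\tilde w$ already used in Section~2.

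The principal obstacle I foresee is the critical case $\gk=\frac{1}{4}$: there the Hardy inequality is sharp and admits no $H^1_0(\Gw)$ minimizer, so one must rule out the possibility that the first two terms vanish merely because $v_\ge$ approaches an extremizer. Two features of the setup resolve this: in case (i), $v_\ge$ has compact support in $\Gw$ and so cannot mimic the borderline behaviour $d^{1/2}|\log d|$ near $\prt\Gw$; in case (ii), $d$ is bounded below on $G$, making the Hardy potential genuinely bounded. Both situations can be reduced to the maximum principles of Propositions~\ref{max1} and~\ref{max}. A secondary point is justifying that $v_\ge$ is admissible as a test function starting only from a distributional sub/supersolution; this is handled by a standard truncation-and-approximation argument using the local $H^1$ regularity furnished by elliptic theory applied to (\ref{T1}).
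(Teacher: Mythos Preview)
The paper does not give its own proof of this proposition; it simply attributes the result to \cite[Lemma 3.2]{1} (Bandle--Moroz--Reichel). Your proposal therefore supplies an argument where the paper has none, and its structure --- subtract, test against the truncated negative part of $\bar u-\underline u$, invoke the Hardy inequality together with the strict monotonicity of $g$ --- is the standard route and is essentially what the cited reference does.

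Two minor technical points. First, your displayed energy inequality for $v_\ge=(\underline u-\bar u-\ge)_+$ drops the cross term coming from $(-w)v_\ge=v_\ge^2+\ge v_\ge$; the correct inequality reads
\[
\myint{G}{}|\nabla v_\ge|^2\,dx-\myint{G}{}\frac{\gk}{d^2}v_\ge^2\,dx+\myint{G}{}\bigl(g(\underline u)-g(\bar u)\bigr)v_\ge\,dx\leq \ge\myint{G}{}\frac{\gk}{d^2}v_\ge\,dx.
\]
This does not spoil the argument: under the intended reading of (\ref{T5}) (which, as you noticed, is stated with the sign reversed) the supports of all the $v_\ge$ lie in one fixed compact subset of $G\subset\Gw$, so the right-hand side is $O(\ge)$ and vanishes in the limit; strict monotonicity of $g$ then forces $(\underline u-\bar u)_+=0$, and the appeal to the equality case in Hardy is not even needed. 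Second, your closing remark on regularity deserves a sharper caveat: the paper defines sub/supersolutions of (\ref{T1}) merely in $L^q_{loc}$ with a distributional inequality, and that alone does \emph{not} furnish $H^1_{loc}$ regularity (there is no Caccioppoli estimate for a bare distributional subsolution). In practice the proposition is only invoked for objects already known to lie in $H^1_{loc}\cap C$ (genuine solutions, $\max(u,v)$, $(u-v)_+$, etc.), which is also the setting of \cite{1}; your proof is complete under that tacit hypothesis.
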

      \subsection{Construction of the boundary trace}
      We use the notations of \cite{MV-CONT}
      \begin{prop} Let $\gu$ be a non-negative function in $C(\xO).$\\
(i) If $\gu$ is a subsolution of (\ref{T1}), there exists a minimal solution $u_*$ dominating $\gu$,
i.e. $\gu\leq u_*\leq U$ for any solution $U\geq \gu.$\\
(ii) If $\gu$ is a supersolution of (\ref{T1}), there exists a maximal solution $u^*$ dominated by $\gu$,
i.e. $U\leq u^*\leq \gu$ for any solution $U\leq \gu.$
\end{prop}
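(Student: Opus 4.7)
The strategy is a Perron-type monotone construction on an exhaustion $\{\Gw_n\}$ of $\Gw$, exploiting the fact that on any subdomain $\Gw_n$ with $\overline{\Gw_n}\subset\Gw$ the Hardy potential $\gk/d^{2}$ is bounded, so that $\CL_\gk$ becomes uniformly elliptic there and classical Dirichlet theory applies. I fix such an exhaustion with $\overline{\Gw_n}\subset\Gw_{n+1}$, $\cup_n\Gw_n=\Gw$, and set $d_n:=\dist(\overline{\Gw_n},\prt\Gw)>0$, so that $\|\gk/d^{2}\|_{L^\infty(\Gw_n)}\leq\gk/d_n^{2}$ and $\gu\lfloor_{\prt\Gw_n}\in C(\prt\Gw_n)$.

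For (i), the idea is to build, for each $n$, the unique $u_n\in C^{2}(\Gw_n)\cap C(\overline{\Gw_n})$ solving
\begin{equation*}
\CL_\gk u_n+u_n^{q}=0\quad\text{in }\Gw_n,\qquad u_n=\gu\quad\text{on }\prt\Gw_n,
\end{equation*}
by classical monotone iteration between the subsolution $\gu$ and a large constant supersolution $M_n:=\max_{\overline{\Gw_n}}\gu+K_n$ where $K_n$ is chosen so that $K_n^{q-1}>\gk/d_n^{2}$ (which makes $M_n$ a supersolution on $\Gw_n$). Proposition \ref{comp2}(ii) gives $\gu\leq u_n\leq M_n$ in $\Gw_n$. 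The sequence $\{u_n\}$ is increasing: on $\prt\Gw_n$ one has $u_n=\gu\leq u_{n+1}$ (because $u_{n+1}\geq\gu$ on $\overline{\Gw_{n+1}}\supset\prt\Gw_n$), both functions satisfy (\ref{T1}) on $\Gw_n$, and Proposition \ref{comp2}(ii) yields $u_n\leq u_{n+1}$. The interior Keller-Osserman estimate, valid for any solution of $\CL_\gk u+u^{q}=0$ on $\Gw_n$ (the Hardy term being bounded and thus treated as a lower-order perturbation), produces $u_n(x)\leq C_q[\dist(x,\prt\Gw_n)]^{-2/(q-1)}$; hence $\{u_n\}$ is locally uniformly bounded on every compact $K\subset\Gw$. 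Standard elliptic regularity then upgrades monotone a.e.~convergence to convergence in $C^{2,\ga}_{\mathrm{loc}}(\Gw)$ to a classical solution $u_*\geq\gu$ of (\ref{T1}). Minimality is immediate: if $U$ is any solution of (\ref{T1}) with $U\geq\gu$, then $U\geq\gu=u_n$ on $\prt\Gw_n$, so Proposition \ref{comp2}(ii) gives $U\geq u_n$ in $\Gw_n$, and letting $n\to\infty$ yields $U\geq u_*$.

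For (ii), the construction is dual. Since the identically zero function is a subsolution of (\ref{T1}) dominated by $\gu$, I solve
\begin{equation*}
\CL_\gk v_n+v_n^{q}=0\quad\text{in }\Gw_n,\qquad v_n=\gu\quad\text{on }\prt\Gw_n,
\end{equation*}
this time by iterating between the subsolution $0$ and the supersolution $\gu$. Proposition \ref{comp2}(ii) yields $0\leq v_n\leq\gu$ on $\Gw_n$. The sequence is now decreasing: $v_{n+1}\leq\gu=v_n$ on $\prt\Gw_n$, hence $v_{n+1}\leq v_n$ on $\Gw_n$. Monotone convergence together with the same Keller-Osserman/elliptic-regularity argument produces $v_n\downarrow u^*$, a classical solution of (\ref{T1}) satisfying $0\leq u^*\leq\gu$. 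Maximality follows by comparing any subsolution $V\leq\gu$ with $v_n$ in $\Gw_n$ (boundary inequality $V\leq\gu=v_n$, Proposition \ref{comp2}(ii)) and taking $n\to\infty$.

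The main obstacle is justifying that the monotone limits are genuine classical solutions of (\ref{T1}) on the whole of $\Gw$. This hinges on two points that must be handled in concert: first, the interior Keller-Osserman bound must be uniform in $n$ on compacts $K\subset\Gw$, which is the reason for invoking $d_n>0$ so the Hardy term remains a bounded perturbation; second, the convergence must take place in a topology strong enough (namely $C^{2,\ga}_{\mathrm{loc}}$) to pass to the limit in the pointwise equation. Once both are in place, minimality in (i) and maximality in (ii) follow from the very construction, because each $u_n$ (resp.\ $v_n$) realises the sharpest comparison with $\gu$ allowed by Proposition \ref{comp2}(ii).
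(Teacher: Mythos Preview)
Your argument is correct and follows essentially the same route as the paper: solve the Dirichlet problem for (\ref{T1}) on each member $\Gw_n$ of a smooth exhaustion with boundary data $\gu$, use the comparison principle (Proposition \ref{comp2}(ii)) to obtain monotonicity in $n$ and the bounds $\gu\le u_n$ (resp.\ $v_n\le\gu$), invoke Keller--Osserman plus interior regularity to pass to the limit, and read off minimality/maximality by comparing any competing solution with $u_n$ (resp.\ $v_n$) on $\Gw_n$. You supply more detail than the paper (existence of $u_n$ via monotone iteration between $\gu$ and a large constant, the explicit role of $d_n>0$ in taming the Hardy potential), but the skeleton is identical.
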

\begin{proof}

(i) Let $\{\xO_n\}$ be a smooth exhaustion $\Gw$ and for each $n\in\BBN$, $u_n$ the positive solution of
            \begin{equation}\label{T6}\BA {lll}
\CL_{\xk } u+ |u|^{q-1}u=0\qquad&\text {in }\xO_n\\
\phantom{\CL_{\xk } u+ |u|^{q-1}}
u=\gu\qquad&\text {in }\prt\xO_n.
  \EA \end{equation}
By the comparison principle $u_n\geq \gu,$ which implies  $u_{n+1}(x)\geq u_n(x)\;\forall x\in \Gw_n.$ Since $\{u_n\}$ is uniformly bounded on compact subsets of $\Gw$ and thus in $C^2$ by standard regularity arguments that $u_n\uparrow u_*$ which is a positive solution of (\ref{T1}). Furthermore, if $U$ is any solution of (\ref{T1}) dominating $\gu$, it dominates  $u_n$ in $\Gw_n$ and thus $ u_*\leq U$. \\
The proof of (ii) is similar: we construct a decreasing sequence $\{u'_n\}$ of nonnegative solutions of (\ref{T1}) in $\Gw_n$
coinciding with $\gu$ on $\prt\Gw_n$ and dominated by $\gu$. It converges to some $u^*$ which satisfies  $U\leq u^*\leq \gu$ for any solution $U$ dominated by $\gu$.
\end{proof}

\begin{prop}
Let $0\leq u,v\in C(\xO).$\\
(i) If $u$ and $v$ are subsolutions (resp. supersolutions) then $\max(u,v)$ is a subsolution (resp. $\min(u,v)$ is a supersolution).\\
(ii) If $u$ and $v$ are supersolutions then $u+v$ is a supersolution.\\
(iii) If $u$ is a subsolution and $v$ is a supersolution then $(u-v)_+$ is a subsolution.\label{maxsub}
\end{prop}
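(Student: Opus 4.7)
The plan is to use a Kato-type truncation argument throughout. Fix a family $\{\eta_\varepsilon\}_{\varepsilon>0}\subset C^1(\mathbb{R})$ of nondecreasing approximations of the Heaviside function with $\eta_\varepsilon(s)=0$ for $s\leq 0$, $\eta_\varepsilon(s)=1$ for $s\geq\varepsilon$, and $\eta_\varepsilon'\geq 0$. Since $u,v\in H^1_{loc}(\xO)\cap C(\xO)$, the chain rule guarantees that, for any nonnegative $\phi\in H^1_c(\xO)$, each of $\eta_\varepsilon(v-u)\phi$, $(1-\eta_\varepsilon(v-u))\phi$, and $\eta_\varepsilon(u-v)\phi$ is a nonnegative admissible $H^1_c(\xO)$ test function in the weak form of the sub/supersolution inequalities.

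For part (i), to show that $\max(u,v)$ is a subsolution, I would insert $\eta_\varepsilon(v-u)\phi$ into the weak subsolution inequality for $v$ and $(1-\eta_\varepsilon(v-u))\phi$ into the one for $u$, then add. An explicit expansion of the gradient terms produces the nonnegative remainder $\int_\xO\phi\,\eta_\varepsilon'(v-u)|\nabla(v-u)|^2\,dx\geq 0$ that may be discarded while preserving the direction $\leq 0$. Letting $\varepsilon\to 0^+$, dominated convergence together with Stampacchia's identification
\[
\nabla\max(u,v)=\chi_{\{u\geq v\}}\nabla u+\chi_{\{v>u\}}\nabla v\quad\text{a.e.}
\]
gives $\CL_\gk\max(u,v)+(\max(u,v))^q\leq 0$ distributionally. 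The symmetric statement that $\min(u,v)$ is a supersolution whenever $u,v$ are supersolutions is obtained by testing the supersolution inequalities for $u$ and $v$ with $(1-\eta_\varepsilon(u-v))\phi$ and $\eta_\varepsilon(u-v)\phi$ respectively; the analogous remainder now appears with a nonpositive sign but may still be dropped because the aggregate inequality is $\geq 0$. Part (ii) is an immediate computation: linearity gives $\CL_\gk(u+v)\geq -u^q-v^q$, and super-additivity $(u+v)^q\geq u^q+v^q$ (valid for $u,v\geq 0$ and $q\geq 1$) yields $\CL_\gk(u+v)+(u+v)^q\geq 0$.

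For part (iii), I would test the subsolution inequality for $u$ and the supersolution inequality for $v$ with the same nonnegative function $\eta_\varepsilon(u-v)\phi$, then subtract the second from the first. The cross-term $-\int_\xO\phi\,\eta_\varepsilon'(u-v)|\nabla(u-v)|^2\,dx\leq 0$ may be dropped preserving $\leq 0$. Passing $\varepsilon\to 0^+$ gives
\[
\int_\xO\nabla(u-v)_+\cdot\nabla\phi\,dx-\int_\xO\frac{\gk}{d^2}(u-v)_+\phi\,dx+\int_{\{u>v\}}(u^q-v^q)\phi\,dx\leq 0.
\]
The elementary super-additivity inequality $a^q-b^q\geq (a-b)^q$, valid for $a\geq b\geq 0$ and $q\geq 1$, applied pointwise on $\{u>v\}$ with $a=u$, $b=v$, yields $(u^q-v^q)\chi_{\{u>v\}}\geq ((u-v)_+)^q$, upgrading the displayed inequality to the subsolution condition for $(u-v)_+$.

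The main obstacle will be the rigorous justification of the two limit passages: admissibility of the truncated test functions depends on $u,v\in H^1_{loc}(\xO)$ and the chain rule for compositions of $H^1$ functions with Lipschitz maps; the identification of the limits of the gradient and zero-order terms relies on Stampacchia's theorem, in particular on the vanishing of $\nabla(u-v)$ a.e.\ on $\{u=v\}$; and the nonlinear terms $u^q,v^q$ are uniformly controlled on the support of $\phi$ by continuity of $u,v$, legitimising the application of dominated convergence.
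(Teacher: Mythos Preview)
Your proposal is correct and follows essentially the same approach as the paper: both rest on Kato's inequality, the paper invoking it tersely for (i)--(ii) and for (iii) writing the distributional chain $-\Delta(u-v)_+\leq\mathrm{sign}_+(u-v)(-\Delta(u-v))\leq-(u-v)_+^q+\kappa d^{-2}(u-v)_+$, while you spell out the underlying $\eta_\varepsilon$-truncation argument explicitly. One minor point: the hypothesis is only $u,v\in C(\Omega)$, not $H^1_{loc}$, but since Kato's inequality holds at the distributional level this is harmless.
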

\begin{proof}   The first two statements follow Kato's inequality. The last statement is verified using that
$$\BA {lll}
-\xD(u-v)_+\leq sign_+(u-v)(-\xD(u-v))&\leq-sign_+(u-v)(u^q-v^q)+\xk \frac{(u-v)_+}{d^2(x)}\\
&\leq-(u-v)^q_++\xk \frac{(u-v)_+}{d^2(x)}.
\EA
$$
\end{proof}

\begin{notation}
Let $u,v$ be nonnegative continuous functions in $\xO.$\\
(a) If $u$ is a subsolution, $[u]_\dag$ denotes the smallest solution dominating $u.$\\
(b) If $u$ is a supersolution, $[u]^\dag$ denotes the largest solution dominated by $u.$\\
(c) If $u,\;v$ are subsolutions then $u\vee v:=[\max(u,v)]_\dag.$\\
(d) If $u,\;v$ are supersolutions then $u\wedge v:=[\inf(u,v)]^\dag$ and $u\oplus v=[u+v]^\dag.$\\
(e) If $u$ is a subsolution and $v$ is a supersolution then $u\ominus v:=[(u-v)_+]_\dag.$\label{34}
\end{notation}
The next result based upon local uniform estimates is due to Dynkin \cite{Dyn}.
\begin{prop}
(i) Let $\{u_k\}\subset C(\Gw)$ be a sequence of positive subsolutions (resp. supersolutions) of (\ref{T1}). Then $U:=\sup u_k$ (resp. $U:=\inf u_k$) is a subsolution (resp. supersolution).\\
(ii) Let $\mathcal{T}\subset C(\Gw)$ be a family of positive solutions of (\ref{T1}). Suppose that, for every pair $u_1,u_2\in\mathcal{T}$ there exists $v\in\mathcal{T}$ such that
$$\max(u_1,u_2)\leq v\qquad \mathrm{resp.}\;\min(u_1,u_2)\geq v.$$
Then there exists a monotone sequence $\{u_n\}\subset \mathcal{T}$ such that
$$u_n\uparrow\sup\mathcal{T}\qquad\mathrm{resp.}\;u_n\downarrow\inf T.$$
Furthermore $\sup \mathcal{T}$ (resp. $\inf\mathcal{T}$) is a solution.\label{sygklish}
\end{prop}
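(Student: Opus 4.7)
\noindent\textbf{Proof proposal for Proposition \ref{sygklish}.}

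The plan is to reduce both statements to the local compactness of bounded families of sub/super/solutions, which in turn follows from a Keller--Osserman type a~priori bound that absorbs the Hardy term on any compact subdomain. On any $K\Subset\Gw$ the weight $\gk/d^2$ is bounded, so a nonnegative subsolution $u$ of \eqref{T1} satisfies
\[
-\xD u+u^q\leq\gk d^{-2}u\leq C_K u\qquad\text{in }K,
\]
and for $u$ above the threshold $(2C_K)^{1/(q-1)}$ the right hand side is absorbed into $\tfrac12 u^q$, yielding a local $L^\infty$ bound depending only on $K$ and $\dist(K,\prt\Gw)$ via the classical Keller--Osserman barrier. Combining this bound with standard $C^{2,\ga}_{loc}$ elliptic regularity for $\CL_\gk$ (legitimate because $d^{-2}$ is smooth inside $\Gw$), I obtain that any family of nonnegative subsolutions of \eqref{T1} is relatively compact in $C^1_{loc}(\Gw)$; for supersolutions that are dominated by a fixed function the same conclusion holds trivially.

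\medskip

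For part (i) with subsolutions, first set $v_n=\max(u_1,\dots,u_n)$. By Proposition \ref{maxsub}(i) each $v_n$ is a subsolution, and $v_n\uparrow U$ pointwise. The local bound above applied to $\{v_n\}$ together with elliptic regularity shows that along a subsequence (hence, by monotonicity, for the whole sequence) $v_n\to U$ in $C^1_{loc}(\Gw)$, so $U\in C(\Gw)$ and passing to the limit in the distributional inequality $\CL_\gk v_n+v_n^q\leq 0$ gives $\CL_\gk U+U^q\leq 0$, i.e.\ $U$ is a subsolution. The supersolution case is symmetric: $w_n=\min(u_1,\dots,u_n)$ is a supersolution by Proposition \ref{maxsub}(i), and the sequence is dominated by $u_1$, so the same compactness and limit passage show that $U=\inf u_k$ is a supersolution.

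\medskip

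For part (ii), consider the upward directed case. By induction on the cardinality, the pairwise directedness assumption extends to finite subsets: for any $u_1,\dots,u_n\in\CT$ there is $v\in\CT$ with $v\geq\max(u_1,\dots,u_n)$. The Keller--Osserman bound shows that $U(x):=\sup_{v\in\CT}v(x)<\infty$ for every $x\in\Gw$. Pick a countable dense set $\{x_i\}\subset\Gw$ and, for each $i,j\in\BBN$, elements $v_{i,j}\in\CT$ with $v_{i,j}(x_i)>U(x_i)-1/j$. Inductively choose $u_n\in\CT$ dominating $u_{n-1}$ and all $v_{i,j}$ with $i,j\leq n$. Then $\{u_n\}$ is increasing in $\CT$ and $u_n(x_i)\to U(x_i)$ for every $i$. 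As $\{u_n\}$ is a locally uniformly bounded sequence of solutions, elliptic regularity gives convergence in $C^2_{loc}(\Gw)$ to a function $W\in C^2(\Gw)$ which is therefore itself a solution of \eqref{T1}. Finally, $W(x_i)=U(x_i)$ for all $i$; if some $v\in\CT$ satisfied $v(y)>W(y)$ at some $y\in\Gw$, continuity of $v-W$ together with the density of $\{x_i\}$ would produce an $x_i$ with $v(x_i)>W(x_i)=\sup_{\CT}u(x_i)$, a contradiction. Hence $W\geq v$ for every $v\in\CT$, i.e.\ $W=\sup\CT$, and it is a solution. The downward directed case is handled identically, using that the $u_n$ are then dominated by any initial element of $\CT$, which provides the uniform upper bound needed for compactness.

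\medskip

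\noindent\textbf{Main obstacle.} The only delicate point is the local Keller--Osserman bound in the presence of the singular Hardy term. The argument above controls this inside compact subdomains, which is all that is needed; no estimate near $\prt\Gw$ is required because the statement deals with sub/supersolutions in $\Gw$, not with prescribed boundary data. Once the local bound is secured, the remaining steps are standard manipulations with Kato's inequality (already encoded in Proposition \ref{maxsub}), elliptic regularity for $\CL_\gk$, and the usual diagonal extraction on a countable dense set.
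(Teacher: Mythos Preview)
The paper does not supply its own proof of this proposition; it merely attributes the result to Dynkin \cite{Dyn}. Your proposal therefore fills in an argument where the paper has none, and the overall strategy---local Keller--Osserman bounds plus compactness---is precisely the ``local uniform estimates'' the paper alludes to in the sentence preceding the statement.

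Your treatment of (ii) is correct: there the $u_n$ are genuine solutions, so interior elliptic regularity legitimately yields $C^2_{loc}$ compactness, and the countable-dense-set extraction is the standard device.

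For (i) there is a small overclaim. You write that ``elliptic regularity shows \dots\ $v_n\to U$ in $C^1_{loc}(\Gw)$, so $U\in C(\Gw)$'', but the $v_n=\max(u_1,\dots,u_n)$ are only continuous subsolutions, and there is no regularity theory that lifts a distributional \emph{subsolution} with bounded data to $C^1$ (think of $\max(x_1,0)$, a subharmonic function that is not $C^1$). The repair is simpler than what you wrote: the local Keller--Osserman bound gives $v_n\leq C_K$ on each compact $K\subset\Gw$, so by monotone convergence $v_n\to U$ and $v_n^q\to U^q$ in $L^1_{loc}(\Gw)$, and one passes to the limit directly in the distributional inequality
\[
\int_\Gw\bigl(v_n\,\CL_\gk\psi+v_n^q\,\psi\bigr)\,dx\leq 0\qquad\text{for all }0\leq\psi\in C_c^\infty(\Gw).
\]
This shows $U\in L^q_{loc}(\Gw)$ is a subsolution in the sense adopted in the paper (the definition just before Proposition~\ref{comp2}); continuity of $U$ is neither needed nor asserted in the statement. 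The supersolution half of (i) is handled the same way, using dominated convergence since $w_n\leq u_1$.
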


\begin{defin}
Let $F\subset\partial\xO$ be a closed set. We set
\begin{equation}\label{T7}
U_F:=\sup\left\{u\in\CU_+(\xO):\;\lim_{x\rightarrow \xi}\frac{u(x)}{W(x)}=0,\;\forall \xi\in \partial\xO\setminus F\right\},
\end{equation}
and
\begin{equation}\label{T8}
[u]_F=\sup\left\{v\in\CU_+(\xO):\;v\leq u,\;\lim_{x\rightarrow \xi}\frac{v(x)}{W(x)}=0,\;\forall \xi\in \partial\xO\setminus F\right\}
\end{equation}
\end{defin}
Notice that $F\mapsto U_F$ and $F\mapsto [u]_F$ are increasing with respect to the inclusion order relation in $\prt\Gw$, $[u]_F=u\wedge U_F$. As a consequence of Proposition \ref{prop19}, $U_F$ satisfies
\begin{equation}\label{T8'}\lim_{x\rightarrow \xi}\frac{U_F(x)}{W(x)}=0,\;\forall \xi\in \partial\xO\setminus K.
\end{equation}
\begin{prop}\label{sygklish1}
Let $E,F\subset\partial\xO$ be closed sets. Then\\
(i) $U_E\wedge U_F=U_{E\cap F}.$\\
(ii) If $F_n\subset\partial\xO$ is a decreasing sequence of closed sets there holds
$$\lim_{n\to\infty} U_{F_n}=U_F\;\;\text{where}\;\;F=\cap F_n.$$
\end{prop}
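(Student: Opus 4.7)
\medskip
\noindent\textbf{Proof proposal.} My plan is to unfold the definition of $U_E\wedge U_F = [\min(U_E,U_F)]^\dag$ and exploit the monotonicity of the map $K\mapsto U_K$ together with Proposition \ref{sygklish} for passing to the limit of monotone families of solutions.

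For (i), I would first establish the trivial direction $U_{E\cap F}\leq U_E\wedge U_F$: since $E\cap F\subset E$ and $E\cap F\subset F$, the monotonicity of $K\mapsto U_K$ (immediate from the definition \eqref{T7}) yields $U_{E\cap F}\leq \min(U_E,U_F)$. As $U_{E\cap F}$ is itself a solution dominated by this minimum, it is dominated by the largest such solution, namely $U_E\wedge U_F$. For the reverse direction, set $v=U_E\wedge U_F$, which is a nonnegative solution of (\ref{T1}) satisfying $v\leq U_E$ and $v\leq U_F$. Given $\xi\in\prt\Gw\setminus(E\cap F)$, at least one of the alternatives $\xi\notin E$ or $\xi\notin F$ occurs; in the first case $\lim_{x\to\xi}U_E(x)/W(x)=0$ by \eqref{T8'}, and since $0\leq v\leq U_E$ this forces $\lim_{x\to\xi}v(x)/W(x)=0$ (symmetrically in the second case). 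Thus $v$ is an admissible competitor in the definition (\ref{T7}) of $U_{E\cap F}$, giving $v\leq U_{E\cap F}$.

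For (ii), since $\{F_n\}$ is decreasing, the monotonicity of $K\mapsto U_K$ yields $U_{F_{n+1}}\leq U_{F_n}$ and $U_{F_n}\geq U_F$ for all $n$. Hence $U^*:=\lim_{n\to\infty}U_{F_n}=\inf_n U_{F_n}$ exists pointwise, dominates $U_F$, and by Proposition \ref{sygklish}(ii) (applied to the family $\mathcal{T}=\{U_{F_n}\}$, which is totally ordered by inclusion) is itself a positive solution of (\ref{T1}). To conclude $U^*\leq U_F$, I would check that $U^*$ vanishes on $\prt\Gw\setminus F$ in the $W$-normalized sense. Given $\xi\in\prt\Gw\setminus F$, the identity $F=\cap_n F_n$ together with the closedness of each $F_n$ and the decreasing property provides some $n_0$ with $\xi\notin F_{n_0}$; then $U^*\leq U_{F_{n_0}}$ and \eqref{T8'} applied to $F_{n_0}$ give $\limsup_{x\to\xi}U^*(x)/W(x)\leq \lim_{x\to\xi}U_{F_{n_0}}(x)/W(x)=0$. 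Hence $U^*$ is an admissible competitor in the definition of $U_F$, so $U^*\leq U_F$, and equality follows.

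The main obstacle I anticipate is verifying that the passage to the limit in (ii) actually produces a solution rather than merely a supersolution: this relies essentially on the local compactness/regularity statement of Proposition \ref{sygklish}, which in turn rests on the Keller--Osserman type local a priori bounds available for (\ref{T1}). Once that machinery is in place, both parts reduce to a careful bookkeeping of the boundary vanishing condition \eqref{T8'} under the operations $\wedge$ and monotone limit.
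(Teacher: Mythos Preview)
Your proposal is correct and follows essentially the same approach as the paper's proof, which is extremely terse (a few lines for each part): for (i) the paper simply observes that $U_E\wedge U_F$ is by definition the largest solution dominated by $\min(U_E,U_F)$, and this coincides with the largest solution vanishing outside $E\cap F$; for (ii) it notes $U_F\leq V:=\lim U_{F_n}$ and that $V$ has boundary support in each $F_n$, hence $V\leq U_F$. Your version is a faithful unpacking of the same argument, making explicit the role of \eqref{T8'} and of Proposition~\ref{sygklish} in guaranteeing that the monotone limit is again a solution.
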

\begin{proof}
(i) $U_E\wedge U_F$ is the largest solution dominated by $\inf(U_E, U_F)$ and therefore, by definition, it is the largest solution which vanishes outside $E\cap F.$\\
(ii) If $V:=\lim U_{F_n}$ then $U_F\leq V.$ But $\hbox{supp}\,(V)\subset F_n$ for each $n\in\mathbb{N}$ and consequently $V\leq U_F.$
\end{proof}
For $\xb>0,$ we recall that $\xO_\xb$, $\xS_\xb$ and the mapping $x\mapsto(d(x),\xs(x))$ have been defined in the proof of Lemma \ref{heat}. We also set $\Gw'_\xb=\xO\setminus\overline{\xO}_\xb$ and, if $Q\subset\partial\xO$, $\xS_\xb(Q)=\{x\in\xO_\xb:\;\xs(x)\in Q\}.$

\begin{prop}\label{17} Let $u\in\mathcal{U}(\xO).$\smallskip

\noindent (i) If $A,B\subset\partial\xO$ are closed sets. Then
\be
[[u]_A]_B=[[u]_B]_A=[u]_{A\cap B}.\label{14}
\ee\smallskip

\noindent (ii) If $\{F_n\}$ is a decreasing sequence of closed subsets of $\prt\Gw$ and $F=\cap F_n$, then
$$[u]_{F_n}\downarrow [u]_F.$$

\noindent (iii) If $A,B\subset\partial\xO$ are closed sets. Then
\be
[u]_A\leq[u]_{A\cap B}+[u]_{\overline{A\setminus B}}.\label{15}
\ee
\end{prop}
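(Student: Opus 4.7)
The strategy is to anchor the whole proposition on the characterization
\begin{equation*}
[v]_F = v\wedge U_F\qquad\text{for every solution }v\in\CU_+(\xO)\text{ and closed }F\subset\prt\xO, \qquad(\star)
\end{equation*}
which reduces (i) and (ii) to a lattice calculation on $\CU_+(\xO)$, and reduces (iii) to a boundary comparison argument. Formula $(\star)$ follows from the definitions \eqref{T7}, \eqref{T8} together with property \eqref{T8'}: the right-hand side is the largest solution dominated by the supersolution $\min(v,U_F)$, so $v\wedge U_F\leq v$ and, since $v\wedge U_F\leq U_F$, it inherits $(v\wedge U_F)/W\to 0$ off $F$, making it admissible in the sup defining $[v]_F$; conversely every admissible $w$ is dominated by $v$, and qualifies for the sup defining $U_F$, hence is dominated by $v\wedge U_F$.

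\emph{Proof of (i).} Applying $(\star)$ twice, the associativity of $\wedge$ on $\CU_+(\xO)$ (both nested expressions coincide with the largest solution dominated by $\min(u,U_A,U_B)$), and Proposition~\ref{sygklish1}(i) gives
$$[[u]_A]_B \;=\; [u]_A\wedge U_B \;=\; u\wedge U_A\wedge U_B \;=\; u\wedge U_{A\cap B} \;=\; [u]_{A\cap B},$$
and the equality $[[u]_B]_A=[u]_{A\cap B}$ follows by the symmetry of the right-hand side in $A,B$.

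\emph{Proof of (ii).} Since $F\mapsto[u]_F$ is monotone in the inclusion order (an admissible function for $[u]_F$ is admissible for $[u]_{F'}$ whenever $F\subset F'$), the sequence $\{[u]_{F_n}\}$ is decreasing; by Proposition~\ref{sygklish}(ii) its pointwise limit $L$ is a positive solution of \eqref{T1} with $L\leq u$. For every $\xi\in\prt\xO\setminus F$ one has $\xi\notin F_{n_0}$ for some $n_0$, so $L\leq[u]_{F_{n_0}}$ combined with $[u]_{F_{n_0}}/W\to 0$ at $\xi$ yields $L/W\to 0$ at $\xi$. Hence $L$ is admissible in \eqref{T8} for $[u]_F$, giving $L\leq[u]_F$; the reverse inequality $[u]_F\leq L$ is immediate.

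\emph{Proof of (iii).} Since $A\subseteq (A\cap B)\cup\overline{A\setminus B}$, monotonicity from (ii) reduces the task to proving
$$[u]_{E\cup F}\leq [u]_E+[u]_F \qquad\text{for any closed } E,F\subset\prt\xO,$$
with $E=A\cap B$ and $F=\overline{A\setminus B}$. Set $z:=[u]_{E\cup F}-[u]_E\geq 0$; by Proposition~\ref{maxsub}(iii), applied with $[u]_{E\cup F}$ as subsolution and $[u]_E$ as supersolution, $z$ is a non-negative subsolution of \eqref{T1} dominated by $u$. The plan is to show $z\leq[u]_F$ by comparison of the subsolution $z$ with a perturbed supersolution built from $[u]_F$ via Proposition~\ref{comp2}, and then let the perturbation vanish. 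The required boundary inequality
$$\limsup_{x\to\xi}\frac{z(x)-[u]_F(x)}{W(x)}\leq 0 \qquad\forall\xi\in\prt\xO$$
is verified by splitting into four subcases: if $\xi\notin E\cup F$, both $z/W$ and $[u]_F/W$ tend to $0$; if $\xi\in E\cap F$, the contributions of $[u]_{E\cup F}$ and $[u]_E$ to $z$ cancel in the $W$-limit since both see the same trace of $u$ at $\xi$; if $\xi\in F\setminus E$, then $[u]_E/W\to 0$ while $E\cup F$ coincides with $F$ in a boundary neighborhood of $\xi$, so $[u]_{E\cup F}$ and $[u]_F$ share the same $W$-weighted asymptotics at $\xi$; and if $\xi\in E\setminus F$, symmetrically $[u]_{E\cup F}$ agrees with $[u]_E$ at $\xi$, so $z/W\to 0\leq [u]_F/W$. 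In the last two cases the asymptotic agreement is obtained by coupling the linear boundary Harnack principles of Propositions~\ref{maincomp} and \ref{maincomp1} with a Keller--Osserman bound on the nonlinear term.

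The main obstacle is precisely this local boundary-comparison step in the last two subcases: showing that when the closed sets $E\cup F$ and $F$ coincide in a boundary neighborhood of a point $\xi\in F\setminus E$, the associated extremal solutions $[u]_{E\cup F}$ and $[u]_F$ share the same $W$-weighted asymptotics at $\xi$. This is a nonlinear localization statement that cannot be read off the linear boundary Harnack inequality alone; one must blend it with a Keller--Osserman barrier to control the nonlinear term on a boundary layer and thus propagate the local identification of the defining sets to an asymptotic identification of the solutions.
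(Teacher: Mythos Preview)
Your arguments for (i) and (ii) are correct and in substance coincide with the paper's: both rest on the identification $[v]_F=v\wedge U_F$, the associativity of $\wedge$, and Proposition~\ref{sygklish1}. The paper carries out (i) slightly more by hand (bounding $[[u]_A]_B$ above by $\min(u,U_A,U_B)$ and below via $[u]_{A\cap B}=[[u]_{A\cap B}]_B$), and (ii) by invoking $U_{F_n}\to U_F$, but there is no real difference.

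Part (iii) is where your proof has a genuine gap---one you yourself flag. The boundary comparison you propose would require, at every $\xi\in\prt\Gw$, that $\limsup_{x\to\xi}(z(x)-[u]_F(x))/W(x)\leq 0$ with $z=[u]_{E\cup F}-[u]_E$. But at a singular point $\xi\in E\cap F$ both $[u]_{E\cup F}$ and $[u]_E$ may blow up (so ``cancel in the $W$-limit'' is not justified by anything available), and at $\xi\in F\setminus E$ the assertion that $[u]_{E\cup F}$ and $[u]_F$ share the same $W$-asymptotics is a \emph{global} nonlinear statement: these are suprema over different admissible classes, and local coincidence of the defining sets near $\xi$ does not, by itself, force asymptotic agreement. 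Your closing paragraph concedes precisely this; invoking ``boundary Harnack plus Keller--Osserman'' is a slogan, not an argument.

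The paper avoids this obstacle entirely by a different mechanism: it works on the \emph{interior} approximating domains $\Gw'_\xb$ rather than at $\prt\Gw$. Given open $O\supset A\cap B$ and $O'\supset\overline{A\setminus B}$, one solves \eqref{T1} in $\Gw'_\xb$ with boundary data $\chi_{\Gs_\xb(\overline O)}v$, $\chi_{\Gs_\xb(\overline{O'})}v$, $\chi_{\Gs_\xb(O^c\cap O'^c)}v$ where $v=[u]_A$, passes to a limit $\xb_j\to 0$, and uses that the limits are dominated by $[v]_{\overline O}$, $[v]_{\overline{O'}}$, $[v]_{O^c\cap O'^c}$ respectively. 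By part (i), $[v]_{O^c\cap O'^c}=[u]_{A\cap O^c\cap O'^c}=0$, so $v\leq[v]_{\overline O}+[v]_{\overline{O'}}$; shrinking $O,O'$ and applying part (ii) gives \eqref{15}. The point is that on $\Gs_\xb$ the data are bounded and the splitting via characteristic functions is trivially valid; no asymptotic identification at singular boundary points is ever needed.
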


\begin{proof}
(i) It follows directly from definition that,
$$[[u]_A]_B\leq\inf(u,U_A,U_B).$$
The largest solution dominated by $u$ and vanishing on $A^c\cup B^c$ is $[u]_{A\cap B}.$ Thus
$$[[u]_A]_B\leq[u]_{A\cap B}.$$
On the other hand $$[u]_{A\cap B}=[[u]_{A\cap B}]_B\leq[[u]_A]_B,$$
this proves (\ref{14}).\\
(ii) If $F_n\downarrow F,$ it follows by Proposition \ref{sygklish1}-(ii) that $U_{F_n}\rightarrow U_F,$ thus
$$\displaystyle [u]_F\leq\lim_{n\to\infty}[u]_{F_n}=\lim_{n\to\infty} u\wedge U_{F_n}\leq\lim_{n\to\infty}\,\inf(u,U_{F_n})\leq\inf(u,U_F).$$
Since $[u]_F$ is the largest solution dominated by $\inf(u,U_F)$, $[u]_{F_n}$ is the largest solution dominated by $\inf(u,U_{F_n})$ and $U_{F_n}\downarrow U_{F}$ by Proposition \ref{sygklish1}, the function $v=\lim_{n\to\infty}[u]_{F_n}$ is a solution of (\ref{T1}) dominated by $\inf(u,U_F)$, thus $v\leq [u]_F$ and the proof of (ii) is complete.\\
(iii) Without loss of generality we assume that $A\cap B\neq\emptyset.$ Let $O,O'\subset\prt\xO$ be a relatively open set such that $A\cap B\subset O$ and $\overline{A\cap B^c}\subset O'$ Set $v=[u]_A$
and let $v_\xb^1$ be the solution of
$$
\BA {lll}
\CL_{\xk }w +|w|^{q-1}w=0\qquad&\mathrm{in}\; \xO'_\xb\;\\
\phantom{\CL_{\xk }w +|w|^{q-1}}
w=\chi_{\xS_\xb(\overline{O})}v\qquad&\mathrm{on}\;\xS_\xb.
\EA
$$
Also we denote by $v_\xb^2$ and $v_\xb^3$ the solutions of the above problem with respective boundary data  $\chi_{\xS(\overline{O'})}v$ and  $\chi_{\xS(O^c\cap O'^c)}v$.
Then $v_\xb^i\leq v\lfloor_{\Gw'_\gb}\leq v_\xb^1+v_\xb^2+v_\xb^3$, $i=1,2,3$. Let now $\{\xb_j\}$ be a decreasing sequence converging to $0$ and such that
$$v_{\xb_j}^i\rightarrow v^i\leq v\leq v^1+v^2+v^3,\;i=1,2,3\;\text{locally uniformly in $\xO$}.$$
By definition of $v^i$ and Proposition \ref{barr}, we have that $v^1\leq [v]_{\overline{O}}$, $v^2\leq [v]_{\overline{O'}}$ and
$v^3\leq [v]_{O^c\cap O'^c}$.
But by (i) we have
$$[v]_{O^c\cap O'^c}=[[u]_A]_{O^c\cap O'^c}=[u]_{A\cap O^c\cap O'^c}=0.
$$
Thus
$$v\leq [v]_{\overline{O}}+ [v]_{\overline{O'}}$$
We can consider decreasing sequences $\{O_n\}$ and $\{O'_n\}$ such that $\cap\overline{O_n}=A\cap B$ and
$\cap\overline{O'_n}=\overline{A\cap B^c}$. By (ii) we obtain
$$v\leq [[u]_A]_{A\cap B}+[[u]_A]_{\overline{A\cap B^c}}\leq  [u]_{A\cap B}+[u]_{\overline{A\cap B^c}}
$$
which is (iii).
\end{proof}

\noindent{\bf Remark}.  Since any $u\in\CU_+(\Gw)$ is dominated by $u_{\prt\Gw}$, it follows from (iii)
 that for any set $A\subset\prt\Gw$, there holds
 \begin{equation}\label{T7'}
 u=[u]_{\prt\Gw}\leq [u]_{\overline A}+[u]_{\overline{\prt\Gw\setminus \overline A}}\leq [u]_{\overline A}+[u]_{\overline{\prt\Gw\setminus A}}.
 \end{equation}

\begin{prop}
Let $u$ be a positive solution of \eqref{T1}. If $u\in L^q_{\ei}(\xO)$
 it possesses a boundary trace $\xm \in \mathfrak{M}(\partial\xO),$ i.e., $u$ is the solution of
the boundary value problem \eqref{N1} with this measure $\xm.$\label{pro1}
\end{prop}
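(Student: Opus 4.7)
The plan is to reduce the problem to the Martin representation Theorem~\ref{Lemm11} by subtracting a Green potential. Since $u\in L^q_{\ei}(\xO)$ we have $u^q\in L^1_{\ei}(\xO)$, so the function
$$v(x):=\BBG_{\CL_{\xk }}[u^q](x)=\myint{\xO}{}G_{\CL_{\xk }}(x,y)u^q(y)\,dy$$
is well-defined, nonnegative, belongs to $L^1_{\ei}(\xO)$ by \eqref{poi1}, and satisfies $\CL_{\xk }v=u^q$ in $\xO$ (in both the classical sense inside $\xO$, since $u,v\in C^2(\xO)$ by interior regularity, and weakly in the sense of Proposition~\ref{2.6} with zero boundary data).

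Set $w:=u+v$. Since $u$ solves \eqref{T1} classically in $\xO$, we have $\CL_{\xk }w=\CL_{\xk }u+u^q=0$ in $\xO$, and $w>0$. Thus $w$ is a positive $\CL_{\xk }$-harmonic function in $\xO$, and Theorem~\ref{Lemm11} provides a unique positive Radon measure $\xm\in\mathfrak M^+(\prt\xO)$ such that
$$w(x)=\myint{\prt\xO}{}K_{\CL_{\xk }}(x,\xi)\,d\xm(\xi)=\BBK_{\CL_{\xk }}[\xm](x)\qquad\forall x\in\xO.$$
In particular $w\in L^1_{\ei}(\xO)$, hence $u=w-v\in L^1_{\ei}(\xO)$ and
\begin{equation}\label{pro1-rep}
u=\BBK_{\CL_{\xk }}[\xm]-\BBG_{\CL_{\xk }}[u^q].
\end{equation}

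Finally, Proposition~\ref{equiv-def} (with $g(r)=|r|^{q-1}r$, $\gn=0$) applied to \eqref{pro1-rep} ensures that $u$ coincides with the unique weak solution $u_{0,\xm}$ of problem \eqref{N1} with boundary data $\xm$; equivalently, for every $\eta\in{\bf X}_\xk(\xO)$,
$$\myint{\xO}{}\bigl(u\,\CL_{\xk }\eta+u^q\eta\bigr)\,dx=\myint{\xO}{}\BBK_{\CL_{\xk }}[\xm]\,\CL_{\xk }\eta\,dx.$$
The only delicate point is the very first reduction, namely the verification that $w=u+v$ is genuinely a positive $\CL_{\xk }$-harmonic function in the sense required by Theorem~\ref{Lemm11}; this follows from the fact that $\CL_{\xk }v=u^q$ holds classically in $\xO$ and from the interior regularity of $u$, so no boundary information on $u$ is needed at this step (the boundary trace is then \emph{produced} through the Martin representation of $w$).
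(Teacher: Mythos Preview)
Your proof is correct and follows essentially the same approach as the paper: set $v=\BBG_{\CL_{\xk }}[u^q]$, observe that $u+v$ is positive $\CL_{\xk }$-harmonic, invoke the Martin representation (Theorem~\ref{Lemm11}) to obtain $\xm$, and then apply Proposition~\ref{equiv-def} to conclude. The paper's proof is simply a terser version of exactly this argument.
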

\begin{proof}
If $v := \BBG_{\CL_{\xk }}[u^q]$ then $v\in L^1_{\ei}(\xO)$ and $u + v$ is a positive $\CL_{\xk }$-harmonic function.
Hence $u + v \in L^1_{\ei}(\xO)$ and there exists a non-negative measure $\xm \in \mathfrak{M}(\partial\xO)$ such
that $u + v = \po[\xm].$ By Proposition \ref{equiv-def} this implies the result.
\end{proof}
\begin{prop} \label{pro2}Let $u$ be a positive solution of \eqref{T1} and $\xm \in \mathfrak M(\partial\xO).$ If for an exhaustion $\{\xO_n\}$ of $\xO,$ we have
$$
\lim_{n\rightarrow\infty}\int_{\partial\xO_n}Z(x)u d\gw^{x_0}_{\Gw_n}=\int_{\partial \xO}Z(x)d\xm,\quad\forall Z \in C(\overline{\xO}),
$$
where  $\gw^{x_0}_{\Gw_n}$ is the $\CL_{\xk }$-harmonic measure of $\xO_n$ relative to a point $x_0\in \xO_1$, then
$u$ and $|u|^p$ belong to  $L^1_{\ei}(\xO)$. Furthermore $u$ possesses the boundary trace $\xm \in \mathfrak{M}(\partial\xO),$ i.e., $u$ is the solution of
the boundary value problem \eqref{N1} with this measure $\xm.$
\end{prop}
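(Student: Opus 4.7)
\textbf{Proof plan for Proposition \ref{pro2}.} The strategy is to identify the ``narrow boundary trace'' $\mu$ with the trace $\nu$ obtained from Proposition \ref{pro1}, by comparing the narrow limits of $u \cdot d\gw^{x_0}_{\Gw_n}$. The steps go as follows.

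\textbf{Step 1: Green representation on $\Gw_n$ and integrability of $u^q\gf_\gk$.} Since $u\in C^2(\Gw)$ and $\CL_\gk u=-u^q$ in each $\Gw_n$, standard Green's representation yields
\begin{equation*}
u(x)=-\int_{\Gw_n}G_{\CL_\gk}^{\Gw_n}(x,y)u^q(y)\,dy+\int_{\prt\Gw_n}u(y)\,d\gw^{x}_{\Gw_n}(y)\qquad\forall x\in\Gw_n.
\end{equation*}
Take $x=x_0$ and let $n\to\infty$. By monotone convergence ($G^{\Gw_n}_{\CL_\gk}\uparrow G_{\CL_\gk}$) the first term on the right increases to $\BBG_{\CL_\gk}[u^q](x_0)$, and by the hypothesis applied to $Z\equiv 1$ the second term tends to $\gm(\prt\Gw)$. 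Therefore
\begin{equation*}
\BBG_{\CL_\gk}[u^q](x_0)=\gm(\prt\Gw)-u(x_0)<\infty.
\end{equation*}
The two-sided Green estimate (\ref{alpha-green}) shows that $G_{\CL_\gk}(x_0,y)\geq c_{x_0}\gf_\gk(y)$ uniformly in $y\in\Gw$, hence $u^q\in L^1_{\gf_\gk}(\Gw)$.

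\textbf{Step 2: Existence of a representation measure $\nu$.} Since $u^q\in L^1_{\gf_\gk}(\Gw)$, Proposition \ref{pro1} (or a direct application of Lemma \ref{mode} to $f=-u^q$ combined with Theorem \ref{Lemm11}) produces $\nu\in\mathfrak{M}^+(\prt\Gw)$ such that
\begin{equation*}
u=\BBK_{\CL_\gk}[\nu]-\BBG_{\CL_\gk}[u^q].
\end{equation*}
The estimates (\ref{poi1}) and (\ref{poi2}) give $u\in L^1_{\gf_\gk}(\Gw)$ at once, and by Proposition \ref{equiv-def} this representation is equivalent to saying that $u=u_\nu$ solves problem (\ref{N1}) with boundary datum $\nu$.

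\textbf{Step 3: Narrow convergence of $\BBG_{\CL_\gk}[u^q]\cdot d\gw^{x_0}_{\Gw_n}$ to zero.} Set $v:=\BBG_{\CL_\gk}[u^q]\geq 0$, which is $C^2$ in $\Gw$ and satisfies $\CL_\gk v=u^q$ there. Applying the Green representation of Step 1 to $v$ in $\Gw_n$ gives
\begin{equation*}
v(x_0)=\BBG_{\CL_\gk}^{\Gw_n}[u^q](x_0)+\int_{\prt\Gw_n}v\,d\gw^{x_0}_{\Gw_n}.
\end{equation*}
Since the first summand increases to $v(x_0)$ by monotone convergence, we obtain $\int_{\prt\Gw_n}v\,d\gw^{x_0}_{\Gw_n}\to 0$. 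Because $v\geq 0$, for every $Z\in C(\overline{\Gw})$
\begin{equation*}
\left|\int_{\prt\Gw_n}Z\,v\,d\gw^{x_0}_{\Gw_n}\right|\leq\|Z\|_{L^\infty(\overline\Gw)}\int_{\prt\Gw_n}v\,d\gw^{x_0}_{\Gw_n}\xrightarrow[n\to\infty]{}0,
\end{equation*}
so $v\cdot d\gw^{x_0}_{\Gw_n}\to 0$ narrowly on $\overline{\Gw}$.

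\textbf{Step 4: Identification $\nu=\gm$ and conclusion.} Proposition \ref{Lemm12} applied to $\nu$ yields $\BBK_{\CL_\gk}[\nu]\cdot d\gw^{x_0}_{\Gw_n}\to\nu$ narrowly. Combining this with Step 3 and the decomposition of Step 2,
\begin{equation*}
u\cdot d\gw^{x_0}_{\Gw_n}=\BBK_{\CL_\gk}[\nu]\cdot d\gw^{x_0}_{\Gw_n}-v\cdot d\gw^{x_0}_{\Gw_n}\xrightarrow{\text{narrowly}}\nu.
\end{equation*}
By hypothesis the same sequence converges narrowly to $\gm$, so $\nu=\gm$. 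Hence $u=u_\gm$ is the weak solution of (\ref{N1}), finishing the proof.

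The only delicate point is Step 3: narrow convergence requires testing against all $Z\in C(\overline{\Gw})$, not merely $Z\equiv 1$. The positivity of the Green potential $v=\BBG_{\CL_\gk}[u^q]$ is what reduces this to the trivial case, turning the problem into convergence of total masses, which is immediate from the monotone convergence $G^{\Gw_n}_{\CL_\gk}\uparrow G_{\CL_\gk}$.
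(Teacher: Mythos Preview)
Your proof is correct and Steps~1--2 coincide with the paper's argument: Green's representation in $\Gw_n$ evaluated at $x_0$, monotone convergence $G^{\Gw_n}_{\CL_\gk}\uparrow G_{\CL_\gk}$, and the hypothesis with $Z\equiv 1$ give $\BBG_{\CL_\gk}[u^q](x_0)<\infty$, whence $u^q\in L^1_{\gf_\gk}(\Gw)$ by the Green estimate, and Proposition~\ref{pro1} then applies.

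Your Steps~3--4 go beyond the paper. The paper stops after invoking Proposition~\ref{pro1}, which only yields \emph{some} trace measure, without explicitly identifying it with the given $\gm$. You close this gap by showing that the Green potential $v=\BBG_{\CL_\gk}[u^q]$ has vanishing narrow trace (again via monotone convergence of the Green kernels, exploiting $v\geq 0$ to reduce to $Z\equiv 1$), and then using Proposition~\ref{Lemm12} to match the narrow limit of $\BBK_{\CL_\gk}[\gn]\cdot d\gw^{x_0}_{\Gw_n}$ with $\gn$. This makes the identification $\gn=\gm$ rigorous where the paper leaves it implicit.
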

\begin{proof}
Let $\gre^n$ be the green function of $\CL_{\xk }$ in $\xO_n,$ then
$$\gre^n(x,y)\leq \gre^{n+1}(x,y),\quad\forall x,y\in \xO_n$$
 and
 $$\gre^n\uparrow\gre.$$
Since
$$\int_{\partial\xO_n}u d\gw^{x_0}_{\Gw_n}=u(x_0)+\int_{\xO_n}\gre^n(x,x_0)|u(x)|^qdx,$$
we derive, as $n\to\infty$,
$$\xm(\partial\xO)=u(x_0)+\int_{\xO_n}\gre(x,x_0)|u(x)|^qdx.$$
By Proposition \ref{green} this implies $|u|^q\in L^1_{\ei}(\xO),$ and the result follows by Proposition \ref{pro1}.
\end{proof}
\begin{prop}
If $F\subset\partial\xO$ is a closed set and $u$ a positive solution of (\ref{T1}) with boundary trace $\xm\in\mathfrak{M}(\partial\xO)$, then $[u]_F$ has boundary trace $\xm\chi_F.$\label{prop}
\end{prop}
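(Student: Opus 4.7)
Write $\nu$ for the boundary trace of $[u]_F$ (to be constructed) and $\mu_F := \mu\chi_F$. The strategy has three stages: build $\nu$, locate its support inside $F$, and squeeze it between two copies of $\mu_F$.

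\emph{Stage 1: $[u]_F$ admits a boundary trace.} By definition $0\le [u]_F\le u$, and since $u$ has boundary trace $\mu$ Proposition \ref{pro1} (applied via Proposition \ref{equiv-def}) forces $u\in L^1_{\ei}(\Gw)$ and $u^q\in L^1_{\ei}(\Gw)$. Consequently $[u]_F$ and $([u]_F)^q$ lie in $L^1_{\ei}(\Gw)$. Applying Proposition \ref{pro1} to $[u]_F\in\CU_+(\Gw)$ gives a Radon measure $\gn\in\mathfrak M_+(\prt\Gw)$ such that
\be\nonumber
[u]_F=-\BBG_{\CL_\gk}[([u]_F)^q]+\BBK_{\CL_\gk}[\gn].
\ee

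\emph{Stage 2: $\mathrm{supp}(\gn)\subset F$.} By the very definition of $[u]_F$ and the argument used in Proposition \ref{T8'} (together with the local comparison/barrier results of Propositions \ref{maincomp} and \ref{maincomp1}), $[u]_F(x)/W(x)\to 0$ locally uniformly as $x\to\xi$ for every $\xi\in\prt\Gw\setminus F$. Fix $\xi\in\prt\Gw\setminus F$ and a ball $B_r(\xi)$ with $\overline B_r(\xi)\cap F=\emptyset$. Using the dynamic characterization of the boundary trace (Propositions \ref{22222} and \ref{Lemm12}) applied to $[u]_F$ along a smooth exhaustion $\{\Gw_n\}$, together with the bound $[u]_F\le c\,W$ on the part of $\prt\Gw_n$ close to $\prt\Gw\cap B_r(\xi)$ (and Harnack on the remainder), we obtain $\gn(\prt\Gw\cap B_{r/2}(\xi))=0$. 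Covering $\prt\Gw\setminus F$ by such half-balls gives $\gn(\prt\Gw\setminus F)=0$, hence $\gn=\gn\chi_F$.

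\emph{Stage 3: $\gn=\gm_F$.} Since $[u]_F\le u$, the difference $u-[u]_F$ is nonnegative, and testing both boundary trace identities against any $Z\in C(\overline\Gw)$ along the exhaustion yields $\gn\le \gm$ as Radon measures on $\prt\Gw$; combined with $\mathrm{supp}(\gn)\subset F$ this gives $\gn\le \gm_F$. For the reverse inequality, note that $\gm_F\le \gm$ and $\gm$ is a good measure, so $\gm_F$ is good as well (any positive Radon measure dominated by a good measure is good; this follows from Theorem \ref{gen} in the subcritical case, and in general by sub/super-solution sandwich with $0$ below and $u$ above). Let $v:=u_{\gm_F}$ be the corresponding solution. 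From $\gm_F\le\gm$ and the monotonicity in Proposition \ref{equiv-def}, $v\le u$. Moreover, by the pointwise estimate (\ref{poissonest}) on $K_{\CL_\gk}$ and the support property of $\gm_F$, $\BBK_{\CL_\gk}[\gm_F](x)/W(x)\to 0$ as $x\to\xi\in\prt\Gw\setminus F$; since $0\le v\le \BBK_{\CL_\gk}[\gm_F]$ the same holds for $v$. Hence $v$ competes in the supremum defining $[u]_F$ and so $v\le [u]_F$. Taking boundary traces on both sides gives $\gm_F\le \gn$, completing the proof.

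\emph{Main obstacle.} The delicate point is Stage 2: turning the pointwise vanishing $[u]_F/W\to 0$ on $\prt\Gw\setminus F$ into vanishing of the measure $\gn$ there. One must compare the decomposition $[u]_F=\BBK_{\CL_\gk}[\gn]-\BBG_{\CL_\gk}[([u]_F)^q]$ with the sharp two-sided Martin kernel estimates (\ref{poissonest}) and use the boundary Harnack principle (Propositions \ref{maincomp} and \ref{maincomp1}) to exclude a nontrivial piece of $\gn$ near any $\xi\notin F$. Once this localization is secured, Stages 1 and 3 are essentially routine consequences of monotonicity in the measure data and the characterization in Proposition \ref{equiv-def}.
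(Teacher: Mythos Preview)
Your proof is correct and follows essentially the same strategy as the paper: first $[u]_F\in L^q_{\phi_\kappa}(\Omega)$ gives a trace $\nu$, then two inequalities pin $\nu=\mu\chi_F$ via the dynamic characterization of the trace and comparison with $u_{\mu\chi_F}$. The only organizational difference is that you isolate $\mathrm{supp}(\nu)\subset F$ as a separate step and then argue $v=u_{\mu\chi_F}\le[u]_F$ directly from the defining supremum, whereas the paper bounds $\nu\le\mu\chi_{\overline A}$ for open $A\supset F$ and routes the reverse inequality through the decomposition $v\le[v]_{\overline O}+[v]_{\overline{O^c}}$ together with Proposition~\ref{17}(ii); both routes rest on the same ingredients (Propositions~\ref{22222}, \ref{Lemm12}, and the kernel estimate~\eqref{poissonest}).
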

\begin{proof}
The function $[u]_F$ belongs to $\CU_+(\Gw)$ and is dominated by $u$ which satisfies (\ref{T1}), thus $[u]_F\in L^q_{\Gf_\gk}(\Gw)$ and $[u]_F$ admits a boundary trace $\xm_F\leq\xm$ by Proposition \ref{pro1}. Let $v$ be the solution of (\ref{N1}) with boundary data $\xm\chi_F$.  Let $O\subset\partial\xO$ relatively open such that $F\subset O.$ By \ref{T7'} we have
$$v\leq [v]_{\overline{O}}+[v]_{\overline{O^c}}.$$
Let  $A$ be an open set such that $F\subset A\subset\overline{A}\subset O,$ and for exhaustion we take $\Gw_n=\Gw'_{\frac{1}{n}}$ which is smooth for $n$ large enough, and $\prt\Gw_n=\xS_{\frac{1}{n}}$. Then
$$
\int_{\partial{\xO_n}}[v]_{\overline{O^c}}d\gw^{x_0}_{\Gw_n}=\int_{\xS_{\frac{1}{n}}(A)}[v]_{\overline{O^c}}d\gw^{x_0}_{\Gw_n}
+\int_{\partial{\xO_n}\setminus\xS_{\frac{1}{n}}(A)}[v]_{\overline{O^c}}d\gw^{x_0}_{\Gw_n}$$
But
$$\int_{\xS_{\frac{1}{n}}(A)}[v]_{\overline{O^c}}d\gw^{x_0}_{\Gw_n}\leq \int_{\xS_{\frac{1}{n}}(A)}vd\gw^{x_0}_{\Gw_n}\rightarrow 0$$
and
$$\int_{\partial{\xO_n}\setminus\xS_\frac{1}{n}(A)}[v]_{\overline{O^c}}d\gw^{x_0}_{\Gw_n}\leq
\int_{\partial{\xO_n}\setminus\xS_\frac{1}{n}(A)}U_{\overline{O^c}}d\gw^{x_0}_{\Gw_n}\rightarrow0,$$
as $n\to\infty$, thus $[v]_{\overline{O^c}}=0$ by Proposition \ref{pro2} and therefore $v\leq [v]_{\overline{O}}\leq [u]_{\overline{O}}$.
 Since $O$ be an arbitrary open set, take a sequence of open set $\{O_n\}$ such that $F\subset O_n\subset \overline{O}_{n}\subset O_{n-1}$ and $\cap O_n=F.$ Using Proposition \ref{17} we derive
$$v\leq [u]_F,$$ and thus $\xm\chi_F\leq \xm_F.$
Conversely, let $Z\in C(\overline{\xO})$, $Z\geq 0$,
\begin{align}\nonumber
\int_{\partial{\xO_n}}Z[u]_{F}d\gw^{x_0}_{\Gw_n}&=\int_{\partial{\xO_n}\cap\xS_\frac{1}{n}(A)}Z[u]_{F}d\gw^{x_0}_{\Gw_n}
+\int_{\partial{\xO_n}\setminus\xS_\frac{1}{n}(A)}Z [u]_{F}d\gw^{x_0}_{\Gw_n}\\ \nonumber
&\leq\int_{\partial{\xO_n}\cap\xS_\frac{1}{n}(A)}Z ud\gw^{x_0}_{\Gw_n}
+\int_{\partial{\xO_n}\setminus\xS_\frac{1}{n}(A)}Z U_{F}d\gw^{x_0}_{\Gw_n}
\\ \nonumber
&\leq I_n+II_n.
\end{align}
Because of (\ref{T8'}), $II_n\to 0$ as $n\to\infty$, thus
$$\int_{\partial{\xO}}Zd\xm_F\leq \int_{\partial{\xO}}Z\chi_Fd\xm\Longrightarrow \xm_F\leq\xm\chi_O,
$$
and the result follow by regularity since $O$ is arbitrary.
\end{proof}

The next result shows that the boundary trace has a local character.
\begin{prop}\label{pro3}
Let $u\in\mathcal{U}_+(\xO)$ and $\xi\in\partial\xO.$ We assume that there exists $\xr>0$ such that
$$\int_{B_\xr(\xi)\cap\xO}(u(x))^q\ei(x)dx<\infty.$$

\noindent (i) Then
$$[u]^q_F\in L^1_{\ei}(\xO)\quad\forall F\subset\prt\xO\cap B_\xr(\xi),\, F\text{ closed}.$$
Thus $[u]_F$ possesses a boundary trace $\xm_F \in \mathfrak{M}(\partial\xO),$ and $\mathrm{supp}\,(\xm_F)\subset F.$\smallskip

\noindent (ii) There exists a nonnegative Radon measure $\xm_\xr$ on $B_\xr(\xi)$ such that for any closed set $F\subset B_\xr(\xi)\cap\prt\Gw$
$$\xm_F=\xm_\xr\chi_F,$$
and for any  exhaustion $\{\xO_n\}$ of $\xO$ and any $Z\in C(\overline{\xO})$ such that $\text{supp}(Z)\cap\partial\xO\subset \partial\xO \cap B_\xr(\xi)$
\be
\lim_{n\rightarrow\infty}\int_{\partial\xO_n}u(x)Z(x)d\gw^{x_0}_{\Gw_n}= \int_{\partial\xO}u(x)Z(x)d\xm_\xr.\label{5.26}
\ee

\end{prop}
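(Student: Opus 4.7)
The approach for part (i) will be to verify the integrability of $[u]_F^q\phi_\gk$ directly and then invoke the existing trace theory. I would fix $\rho'<\rho$ such that $F\subset\overline{B_{\rho'}(\xi)}\cap\partial\xO$ and split $\int_\xO[u]_F^q\phi_\gk\,dx$ into the contributions from $\xO\cap B_\rho(\xi)$ and its complement. On the first set, the inequality $[u]_F\leq u$ yields integrability from the hypothesis. On the second, $[u]_F$ vanishes on the piece of $\partial\xO$ outside $F$, and by Proposition \ref{prop19} is controlled by $c(d(x))^{\alpha_+/2}(\dist(x,F))^{-2/(q-1)-\alpha_+/2}$, which is bounded (and hence integrable against $\phi_\gk\,dx$) on $\xO\setminus B_{\rho}(\xi)$ since there $\dist(x,F)$ stays bounded below. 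Proposition \ref{pro1} then provides the boundary trace $\mu_F\in\mathfrak M_+(\partial\xO)$. For the support statement, I would apply Proposition \ref{prop} to $[u]_F$ taken as the parent solution with the closed set $F$: since $[[u]_F]_F=[u]_{F\cap F}=[u]_F$ by Proposition \ref{17}(i), the resulting trace equality $\mu_F=\mu_F\chi_F$ forces $\supp(\mu_F)\subset F$.

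For part (ii), the central point is a compatibility relation for nested closed sets. If $F_1\subset F_2\subset B_\rho(\xi)\cap\partial\xO$ are closed, Proposition \ref{17}(i) yields $[[u]_{F_2}]_{F_1}=[u]_{F_1\cap F_2}=[u]_{F_1}$, and Proposition \ref{prop} applied to $[u]_{F_2}$ (which has trace $\mu_{F_2}$ by part (i)) with the set $F_1$ gives that $[u]_{F_1}$ has trace $\mu_{F_2}\chi_{F_1}$. By uniqueness of the trace, $\mu_{F_1}=\mu_{F_2}\chi_{F_1}$. I would then select an increasing sequence $\rho_n\uparrow\rho$, set $F_n=\overline{B_{\rho_n}(\xi)}\cap\partial\xO$, and define $\mu_\rho$ on $B_\rho(\xi)\cap\partial\xO$ by $\mu_\rho\lfloor_{F_n}=\mu_{F_n}$; the compatibility makes this consistent, and any closed $F\subset B_\rho(\xi)\cap\partial\xO$ lies in some $F_n$, giving $\mu_F=\mu_\rho\chi_F$.

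For the limit \eqref{5.26}, given $Z$ with $\supp(Z)\cap\partial\xO\subset B_{\rho'}(\xi)$ for some $\rho'<\rho$, I would pick $F=\overline{B_{\rho'}(\xi)}\cap\partial\xO$ and split
\[
\int_{\partial\xO_n}uZ\,d\omega^{x_0}_{\xO_n}=\int_{\partial\xO_n}[u]_FZ\,d\omega^{x_0}_{\xO_n}+\int_{\partial\xO_n}(u-[u]_F)Z\,d\omega^{x_0}_{\xO_n}.
\]
For the first term, the representation $[u]_F=-\BBG_{\CL_\gk}[[u]_F^q]+\BBK_{\CL_\gk}[\mu_F]$ supplied by Proposition \ref{equiv-def}, together with Proposition \ref{Lemm12} applied to the $\CL_\gk$-harmonic part and the analogous convergence to $0$ for the Green potential part (it has zero trace), gives the limit $\int Z\,d\mu_F=\int Z\,d\mu_\rho$ because $\supp(Z)\cap\partial\xO\subset F$. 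The main obstacle is the remainder term: since $u$ itself need not have a global boundary trace, I would invoke \eqref{T7'} to obtain $0\leq u-[u]_F\leq [u]_G$ where $G=\overline{\partial\xO\setminus B_{\rho'}(\xi)}$, and use that $[u]_G/W\to 0$ uniformly on a compact neighborhood of $\supp(Z)\cap\partial\xO$ (a consequence of Proposition \ref{prop19} combined with the explicit form of $W$ in \eqref{W}, which gives decay $(d(x))^{(\alpha_+-\alpha_-)/2}$ or $1/|\log d(x)|$ respectively). Multiplying by $Z$ and applying Proposition \ref{22222} to the auxiliary integrals $\int_{\partial\xO_n}ZW\,d\omega^{x_0}_{\xO_n}$ will force this second term to zero. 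The delicate step is precisely this uniform smallness of $[u]_G/W$ near the support of $Z$, which is where Proposition \ref{prop19} is essential.
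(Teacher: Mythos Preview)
Your proof is correct and follows essentially the same route as the paper's. For part (i) both you and the paper split $\int_\Omega [u]_F^q\phi_\gk\,dx$ over $B_\rho(\xi)\cap\Omega$ and its complement, bounding $[u]_F$ by $u$ on the first piece and by a function controlled away from $F$ on the second (the paper uses $[u]_F\leq U_F\in C(\overline\Omega\setminus F)$, you invoke Proposition~\ref{prop19} directly; both work). For part (ii) the paper uses the sandwich $[u]_{F_2}\leq u\leq [u]_{F_2}+U_{G_2}$ with $F_2=\overline{B_{\rho_2}(\xi)}\cap\partial\Omega$ and $G_2=\overline{\partial\Omega\setminus\overline{B_{\rho_2}(\xi)}}$, then shows the $U_{G_2}$ contribution vanishes for test functions supported in $B_{\rho_1}(\xi)$; your splitting $u=[u]_F+(u-[u]_F)$ with $0\leq u-[u]_F\leq [u]_G$ is the same idea, and your remainder estimate via Proposition~\ref{prop19} is exactly how the paper's $U_{G_2}$ term is handled (note $[u]_G\leq U_G$). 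Your explicit compatibility argument $\mu_{F_1}=\mu_{F_2}\chi_{F_1}$ via Propositions~\ref{17}(i) and~\ref{prop} spells out what the paper compresses into ``Combined with Proposition~\ref{prop} it follows identity~\eqref{5.26}''.
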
  
\begin{proof}
(i) Let $F$ be a closed set and $0<\xr'<\xr$ be such that $$F\subset\partial\xO\cap B_{\xr'}(\xi).$$
Since $[u]_F\leq\inf(u,U_F)$ and $U_F\in C(\overline{\xO}\setminus F),$ we have
$$\int_{\xO}[u]_F^q\ei(x)dx\leq\int_{B_\xr(\xi)\cap\xO}|u|^p\ei(x)dx+\int_{\xO\setminus B_\xr(\xi)}|U_F|^p\ei(x)dx<\infty.$$
(ii) Let $0<\xr_1<\xr_2<\xr,$
then $$[u]_{\overline{B}_{\xr_2}(\xi)\cap\partial\xO}\leq u\leq[u]_{\overline{B}_{\xr_2}(\xi)\cap\partial\xO}+U_{\overline{\partial\xO\setminus\overline{B}_{\xr_2}(\xi)}}.$$
The function $[u]_{\overline{B}_{\xr_2}(\xi)\cap\partial\xO}$ which belongs $L^q_{\ei}(\xO)$ admits a boundary trace $\xn\in\mathfrak M(\prt\Gw)$ and
$$\lim_{n\rightarrow\infty}\int_{\partial\xO_n}U_{\overline{\partial\xO\setminus\overline{B}_{\xr_2}(\xi)}}Z(x)d\gw^{x_0}_{\Gw_n}= 0,$$
for any $Z\in C(\overline{\xO})$ such that $\text{supp}(Z)\cap\partial\xO\subset \partial\xO \cap B_{\xr_1}(\xi).$
Combined with Proposition \ref{prop} it follows identity (\ref{5.26}) and finally statement (ii).
\end{proof}
\begin{defin} The set $\mathcal{R}_u$ of boundary points a such that there exists $r > 0$ such
that (\ref{5.26}) holds is relatively open. Using a partition of unity there exists a positive Radon measure $\xm_u$ on $\mathcal{R}_u$ such that
\begin{equation}\label{REG1}\lim_{n\rightarrow\infty}\int_{\partial\xO_n}u(x)Z(x)d\gw^{x_0}_{\Gw_n}= \int_{\partial\xO}u(x)Zd\xm_u
\end{equation}
for any $Z\in C(\overline{\xO})$ such that $\text{supp}(Z) \cap\partial\xO \subset\mathcal{R}_u.$ The set $\mathcal{S}_u :=\partial\xO\setminus \mathcal{R}_u$ is closed. The couple $(\mathcal{S}_u, \xm_u)$ is the boundary trace of $u,$ denoted by $\text{Tr}_{\partial\xO}(u).$ The measure $\xm_u$ is the regular part of $\text{Tr}_{\partial\xO}(u),$ the set $(\mathcal{S}_u)$ is its singular part.
\end{defin}
\begin{prop}\label{Lemm16}
Let $u$ be a positive solution in $\xO$ and let $\{\xO_n\}$ be an exhaustion of $\xO.$ If $y\in\mathcal{S}_u$ then for every nonnegative $Z\in C(\overline{\xO})$ such that $Z(y)\neq0$ we have
$$\lim_{n\rightarrow\infty}\int_{\partial\xO_n}Zud\gw^{x_0}_{\Gw_n}=\infty.$$
\end{prop}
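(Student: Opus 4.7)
The strategy is a proof by contradiction: assume $y\in\CS_u$ and yet there exists a nonnegative $Z\in C(\overline\xO)$ with $Z(y)>0$ such that
$$M:=\liminf_{n\to\infty}\int_{\prt\xO_n}Zu\,d\gw^{x_0}_{\Gw_n}<\infty.$$
The goal is to verify that the regular-set condition of Definition/\eqref{5.26} holds at $y$, i.e. $y\in\CR_u$, which contradicts $y\in\CS_u$. Pass to a subsequence along which the liminf is attained. By continuity of $Z$, fix $r>0$ and $c_0>0$ with $Z\geq c_0$ on $\overline{B_{2r}(y)\cap\overline\xO}$, and choose a cutoff $\chi\in C^\infty(\overline\xO)$ with $0\leq\chi\leq 1$, $\chi\equiv 1$ on $\overline{B_r(y)}$, and $\supp(\chi)\cap\overline\xO\subset B_{2r}(y)$. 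Then $c_0\chi\leq Z$ and hence $\int_{\prt\xO_n}\chi u\,d\gw^{x_0}_{\Gw_n}\leq(M+1)/c_0$ for large $n$.

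Define the $\CL_\gk$-harmonic majorant $w_n(x):=\int_{\prt\xO_n}\chi u\,d\gw^{x}_{\Gw_n}$ on $\xO_n$. Since $u$ is $\CL_\gk$-super\-har\-monic ($\CL_\gk u=-u^q\leq 0$) and $\chi\leq 1$, the maximum principle (Propositions \ref{max1}/\ref{max}) yields $0\leq w_n\leq u$, so in particular $w_n(x_0)\leq(M+1)/c_0$. Interior Harnack and standard elliptic compactness then give, along a subsequence, $w_n\to w$ locally uniformly in $\xO$ with $w\geq 0$ $\CL_\gk$-harmonic and $w\leq u$. By Theorem \ref{Lemm11}, $w=\BBK_{\CL_\gk}[\gm]$ for a finite nonnegative Radon measure $\gm$ on $\prt\xO$. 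Moreover, because the boundary data $\chi u|_{\prt\xO_n}$ of $w_n$ is supported in $\overline{B_{2r}(y)}\cap\prt\xO_n$, the boundary Harnack inequalities (Propositions \ref{maincomp}, \ref{maincomp1}) force $w$ to vanish at every $\xi\in\prt\xO\setminus\overline{B_{2r}(y)}$ in the sense of \eqref{vanis}, so that $\supp(\gm)\subset\overline{B_{2r}(y)}\cap\prt\xO$.

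The crucial step is to identify the limit of the boundary integrals. For any $\xi\in C(\overline\xO)$, apply Proposition \ref{Lemm12} to $w=\BBK_{\CL_\gk}[\gm]$ to obtain $\int_{\prt\xO_n}\xi w\,d\gw^{x_0}_{\Gw_n}\to\int_{\prt\xO}\xi\,d\gm$. On $\prt\xO_n$ one has $w_n=\chi u$, so
$$\int_{\prt\xO_n}\xi\chi u\,d\gw^{x_0}_{\Gw_n}=\int_{\prt\xO_n}\xi w_n\,d\gw^{x_0}_{\Gw_n},$$
and one must show the error $\int_{\prt\xO_n}\xi(w_n-w)\,d\gw^{x_0}_{\Gw_n}\to 0$. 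For $\xi=1$ this is immediate from $w_n(x_0)\to w(x_0)=\int K_{\CL_\gk}(x_0,\cdot)d\gm=\gm(\prt\xO)$ (using the normalization \eqref{RD}) together with $\int w\,d\gw^{x_0}_{\Gw_n}\to\gm(\prt\xO)$. For general continuous $\xi$, one applies the same argument to the auxiliary $\CL_\gk$-harmonic extensions $H_n^{\xi}(x):=\int\xi\chi u\,d\gw^{x}_{\Gw_n}$: by local uniform convergence $H_n^\xi\to H^\xi=\BBK_{\CL_\gk}[\gm^\xi]$ for some $\gm^\xi$, and Proposition \ref{Lemm12} applied with test function $1$ identifies $\gm^\xi(\prt\xO)=\int\xi\,d\gm$ via the linearity $\xi\mapsto\gm^\xi$ and a density argument using \CL-harmonic test lifts as in Propositions \ref{trlin}, \ref{remark2}.

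Now take any $Z'\in C(\overline\xO)$ with $\supp(Z')\cap\prt\xO\subset B_r(y)\cap\prt\xO$. Since $\chi\equiv 1$ on $\overline{B_r(y)}$ and $Z'$ vanishes on $\prt\xO\setminus B_r(y)$, uniform continuity gives $Z'=Z'\chi$ on $\prt\xO$ and approximately on $\prt\xO_n$ for large $n$. Combined with the previous step, this yields
$$\int_{\prt\xO_n}Z' u\,d\gw^{x_0}_{\Gw_n}\longrightarrow\int_{\prt\xO}Z'\,d\gm,$$
so the defining condition \eqref{5.26} of $\CR_u$ holds at $y$ with local trace $\gm$; this contradicts $y\in\CS_u$. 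The main obstacle is the identification of $\gm^\xi$ with $\xi\gm$ in step three; this requires exploiting the duality given by Proposition \ref{Lemm12} (which says $\{d\gw^{x_0}_{\Gw_n}\}$ recovers traces dynamically against Martin lifts) to transfer weak convergence from the interior harmonic functions $w_n,H_n^\xi$ to their boundary data, using the uniform bound $w_n\leq u$ and the normalization $K_{\CL_\gk}(x_0,\cdot)\equiv 1$ to close the computation.
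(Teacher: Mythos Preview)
Your approach has a sign error and a genuine gap. First, $\CL_\gk u=-u^q\le 0$ makes $u$ an $\CL_\gk$-\emph{sub}harmonic function in the paper's sign convention, not superharmonic; subharmonic functions lie \emph{below} their harmonic lifts, so one gets $u\le w_n'$ for the harmonic extension with full boundary data $u$, but the claimed inequality $w_n\le u$ is unavailable. This alone is not fatal (Harnack from the bound $w_n(x_0)\le(M+1)/c_0$ still gives compactness), but the stated comparison is wrong.

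The real gap is the step you yourself flag as the ``main obstacle'': passing from subsequential convergence $H_n^\xi\to H^\xi=\BBK_{\CL_\gk}[\gm^\xi]$ to the identification $\gm^\xi=\xi\gm$. There is no a priori link between the boundary trace of a limit of harmonic extensions with data $\xi\chi u$ and the trace $\gm$ obtained from data $\chi u$; ``linearity and density'' does not close this, and Propositions \ref{trlin}, \ref{remark2} go from a \emph{known} trace to boundary integrals, not the reverse. You would also need full-sequence (not merely subsequential) convergence to verify \eqref{5.26}.

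The paper avoids all of this by not attempting to verify \eqref{5.26} directly. It localizes via $[u]_F$ with $F=\overline{B_{r'}(y)\cap\prt\Gw}$ (so $[u]_F\le\min(u,U_F)$, hence $[u]_F$ is bounded away from $F$), obtains from the hypothesis a bound on $\limsup_j\int_{\prt\Gw_{n_j}}[u]_F\,d\gw^{x_0}_{\Gw_{n_j}}$, and then uses the Green representation underlying Proposition \ref{pro2},
\[
\int_{\prt\Gw_{n_j}}[u]_F\,d\gw^{x_0}_{\Gw_{n_j}}=[u]_F(x_0)+\int_{\Gw_{n_j}}G^{\Gw_{n_j}}_{\CL_\gk}(x_0,\cdot)\,[u]_F^q\,dx,
\]
together with $G^{\Gw_{n_j}}_{\CL_\gk}\uparrow G_{\CL_\gk}$, to conclude $[u]_F^q\in L^1_{\gf_\gk}(\Gw)$ and hence $u^q\in L^1_{\gf_\gk}$ near $y$. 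Proposition \ref{pro3} then supplies the local trace and gives $y\in\CR_u$, the desired contradiction. The point you are missing is that local $L^q_{\gf_\gk}$-integrability of $u$ is precisely the criterion for membership in $\CR_u$, and it drops out of a single Green identity rather than any trace identification for auxiliary harmonic functions.
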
  
\begin{proof}
Let  $Z\in C(\overline{\xO})$, $Z\geq 0$, such that $Z(y)\neq0$  and
$$\liminf_{n\rightarrow\infty}\int_{\partial\xO_n}Zud\gw^{x_0}_{\Gw_n}<\infty.$$
There exists a subsequence $n_j$ such that
$$\lim_{j\rightarrow\infty}\int_{\partial\xO_{n_j}}Zud\gw^{x_0}_{\Gw_{n_j}}=M<\infty.$$
Let $r$ be such that $Z(x)>\frac{Z(y)}{2}$, $\forall x\in B_r(y)\cap\overline{\xO}$, then for any $r'<r$ we have that
$$\limsup_{j\rightarrow\infty}\int_{\partial\xO_{n_j}}[u]_{\overline{B_{r'}(y)\cap\partial\xO}}d\gw^{x_0}_{\Gw_n}<\infty.$$
In view of the proposition of \ref{pro2} the last fact implies that $|[u]_{\overline{B_{r'}(y)}}|^q\in L_{\ei}(\xO),$ which implies that
$$|u|^q\in L_{\ei}({B_{r''}(y)})\quad\forall r''<r'.$$
Which is clearly a contradiction, by Proposition \ref{pro3}.
\end{proof}
\begin{prop}\label{prop17}
Let $u$ be a positive solution of \eqref{T1} in $\xO$ with  boundary trace $(\mathcal{S}_u, \xm_u).$ Then
$$
\int_{\xO}(u \CL_{\xk }\gz+u^q\gz) dx=\int_\xO \mathbb{K}_{\CL_{\xk }}[\xm_u\chi_F]\CL_{\xk }\gz dx,
$$
for any $\gz\in\mathbf{X}(\xO)$ such that $\text{supp}(\gz)\cap\partial\xO\subset F.$
\end{prop}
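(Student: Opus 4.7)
The $F$ appearing in the statement is to be read (from the context of Section 5) as a closed subset of $\CR_u$, so that $\mu_u\chi_F\in\mathfrak M(\prt\Gw)$ is a bounded Radon measure. My strategy is to first establish the integral identity for the truncated solution $[u]_F$ and then to bridge $u$ and $[u]_F$ against test functions $\zeta$ whose trace is supported in $F$.

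\textbf{Step 1 (the identity for $[u]_F$).} By compactness of $F$ inside the relatively open set $\CR_u$ and Proposition \ref{pro3}, I would find a relatively open $\CO\subset\prt\Gw$ with $F\subset\overline\CO\subset\CR_u$ and a neighborhood $\CN$ of $\overline\CO$ in $\overline\Gw$ such that $u^q\phi_\gk\in L^1(\CN\cap\Gw)$. Consequently $[u]_F\in L^q_{\phi_\gk}(\Gw)$ by Proposition \ref{pro3}(i), and Proposition \ref{prop} identifies $\mathrm{Tr}_{\prt\Gw}[u]_F=\mu_u\chi_F$. Applying Proposition \ref{equiv-def} to $[u]_F$ yields
$$
[u]_F=-\BBG_{\CL_\gk}[[u]_F^q]+\BBK_{\CL_\gk}[\mu_u\chi_F],
$$
and hence, for every $\eta\in\mathbf{X}(\Gw)$,
\begin{equation}\label{planstar}
\int_\Gw\bigl([u]_F\,\CL_\gk\eta+[u]_F^q\eta\bigr)dx=\int_\Gw\BBK_{\CL_\gk}[\mu_u\chi_F]\,\CL_\gk\eta\,dx.
\end{equation}

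\textbf{Step 2 (reduction from $u$ to $[u]_F$).} It then remains to show that, for $\zeta\in\mathbf{X}(\Gw)$ with $\mathrm{supp}(\zeta)\cap\prt\Gw\subset F$,
\begin{equation}\label{plansharp}
\int_\Gw\bigl(u\,\CL_\gk\zeta+u^q\zeta\bigr)dx=\int_\Gw\bigl([u]_F\,\CL_\gk\zeta+[u]_F^q\zeta\bigr)dx;
\end{equation}
combined with \eqref{planstar}, this gives the stated identity. I would work with the smooth exhaustion $\Gw_n=\Gw'_{1/n}$ of Section 2. Applying Green's formula to $u$ (using $\CL_\gk u+u^q=0$) and to $[u]_F$ (using $\CL_\gk[u]_F+[u]_F^q=0$) on $\Gw_n$ and subtracting, \eqref{plansharp} is reduced to
$$
\lim_{n\to\infty}\int_{\Sigma_{1/n}}\bigl[\zeta\,\prt_{\bf n}(u-[u]_F)-(u-[u]_F)\prt_{\bf n}\zeta\bigr]dS=0.
$$
Since $\zeta$ vanishes on a neighborhood of $\prt\Gw\setminus F$ and the two traces coincide on $F$, the vanishing of this limit follows from the localized convergence \eqref{REG1} applied separately to $u$ and $[u]_F$. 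To make this rigorous, I would approximate $\zeta$ by factorized test functions of the form $Z\tilde G$ as in Proposition \ref{trlin}, where $\tilde G$ coincides with $G_{\CL_\gk}(x_0,\cdot)$ in $\Gw_\gd$ and $Z\in C^2(\overline\Gw)$ has support in $\Gw\cup\CO$. For such factorized test functions the boundary integral reduces to $\int_{\prt\Gw_n}Z(u-[u]_F)\,d\gw^{x_0}_{\Gw_n}$, which converges to $\int_{\prt\Gw}Z\,d(\mu_u-\mu_u\chi_F)=0$ because $Z\lfloor_{\prt\Gw}$ is supported in a set on which $\mu_u$ and $\mu_u\chi_F$ coincide.

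\textbf{Main obstacle.} The delicate point is the rigorous passage to the limit in the boundary term on $\Sigma_{1/n}$. This requires two ingredients: first, the uniform integrability of $\zeta\,\prt_{\bf n}u$ and $u\,\prt_{\bf n}\zeta$ on $\Sigma_{1/n}$, which can be extracted from the interior gradient estimate for $u$ (a rescaling consequence of Proposition \ref{prop19}) together with the defining bounds of $\mathbf{X}(\Gw)$; and second, the density of the factorized family $\{Z\tilde G\}$ inside the subclass of $\mathbf{X}(\Gw)$ cut out by $\mathrm{supp}(\zeta)\cap\prt\Gw\subset F$, in a norm strong enough to control both $\CL_\gk\zeta$ in $L^1_{\phi_\gk}$ and the induced boundary functionals. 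Once these technicalities are dispatched, the chain \eqref{planstar}--\eqref{plansharp} produces the identity of Proposition \ref{prop17}.
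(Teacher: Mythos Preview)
Your Step 1 is clean and correct: once $F\subset\CR_u$ is compact, Propositions \ref{pro3}, \ref{prop} and \ref{equiv-def} immediately give the weak formulation \eqref{planstar} for $[u]_F$ against every $\eta\in\mathbf X(\Gw)$. This is indeed a natural starting point and is implicitly used in the paper as well.

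The gap is in Step 2. You reduce \eqref{plansharp} to the vanishing of a boundary integral on $\Sigma_{1/n}$ involving \emph{both} $\zeta\,\prt_{\bf n}(u-[u]_F)$ and $(u-[u]_F)\,\prt_{\bf n}\zeta$, and then propose to approximate $\zeta$ by products $Z\tilde G$ to convert this into an $\gw^{x_0}_{\Gw_n}$-integral. Two difficulties are not resolved. First, the conversion in Proposition \ref{trlin} works because one uses $\tilde G_j$ equal to the Green function of $\Gw_j$, which \emph{vanishes} on $\prt\Gw_j$; with your fixed $\tilde G$ the test function does not vanish on $\Sigma_{1/n}$, so the term $\zeta\,\prt_{\bf n}u$ does not disappear and there is no direct reduction to $\int Z(u-[u]_F)\,d\gw^{x_0}_{\Gw_n}$. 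Second, and more seriously, the density you invoke---that products $Z\tilde G$ with $\mathrm{supp}(Z)\cap\prt\Gw\subset F$ approximate an arbitrary $\zeta\in\mathbf X(\Gw)$ satisfying the same support condition, in a topology controlling $\gf_\gk^{-1}\CL_\gk\zeta$ in $L^\infty$---is not available in the paper and is genuinely unclear. Near $\prt\Gw$ one has $\CL_\gk(Z\tilde G)\approx -2\nabla Z\cdot\nabla\gf_\gk-\gf_\gk\Gd Z$, so the range of $Z\mapsto \gf_\gk^{-1}\CL_\gk(Z\tilde G)$ is far from all of $L^\infty(\Gw)$; there is no reason to expect density.

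The paper bypasses both issues with a different mechanism. Instead of approximating the test function within $\mathbf X(\Gw)$, it approximates on the \emph{solution} side and on the \emph{test} side simultaneously on interior domains: it introduces a cutoff $\eta$ supported in a small neighborhood $O_\ge$ of $\mathrm{supp}(\zeta)\cap\prt\Gw$, defines $v_\gb$ as the solution in $\Gw'_\gb$ with boundary data $\eta u$, and replaces $\zeta$ by the solution $\zeta_\gb$ of $\CL_\gk\zeta_\gb=\CL_\gk\zeta$ in $\Gw'_\gb$ with $\zeta_\gb=0$ on $\Sigma_\gb$. Because $\zeta_\gb$ vanishes on $\Sigma_\gb$, only the Poisson-kernel term survives in the boundary integral, and the identity on $\Gw'_\gb$ links the $u$-integral directly to the $v_\gb$-integral. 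The a priori bounds $v_\gb\leq [u]_{\overline{O}_{2\ge}\cap\prt\Gw}+c\,d^{\frac{\ga_+}{2}}$ (from $U_{\prt\Gw\setminus O_{2\ge}}$ and Proposition \ref{prop19}) give enough compactness to pass to the limit $\gb\to 0$, and the sandwich $\gm_u\chi_F\leq\gm_0\leq\gm_u\chi_{\overline{O_\ge}}$ is then squeezed to $\gm_u\chi_F$ by letting $\ge\to 0$. This avoids any density statement in $\mathbf X(\Gw)$ and any need to control $\zeta\,\prt_{\bf n}u$ on $\Sigma_\gb$.
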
  
\begin{proof}

Consider the function $\xz\in\mathbf{X}(\xO)$ such that $\text{supp}(\xz)\cap\partial\xO\subset F.$ Set $K=\text{supp}(\xz)$,
$$O_\xe=\{x\in\mathbb{R}^N:\;\mathrm{dist}(x,K)<\xe\}$$
and $\xe_0>0$ small enough such that
$$\overline{O_\xe}\cap\partial\xO\subset\mathcal{R}_u,\qquad\forall \,0<\xe<\xe_0.$$
Let $\xe<\frac{\xe_0}{4}$ and $\eta$ be a cut off function such that $\eta\in C_0^\infty(O_\xe)$, $0\leq \eta\leq 1$
and $\eta\equiv1$ on $\overline{O}_{\frac{\xe}{2}}$.
For $0<\gb\leq\gb_0$, let $v_{\xb}$ be the solution of
$$\BA {ll}
\CL_{\xk }w +|w|^{q-1}w=0\qquad&\mathrm{in}\; \xO'_\xb\;\\
\phantom{\CL_{\xk }w +|w|^{q-1}}
w=\eta u\qquad&\mathrm{on}\;\xS_\xb.
\EA$$
Then there exists a sequence $\{\xb_j\}$ decreasing to $0$ such that $v_{\xb_j}\rightarrow v$ locally uniformly, and
$$v\leq [u]_{\partial\xO\cap\overline{O_\xe}}.$$ Thus $v$ has boundary trace $\xm_0$ such that
$$\xm_0\leq\xm_u\chi_{\partial\xO\cap\overline{O_\xe}}.$$
Let $v^1_\xb$ and $v^2_\xb$ be the solutions of
$$\BA {ll}
\CL_{\xk }w +|w|^{q-1}w=0\qquad&\mathrm{in}\; \xO'_\xb\;\\
\phantom{\CL_{\xk }w +|w|^{q-1}}
w=\eta[u]_{\partial\xO\cap\overline{O}_{2\xe}}\qquad&\mathrm{on}\;\xS_\xb.
\EA$$
and
$$\BA {ll}
\CL_{\xk }w +|w|^{q-1}w=0\qquad&\mathrm{in}\; \xO'_\xb\;\\
\phantom{\CL_{\xk }w +|w|^{q-1}}
w=\eta U_{\partial\xO\setminus O_{2\xe}}\qquad&\mathrm{on}\;\xS_\xb,
\EA$$
respectively.
Notice that $u\leq [u]_{\partial\xO\cap\overline{O}_{2\xe}}+U_{\partial\xO\setminus O_{2\xe}}$ we have that
$$v_\xb\leq v^1_\xb+v_\xb^2\leq[u]_{\partial\xO\cap\overline{O}_{2\xe}}+v_\xb^2.$$
Since $[u]_{\partial\xO\cap\overline{O}_{2\xe}}^q\in L^1_{\ei}(\xO).$ By \eqref{3.4.24} we have that
$$\eta (x)U_{\partial\xO\setminus O_{2\xe}}(x)\leq c_{90}d^{\frac{\xa_+}{2}}(x)\quad\forall x\in\xO.$$
where $c_{90}>0$ depends on $N,q,\gk$ and $\dist$(supp$(\eta),\prt\Gw\setminus O_\ge)$. Thus $v^2_\xb(x)\leq c_{90}d^{\frac{\xa_+}{2}}(x)$ and
\be
v_\xb\leq [u]_{\partial\xO\cap\overline{O}_{2\xe}}+c_{90}d^{\frac{\xa_+}{2}}(x),\;\;\forall x\in \xO'_\xb.
\ee
Let $w_\xb$ be the solution of
$$\BA {ll}
\CL_{\xk }w +|w|^{q-1}w=0\qquad&\mathrm{in}\; \xO'_\xb\;\\
\phantom{\CL_{\xk }w +|w|^{q-1}}
w=\chi_{\xS_\xb(\overline{\partial\xO\setminus O_{\frac{\xe}{2}}})}[u]_F\qquad&\mathrm{on}\;\xS_\xb.
\EA$$
Then  $$[u]_F\leq v_{\xb}+w_\xb\qquad\text {in } \xO'_\xb.$$
We have that $w_{\xb_j}\rightarrow0$ locally uniformly in $\xO$ which implies that
$$[u]_F\leq v.$$
Thus we have
\be
\xm_u\chi_F\leq\xm_0\leq\xm_u\chi_{\partial\xO\cap\overline{O_\xe}}.\label{measure}
\ee
Set $Z=\eta\xz_\xb$ where $\xz_\xb$ is the solution of
$$\BA {ll}
\CL_{\xk }w =\CL_{\xk } \xz\qquad&\mathrm{in}\; \xO'_\xb\\
\phantom{\CL_{\xk }}
w=0\qquad&\mathrm{on}\;\xS_\xb.
\EA$$
Since $\gz\in \mathbf{X}(\xO),$ there exists a constant $c_{91}$ such that $\xz_\xb\leq c_{91}\ei$ in $\xO'_\xb.$ Thus there exists a decreasing sequence $\{\xb_j\}$ converging to $0$ such that $\xz_{\xb_j}\rightarrow \xz$ locally uniformly.
 Now,
\begin{align}
\nonumber
\int_{\xO'_\xb}u \CL_{\xk }Zdx+\int_{\xO'_\xb} u^{q}Z dx&=-\int_{\partial \xO'_\xb} \frac{\partial Z}{\partial {\bf n}} udS\\ \nonumber
&=-\int_{\partial \xO'_\xb} \frac{\partial \xz_\xb}{\partial {\bf n}} \eta udS.\\ \nonumber
&=\int_{\xO'_\xb}v_\xb \CL_{\xk }\xz_\xb dx+\int_{\xO'_\xb}v_\xb^q \xz_\xb dx\\
&=\int_{\xO'_\xb}v_\xb \CL_{\xk }\xz dx+\int_{\xO'_\xb}v_\xb^q \xz_\xb dx,\label{ineq2}
\end{align}
We note here that in view of the proof of (\ref{3.4.24*}), we have
$$|\nabla\xz_\xb|\leq c_{92} d^{\frac{\xa_+}{2}-1},\quad\forall x\in \xO'_\xb,$$
where the constant $c_{92}>0$ does not depend on $\xb.$
Also by remark \ref{remark 1} and our assumptions we have
$$\int_{\xO\cap O_{2\xe}}[u]_{\partial\xO\cap\overline{O}_{3\xe}}d^{\frac{\xa_+}{2}-1}dx<\infty.$$
By \eqref{3.4.24}
$$
\int_{\xO\cap O_{2\xe}}U_{\partial\xO\setminus O_{3\xe}}d^{\frac{\xa_+}{2}-1}dx<\infty.
$$
The last two inequalities above implies that
$$
\int_{\xO\cap O_{2\xe}}ud^{\frac{\xa_+}{2}-1}dx\leq\int_{\xO\cap O_{2\xe}}U_{\partial\xO\setminus O_{3\xe}}d^{\frac{\xa_+}{2}-1}dx+\int_{\xO\cap O_{2\xe}}[u]_{\partial\xO\cap\overline{O}_{3\xe}}d^{\frac{\xa_+}{2}-1}dx<\infty.
$$
Combining all above we can choose a decreasing subsequence $\{\xb_j\}$ to the origin such that if we take the limit in \eqref{ineq2} to obtain
$$
\int_{\xO}u \CL_{\xk }\xz dx+\int_{\xO} u^q\xz dx=\int_{\xO}v \CL_{\xk }\xz dx+\int_{\xO}v^q \xz dx=\int_{\xO}\po[\xm_0] \CL_{\xk }\xz dx
$$
Be \eqref{measure} we have the desired result if we send $\xe$ to zero.
\end{proof}

\subsection{Subcritical case}
We recall that
$$q_{c } = \frac{N+\frac{\xa_+}{2}}{N+\frac{\xa_+}{2}-2}$$ is the critical exponent for the equation.
If $1<q<q_{c },$ we have seen in section 4 that for any $a\in\prt\Gw$  and $k\geq 0$ there exists $u_{k\gd_a}$
and $\lim_{k\to\infty}u_{k\gd_a}=u_{\infty,a}$. Furthermore, by Proposition \ref{Lemm16}, $Tr_{\prt\Gw}(u_{\infty,a})=(\{a\},0)$.

\begin{theorem}\label{singpro1}
Assume $1 < q < q_{c}$ and $a\in \mathcal{S}_u.$ Then
\be\label{T9}u(x)\geq u_{\infty,a}(x),\quad\forall x\in \xO.\ee
\end{theorem}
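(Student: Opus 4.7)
The plan is to reduce Theorem~\ref{singpro1} to the pointwise inequality $u\geq u_{k\gd_a}$ for every $k>0$, and then to pass to the limit $k\to\infty$ via Proposition~\ref{IS-3}. For each fixed $k$, we approximate $u_{k\gd_a}$ by solutions of the nonlinear equation in interior subdomains whose boundary data are drawn from the restriction of $u$ near $a$; the hypothesis $a\in\CS_u$ is used precisely to ensure that these boundary data can be chosen with prescribed mass $k$ concentrated arbitrarily close to $a$.

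More concretely, let $\{\xO_n\}$ be the smooth exhaustion $\xO_n=\xO'_{1/n}$, on each of which $u$ is a classical solution with continuous boundary datum $h_n:=u\lfloor_{\prt\xO_n}$. Fix a cut-off $\eta_\xe\in C(\overline\xO)$ with $0\leq\eta_\xe\leq 1$, $\eta_\xe\equiv 1$ on $\overline{B_{\xe/2}(a)}\cap\overline\xO$, and support in $B_\xe(a)$. Since $a\in\CS_u$, Proposition~\ref{Lemm16} gives $I_{n,\xe}:=\int_{\prt\xO_n}\eta_\xe h_n\,d\gw^{x_0}_{\xO_n}\to\infty$ as $n\to\infty$. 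For each $n$ with $I_{n,\xe}\geq k$, set $h_{n,k,\xe}:=(k/I_{n,\xe})\eta_\xe h_n$, so that $0\leq h_{n,k,\xe}\leq h_n$ pointwise and $\int h_{n,k,\xe}\,d\gw^{x_0}_{\xO_n}=k$. Let $u_{n,k,\xe}$ be the unique solution in $\xO_n$ of $\CL_\xk w+w^q=0$ with boundary value $h_{n,k,\xe}$; by Proposition~\ref{comp2}(ii), $u_{n,k,\xe}\leq u$ in $\xO_n$. Standard elliptic regularity yields, along a subsequence $n_j\to\infty$, local uniform convergence $u_{n_j,k,\xe}\to v_{k,\xe}$ on compact subsets of $\xO$, where $v_{k,\xe}$ solves the equation and $v_{k,\xe}\leq u$.

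The boundary measures $h_{n_j,k,\xe}\,d\gw^{x_0}_{\xO_{n_j}}$, regarded as Radon measures on $\overline\xO$, have uniformly bounded total mass $k$ and support in $\pi^{-1}(\overline{B_\xe(a)}\cap\prt\xO)\cap\prt\xO_{n_j}$; along a further subsequence they converge weakly-$*$ to a positive Radon measure $\gm_{k,\xe}$ on $\overline{B_\xe(a)}\cap\prt\xO$ of total mass $k$. Writing $u_{n_j,k,\xe}=\BBK_{\CL_\xk^{\xO_{n_j}}}[h_{n_j,k,\xe}]-\BBG_{\CL_\xk^{\xO_{n_j}}}[u_{n_j,k,\xe}^q]$, the convergence $\BBK_{\CL_\xk^{\xO_{n_j}}}[h_{n_j,k,\xe}]\to\BBK_{\CL_\xk}[\gm_{k,\xe}]$ follows from an adaptation of Proposition~\ref{Lemm12}, the monotone convergence $G_{\CL_\xk^{\xO_{n_j}}}\uparrow G_{\CL_\xk}$ and dominated convergence for $u_{n_j,k,\xe}^q$ (controlled uniformly in $n_j$ via the Marcinkiewicz estimate of Lemma~\ref{Marcin}, which is where the subcriticality $q<q_c$ enters crucially) identify $v_{k,\xe}$ with the unique solution $u_{\gm_{k,\xe}}$ furnished by Theorem~\ref{gen}.

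Finally, as $\xe\to 0$ along a suitable sequence, the measures $\gm_{k,\xe}$ (of fixed mass $k$, supported ever closer to $a$) converge weakly-$*$ to $k\gd_a$; by the stability clause of Theorem~\ref{gen}, $v_{k,\xe}=u_{\gm_{k,\xe}}\to u_{k\gd_a}$ in $L^1_{\gf_\xk}(\xO)$, so after extracting a further a.e.\ convergent subsequence and combining with $v_{k,\xe}\leq u$, we obtain $u_{k\gd_a}\leq u$, which by continuity holds throughout $\xO$. Passing to the limit $k\to\infty$ and using Proposition~\ref{IS-3} yields $u_{\infty,a}\leq u$, as desired. The main obstacle lies in the identification $v_{k,\xe}=u_{\gm_{k,\xe}}$: one must transfer an approximating sequence of classical Dirichlet problems on the interior domains $\xO_{n_j}$ into a measure-boundary-data problem on $\xO$, with global control of the nonlinear term $u_{n_j,k,\xe}^q$ in $L^1_{\gf_\xk}$; it is at this step that the hypothesis $q<q_c$ is used essentially, via the $L^q_{\gf_\xk}$-boundedness of potentials $\BBK_{\CL_\xk}[\gm]$ provided by Lemma~\ref{Marcin}.
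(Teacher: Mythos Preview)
Your overall strategy---build approximations $u_{n,k,\ge}\leq u$ on an exhaustion, pass to a limit $v_{k,\ge}\leq u$, identify $v_{k,\ge}$ as the solution with a boundary measure of mass $k$ concentrated near $a$, then let $\ge\to 0$ and $k\to\infty$---is exactly the paper's. The gap is in the identification step, specifically in the sentence invoking ``dominated convergence for $u_{n_j,k,\ge}^q$ (controlled uniformly in $n_j$ via the Marcinkiewicz estimate of Lemma~\ref{Marcin})''. Lemma~\ref{Marcin} gives weak-$L^{q_c}$ bounds for $\BBK_{\CL_\gk}[\gm]$ with $\gm\in\mathfrak M(\partial\Gw)$; it says nothing about the $\CL_\gk^{\Gw_n}$-harmonic lifts $P_n$ of data on $\partial\Gw_n$. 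Your data $h_{n,k,\ge}=(k/I_{n,\ge})\eta_\ge u$ has fixed $\gw^{x_0}_{\Gw_n}$-mass but no pointwise control, so there is no function in $L^q_{\gf_\gk}(\Gw)$ dominating all $P_n$ (and hence all $u_{n,k,\ge}$) simultaneously. With Fatou alone you only get $v_{k,\ge}=u_{\gn'}$ for some $\gn'\leq\gm_{k,\ge}$, i.e.\ possible mass loss; nothing in your argument forces $\|\gn'\|\to\infty$ as $k\to\infty$, so the final inequality $u\geq u_{k\gd_a}$ is not established.

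The paper closes this gap by a genuinely different preliminary step. It first proves (Step~1) the quantitative implication $a\in\CS_u\Rightarrow\limsup_{x\to a}d(x)^{N+\frac{\ga_+}{2}-2}u(x)=\infty$, via a Gronwall argument that uses $q<q_c$. This produces a sequence $\xi^n\to a$ with $r_n=d(\xi^n)$ along which $u$ blows up at the precise rate $r_n^{2-N-\frac{\ga_+}{2}}$. Combining the scale-invariant Harnack inequality (Lemma~\ref{harnack}) with the harmonic-measure lower bounds of Lemmas~\ref{vol1}--\ref{vol2}, one obtains rescaled data $h_{n,k}$ supported in the \emph{shrinking} sets $V_n=B_{r_n/2}(\xi^n)\cap\partial\Gw_n$ and satisfying the pointwise bound $h_{n,k}\leq c\,r_n^{2-N-\frac{\ga_+}{2}}$ (this is (\ref{7.8})). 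Because of the two-sided Poisson estimate (\ref{poissonest}), this bound yields $w_{n,k}\leq c\,K_{\CL_\gk}(\cdot,\eta^n)$ in $\Gw_n$ (inequality (\ref{7.14})), i.e.\ domination by a \emph{single} Poisson kernel on the full domain $\Gw$. Since $\{K_{\CL_\gk}(\cdot,\eta^n)^q\}$ is uniformly integrable in $L^1_{\gf_\gk}(\Gw)$ for $q<q_c$, uniform integrability of $\{w_{n,k}^q\}$ follows, and the limit is identified with $u_{k\gd_a}$ without mass loss (Lemma~\ref{7.3}). Your fixed-$\ge$ cutoff bypasses Step~1 entirely, but at the cost of losing precisely the pointwise Poisson-kernel domination that makes the compactness work.
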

For proof of the above uses some ideas of the proof of Theorem 7.1 in \cite{MV-CPAM} and needs several intermediate lemmas.
\begin{lemma}\label{7.3}
Assume $1 < q < q_{c}.$
Let $\{\xi^n\}$ be a sequence of
points in $\xO$ converging to $a\in\partial\xO$ and let $l\in (0, 1).$ We define the sets
\be\label{TX}\Gw_n:=\Gw'_{d(\xi^n)}=\{x\in\xO:\;d(x)>d(\xi^n)\} \quad\text{and}\quad \xS_n:=\partial \Gw_n.\ee
Let $x_0\in \Gw'_1$ and denote by $\gw_n:=\gw^{x_0}_{\Gw_n}$ the $\CL_{\xk }$-harmonic measure
in $\xO_n$ relative to $x_0.$
Put
$$V_n=B_{lr_n}(\xi^n)\cap\partial \Gw_n,\quad r_n=d(\xi_n).$$
Let $h_n \in L^\infty(\xS_{n})$ $n = 1, 2, . . . ,$ and suppose that there exist numbers $c$ and $k$
such that
\be
\text{supp}\,(h_n)\subset V_n\quad\text{and}\quad0\leq h_n\leq cr_n^{-N-\frac{\xa_+}{2}+2}\label{7.8}
\ee
and
$$\lim_{n\rightarrow\infty}\int_{\xS_{n}} h_n\xf d\gw^{x_0}_{\Gw_n}=k\xf(a),\quad\forall\xf\in C(\overline{\xO}).$$
Let $w_n$ be the solution of the problem
\begin{align}\nonumber
\CL_{\xk }w_n+|w_n|^{q-1}w_n&=0\qquad\;\;\mathrm{in}\;\;\Gw_n\\ \nonumber
w_n&=h_n\qquad\mathrm{on}\;\;\partial\xS_n.
\end{align}
Then $$w_n\rightarrow u_{k,a}\quad\text{locally uniformly in $\xO$}.$$
\end{lemma}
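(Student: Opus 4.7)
The plan is to sandwich $w_n$ from above by the linear $\CL_{\xk}$-harmonic extension $v_n$ of the boundary data $h_n$, identify the limit of $v_n$ as $k\BBK_{\CL_{\xk}}[\gd_a]$, and then pass to the limit in the nonlinear Green representation using the subcritical estimate of Lemma \ref{IS-1}. Throughout, the crucial feature is that $q<q_c$, which is exactly what makes the nonlinear Green integral integrable against the Martin kernel.

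First, let $v_n$ be the unique $\CL_{\xk}$-harmonic function in $\Gw_n$ with $v_n=h_n$ on $\Gs_n$. Proposition \ref{comp2} yields $0\le w_n\le v_n$. I would then prove that, for every compact $K\Subset\Gw\setminus\{a\}$,
\[
\sup_{x\in K}\bigl|v_n(x)-k\,K_{\CL_{\xk}}(x,a)\bigr|\longrightarrow 0.
\]
Write
\[
v_n(x)=\int_{V_n}K^{\Gw_n}_{\CL_{\xk}}(x,y)\,h_n(y)\,d\gw_n(y),
\]
where $K^{\Gw_n}_{\CL_{\xk}}(x,\cdot)=d\gw^{x}_{\Gw_n}/d\gw^{x_0}_{\Gw_n}$. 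Since $\Gw_n=\Gw'_{r_n}$ has uniformly bounded geometry as $r_n\to 0$, the boundary Harnack principle (Propositions \ref{maincomp}, \ref{maincomp1}) applies in $\Gw_n$ with constants independent of $n$; hence for $y\in V_n$ and $x\in K$, one has $K^{\Gw_n}_{\CL_{\xk}}(x,y)\to K_{\CL_{\xk}}(x,a)$ uniformly as $n\to\infty$. The weak convergence hypothesis $\int h_n\phi\,d\gw_n\to k\phi(a)$, applied to a continuous extension of $K_{\CL_{\xk}}(x,\cdot)$ to $\overline\Gw$, then delivers the pointwise limit; interior regularity of $\CL_{\xk}$-harmonic functions upgrades this to locally uniform convergence.

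Second, since $0\le w_n\le v_n$ and the right-hand side is locally bounded on $\Gw\setminus\{a\}$ (and $w_n$ is also bounded on a neighbourhood of $a$ by the Keller-Osserman estimate $w_n(x)\le c\,(\dist(x,\Gs_n))^{-2/(q-1)}$), standard interior elliptic regularity yields $C^2_{\mathrm{loc}}(\Gw)$ bounds on $w_n$. Extract a subsequence $w_{n_j}\to w$ in $C^2_{\mathrm{loc}}(\Gw)$; the limit satisfies $\CL_{\xk}w+w^q=0$ in $\Gw$. From the representation
\[
w_n(x)=v_n(x)-\int_{\Gw_n}G^{\Gw_n}_{\CL_{\xk}}(x,y)\,w_n(y)^q\,dy,
\]
I would pass to the limit in the Green integral by dominated convergence, using $w_n^q\le v_n^q\le c^q K_{\CL_{\xk}}(\cdot,a)^q$ and $G^{\Gw_n}_{\CL_{\xk}}\uparrow G_{\CL_{\xk}}$. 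The domination is integrable precisely by Lemma \ref{IS-1}, which gives $\BBG_{\CL_{\xk}}[K_{\CL_{\xk}}(\cdot,a)^q]<\infty$ for $q<q_c$. Combining with Step 1 yields
\[
w(x)=k\,\BBK_{\CL_{\xk}}[\gd_a](x)-\BBG_{\CL_{\xk}}[w^q](x),
\]
and Proposition \ref{equiv-def} then identifies $w$ with the unique weak solution $u_{k\gd_a}$. Independence of the limit from the subsequence forces the full sequence to converge locally uniformly.

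\textbf{Main obstacle.} The delicate point is Step 1: transferring the boundary Harnack estimates and the Martin kernel two-sided bound of Theorem B from the fixed domain $\Gw$ to the family $\{\Gw_n\}=\{\Gw'_{r_n}\}$ with constants \emph{uniform} in $n$. The pointwise bound $h_n\le c\,r_n^{-N-\xa_+/2+2}$ is exactly the scaling-natural upper bound that prevents mass concentration of $h_n\,d\gw_n$ away from $a$ and underwrites the uniform domination $v_n\le c\,K_{\CL_{\xk}}(\cdot,a)$ on compact subsets of $\Gw\setminus\{a\}$. A secondary issue is the joint passage to the limit in the two varying objects $G^{\Gw_n}_{\CL_{\xk}}$ and $w_n^q$, which is handled cleanly once the uniform $K_{\CL_{\xk}}(\cdot,a)^q$-domination is in place via the subcritical assumption.
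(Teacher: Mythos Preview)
Your global strategy (dominate $w_n$ by the $\CL_\gk$-harmonic extension $v_n$, show $v_n\to k\,K_{\CL_\gk}(\cdot,a)$, then pass to the limit) coincides with the paper's. However, the execution of Step~1 contains a genuine gap. You claim that for $y\in V_n$ the ratio $K^{\Gw_n}_{\CL_\gk}(x,y)=\frac{d\gw^{x}_{\Gw_n}}{d\gw^{x_0}_{\Gw_n}}(y)$ converges to $K_{\CL_\gk}(x,a)$, and you invoke Propositions~\ref{maincomp}--\ref{maincomp1} on $\Gw_n$. But those propositions concern the operator $\CL_\gk$ near $\partial\Gw$, where the potential is singular; on $\Gw_n=\Gw'_{r_n}$ the potential $\gk/d^2$ is bounded, so the Poisson kernel of $\Gw_n$ is a classical-type object with no reason to match the Martin kernel of the \emph{limiting} singular problem. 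Establishing this kernel convergence under domain exhaustion is a separate (and nontrivial) stability result which the paper never proves and does not need.

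The paper bypasses this entirely. From the hypothesis $h_n\le c\,r_n^{2-N-\frac{\ga_+}{2}}$ together with the lower bound in Theorem~\ref{poisson}, one gets for every $x\in\Gs_n$ (not just $V_n$) the pointwise inequality $h_n(x)\le c'\,K_{\CL_\gk}(x,\eta^n)$, where $\eta^n\in\partial\Gw$ is the nearest boundary point to $\xi^n$. Since $K_{\CL_\gk}(\cdot,\eta^n)$ is $\CL_\gk$-harmonic in all of $\Gw\supset\Gw_n$, the ordinary maximum principle in $\Gw_n$ gives $v_n\le c'\,K_{\CL_\gk}(\cdot,\eta^n)$, hence also $w_n\le c'\,K_{\CL_\gk}(\cdot,\eta^n)$. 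This single inequality, with $\eta^n\to a$, yields the uniform integrability of $\{v_n\}$ and $\{w_n^q\}$ in $L^1_{\phi_\gk}(\Gw)$ that you need (via the subcritical bound on $\int K_{\CL_\gk}(\cdot,\eta)^q\phi_\gk\,dx$), and after extracting a limit $v$ of $v_n$, the bound $v\le c'\,K_{\CL_\gk}(\cdot,a)$ forces the boundary trace of $v$ to be a multiple of $\gd_a$. The multiple is then read off from $v(x_0)=\lim v_n(x_0)=\lim\int_{\Gs_n}h_n\,d\gw_n=k$, using only the hypothesis with $\phi\equiv 1$. No kernel-convergence statement on the moving domains is required.

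Once Step~1 is repaired in this way, your Step~2 via the Green representation $w_n=v_n-\BBG^{\Gw_n}_{\CL_\gk}[w_n^q]$ and dominated convergence is a perfectly legitimate alternative to the paper's weak-formulation argument; both rely on the same subcritical integrability.
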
  
\begin{proof}
Let $\eta^n\in\partial\xO$ be such that $d(\xi^n)=|\xi^n-\eta^n|.$ By Corollary \ref{poisson} we have
\be\label{7.13}
\po(x, \eta^n)\geq \frac{1}{c_{43}}r_n^{-N-\frac{\ga_+}{2}+2}\geq \frac{1}{c_{43}}h_n(x),\qquad \forall x \in \xS_n,
\ee
by the maximum principle,
\be
\po(x, \eta^n) \geq \frac{1}{c_{43}} w_n(x),\quad \forall x \in \Gw_n.\label{7.14}
\ee
Moreover
$$\int_\xO \po^q(x,y)d^{\frac{\xa_+}{2}}(x)dx\leq c(q,\xO),\quad\forall 1<q<q_{c },$$
where $c(q,\xO)$ is a constant independent of $y.$ Since $q$ is subcritical, it
follows that the sequences $\{\po^q ( \cdot , \eta^n)\}$ and $\{\po( \cdot, \eta^n\}$ are uniformly integrable
in $L^1_{\ei}(\xO).$ Let $\overline{w}_n$ denotes the extension of $w_n$ to $\xO$ defined by $\overline{w}_n = 0$ in $\xO \setminus \Gw_n.$ In
view of (\ref{7.13}) we conclude that the sequences $\{ \overline{w}_n^q\}$ and $\{\overline{w}_n\}$ are uniformly
integrable in $L^1_{\ei}(\xO),$ and locally uniformly bounded in $\Gw$
By regularity results for elliptic equations there exists a subsequence of $\{\overline{w}_n\},$ say again $\{\overline{w}_n\}$ that converges locally
uniformly in $\xO$ to a solution $w$ of (\ref{T1}). This fact and the uniform integrability
mentioned above imply that
$$w_{n}\rightarrow w\quad\text{in}\;\;L^q_{\ei}(\xO)\cap L^1_{\ei}(\xO).$$
Since $w\in L^q_{\ei}(\xO)$ by Proposition \ref{pro1} there exists $\xm\in\mathfrak{M}(\xO)$ such that
\be\nonumber
\int_{\xO}w \CL_{\xk }\eta dx+\int_\xO |w|^{q-1}w\eta dx=\int_\xO \mathbb{K}_{\CL_{\xk }}[\xm]\CL_{\xk }\eta dx\qquad\forall \eta\in\mathbf{X}(\xO).
\ee
Furthermore, using \eqref{7.13} we prove below that measure $\xm$ is concentrated at $a.$
Let $\xf_{\xk ,n}$ be the first eigenfunction of $\CL_{\xk }$ in $\xO_n$ normalized by $\xf_{\xk ,n}({x_0})=1$ for some $x_0\in\xO_1.$ Let $\eta\in \mathbf{X}(\xO)$ be nonnegative function and let $\eta_n$ be the solution of the problem
$$\BA {lll}\CL_{\xk }\eta_n=\frac{\xf_{\xk ,n}}{\ei}\CL_{\xk }\eta\qquad&\text{in } \Gw_n\\
\phantom{\CL_{\xk }}
\eta_n=0\qquad&\text{in }\partial \Gw_n.\EA$$
Then $\eta_n\in C^2(\overline\xO_n)$ and since $\xf_{\xk ,n}\rightarrow\ei,$
\be\nonumber
\CL_{\xk }\eta_n\rightarrow \CL_{\xk }\eta\;\text{ and }\;\eta_n\rightarrow\eta\;\text{ as }\; n\to\infty.
\ee
Then we have
\be
\int_{\Gw_n}w_n \CL_{\xk }\eta_n dx+\int_\xO |w_n|^{q-1}w\eta dx=\int_\xO v_n\CL_{\xk }\eta_n dx,\label{ww}
\ee
where $v_n$ solves
\begin{align}\nonumber
\CL_{\xk }v_n&=0\qquad\,\text{ in}\;\;\Gw_n\\ \nonumber
v_n&=h_n\qquad\text{on}\;\;\partial\xS_n.
\end{align}
By the same arguments as above there exists a subsequence of $\{v_n\chi_{\Gw_n}\},$ for simplicity $\{v_n\chi_{\Gw_n}\}$, converging  in $L^1_{\ei}(\xO)$ to a a nonnegative $\CL_{\xk }$-harmonic function $v$.
By (\ref{7.13}) we have
\be
cc_{43}K_{\CL_\gk}(x, a) \geq v(x),\quad \forall x \in \xO.\label{7.13*}
\ee
Thus there exists a measure $\xn\in \mathfrak{M}(\partial\xO),$ concentrated at $a$ such that $v$ solves
 \begin{align}\nonumber
\CL_{\xk }v&=0\qquad\mathrm{in}\;\;\xO\\ \nonumber
v&=\xn\qquad\mathrm{on}\;\;\partial\xO.
\end{align}
But
$$k=\lim_{n\rightarrow\infty}\int_{\xS_{n}} h_nd\gw^{x_0}_{\Gw_n}=\lim_{n\rightarrow\infty}v_n(x_0)=v(x_0)=\int_{\prt\Gw} d\xn,$$
the results follows if we sent $n$ to $\infty$ in \eqref{ww}.
\end{proof}
\begin{lemma}\label{harnack}
For every $l \in (0, 1)$ there exists a constant $c_l = c(N, \xk , q,l)$ such
that, for every positive solution u of $(\ref{T1})$ in $\xO$ and every $x_0\in\xO,$

\be\label{T10}u(x) \leq c_lu(y),\quad \forall x, y \in B_{lr_0}(x_0) ,\quad r_0 = d(x_0) .\ee
\end{lemma}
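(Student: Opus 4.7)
The plan is to reduce the Harnack inequality for the semilinear equation $\CL_\xk u + u^q = 0$ on $B_{lr_0}(x_0)$ to Moser's classical Harnack inequality for a linear equation with bounded potential, using a parabolic-type rescaling and an a priori $L^\infty$ bound of Keller-Osserman type. The key observation is that on $B_{lr_0}(x_0)$ the distance function $d$ is comparable to $r_0$, so the Hardy coefficient $\xk/d^2$ is bounded and no boundary geometry enters.

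First I would rescale: set $\tilde u(z) := r_0^{\frac{2}{q-1}} u(x_0 + r_0 z)$ for $z \in B_1(0)$. Since $(1-|z|) r_0 \le d(x_0 + r_0 z) \le (1+|z|) r_0$, the function $\tilde u>0$ satisfies
\be\nonumber
-\Gd \tilde u - \myfrac{\xk}{\tilde d^2(z)}\tilde u + \tilde u^q = 0 \quad\text{in } B_1(0),
\ee
with $\tilde d(z) := r_0^{-1} d(x_0 + r_0 z) \in [1-|z|, 1+|z|]$. I would then pick auxiliary radii $l < l'' < l' < 1$; on $B_{l'}(0)$ one has $\xk/\tilde d^2 \le C_0 := \xk/(1-l')^2$, so the equation is effectively a semilinear equation with bounded potential.

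Second, I would prove an a priori bound $\tilde u \le M(N,q,\xk,l)$ on $B_{l''}(0)$. From $-\Gd \tilde u + \tilde u^q \le C_0 \tilde u$ on $B_{l'}(0)$ one sees that wherever $\tilde u \ge (2C_0)^{1/(q-1)}$ we have $-\Gd \tilde u + \tfrac12 \tilde u^q \le 0$. Let $\gf$ be the classical radial Keller-Osserman boundary blow-up solution of $-\Gd \gf + \tfrac12 \gf^q = 0$ in $B_{l'}(0)$ with $\gf \to +\infty$ on $\prt B_{l'}(0)$ (which exists for every $q>1$), whose restriction to $B_{l''}(0)$ is bounded by an explicit constant $C(N,q,l)$. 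A maximum principle argument applied to the auxiliary function $\tilde u - \gf - (2C_0)^{1/(q-1)}$, whose interior positive maximum would force $-\Gd(\tilde u - \gf) \le \tfrac12(w^q - \tilde u^q) < 0$, shows $\tilde u \le \gf + (2C_0)^{1/(q-1)}$ on $B_{l'}(0)$ and hence the claimed bound on $B_{l''}(0)$.

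With the $L^\infty$ bound in hand the equation takes the form $-\Gd \tilde u + c(z) \tilde u = 0$ in $B_{l''}(0)$ with $c(z) := \tilde u^{q-1}(z) - \xk/\tilde d^2(z)$ and $\|c\|_{L^\infty(B_{l''}(0))} \le M^{q-1}+C_0$. Moser's Harnack inequality for nonnegative solutions of such linear uniformly elliptic equations then gives
\be\nonumber
\sup_{B_l(0)} \tilde u \le c_l \inf_{B_l(0)} \tilde u, \qquad c_l = c_l(N,q,\xk,l),
\ee
and undoing the rescaling recovers \eqref{T10}. The only nontrivial step is the Keller-Osserman comparison of the second paragraph; the scaling and Moser's Harnack inequality are completely standard once the operator has been recognised as uniformly elliptic with bounded potential on $B_{l''}(0)$, so I expect the comparison argument to be the sole place where care is needed.
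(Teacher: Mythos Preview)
Your proof is correct and follows essentially the same strategy as the paper: rescale by $r_0$ so that the Hardy coefficient $\xk/d^2$ becomes bounded on a fixed ball, obtain an $L^\infty$ bound on the rescaled solution, and then apply Moser's Harnack inequality to the resulting linear equation with bounded potential.

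The only difference is in how the $L^\infty$ bound is obtained. The paper simply invokes the global Keller--Osserman estimate $u(x)\le C\,d(x)^{-2/(q-1)}$ (recorded as \eqref{3.4.1} in the appendix), which after rescaling gives $r_0^2|v|^{q-1}\le C$ directly. You instead re-derive a local version of this bound via comparison with a radial boundary blow-up supersolution on $B_{l'}(0)$. Your argument is self-contained but unnecessarily long here, since the global Keller--Osserman bound is already available in the paper; on the other hand, your route would still work in settings where only the local equation is given. (A minor typo: in your comparison step ``$w^q$'' should read ``$\gf^q$''.)
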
  
\begin{proof}
Put $r_1=\frac{1+l}{2}r_0.$ Then u satisfies
$$\CL_{\xk }u+u^q=0,\quad\text{in}\;\; B_{r_1}(x_0).$$
Denote by
$\xO_{r_0}$ the domain
$$\xO_{r_0}=\{y\in\mathbb{R}^n:r_0y\in\xO\}.$$
Set $v(y)=u(r_0y),$ and $y_0=\frac{x_0}{r},$ then $v(y)$ satisfies
 $$-\xD v-\xk \frac{v}{\text{dist}^2(y,\partial\xO_{y_0})}+r_0^2|v|^{q-1}v=0,\quad\text{in}\;\; B_{\frac{1+l}{2}}(y_0).$$
Now note that
$$\frac{1}{\text{dist}^2(y,\partial\xO_{y_0})}\leq \frac{4}{(1-l)^2},\quad\forall y\in B_{\frac{1+l}{2}}(y_0)$$
and by Keller Osserman condition
$$r_0^2|v(y)|^{q-1}=r_0^2|u(r_0y)|^{q-1}\leq C(\xO,\xk ,N)r_0^2\frac{1}{d^2(r_0y)}\leq C(\xO,\xk ,N)B_{\frac{1+l}{2}}(y_0).$$
Thus by Harnack inequality there exists a constant $c_l>0$ such that
$$v(z) \leq c_lv(y),\quad \forall z, y \in B_{l}(y_0),$$
and the results follows.
\end{proof}

For the proof of the next lemma we need some notations. Let $\xb>0$ and $\xi\in\Gs_\gb=\prt\Gw'_\gb.$ We set $\xD_r^\xb(\xi)=\Gs_\gb\cap B_r(\xi)$ and, for $0<r<\gb< 2r$,
$x_r^\xb=x_r^\xb(\xi)\in\overline\xO_\xb,\;$ such that $d(x_r^\xb)=|x_r^\xb-\xi|=r.$
Also we denote by $\gw^{x}_{\Gw'_\xb}$ the $\CL_{\xk }$-harmonic measure in $\xO'_\xb:=\xO\setminus \overline{\xO}_\xb$ relative to $x$
\begin{lemma}\label{lem2.1*}
Let $r_0=r_0(\xO)>0$ be small enough and $0<r\leq \frac{r_0}{4}.$ Then there exists a constant $c_{95}$ which depends only on $\xO,N$ such that
\be\label{T11}\gw^{x}_{\Gw'_\xb}(\xD_r(\xi))>c_{95}\qquad\forall x\in\xO\cap B_{\frac{r}{2}}(\xi).\ee
\end{lemma}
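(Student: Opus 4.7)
The strategy is to mimic the argument of Lemma \ref{lem2.1}, exploiting that on $\Gw'_\xb$ the potential $\gk/d^2$ is bounded by $\gk/\xb^2$, so $\CL_\gk$ is classically uniformly elliptic with bounded zeroth-order coefficient on $\Gw'_\xb$; no weighted boundary limit is needed at $\Sigma_\xb$ since $\Sigma_\xb$ is a $C^2$ hypersurface (being a level set of $d$, which is $C^2$ on $\overline{\Gw_{r_0}}$ once $r_0$ is small enough).

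First I would fix a cutoff $\eta\in C(\Sigma_\xb)$ with $0\le\eta\le1$, $\eta\equiv 1$ on $\Sigma_\xb\cap\overline{B_{3r/4}(\xi)}$ and compactly supported in $\Sigma_\xb\cap B_r(\xi)$. By the classical Perron--Dirichlet theory applied to $\CL_\gk$ on the $C^2$ domain $\Gw'_\xb$ (with bounded potential), there is a unique $v_\eta\in C(\overline{\Gw'_\xb})\cap C^2(\Gw'_\xb)$ with $\CL_\gk v_\eta=0$ in $\Gw'_\xb$ and $v_\eta|_{\Sigma_\xb}=\eta$, and by the very definition of the $\CL_\gk$-harmonic measure
\[
v_\eta(x)=\int_{\Sigma_\xb}\eta\,d\gw^x_{\Gw'_\xb}\le \gw^x_{\Gw'_\xb}(\xD_r(\xi)).
\]
So it suffices to bound $v_\eta(x)\ge c_{95}$ for $x\in\Gw'_\xb\cap B_{r/2}(\xi)$ (the case where $B_{r/2}(\xi)\cap\Gw'_\xb$ is empty being vacuous).

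Next I would rescale. Set $y=\xi+rz$ and $\widetilde v(z)=v_\eta(\xi+rz)$; then $\widetilde v$ satisfies
\[
-\xD_z \widetilde v-\gk\,\widetilde V(z)\widetilde v=0\qquad\text{in }G:=r^{-1}(\Gw'_\xb-\xi)\cap B_2(0),
\]
where $\widetilde V(z)=r^2/d^2(\xi+rz)\le r^2/\xb^2$. The geometry forces $\xb$ to be comparable to $r$: indeed $\xi\in\Sigma_\xb$ and the relevant regime has $B_{r/2}(\xi)\cap\Gw'_\xb\neq\ems$, which (together with the standard smoothness of $d$ at scale $r_0$) gives $\xb\le 2r$; the case $\xb\le r/2$, if it occurs in the application, can be handled either by the same argument with a trivial potential bound or by directly applying Lemma \ref{lem2.1}. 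Hence $\widetilde V\le 4/\xb^2\cdot\xb^2=O(1)$, the rescaled boundary $r^{-1}(\Sigma_\xb-\xi)\cap B_2(0)$ is a $C^2$ hypersurface with second fundamental form bounded by $r\|D^2d\|_{L^\infty(\Gw_{r_0})}$, and $\widetilde v=1$ on its intersection with $B_{3/4}(0)$.

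We are reduced to a classical lower bound: for a uniformly elliptic operator with bounded zeroth-order term on a domain with uniformly $C^2$ boundary piece, the solution of the Dirichlet problem with boundary data $1$ on a piece of unit size is bounded below by a positive constant on points within distance $1/2$ of that piece. This follows by combining a concrete barrier (e.g.\ $\psi(z)=c_0(1-(2|z|)^{2-N})_+$ adjusted to account for the bounded potential via the standard Alexandrov--Bakelman--Pucci / maximum principle for $-\xD-c$ with $\|c\|_\infty\le C$) with the interior Harnack inequality applied at a fixed reference point in $B_{1/2}(0)\cap G$. The main technical point, and the only real obstacle, is to verify that all constants in this last step depend only on the rescaled ellipticity (controlled by $\gk\le 1/4$) and the rescaled geometry (controlled by $\|d\|_{C^2(\overline{\Gw_{r_0}})}$), and hence depend only on $\Gw$ and $N$; granted this, undoing the rescaling gives $v_\eta(x)\ge c_{95}$ on $B_{r/2}(\xi)\cap\Gw'_\xb$, which is the claim.
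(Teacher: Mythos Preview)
Your approach can be made to work in the regime $\gb\sim r$ that actually occurs in the applications (Lemmas \ref{vol1}--\ref{vol2}), but the paper's proof is far shorter and avoids the rescaling gymnastics entirely. The key observation you are missing is that the sign of $\gk$ does all the work: if $u(x)=\gw^{x}_{\Gw'_\gb}(\xD_r(\xi))$, then $u>0$ and $\CL_\gk u=0$ in $\Gw'_\gb$, whence $-\Gd u=\frac{\gk}{d^2}\,u\ge 0$, i.e.\ $u$ is \emph{superharmonic} for the ordinary Laplacian. Since the classical harmonic measure $\gu^{x}_{\Gw'_\gb}(\xD_r(\xi))$ is harmonic with the same boundary data, the minimum principle gives $\gw^{x}_{\Gw'_\gb}\ge \gu^{x}_{\Gw'_\gb}$ on $\Gw'_\gb$, and the conclusion follows directly from the classical lower bound of Caffarelli--Fabes--Mortola--Salsa \cite[Lemma~2.1]{caffa}, which is uniform in $\gb$ because the $C^2$ character of $\Gs_\gb$ is controlled by $\|d\|_{C^2}$ near $\prt\Gw$.

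Two comments on your argument as written. First, the claim that ``the geometry forces $\gb\le 2r$'' is not justified by the hypotheses of the lemma: $\xi\in\Gs_\gb$ and $B_{r/2}(\xi)\cap\Gw'_\gb\neq\ems$ hold for every $r>0$ and impose no relation between $\gb$ and $r$. The constraint $r<\gb<2r$ does appear in the surrounding context and in the applications, so your restriction is harmless in practice, but you should state it as an assumption rather than a consequence. Second, when $\gb\ll r$ your rescaled potential $r^2/d^2$ blows up like $(r/\gb)^2$ and your barrier/Harnack constants degenerate; the fallback you suggest (``directly applying Lemma~\ref{lem2.1}'') is not available, since that lemma concerns the $\CL_\gk$-harmonic measure of $\prt\Gw$, not of $\Gs_\gb$. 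The superharmonicity comparison above handles all $\gb$ at once with no case distinction.
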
  
\begin{proof}
Since $x\mapsto \gw^{x}_{\Gw'_\xb}$ is a positive $\CL_{\xk }$-harmonic in $\xO'_\xb,$ it is a positive superharmonic function (relative to  the Laplacian) in $\xO'_\xb.$ Thus
$$\gw^{x}_{\Gw'_\xb}\geq \gu^{x}_{\Gw'_\xb},\quad\forall x\in \xO'_\xb, $$
where $\gu^{x}_{\Gw'_\xb}$ is the standard harmonic measure in $\xO'_\xb$ relative to $x\in\Gw'_\gb$
The result follows by Lemma 2.1 in \cite{caffa}.
\end{proof}
\begin{lemma}\label{vol1}
Let $\xk =\frac{1}{4},$ $\xe\in(0,1)$ and $x_0\in \Gw_1$. Let $\{\xi^n\}$ be a sequence of
points in $\xO$ converging to $a\in\partial\xO.$ Then there exist $n_0=n_0(\xe,\xO)\in\BBN$ and $c_{96}=c_{96}(\xO,N,\xe)$ such that
\be\label{T12}\gw^{x_0}_{\Gw_n}(B_{d(\xi^n)}(\xi^n)\cap \partial \Gw_n)\geq c_{96}d(\xi^n)^{N+\frac{1}{2}-2}(-\log d(\xi^n))^{1-\xe}\qquad\forall n\geq n_0.\ee
\end{lemma}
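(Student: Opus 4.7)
The plan is to combine the representation formula for the $\CL_{\frac{1}{4}}$-harmonic measure in $\Gw_n$ with the sharp two-sided estimates of Theorem~A (Theorem~\ref{lem2.2*}), the Green function bounds of Proposition~\ref{green}, and a dyadic decomposition of $\xS_{r_n}$ around $\xi^n$. Throughout, set $r_n := d(\xi^n)$ and let $\eta^n\in\partial\xO$ be the unique boundary point with $|\xi^n-\eta^n|=r_n$, so that $\xi^n=x_{r_n}(\eta^n)$, and put $A_n := B_{r_n}(\xi^n)\cap\xS_{r_n}$.

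First I would observe that the function $u_n(y) := \gw^y(\xD_{r_n}(\eta^n))$ is $\CL_{\frac{1}{4}}$-harmonic in $\xO$, hence in $\Gw_n\subset\xO$, and continuous up to $\xS_{r_n}$; the representation formula in $\Gw_n$ therefore yields
\[
\gw^{x_0}(\xD_{r_n}(\eta^n)) = \int_{\xS_{r_n}} u_n(y)\, d\gw^{x_0}_{\Gw_n}(y),
\]
and by Theorem~\ref{lem2.2*} applied with $\xi = \eta^n$ and $x = x_0$, together with the Green function estimate $G_{\CL_{\frac{1}{4}}}(\xi^n,x_0) \approx r_n^{1/2}$ read off from Proposition~\ref{green}, the left side is comparable to $r_n^{N-1}|\log r_n|$.

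Next, the integrand is bounded differently on $A_n$ and on the exterior dyadic rings $A_{n,k} := \xS_{r_n}\cap\bigl(B_{2^{k}r_n}(\xi^n)\setminus B_{2^{k-1}r_n}(\xi^n)\bigr)$ for $1\leq k\leq K_n$ with $K_n\approx |\log r_n|$. On $A_n$ one uses the trivial bound $u_n(y)\leq \gw^y(\partial\xO)\leq C\, W(y) = C r_n^{1/2}|\log r_n|$. On $A_{n,k}$ with $k$ large enough that $|y-\eta^n|\geq 4 r_n$, Theorem~\ref{lem2.2*} (applied now at the point $y$) combined with the estimate $G_{\CL_{\frac{1}{4}}}(\xi^n,y)\leq C\, r_n |y-\xi^n|^{1-N}$ from Proposition~\ref{green} produces $u_n(y)\leq C r_n^{1/2}|\log r_n|\cdot 2^{-k(N-1)}$. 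Dualizing by testing against a Martin kernel $K_{\CL_{\frac{1}{4}}}(\cdot,\eta^{(k)})$ with $\eta^{(k)}$ a boundary point near the projection of $A_{n,k}$ onto $\partial\xO$, and invoking Theorem~B, one also obtains the a priori upper bound $\gw^{x_0}_{\Gw_n}(A_{n,k})\leq C\, 2^{k(N-1)} r_n^{N-3/2}$.

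Inserting these estimates into the representation formula and summing the resulting geometric series produces a baseline lower bound $\gw^{x_0}_{\Gw_n}(A_n)\gtrsim r_n^{N-3/2}$; to recover the logarithmic factor $(-\log r_n)^{1-\xe}$ I would bootstrap, iterating the entire scheme at dyadic scales $2^k r_n$ in place of $r_n$ and exploiting the doubling property of $\gw^{x_0}_{\Gw_n}$ on $\xS_{r_n}$ (analogous to Theorem~\ref{Th10}) to sharpen the a priori bound to $\gw^{x_0}_{\Gw_n}(A_{n,k})\lesssim (2^{k}r_n)^{N-3/2}|\log(2^{k}r_n)|^{1-\xe'}$, which after summation pushes the logarithmic deficit onto the $A_n$ term. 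The main obstacle is precisely this recovery of the logarithmic factor: a single application of the representation formula together with the dyadic bound loses a full power of $|\log r_n|$, and lifting the polynomial bound $r_n^{N-3/2}$ to $r_n^{N-3/2}(-\log r_n)^{1-\xe}$ requires the delicate bootstrap above, which closes after $O(\log|\log r_n|)$ iterations with the residual loss absorbed into any prescribed $\xe\in(0,1)$.
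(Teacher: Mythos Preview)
Your dyadic scheme does not close. Write the representation formula
\[
\gw^{x_0}(\Delta_{r_n}(\eta^n))=\int_{A_n}u_n\,d\gw^{x_0}_{\Gw_n}+\sum_{k=1}^{K_n}\int_{A_{n,k}}u_n\,d\gw^{x_0}_{\Gw_n},
\]
and insert your own bounds: the left side is $\approx r_n^{N-1}|\log r_n|$, while each summand on the right is bounded above by
$C r_n^{1/2}|\log r_n|\,2^{-k(N-1)}\cdot C\,2^{k(N-1)}r_n^{N-3/2}=C^2 r_n^{N-1}|\log r_n|$, a constant in $k$. The ``geometric series'' is in fact $K_n\approx|\log r_n|$ equal terms, so the tail is $\approx r_n^{N-1}|\log r_n|^2$, which already dominates the left-hand side. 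No lower bound on $\gw^{x_0}_{\Gw_n}(A_n)$ can be extracted; not even the baseline $r_n^{N-3/2}$ follows. Your bootstrap is supposed to repair this, but it relies on a doubling property for $\gw^{x_0}_{\Gw_n}$ on $\Sigma_{r_n}$ that is nowhere proved (Theorem~\ref{Th10} concerns $\gw^{x}$ in $\Gw$, not the truncated domain), and the claimed sharpening $\gw^{x_0}_{\Gw_n}(A_{n,k})\lesssim(2^kr_n)^{N-3/2}|\log(2^kr_n)|^{1-\xe'}$ is precisely the estimate you are trying to prove, up to a shift of centre.

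The paper's argument is entirely different and avoids Theorem~\ref{lem2.2*} altogether. It works directly with the Green function $G^{\Gw_n}_{\CL_{1/4}}(x_0,\cdot)$ of the truncated domain: first, a Carleson-type comparison (via Lemma~\ref{lem2.1*} and the maximum principle) gives $r_n^{N-2}G^{\Gw_n}_{\CL_{1/4}}(x_0,x(\xi^n))\lesssim\gw^{x_0}_{\Gw_n}(A_n)$. The logarithmic factor $(-\log r_n)^{1-\xe}$ is then produced by an explicit barrier. Using the sub- and supersolutions $h_2,h_1\approx d^{1/2}(-\log d)\bigl(1\mp(-\log d)^{-\xe}\bigr)$ from \cite{1}, one forms the subsolution $H=h_2-c_{101}h_1$ with $c_{101}$ chosen so that $H$ vanishes on $\partial\Gw_n$; the maximum principle then yields $H\lesssim G^{\Gw_n}_{\CL_{1/4}}(x_0,\cdot)$, and evaluating $H$ near $\xi^n$ gives exactly $r_n^{1/2}(-\log r_n)^{1-\xe}$. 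The $\xe$-loss is thus a feature of the barrier construction, not of an iteration.
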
  
\begin{proof}
We recall that for any $n\in\BBN$ $\Gw_n$ is defined by (\ref{TX}), ${G}_{\CL_{\frac{1}{4}}}^{\Gw_n}\leq {G}_{\CL_{\frac{1}{4}}}:={G}_{\CL_{\frac{1}{4}}}^{\Gw}$,
and for a fixed point $y_0\in \Gw_1$
\be
{G}_{\CL_{\frac{1}{4}}}^{\Gw_n}\chi_{\Gw_n}(x)\uparrow {G}_{\CL_{\frac{1}{4}}}(x,y_0),\quad\text{locally uniformly in }\xO\setminus{y_0}.\label{gr2}
\ee
Set $x(\xi^n)=x_{\frac{r_n}{2}}^{2r_n}(\xi^n),$ with $r_n=\frac{d(\xi^n)}{2}.$
By \eqref{greenest} we have
$$r_n^{N-2}{G}_{\CL_{\frac{1}{4}}}^n(x,x(\xi^n))<c_{97},\quad\forall x\in \Gw_n\cap \partial B_{r_n}(\xi^n),$$
and by Lemma \ref{lem2.1*} there exists $r_0=r_0(\xO)>0$ such that for any $r_n\leq \frac{r_0}{4}$
$$r_n^{N-2}{G}^{\Gw_n}_{\CL_{\frac{1}{4}}}(x,x(\xi^n))\leq c_{98} \gw^x_{\Gw_n}(\partial \Gw_n\cap B_{r_n}(\xi^n)),\quad\forall x\in \Gw_n\cap \partial B_{r_n}(\xi^n).$$
Since if $|x-y|>\xe>0$ there holds
$${G}^{\Gw_n}_{L_{\frac{1}{4}}}(x,y)\approx c_{99}(\xe,\Gw_n)\mathrm{dist}(x,\partial \Gw_n)\mathrm{dist}(y,\partial \Gw_n).$$
Thus we have by maximum principle and properties of Green function
\be
r_n^{N-2}{G}^{\Gw_n}_{\CL_{\frac{1}{4}}}(x,x(\xi^n))\leq c_{100} \gw^x_{\Gw_n}(\partial \Gw_n\cap B_{r_n}(\xi^n)),\quad\forall x\in \Gw_n\setminus B_{r_n}(\xi^n).\label{gr3}
\ee
By \cite[Lemma 2.8]{1} there exists $\xb_0=\xb_0(\xO,\xe)>0$ such that the function
$$h_1(x)=d^{\frac{1}{2}}(x)(-\log d(x))\left(1+\left(-\log d(x)\right)^{-\xe}\right),$$
is a supersolution in $\xO_{\xb_0}$ and the function
$$h_2(x)=d^{\frac{1}{2}}(x)(-\log d(x))\left(1-\left(-\log d(x)\right)^{-\xe})\right),$$
is a subsolution in $\xO_{\xb_0}.$ Set
$$c_{101}=\frac{1-\left(-\log d(\xi_n)\right)^{-\xe}}{1+\left(-\log d(\xi_n)\right)^{-\xe}}$$
and
$$H(x)=h_2(x)-c_{101}h_1(x).$$
Let $n_0\in \mathbb{N}$ such that $r_n\leq \frac{\xb_0}{4},\;\forall n\geq n_0.$
then the function $H(x)$ is a nonnegative subsolution in $\Gw_n\setminus\xO'_{\xb_0},$
and $H(x)=0,\;\forall x\in \partial \Gw_n.$
By \eqref{gr2} we can choose $n_1\in \mathbb{N}$ such that
$${G}_{\CL_{\frac{1}{4}}}^{\xO_n}(x_0,x)\geq c(\xO,N,\xk)\xb_0^{\frac{1}{2}},\quad\forall x \in \partial \Gw'_{\xb_0}.$$
Thus we can find  a constant $c_{102}=c_{102}(\xb_0)>0$ such that
$$c_{102} H(x)\leq {G}_{\CL_{\frac{1}{4}}}^{\xO_n}(x_0,x),\quad\forall x \in \partial \Gw'_{\xb_0}. $$
Since $H$ vanishes on $\prt\Gw_n$ it follows by the by maximum principle that
\be
c_{102} H(x)\leq {G}_{\CL_{\frac{1}{4}}}(x_0,x),\quad\forall x \in \overline\Gw_n\setminus\xO'_{\xb_0}.\label{gr4}
\ee
But $$H(x(\xi^n))\geq c_{103}(\xb_0)\geq c_{104}(\xO,N)r_n^{\frac{1}{2}}(-\log r_n)^{1-\xe}$$
and the result follows by the above inequality and inequalities \eqref{gr4} and \eqref{gr3}.
\end{proof}

\begin{lemma}\label{vol2}
Let $\xk <\frac{1}{4},$ $\xe\in\left(0,\sqrt{1-4\xk }\right)$ and $x_0\in \Gw_1$. Let $\{\xi^n\}$ be a sequence of
points in $\xO$ converging to $a\in\partial\xO.$ Then there exists $n_0=n_0(\xe,\xO)\in\BBN$ such that
$$\gw_{\Gw_n}^{x_0}\left(B_{d(\xi^n)}(\xi^n)\cap \partial \Gw'_n\right)\geq c_{105}(\xO,N,\xk ,\xe)d(\xi^n)^{N+\frac{\xa_-}{2}+\xe-2},\quad\forall n\geq n_0,$$
where $\Gw_n$ is defined by (\ref{TX})
\end{lemma}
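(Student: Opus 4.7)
The plan is to mimic the proof of Lemma \ref{vol1}, replacing the logarithmic barriers appropriate to the critical case $\gk=\tfrac14$ by power-type barriers appropriate to the subcritical case $\gk<\tfrac14$, and then to combine the resulting lower bound for the Green function with the upper estimate for $G_{\CL_\gk}^{\Gw_n}$ on $\partial B_{r_n}(\xi^n)$ and Lemma \ref{lem2.1*}. Throughout the argument fix $r_n=\tfrac12 d(\xi^n)$ and $x(\xi^n)=x^{2r_n}_{r_n/2}(\xi^n)$, so that $d(x(\xi^n))=r_n/2$.

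\emph{Step 1: From the Green function to harmonic measure.} By Proposition \ref{green} there holds $r_n^{N-2}G_{\CL_\gk}^{\Gw_n}(x,x(\xi^n))\le c$ for every $x\in\Gw_n\cap\partial B_{r_n}(\xi^n)$, while Lemma \ref{lem2.1*} (which applies unchanged here, since $\CL_\gk$-harmonic measure is controlled from below by the classical harmonic measure) gives $\gw^x_{\Gw_n}(\partial\Gw_n\cap B_{r_n}(\xi^n))\ge c_{95}$ for $x\in\Gw_n\cap B_{r_n/2}(\xi^n)$, in particular for $x\in \Gw_n\cap\partial B_{r_n}(\xi^n)$. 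The maximum principle applied to the two $\CL_\gk$-harmonic functions $x\mapsto r_n^{N-2}G_{\CL_\gk}^{\Gw_n}(x,x(\xi^n))$ and $x\mapsto \gw^x_{\Gw_n}(\partial\Gw_n\cap B_{r_n}(\xi^n))$ in $\Gw_n\setminus\overline{B_{r_n}(\xi^n)}$ (both vanish on $\partial\Gw_n$) yields
\begin{equation}\label{volstar}
r_n^{N-2}G_{\CL_\gk}^{\Gw_n}(x_0,x(\xi^n))\le c_{100}\,\gw^{x_0}_{\Gw_n}(\partial\Gw_n\cap B_{r_n}(\xi^n))
\end{equation}
for all sufficiently large $n$. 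It therefore suffices to bound $G_{\CL_\gk}^{\Gw_n}(x_0,x(\xi^n))$ from below by $c\, r_n^{\frac{\xa_-}{2}+\xe}$.

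\emph{Step 2: Power barriers.} For $\gk<\tfrac14$ and $\xe\in(0,\sqrt{1-4\gk})$, \cite[Lemma 2.8]{1} provides $\xb_0=\xb_0(\xO,\xe)>0$ together with a supersolution $h_1\in C^2(\Gw_{\xb_0})$ and a subsolution $h_2\in C^2(\Gw_{\xb_0})$ of $\CL_\gk u=0$ satisfying
$$h_1(x)\approx d^{\frac{\xa_-}{2}}(x)\bigl(1+d^{\xe}(x)\bigr),\qquad h_2(x)\approx d^{\frac{\xa_-}{2}}(x)\bigl(1-d^{\xe}(x)\bigr).$$
Set $c_{101}=\dfrac{1-d(\xi^n)^{\xe}}{1+d(\xi^n)^{\xe}}$ and $H(x)=h_2(x)-c_{101}h_1(x)$. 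By construction $H$ is a nonnegative $\CL_\gk$-subsolution on $\Gw_n\setminus\Gw'_{\xb_0}$, vanishes identically on $\partial\Gw_n=\Sigma_{d(\xi^n)}$, and is bounded above by a constant depending only on $\xO,\gk,\xe$. A direct computation gives
\begin{equation}\label{H-lower}
H(x(\xi^n))\ge c_{104}(\xO,N,\gk,\xe)\,r_n^{\frac{\xa_-}{2}+\xe}.
\end{equation}

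\emph{Step 3: Lower bound on the Green function.} Because $G_{\CL_\gk}^{\Gw_n}(x_0,\cdot)\uparrow G_{\CL_\gk}(x_0,\cdot)$ locally uniformly away from $x_0$ by \eqref{gr2}, and the latter is continuous and strictly positive on $\partial\Gw'_{\xb_0}$, there exists $n_1$ such that for $n\ge n_1$,
$$c_{102}\,H(x)\le G_{\CL_\gk}^{\Gw_n}(x_0,x),\qquad \forall x\in\partial\Gw'_{\xb_0},$$
for some $c_{102}=c_{102}(\xO,\gk,\xe,\xb_0)>0$. Since $H=0=G_{\CL_\gk}^{\Gw_n}(x_0,\cdot)$ on $\partial\Gw_n$, and $c_{102}H-G_{\CL_\gk}^{\Gw_n}(x_0,\cdot)$ is $\CL_\gk$-subharmonic in $\Gw_n\setminus\overline{\Gw'_{\xb_0}}$, the maximum principle (Proposition \ref{max}) gives
$$c_{102}\,H(x)\le G_{\CL_\gk}^{\Gw_n}(x_0,x),\qquad \forall x\in\overline{\Gw_n}\setminus\Gw'_{\xb_0}.$$
Evaluating at $x=x(\xi^n)$ and combining with \eqref{H-lower} and \eqref{volstar} yields
$$\gw^{x_0}_{\Gw_n}(\partial\Gw_n\cap B_{d(\xi^n)}(\xi^n))\ge c_{105}\, r_n^{N-2+\frac{\xa_-}{2}+\xe}\ge c_{105}(\xO,N,\gk,\xe)\, d(\xi^n)^{N+\frac{\xa_-}{2}+\xe-2},$$
which is the announced bound.

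The only delicate point is Step 2, where one must verify that the exact linear combination $H=h_2-c_{101}h_1$ which vanishes on $\Sigma_{d(\xi^n)}$ still carries a quantitatively nontrivial positive value at $x(\xi^n)$, namely of order $r_n^{\xa_-/2+\xe}$. The algebraic cancellation performed in the proof of Lemma \ref{vol1} (with the $(-\log d)^{-\xe}$ factor replaced by $d^{\xe}$) applies essentially verbatim and produces the gain $(1-4^{-\xe})$ which is strictly positive precisely because $\xe>0$.
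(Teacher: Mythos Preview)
Your proposal is correct and follows essentially the same approach as the paper. The paper's own proof is a one-line remark that the argument of Lemma \ref{vol1} goes through verbatim once the logarithmic barriers $d^{1/2}(-\log d)(1\pm(-\log d)^{-\xe})$ are replaced by the power barriers $d^{\xa_-/2}(1\pm d^{\xe})$; you have simply written out in full the three steps (Green-to-harmonic-measure comparison, barrier construction, maximum principle) that this entails, including the explicit cancellation that produces the $r_n^{\xa_-/2+\xe}$ gain at $x(\xi^n)$.
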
  
\begin{proof}
The proof is same as in Lemma \ref{vol1}. The only difference is that we use $d^{\ga_-}(1-d^{\xe})$ and the supersolution
$d^{\ga_-}(1+d^{\xe})$ as a subsolution.
\end{proof}

\noindent{\it{Proof of Theorem \ref{singpro1}.}}
{\it Step 1}: if
\be
\limsup_{x\in\xO,\;x\rightarrow a}(d(x))^{N+\frac{\xa_+}{2}-2}u(x)<\infty,\label{assert1}
\ee
then $a \in \mathcal{R}_u$. Thus we have to prove that there exists $r_0>0$ such that $u\in L^q_{\ei}(\xO\cap B_{r_0}(a)).$
By \eqref{assert1} there exists $r_1>0$ such that
$$\sup_{x\in\xO\cap B_{r_1}(a)}d^{N+\frac{\xa_+}{2}-2}(x)u(x)=M<\infty.$$
Let $U$ be a smooth open domain such that
$$\xO\cap B_{\frac{r_1}{2}}(a)\subset U\subset\xO\cap B_{r_1}(a),$$
and
$$\overline{U}\cap\partial\xO\subset\partial\xO\cap B_{r_1}(a).$$
For $\gb>0$, set
$$d_U(x)=\text{dist}(x,\partial U)\;\;\;\forall x\in U,\quad U_\xb=\{x\in U:\;d_U(x)>\xb\},\;\quad V_\xb= U\setminus U_\xb.$$
Let $\xb_0>0$ be small enough such that $d_U\in C^2(\overline{U}_{\xb_0}).$ Let $0<\xb<\xb_0$ and $\xz(x)=d_U(x)-\xb.$ Then $u$ satisfies
$$\int_{\partial V_\xb}u dS=\int_{V_\xb\setminus V_{\xb_0}}(u \CL_{\xk }\xz+u^q\xz) dx-\int_{\partial V_{\xb_0}}\frac{\partial u}{\partial {\bf n}}\xz dS.$$
Now
$$\left|\int_{\partial V_{\xb_0}}\frac{\partial u}{\partial {\bf n}}\xz dS\right|\leq c_{106}(\gb_0-\gb),$$
where $c_{106}$ depends on $q,\gk,\Gw,\gb_0$,
$$\int_{V_\xb\setminus V_{\xb_0}}u \CL_{\xk }\xz dx \leq-\int_{V_\xb\setminus V_{\xb_0}}u \xD\xz dx\leq c_{107}\int_{V_\xb\setminus V_{\xb_0}}udx.
$$
and by \eqref{assert1}
$$u^{q-1}(x)\leq c_{108}(d(x))^{-(q-1)(N+\frac{\xa_+}{2}-2)}\leq c_{108}(d_U(x))^{-(q-1)(N+\frac{\xa_+}{2}-2)}\quad\forall x\in U.$$
Combining the above inequalities,  we derive
$$\int_{\partial V_\xb}u dS\leq c_{109}\left(\int^{\xb_0}_\xb (\xs^{1-(q-1)(N+\frac{\xa_+}{2}-2)}+1)\int_{\partial V_\xs}u(x)dSd\xs+1\right).$$
Multiplying the above inequality by $\xb^{\frac{\xa_+}{2}}$ we get
$$\int_{\partial V_\xb}ud_U^{\frac{\xa_+}{2}} dS\leq c_{109}\left(\int^{\xb_0}_\xb (\xs^{1-(q-1)(N+\frac{\xa_+}{2}-2)}+1)\int_{\partial V_\xs}d_U^{\frac{\xa_+}{2}}(x)u(x)dSd\xs+1\right).$$
Set $$U(\xs)=\int_{\partial V_\xs}d_U^{\frac{\xa_+}{2}}(x)u(x)dS,$$
Then we have
\be
U(\xb)\leq c_{110}\left(\int^{\xb_0}_\xb (\xs^{1-(q-1)(N+\frac{\xa_+}{2}-2)}+1) U(\xs)d\xs+1\right),\label{in11}
\ee
Set $$W(\xb)=\int^{\xb_0}_\xb (\xs^{1-(q-1)(N+\frac{\xa_+}{2}-2)}+1) U(\xs)d\xs+1,$$
then
$$W'(\xb)=-(\xb^{1-(p-1)(N+\frac{\xa_+}{2}-2)}+1) U(\xb)=-h(\xb)U(\xb).$$
Thus inequality \eqref{in11} becomes
$$-W'(\xb)\leq c_{110}h(\xb)W(\xb)\Longleftrightarrow \left(H(\xb)W(\xb)\right)'\geq0,$$
where
$$H(\xb)=e^{-c_{110}\int_\xb^{\xb_0}h(s)ds}.$$
Thus we have
$$W(\xb)\leq \frac{1}{H(\xb)}W(\xb_0),\quad\forall 0<\xb< \xb_0.$$
But $$\frac{1}{H(\xb)}=e^{c_{110}\int_\xb^{\xb_0}h(s)ds}=e^{c_{110}\int_\xb^{\xb_0}\xs^{1-(q-1)(N+\frac{\xa_+}{2}-2)}+1ds}<\infty$$
if and only if $$2-(q-1)(N+\frac{\xa_+}{2}-2)>0\Longleftrightarrow q<q_{c }.$$
Thus we have proved that
$$\int_Uu^q(d_U(x))^{\frac{\xa_+}{2}}dx<\infty,$$
which implies the existence of a $r_2>0$ such that
$$\int_{\xO\cap B_{r_2}(a)}u^q(d(x))^{\frac{\xa_+}{2}}dx<\infty,$$
i.e. $a\in \mathcal{R}_u,$ which is the claim.\smallskip

\noindent{\it Step 2}. Since $a\in \CS_u$ the previous statement implies that there exists a sequence $\{\xi^n\}\subset \xO$ such that
\be
\xi^n\rightarrow a\quad \text{and}\quad \limsup_{n\rightarrow \infty}(d(\xi^n))^{N+\frac{\xa_+}{2}-2}u(\xi^n)=\infty.\label{7.27}
\ee
By Lemma \ref{harnack}, there exists a constant $c_l$ such that
\be
u(x) \leq c_lu(y),\quad \forall x, y \in B_{\frac{r_n}{2}}(\xi^n) ,\quad r_n =d(\xi^n) .\label{7.28}
\ee
Put
$V_n := B_{\frac{r_n}{2}}(\xi^n)\cap \partial \Gw'_{r_n},$ and, for $k > 0$, $h_{n,k} := \frac{k}{b_n}u\chi_{V_n}$.

\noindent{\it Case 1}: $\xk =\frac{1}{4}.$
By (\ref{7.28}) and Lemma \ref{vol1} there exists a constant $c_{111} > 0$ such that
$$
b_n :=\int_{V_n} u dS\geq c_{111}A_n r_n^{N+\frac{1}{2}-2}(-\log r_n)^{1-\xe},\quad A_n := \sup_{x\in B_{\frac{r_n}{2}}(\xi^n)}u(x).
$$
Then
\be
\int_{\partial \Gw'_n}h_{n,k} dS = k ,\quad h_{n,k} \leq \frac{k}{c_2}r_n^{2-\frac{\xa_+}{2}-N}\chi_{V_n},\quad\forall n\geq n_0.\label{7.29}
\ee
By \eqref{7.27},
\be
b_n \rightarrow\infty,\quad r_n \rightarrow 0 .\label{7.30}
\ee
Hence, for every $k > 0$ there exists $n_k$ such that
\be
u \geq h_{n,k}\qquad \text{on } \;\partial \Gw'_n\quad \forall n \geq n_k .\label{7.31}
\ee
Let $w_{n,k}$ be defined as in Lemma \ref{7.3} with $h_n$ replaced by $h_{n,k} .$ By (\ref{7.29}) and
(\ref{7.30}), the sequence $\{h_{n,k}\}_{n=1}^\infty$
satisfies (\ref{7.8}) for every fixed $k > 0.$ Therefore by
Lemma \ref{7.3}
$$\lim_{n\rightarrow\infty} w_{n,k} = u_{k\gd_a} \text{ locally uniformly in}\; \xO.$$
By (\ref{7.31}), $u \geq w_{n,k}$ in ${x\in\xO : d(x) > r_n}.$ Hence $u\geq u_{k\gd_a}$ for every
$k > 0.$ The proof in the case $0<\gk<\frac{1}{4}$ is similar.  \hfill$\Box$\medskip

As a consequence we provide a full classification of positive solution of (\ref{Eq1}) with a boundary isolated singularity.
\begin{theorem}\label {class} Assume $1<q<q_{c }$ and $u\in C(\overline\Gw\setminus\{0\})$ is a positive solution of $(\ref{Eq1})$ 
which satisfies 
$$\lim_{x\in\xO,\;x\rightarrow\xi}\frac{u(x)}{W(x)}=0,\quad\forall \xi\in\partial\xO\setminus \{0\}.$$
 Then the following alternative holds\smallskip

\noindent (i) Either there exists $k\geq 0$ such that
      \begin{equation}\label{Sg1}\BA {ll}
\displaystyle\lim_{\tiny{\BA {cc}x\to 0,x\in\Gw\\
x|x|^{-1}\to\gs\EA}}|x|^{N+\frac{\ga_+}{2}-2}u(x)=k\psi_1(\gs)
  \EA \end{equation}
  and $u$ solves
        \begin{equation}\label{Sg2}\BA {ll}
-\Gd u-\myfrac{\xk }{d^2}u+u^q=0\qquad&\text{in }\Gw\\[2mm]
\phantom{-\Gd-\myfrac{\xk }{d^2}u+u^q}
u=k\gd_0\qquad&\text{in }\prt\Gw.
  \EA \end{equation}
  \smallskip

\noindent (ii) Or
        \begin{equation}\label{Sg3}\BA {ll}
\displaystyle\lim_{\tiny\BA {cc}x\to 0,x\in\Gw\\
x|x|^{-1}\to\gs\EA}| x|^{\frac{2}{q-1}}u(x)=\gw_{\xk }(\gs)
  \EA \end{equation}
  locally uniformly on $S^{N-1}_+$.
  \smallskip
\end{theorem}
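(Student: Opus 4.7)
The proof hinges on a dichotomy coming from the boundary trace construction of Section~5. Because $u$ vanishes in the strong sense on $\prt\xO\setminus\{0\}$, the local boundary trace analysis forces $\CS_u\subseteq\{0\}$ and $\mathrm{supp}(\gm_u)\subseteq\{0\}$, and exactly one of the following occurs: either $0\in\CR_u$ with $\gm_u=k\gd_0$ for some $k\ge 0$, or $0\in\CS_u$. These two possibilities correspond to~(i) and~(ii) respectively.

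In the first case, Proposition~\ref{prop17} yields
$$\int_\xO(u\,\CL_\gk\eta+u^q\eta)\,dx=k\int_\xO\BBK_{\CL_\gk}[\gd_0]\,\CL_\gk\eta\,dx$$
for every $\eta\in\mathbf{X}(\xO)$ supported in $\xO\cup V$ with $V$ a sufficiently small neighborhood of $0$ in $\prt\xO$; extension to arbitrary $\eta\in\mathbf{X}(\xO)$ is allowed by the vanishing of $u$ on $\prt\xO\setminus\{0\}$ together with the integrability of $u^q\ei$ away from $0$. The uniqueness part of Theorem~\ref{gen} then identifies $u=u_{k\gd_0}$, and the asymptotic \eqref{Sg1} follows by coupling Corollary~\ref{IS-2} (which gives $u_{k\gd_0}/K_{\CL_\gk}(\cdot,0)\to k$) with the sharp Poisson-kernel two-sided estimate \eqref{poissonest}: after multiplication by $|x|^{N+\ga_+/2-2}$, the kernel has the directional limit $\psi_1(\gs)$ (proportional to $({\bf e}_N.\gs)^{\ga_+/2}$) as $x=r\gs\to 0$.

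In the second case, Theorem~\ref{singpro1} supplies the lower bound $u\ge u_{\infty,0}$, and the strategy is to rerun the self-similar analysis of Proposition~\ref{IS-3} with $u$ in place of $u_{\infty,0}$. The crucial scale-invariant pointwise bound
$$u(x)\le c\,d(x)^{\ga_+/2}|x|^{-\frac{2}{q-1}-\ga_+/2},\qquad|\nabla u(x)|\le c'\,d(x)^{\ga_+/2-1}|x|^{-\frac{2}{q-1}-\ga_+/2},$$
valid for any positive solution of \eqref{Eq1} vanishing on $\prt\xO\setminus\{0\}$, rests on the same barrier construction that underlies \eqref{Sg8}--\eqref{Sg9}. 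Setting $T_\ell[u](y)=\ell^{2/(q-1)}u(\ell y)$ on $\xO_\ell=\ell^{-1}\xO$, the family $\{T_\ell[u]\}_{0<\ell\le 1}$ becomes locally $C^1$-precompact on $\ovl{\BBR^N_+}\setminus\{0\}$. Any subsequential limit $U$ is a positive solution of \eqref{Sg12} in $\BBR^N_+$ vanishing on $\prt\BBR^N_+\setminus\{0\}$, and the semigroup identity $T_{\ell'}\circ T_\ell=T_{\ell\ell'}$ forces $T_{\ell'}[U]=U$, so $U(r,\gs)=r^{-2/(q-1)}\gw(\gs)$ with $\gw\in\CE_\gk^+$.

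Rescaling the inequality $u\ge u_{\infty,0}$ and invoking the convergence in Proposition~\ref{IS-3} for the right-hand side produces $\gw\ge\gw_\gk>0$, and the uniqueness statement in Theorem~\ref{struct}(ii) then forces $\gw=\gw_\gk$. Since every cluster point of $\{T_\ell[u]\}$ coincides with the same self-similar profile, convergence takes place along the full family as $\ell\to 0$, which is exactly \eqref{Sg3}. The principal obstacle is the scale-invariant a priori upper bound for a general $u$ merely known to vanish on $\prt\xO\setminus\{0\}$: producing such a barrier in the presence of the Hardy potential and reconciling it with the nonlinear Keller--Osserman effect is the technical heart of the argument, and it is what feeds compactness into the self-similar limit.
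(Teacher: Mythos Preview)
Your handling of case~(i) (when $0\in\CR_u$) is correct and essentially coincides with the paper's.

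In case~(ii) there is a real gap. You assert that for any subsequential limit $U$ of $\{T_\ell[u]\}_{\ell\to 0}$ the semigroup identity $T_{\ell'}\circ T_\ell=T_{\ell\ell'}$ ``forces $T_{\ell'}[U]=U$''. It does not: if $T_{\ell_n}[u]\to U$ then $T_{\ell'}[U]=\lim_n T_{\ell'\ell_n}[u]$, and nothing guarantees that $\{\ell'\ell_n\}$ selects the same cluster point; one only gets $T_{\ell'}[U]\in\omega(u)$. The analogous step in Proposition~\ref{IS-3} works \emph{only} because of the special identity $T_\ell[u_{\infty,0}^{\Gw}]=u_{\infty,0}^{\Gw_\ell}$, which expresses that scaling sends the canonical infinity-solution on $\Gw$ to the canonical infinity-solution on $\Gw_\ell$. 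A generic $u$ enjoys no such identity, so you cannot conclude that each cluster point is self-similar, and hence cannot invoke Theorem~\ref{struct}(ii) to identify the profile.

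The paper closes case~(ii) by a squeeze that avoids this issue altogether. It proves as a separate lemma (Lemma~\ref{U_0}) that the maximal solution $U_{\{0\}}$ with singular support $\{0\}$ equals $u_{\infty,0}$. Since trivially $u\le U_{F_\ge(0)}$ for every $\ge>0$ and $U_{F_\ge(0)}\downarrow U_{\{0\}}$, this gives $u\le u_{\infty,0}$; combined with $u\ge u_{\infty,0}$ from Theorem~\ref{singpro1} one obtains $u=u_{\infty,0}$ \emph{exactly}, and~\eqref{Sg3} then follows by one application of Proposition~\ref{IS-3} to $u_{\infty,0}$. The substantive work is Lemma~\ref{U_0}: assuming $U_{\{0\}}>u_{\infty,0}$, the two-sided bound $c^{-1}u_{\infty,0}\le U_{\{0\}}\le c\,u_{\infty,0}$ and convexity of $t\mapsto t^q$ let one build a supersolution strictly below $u_{\infty,0}$ with singular trace $(\{0\},0)$, contradicting Theorem~\ref{singpro1}. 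Your route could in principle be repaired by a Liouville theorem in $\BBR^N_+$ or a Lyapunov argument for the autonomous flow in $t=-\log r$, but as written the self-similarity claim is unjustified.
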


The result is a consequence of the following result

\begin{lemma}\label{U_0} Assume $1<q<q_c$, $a\in\prt\Gw$ and $F_\ge(a)=\prt\Gw\cap\overline {B_\ge(a)}$. Then
        \begin{equation}\label{SX1}\BA {ll}
       \displaystyle \lim_{\ge\to 0}U_{F_\ge(a)}=u_{\infty,a}.
  \EA \end{equation}
\end{lemma}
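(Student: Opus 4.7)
The plan is to sandwich $U_* := \lim_{\ge\to 0}U_{F_\ge(a)}$ between $u_{\infty,a}$ and itself, relying on the classification of isolated boundary singularities from Theorem \ref{class}.

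First, I observe that $\ge\mapsto F_\ge(a)$ is decreasing in $\ge$ with $\bigcap_{\ge>0}F_\ge(a)=\{a\}$, so Proposition \ref{sygklish1}(ii) guarantees that the monotone pointwise limit $U_*:=\lim_{\ge\to 0}U_{F_\ge(a)}=U_{\{a\}}$ exists and is a positive solution of (\ref{T1}) in $\Gw$ (standard local compactness for the family, via Keller--Osserman estimates and elliptic regularity). The vanishing property (\ref{T8'}) of each $U_{F_\ge(a)}$, together with $0\leq U_*\leq U_{F_\ge(a)}$, yields $\lim_{x\to\xi}U_*(x)/W(x)=0$ for every $\xi\in\prt\Gw\setminus\{a\}$ (just pick $\ge$ so small that $\xi\notin F_\ge(a)$), and the boundary Harnack inequalities of Propositions \ref{maincomp1}--\ref{maincomp} then give $U_*\in C(\overline\Gw\setminus\{a\})$.

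For the lower bound $u_{\infty,a}\leq U_*$, I use that for each $k\geq 0$ the solution $u_{k\gd_a}$ vanishes on $\prt\Gw\setminus\{a\}\supset \prt\Gw\setminus F_\ge(a)$, so by the very definition (\ref{T7}) of $U_{F_\ge(a)}$ as the supremum of such solutions, $u_{k\gd_a}\leq U_{F_\ge(a)}$. Letting first $k\to\infty$ and then $\ge\to 0$ gives the desired inequality.

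The main obstacle is the reverse inequality $U_*\leq u_{\infty,a}$, which I obtain by applying the classification Theorem \ref{class} to $U_*$: either \emph{(i)} $U_*=u_{k\gd_a}$ for some finite $k\geq 0$, or \emph{(ii)} $U_*$ satisfies the self-similar asymptotic (\ref{Sg3}), in which case it has boundary trace $(\{a\},0)$ and therefore coincides with $u_{\infty,a}$ (by the uniqueness contained in the classification, noting via Proposition \ref{IS-3} that $u_{\infty,a}$ exhibits the very same asymptotic). Alternative \emph{(i)} is incompatible with the previous step: it would force $u_{\infty,a}\leq u_{k\gd_a}$, but the monotone sequence $\{u_{k'\gd_a}\}_{k'>0}$ is strictly increasing (distinct moderate boundary traces give distinct solutions by Proposition \ref{2.6}), and moreover $u_{\infty,a}$ blows up near $a$ like $|x-a|^{-2/(q-1)}$ by (\ref{Sg7}), while the moderate solution $u_{k\gd_a}$ is dominated by $kK_{\CL_{\xk}}(\cdot,a)\approx|x-a|^{2-N-\ga_+/2}$ along the inward normal; since $2/(q-1)>N+\ga_+/2-2$ in the subcritical range $1<q<q_c$, the two blow-up rates are incompatible. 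Thus case \emph{(ii)} must hold and $U_*=u_{\infty,a}$, concluding the proof.
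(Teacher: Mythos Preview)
Your argument is circular. In the paper's logical structure, Theorem~\ref{class} is proved \emph{after} Lemma~\ref{U_0}, and its proof explicitly invokes~(\ref{SX1}): when $a\in\CS_u$ the paper bounds $u\le U_{F_\ge(a)}$ for every $\ge>0$ and then lets $\ge\to 0$, using Lemma~\ref{U_0} to conclude $u\le u_{\infty,a}$; together with Theorem~\ref{singpro1} this yields $u=u_{\infty,a}$ and hence the asymptotic~(\ref{Sg3}). So neither the dichotomy of Theorem~\ref{class} in case~(ii), nor the ``uniqueness contained in the classification'' you appeal to, is available at this stage.

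Even setting the circularity aside, your step in case~(ii) is not justified: Theorem~\ref{class} as stated only asserts the asymptotic~(\ref{Sg3}), not that every solution realizing it equals $u_{\infty,a}$. Two positive solutions sharing the same leading-order behaviour at the singular point and vanishing on $\prt\Gw\setminus\{a\}$ need not coincide without a further argument; that is precisely what Lemma~\ref{U_0} is designed to supply.

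The paper's route avoids this by a direct comparison: from the a~priori estimate~(\ref{3.4.24}) and Proposition~\ref{IS-3} one gets a two-sided bound
\[
\frac{1}{c_0}\,|x|^{-\frac{2}{q-1}}\Bigl(\tfrac{d(x)}{|x|}\Bigr)^{\frac{\ga_+}{2}}\le u_{\infty,a}(x)\le U_{\{a\}}(x)\le c_0\,|x|^{-\frac{2}{q-1}}\Bigl(\tfrac{d(x)}{|x|}\Bigr)^{\frac{\ga_+}{2}},
\]
so that $U_{\{a\}}\le c\,u_{\infty,a}$ for some $c>1$. If strict inequality held somewhere, one uses convexity of $t\mapsto t^q$ to check that $\tilde u:=u_{\infty,a}-\tfrac{1}{2c}(U_{\{a\}}-u_{\infty,a})$ is a supersolution strictly below $u_{\infty,a}$, while $\tfrac{c+1}{2c}u_{\infty,a}$ is a subsolution below it; the resulting intermediate solution $\underline u<u_{\infty,a}$ has trace $(\{a\},0)$, contradicting Theorem~\ref{singpro1}. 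This argument relies only on Theorem~\ref{singpro1} and the a~priori estimates, not on Theorem~\ref{class}.
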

\begin{proof} Without loss of generality, we can assume $a=0$. Clearly, $U_{\{0\}}:=\lim_{\ge\to 0}U_{F_\ge(0)}$ is a solution of (\ref{T1}) which satisfies
$$\lim_{x \to \xi}\frac{U_{\{0\}}}{W(x)}=0\qquad\forall \xi\in\prt\Gw\setminus\{0\}
$$
locally uniformly on $\prt\Gw\setminus\{0\}$. By (\ref{3.4.24}) it verifies
        \begin{equation}\label{SX2}\BA {ll}
U_{\{0\}}(x)\leq c|x|^{-\frac{2}{q-1}}\left(\frac{d(x)}{|x|}\right)^{\frac{\ga_+}{2}}.
  \EA \end{equation}

 By Proposition \ref{IS-3} and \eqref{cor6}, we can follow the same argument like in the proof of Theorem 3.4.6-(ii) in \cite{book} to prove that: there exists $c_0=c_{112}(N,\gk,q)>1$ such that
$$ 
 \frac{1}{c_0}|x|^{-\frac{2}{q-1}}\left(\frac{d(x)}{|x|}\right)^{\frac{\ga_+}{2}}\leq u_{\infty,0}(x)\leq U_{\{0\}}(x)\leq c_0|x|^{-\frac{2}{q-1}}\left(\frac{d(x)}{|x|}\right)^{\frac{\ga_+}{2}}
$$ 
Which implies 
        \begin{equation}\label{SX3}\BA {ll}
U_{\{0\}}(x)\leq cu_{\infty,0}(x)\qquad\forall x\in\Gw,
  \EA \end{equation}
 where $c=c_{122}(N,\gk,q)>1.$
  
Assume $U_{\{0\}}\neq u_{\infty,0}$, thus $U_{\{0\}}(x)> u_{\infty,0}(x)$ for all $x\in\Gw$  and put
$\tilde u=u_{\infty,0}-\frac{1}{2c}(U_{\{0\}}- u_{\infty,0})$. By convexity $\tilde u$ is a supersolution of (\ref{T1}) which is smaller than $u_{\infty,0}$. Now $\frac{c+1}{2c}u_{\infty,0}$ is a subsolution, thus there exists a solution $\underline u$ of (\ref{T1}) in
$\Gw$ which satisfies
        \begin{equation}\label{SX4}\BA {ll}
\myfrac{c+1}{2c}u_{\infty,0}(x)\leq \underline u(x)\leq \tilde u(x)<u_{\infty,0}(x)\qquad\forall x\in\Gw.
  \EA \end{equation}
  This implies that $Tr_{\prt\Gw}(\underline u)=(\{0\},0)$, and by Theorem \ref{singpro1}, $\underline u\geq u_{\infty,0}$, which is a contradiction.
\medskip

\noindent{\it Proof of Theorem \ref{class}} Assume $a=0$ without loss of generality. If $a\in \CS_u$, then for any $\ge>0$,
$u\leq U_{F_\ge(0)}$ which is a maximal solution which vanishes on $\prt\Gw\setminus F_\ge(0)$. Thus, using (\ref{SX1})
$$u\leq      \displaystyle \lim_{\ge\to 0}U_{F_\ge(0)}=U_{\{0\}}=u_{\infty,0}.$$
If $0\in \CR_u$, this implies that $Tr_{\prt\Gw}( u)=(\emptyset,k\gd_0)$ for some $k\geq 0$ and we conclude with Corollary
\ref{IS-2}.
\end{proof}\medskip

The next result can be proven by using the same approximation methods as in \cite[Th 9.6]{MV-CPAM}.

\begin{theorem}\label{Borel}. Assume $\CS\subset\prt\Gw$ is closed and $\gn$ is a positive Radon measure on $\CR=\prt\Gw\setminus\CS$. Then there exists a positive solution of (\ref{Eq1}) in $\Gw$ with boundary trace $(\CS,\gm)$.
\end{theorem}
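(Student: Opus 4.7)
The plan is to adapt the approximation construction of \cite[Th. 9.6]{MV-CPAM} to the operator $\CL_\gk$. First I would build a ``singular backbone'' $U_\CS$ via the definition (\ref{T7}), namely the maximal solution of (\ref{T1}) whose limit normalized by $W$ vanishes outside $\CS$. Arguing with $u_{\infty,a}\leq U_\CS$ for every $a\in\CS$ together with Proposition \ref{Lemm16} gives $\CS\subseteq\CS_{U_\CS}$, while the barrier estimate of Proposition \ref{prop19} (combined with the vanishing condition built into the definition) shows the regular part of $U_\CS$ is zero. Hence $\text{Tr}_{\prt\Gw}(U_\CS)=(\CS,0)$.

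Next I would discretize the regular data. Setting $K_n:=\{\xi\in\CR:\dist(\xi,\CS)\geq 1/n\}$ yields an increasing family of compact subsets of $\CR$ with $\bigcup_n K_n=\CR$, and each $\gn_n:=\chi_{K_n}\gn$ is a finite Borel measure on $\prt\Gw$ because $\gn$ is Radon on $\CR$. Since $1<q<q_c$, Theorem \ref{gen} produces the moderate solution $v_n:=u_{\gn_n}$ with $\text{Tr}_{\prt\Gw}(v_n)=(\emptyset,\gn_n)$, and the monotonicity of the solution map yields $v_n\uparrow$. I would then define
\begin{equation*}
w_n:=U_\CS\vee v_n=[\max(U_\CS,v_n)]_\dagger,
\end{equation*}
the smallest solution of (\ref{T1}) dominating the continuous subsolution $\max(U_\CS,v_n)$; this exists by Proposition \ref{sygklish} together with the Keller--Osserman bound for $\CL_\gk u+u^q=0$, which yields $w_n(x)\leq C\,d(x)^{-\frac{2}{q-1}}$ with $C$ depending only on $q$ and $\gk$. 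Monotonicity of $v_n$ makes $\{w_n\}$ nondecreasing, so the uniform bound on compacts of $\Gw$ ensures that $w:=\lim_n w_n$ is a classical positive solution of (\ref{T1}).

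The trace identification proceeds in two stages. The lower bound is straightforward: $w\geq U_\CS$ forces $\CS\subseteq\CS_w$ by Step~1, while $w\geq v_n$ combined with (\ref{REG1}) and Proposition \ref{pro2} applied to $Z\in C_c(\CR)$ gives $\gm_w\geq\gn$ on $\CR$. For the upper bound I would fix $a\in\CR$ and $r>0$ with $B_{2r}(a)\cap\CS=\emptyset$, and show $\int_{B_r(a)\cap\Gw}w^q\gf_\gk\,dx<\infty$, which puts $a$ in $\CR_w$ and, via the characterization of the regular trace, pins $\gm_w$ down to $\gn$ on compacts of $B_r(a)\cap\CR$. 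Near $a$, $U_\CS$ is locally bounded by Proposition \ref{prop19}; and for $v_n\leq \BBK_{\CL_\gk}[\gn_n]$ I would split $\gn_n=\gn_n\chi_{B_{2r}(a)}+\gn_n\chi_{\prt\Gw\setminus B_{2r}(a)}$, using Lemma \ref{Marcin} to control the $L^q_{\gf_\gk}$-norm of the first piece (whose total mass is uniformly bounded by $\gn(B_{2r}(a)\cap\CR)<\infty$), and the pointwise bound $K_{\CL_\gk}(x,y)\leq c\,d(x)^{\ga_+/2}r^{-(N+\ga_+-2)}$ for $x\in B_r(a)\cap\Gw$, $y\in\prt\Gw\setminus B_{2r}(a)$, to handle the second.

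The main obstacle I anticipate is exactly this last uniform control when $\gn(\CR)=\infty$: the mass of the far piece of $\gn_n$ may grow unboundedly as $n\to\infty$, and one must show that this growth contributes only to the behavior of $w$ near $\CS$ and not near $a\in\CR$. The resolution must exploit the strict subcriticality $q<q_c$, which makes the Green--Marcinkiewicz potential estimate of Lemma \ref{Marcin} operate with a subordination between $\BBG_{\CL_\gk}[v_n^q]$ and $\BBK_{\CL_\gk}[\gn_n]$ (compare Lemma \ref{IS-1} in the atomic case), together with a localization/Harnack argument in $B_r(a)\cap\Gw$ in the spirit of Proposition \ref{maincomp}. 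Once this local integrability is achieved uniformly in $n$, Fatou together with Proposition \ref{prop17} applied to test functions supported near $a$ upgrades the liminf lower bound to equality and concludes $\text{Tr}_{\prt\Gw}(w)=(\CS,\gn)$.
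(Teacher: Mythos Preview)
Your strategy is exactly the one the paper invokes (it simply cites \cite[Th.~9.6]{MV-CPAM}), and the skeleton---build $U_\CS$, take moderate approximants $v_n=u_{\gn_n}$ with $\gn_n=\chi_{K_n}\gn$, form $w_n=U_\CS\vee v_n$, and pass to the monotone limit---is correct.

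The one place where your write-up is not yet closed is precisely the obstacle you flag. Your proposed fix, bounding the ``far'' piece of $v_n$ through the linear kernel $K_{\CL_\gk}(x,y)\leq c\,d(x)^{\ga_+/2}r^{-(N+\ga_+-2)}$, does \emph{not} work when $\gn_n(\prt\Gw\setminus B_{2r}(a))\to\infty$: that estimate is multiplied by the total far mass, which blows up. The Marcinkiewicz/Harnack route you sketch does not repair this, because those tools still see the linear potential.

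The correct resolution is nonlinear and already sits in Section~5. Use the decomposition (\ref{T7'}) applied to the solution $v_n$ with $A=\overline{B_{2r}(a)\cap\prt\Gw}$:
\[
v_n\;\leq\;[v_n]_{\overline{B_{2r}(a)\cap\prt\Gw}}\;+\;U_{\overline{\prt\Gw\setminus B_{2r}(a)}}.
\]
By Proposition~\ref{prop}, the first term has boundary trace $\gn_n\chi_{\overline{B_{2r}(a)}}\leq\gn\chi_{\overline{B_{2r}(a)}}$, hence is dominated by the fixed moderate solution $u_{\gn\chi_{\overline{B_{2r}(a)}}}\in L^q_{\gf_\gk}(\Gw)$, uniformly in $n$. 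The second term is a maximal solution vanishing on $B_{2r}(a)\cap\prt\Gw$, so Proposition~\ref{prop19} gives $U_{\overline{\prt\Gw\setminus B_{2r}(a)}}(x)\leq C\,d(x)^{\ga_+/2}r^{-\frac{2}{q-1}-\frac{\ga_+}{2}}$ on $B_r(a)\cap\Gw$, again independent of $n$. Since $w_n\leq U_\CS+v_n$ (because $U_\CS+v_n$ is a supersolution dominating $\max(U_\CS,v_n)$), you obtain a uniform pointwise bound
\[
w_n\;\leq\;U_\CS+u_{\gn\chi_{\overline{B_{2r}(a)}}}+U_{\overline{\prt\Gw\setminus B_{2r}(a)}}\qquad\text{in }\Gw,
\]
and all three right-hand terms lie in $L^q_{\gf_\gk}(B_r(a)\cap\Gw)$ with bounds independent of $n$. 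This gives $a\in\CR_w$; moreover, since the two maximal solutions contribute zero regular trace on $B_r(a)\cap\prt\Gw$, Proposition~\ref{pro3} yields $\gm_w\lfloor_{B_r(a)}\leq\gn\lfloor_{B_r(a)}$, matching your lower bound. The point is that the Keller--Osserman estimate embedded in $U_F$ absorbs arbitrarily large far mass---this is what the linear kernel cannot do.
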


\section{Appendix I: barriers and a priori estimates}
\setcounter{equation}{0}

\subsection{Barriers}\label{Barrier}
Following a localization principle introduced in \cite{MV-CPAM} we the following lemma is at the core of the a priori estimates construction
\begin{prop}\label{barr} Let $\Gw\subset\BBR^N$ be a $C^2$ domain $0<\xk \leq\frac{1}{4}$ and $p>1$.Then there exists $R_0>0$ such that for any $z\in \prt\Gw$ and $0<R\leq R_0$, there exists a super solution $f:=f_{R,z}$ of $(\ref{Eq1})$ in $\Gw\cap B_R(z)$ such that $f\in C(\overline\Gw\cap B_R(z))$, $f(x)\to \infty$ when $\dist (x,K)\to 0$, for any compact subset $K\subset \Gw\cap \prt B_R(z)$ and which vanishes on $\prt\Gw\cap B_R(z)$, and more precisely
\begin{equation}\label{BAR1}
f(x)= \left\{\BA{lll}c_{\gb,\gamma,\gk,q}(R^2-|x-z|^2)^{-\gb}d^{\gamma}(x)\quad\forall \gamma\in (\frac{\ga_-}{2},\frac{\ga_+}{2})\,&\text{ if }0<\gk<\frac{1}{4}\\[2mm]
c_{\gb,\gamma,q}(R^2-|x-z|^2)^{-\gb}\sqrt{d(x)}\sqrt{\ln \left(\frac{{\rm {diam}}(\Gw)}{d(x)}\right)}&\text{ if }\gk=\frac{1}{4}
\EA\right.
\end{equation}
for $\gb\geq \max\{\frac{2}{q-1}+\gg,\frac{N-2}{2},1\}$.
\end{prop}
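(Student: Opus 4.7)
My plan is to verify the ansatz $f=c_*\,g\,h$, with
\begin{equation*}
g(x)=(R^2-|x-z|^2)^{-\gb}
\end{equation*}
and with $h(x)=d(x)^{\gamma}$ in the subcritical range $0<\gk<\tfrac14$, $h(x)=\sqrt{d(x)\,\ln(\mathrm{diam}(\Gw)/d(x))}$ in the critical case $\gk=\tfrac14$, and to check directly that $\CL_\gk f+f^{q}\ge 0$ on $\Gw\cap B_R(z)$, provided $R_0$ is small and $c_*$ is large (both independent of $R\le R_0$ and of $z\in\prt\Gw$). Because $z\in\prt\Gw$ and $R_0\le\gb_0$, the whole ball $B_R(z)\cap\Gw$ lies in the tubular neighborhood $\Gw_{\gb_0}$ where $d$ is $C^{2}$, so $\Gd h$ can be computed classically. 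The product-rule identity
\begin{equation*}
\CL_\gk(c_*gh)+(c_*gh)^q=c_*\bigl[\,g\,\CL_\gk h-h\,\Gd g-2\nabla g\cdot\nabla h\,\bigr]+c_*^{\,q}g^{q}h^{q}
\end{equation*}
reduces the task to showing that the two \emph{good} positive contributions $g\,\CL_\gk h$ and $c_*^{\,q-1}g^{q}h^{q}$ dominate the two \emph{bad} terms $h|\Gd g|$ and $2|\nabla g\cdot\nabla h|$.

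The first step is a one-sided lower bound for $\CL_\gk h$. In the subcritical range the identity $\CL_\gk d^{\gamma}=-[\gamma(\gamma-1)+\gk]\,d^{\gamma-2}-\gamma\,d^{\gamma-1}\Gd d$, combined with $\gamma^{2}-\gamma+\gk<0$ for $\gamma\in(\ga_-/2,\ga_+/2)$, yields $\CL_\gk h\ge c_0\,h/d^{2}$ throughout $\Gw_{\gb_0}$ (after shrinking $\gb_0$ so that the lower order $\Gd d$ term is absorbed). In the critical case an analogous one-dimensional computation in flow coordinates, modeled on \cite[Lemma 2.8]{1}, gives $\CL_{1/4}h\ge c_0\,h/(d^{2}L^{2})$, with $L:=\ln(\mathrm{diam}(\Gw)/d)$. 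The second step is elementary calculus on $g$: writing $\gr:=R^{2}-|x-z|^{2}$ one obtains $\Gd g\ge 0$, together with $|\Gd g|\le C_\gb(R^{2}\gr^{-\gb-2}+\gr^{-\gb-1})$ and $|\nabla g|\le C_\gb R\,\gr^{-\gb-1}$; combined with $|h_d|\le C\,h/d$ (respectively $C\sqrt L/\sqrt d$ in the critical case) these control $|\nabla g\cdot\nabla h|$. The standing conditions $\gb\ge(N-2)/2$ and $\gb\ge 1$ are precisely what makes the coefficients in $\Gd g$ work out cleanly.

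The main argument is then a region decomposition. In the inner region $\{\gr\ge d\}$ the linear good term dominates: in the subcritical case both ratios $h|\Gd g|/(g\CL_\gk h)$ and $|\nabla g\cdot\nabla h|/(g\CL_\gk h)$ are bounded by $CR^{2}$ (resp.\ $CR$), which can be made $\le\tfrac14$ by taking $R_0$ small. In the outer region $\{\gr\le d\}$ the super-linear good term takes over: using $\gb\ge\gamma+\tfrac{2}{q-1}$ one checks $h|\Gd g|/(g^{q}h^{q})\le CR^{2}(\gr/d)^{(q-1)\gamma}\le CR^{2}$, and a similar estimate holds for the gradient term, so choosing $c_*^{\,q-1}$ above that absolute constant completes the inequality. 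The boundary behaviour required by the proposition is immediate from the formula: $g\to+\infty$ uniformly on compact subsets of $\prt B_R(z)\cap\Gw$, while $h\to 0$ on $\prt\Gw\cap B_R(z)$ at a rate compatible with $W$ in each of the two cases.

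The main obstacle will be the critical case $\gk=\tfrac14$, in which the lower bound for $\CL_{1/4}h$ is degraded by the factor $L^{-2}$. Accordingly the region where the linear good term alone is strong enough shrinks to $\{\gr\ge RL\,d\}$, and on the intermediate strip $\{d\le\gr\le RL\,d\}$ one must rely on the super-linear term. There the choice $\gb\ge\frac{q+3}{2(q-1)}=\frac12+\frac{2}{q-1}$ makes $g^{q}h^{q}$ of order $R^{-(q+3)/2}\cdot h|\Gd g|$ uniformly in the strip, restoring the needed margin once $R_0$ is small enough. Getting this hierarchy of choices right---first fix $\gb_0$ and $c_0$ from the lower bound on $\CL_\gk h$, then $R_0$ in terms of $c_0$, and finally $c_*$ in terms of $R_0$ and the structural constants---is the only delicate point; everything else reduces to direct algebra in the variables $d$, $\gr$, $L$, $R$.
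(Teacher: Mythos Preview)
Your approach is essentially the same as the paper's: the same ansatz $f=\Lambda\,(R^2-|x-z|^2)^{-\beta}\,h(d)$, the same product-rule expansion of $\CL_\gk f+f^q$, and the same dichotomy between a region where $d$ is small relative to $\rho=R^2-|x-z|^2$ (so the positivity coming from $\gamma(\gamma-1)+\gk<0$, resp.\ the $\tfrac14 d^{-3/2}L^{-3/2}$ term, wins) and a complementary region where the superlinear term $f^q$ does the work under the constraint $\beta\ge \gamma+\tfrac{2}{q-1}$. The paper simply writes out the full bracket explicitly (see (F3) and (F12)) and takes the threshold $d\lesssim \rho/(R)$ in the subcritical case and $d\,L^2\lesssim \rho/R$ in the critical case, while you phrase it via the cleaner product-rule splitting and the thresholds $\rho\ge d$ and $\rho\ge RLd$; these differ only by harmless constants. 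Your observation that $R_0\le\gb_0$ forces $d\le R_0$, so one never leaves the tubular neighborhood where $\CL_\gk h\ge c_0 h/d^2$ holds, in fact streamlines the paper's argument by making its separate region $C=\{d\ge\gd_0\}$ empty.
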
  
\begin{proof} We assume  $z=0$
\smallskip

\noindent {\it Step 1: $\xk <\frac{1}{4}$. } Set
$f(x)=\Gl(R^2-| x|^2)^{-\gb} (d(x))^{\gamma}$ where $\gb,\gamma>0$ to be chosen later on. Then, with $r=|x|$,
$$\BA {lll}
\Gl^{-1}\CL_{\xk }f\\[4mm]
=-(R^2-r^2)^{-\gb}\left(\Gd d^{\gamma}+\xk d^{\gamma-2}\right)-d^{\gamma}\Gd(R^2-r^2)^{-\gb}-2
\nabla(R^2-r^2)^{-\gb}\nabla d^{\gamma}\EA$$
Since $\Gd d(x)=(N-1)H_d$ where $H_d$ is the mean curvature of the foliated set $\Gs_d:=\{x\in\Gw:d(x)=d\}$ and $|\nabla d|^2=1$,

$$\BA {ll}\Gd d^{\gamma}=(N-1)\gamma H_d  d^{\gamma-1}+\gamma(\gamma-1)  d^{\gamma-2}
\EA$$
$$\Gd d^{\gamma}+\xk d^{\gamma-2}=(N-1)\gamma H_d  d^{\gamma-1}+\left(\gamma(\gamma-1)+\xk \right) d^{\gamma-2}
$$
$$\nabla d^{\gamma}=\gamma d^{\gamma-1}\nabla d,
$$

$$\nabla(R^2-r^2)^{-\gb}=2\gb(R^2-r^2)^{-\gb-1}x,
$$

thus
$$\nabla(R^2-r^2)^{-\gb}\nabla d^{\gamma}=2\gb\gamma  d^{\gamma-1}(R^2-r^2)^{-\gb-1}x\nabla d
$$

$$\BA {ll}\Gd(R^2-r^2)^{-\gb}=2N\gb(R^2-r^2)^{-\gb-1}+4\gb(\gb+1)(R^2-r^2)^{-\gb-2}r^2\\[4mm]
\phantom{\Gd(R^2-r^2)^{-\gb}}
=2\gb(R^2-r^2)^{-\gb-2}\left(NR^2+(2\gb+2-N)r^2\right)
\EA$$
Then

$$\BA {lll}
\Gl^{-1}\CL_{\xk }f=-(R^2-r^2)^{-\gb-2} d^{\gamma-2}\left[(R^2-r^2)^{2}\left((N-1)\gamma H_d  d+\gamma(\gamma-1)+\xk \right)\right.
\\[4mm]\phantom{\Gl^{-1}\CL_{\xk }f}\left.
+2\gb d^2\left(NR^2+(2\gb+2-N)r^2\right)+4\gb\gamma d(R^2-r^2)x\nabla d^{\phantom{^4}}
\right]
\EA$$
Therefore
\begin{equation}\label{F3}\BA {ll}\CL_{\xk }f+f^q=\Gl(R^2-r^2)^{-\gb-2}d^{\gamma-2}\left[\Gl^{q-1}(R^2-r^2)^{-(q-1)\gb+2}d^{(q-1)\gamma+2}
\right.
\\[4mm]\phantom{\CL_{\xk }(f)+f^q}\left.
-(R^2-r^2)^{2}\left((N-1)\gamma H_d  d+\gamma(\gamma-1)+\xk \right)
\right.
\\[4mm]\phantom{\CL_{\xk }(f)+f^q}\left.
-2\gb d^2\left(NR^2+(2\gb+2-N)r^2\right)+4\gb\gamma d(R^2-r^2)x\nabla d^{\phantom{^4}}
\right]\EA\end{equation}
If we fix $\gb\geq \max\{\frac{2}{q-1}+\gamma, \frac{N-2}{2},1\}$, there holds
$$2\gb d^2\left(NR^2+(2\gb+2-N)r^2\right)+4\gb\gamma d(R^2-r^2)x\nabla d
\leq 4d^2\gb(\gb+1)NR^2+4\gb\xg dR(R^2-r^2)
$$
We choose $\frac{\ga_-}{2}<\gamma<\frac{\ga_+}{2}$ so that $\gamma(\gamma-1)+\xk <0$. There exist $\gd_0,\ge_0>0$ such that
$$(N-1)\gamma H_d  d+\gamma(\gamma-1)+\xk <-\ge_0<-1
$$
provided $d(x)\leq\gd_0$. We set
$$A=\left\{x\in\Gw\cap B_R:d(x)\leq \myfrac{\ge_0(R^2-r^2)}{16\gb R}\right\}\quad\text{and }\,B:=A\cap\left\{x\in\Gw\cap B_R:d(x)\leq \gd_0^{\phantom{^4}}\right\}
$$
Then, if $x\in B$, there holds
$$\BA {ll}-(R^2-r^2)^{2}\left((N-1)\gamma H_d  d+\gamma(\gamma-1)+\xk \right)-2\gb d^2\left(NR^2+(2\gb+2-N)r^2\right)\\[4mm]\phantom{-------------}+4\gb\gamma d(R^2-r^2)x\nabla d\geq \myfrac{(R^2-r^2)^{2}\ge_0}{2}
\EA$$
Finally, assume $x\in A^c\cap \left\{x\in\Gw\cap B_R:d(x)\leq \gd_0^{\phantom{^4}}\right\}$ and thus
$$d\geq c_1\myfrac{R^2-r^2}{ R}$$
In order to have
\begin{equation}\label{F4}\BA {llllll}
(i)\; &\Gl^{q-1}(R^2-r^2)^{2-(q-1)\gb}d^{(q-1)\gg+2}\geq d^2R^2&\\[2mm]
(ii)\qquad &\Gl^{q-1}(R^2-r^2)^{2-(q-1)\gb}d^{(q-1)\gg+2}\geq dR(R^2-r^2)&
\EA\end{equation}
or equivalently
\begin{equation}\label{F5}\BA {llllll}
(i)\Longleftrightarrow \Gl^{\frac{1}{\gg}}d\geq (R^2-r^2)^{\frac{\gb}{\gg}}\\[2mm]
(ii)\Longleftrightarrow
\Gl^{\frac{q-1}{(q-1)\gg+1}}d\geq R^{\frac{1}{(q-1)\gg+1}}(R^2-r^2)^{\frac{(q-1)\gb-1}{(q-1)\gg+1}}
\EA\end{equation}
it is sufficient to have, for (i)
\begin{equation}\label{F6}\BA{ll}
c_1\Gl^{\frac{1}{\gg}}\myfrac{R^2-r^2}{ R}\geq (R^2-r^2)^{\frac{\gb}{\gg}}\quad\forall r\in (0,R)\Longleftrightarrow
\Gl\geq c_2R^{2\gb-\gg}
\EA\end{equation}
and for (ii)
\begin{equation}\label{F7}\BA {lll}
c_1\Gl^{\frac{q-1}{(q-1)\gg+1}}\myfrac{R^2-r^2}{ R}\geq
R^{\frac{1}{(q-1)\gg+1}}(R^2-r^2)^{\frac{(q-1)\gb-1}{(q-1)\gg+1}}\quad\forall r\in (0,R)\\
\phantom{\myfrac{1}{ R}}\Longleftrightarrow \phantom{\myfrac{1}{ R}}
\Gl\geq c_2R^{2\gb-\gg-\frac{2}{q-1}}
\EA\end{equation}
where $c_2=c_2(N,\gamma,\beta)>0$ since $\gb>\gg+\frac{2}{q-1}$.

\noindent At end, in the set $C:=\{x\in\Gw:d(x)\geq\gd_0\}$, it suffices that
\begin{equation}\label{F8}\BA {lll}
\Gl\geq c_3\max\left\{R^{2\gb},R^{2\gb-\frac{1}{q-1}}\right\}
\EA\end{equation}
for some $c_3=c_3(N,\gamma,\beta,\max|H_d|,\gd_0)>0$ in order to insure
\begin{equation}\label{F9}\BA {llllll}
(i)&\Gl^{q-1}(R^2-r^2)^{-(q-1)\gb+2}d^{(q-1)\gamma+2}\geq (R^2-r^2)^{2}(N-1)\gamma |H_d|  d
\\[2mm]
(ii)&\Gl^{q-1}(R^2-r^2)^{-(q-1)\gb+2}d^{(q-1)\gamma+2}\geq 4d^2\gb(\gb+1)NR^2
\\[2mm]
(iii) &\Gl^{q-1}(R^2-r^2)^{-(q-1)\gb+2}d^{(q-1)\gamma+2}\geq4\gb dR(R^2-r^2).
\EA\end{equation}

\noindent Noticing that $2\gb>2\gb-\frac{1}{q-1},2\gb-\gg>2\gb-\gg-\frac{1}{q-1}$,
we conclude that there exists a constant $c_4=c_4(N,\gamma,\beta,\max|H_d|,\gd_0)>0$ such that if
\begin{equation}\label{F10}\BA {lll}
\Gl\geq c_4
\max\left\{R^{2\gb},R^{2\gb-\gg-\frac{1}{q-1}}\right\}
\EA\end{equation}
there holds
\begin{equation}\label{F11}\BA {lll}
\CL_{\xk }(f)+f^q\geq 0\qquad\text{in }\;\Gw.
\EA\end{equation}
\noindent {\it Step 2: $\xk =\frac{1}{4}$. } Set $f(x)=\Gl(R^2-r^2)^{-\gb}\sqrt d(\ln \frac{eR}{d})^\frac{1}{2}$ for some $\Gl,\gb$ to be fixed.
Then
$$\BA{ll}\Gd\sqrt d(\ln \frac{eR}{d})^\frac{1}{2}=\frac{1}{\sqrt d}\left(\frac{1}{2}(\ln \frac{eR}{d})^\frac{1}{2}-\frac{1}{2}(\ln \frac{eR}{d})^{-\frac{1}{2}}\right)\Gd d\\[4mm]
\phantom{\Gd\sqrt d(\ln \frac{eR}{d})^\frac{1}{2}=}+
\frac{1}{ d^{\frac{3}{2}}}\left(-\frac{1}{4}(\ln \frac{eR}{d})^\frac{1}{2}-\frac{1}{4}(\ln \frac{eR}{d})^{-\frac{3}{2}}\right)\\[4mm]
\phantom{\Gd\sqrt d(\ln \frac{eR}{d})^\frac{1}{2}}
=\frac{N-1}{\sqrt d}\left(\frac{1}{2}(\ln \frac{eR}{d})^\frac{1}{2}-\frac{1}{2}(\ln \frac{eR}{d})^{-\frac{1}{2}}\right)H_d\\[4mm]
\phantom{\Gd\sqrt d(\ln \frac{eR}{d})^\frac{1}{2}=}+
\frac{1}{ d^{\frac{3}{2}}}\left(-\frac{1}{4}(\ln \frac{eR}{d})^\frac{1}{2}-\frac{1}{4}(\ln \frac{eR}{d})^{-\frac{3}{2}}\right)
\EA$$
Thus
$$\BA{ll}\Gd\sqrt d(\ln \frac{eR}{d})^\frac{1}{2}+\frac{\xk }{d^2}\sqrt d(\ln \frac{eR}{d})^\frac{1}{2}=
\frac{N-1}{\sqrt d}\left(\frac{1}{2}(\ln \frac{eR}{d})^\frac{1}{2}-\frac{1}{2}(\ln \frac{eR}{d})^{-\frac{1}{2}}\right)H_d
-\frac{1}{4 d^{\frac{3}{2}}}(\ln \frac{eR}{d})^{-\frac{3}{2}}\\[2mm]\phantom{(\ln \frac{eR}{d})^\frac{1}{2}+\frac{\xk }{d^2}\sqrt d(\ln \frac{eR}{d})^\frac{1}{2}}
=\frac{1}{ d^{\frac{3}{2}}}(\ln \frac{eR}{d})^{-\frac{3}{2}}\left[(N-1)dH_d\left(\frac{1}{2}(\ln \frac{eR}{d})^2-\frac{1}{2}(\ln \frac{eR}{d})\right)-\frac{1}{4})\right]
\EA$$
Further
$$\BA{ll}
\nabla (R^2-r^2)^{-\gb}\nabla \sqrt d(\ln \frac{eR}{d})^\frac{1}{2}=\frac{\gb (R^2-r^2)^{-\gb-1}(\ln \frac{eR}{d})^{-\frac{1}{2}}}{\sqrt d}\left((\ln \frac{eR}{d})-1\right)x\nabla d.
\EA$$
Therefore
$$\BA{ll}
\Gl^{-1}\CL_{\xk }f=-(R^2-r^2)^{-\gb-2}d^{-\frac{3}{2}}(\ln \frac{eR}{d})^{-\frac{3}{2}}\\[2mm]\phantom{--}\left[\phantom{\myfrac{1}{1}}\!\!\!\!(R^2-r^2)^{2}\left[(N-1)dH_d\left(\frac{1}{2}(\ln \frac{eR}{d})^2-\frac{1}{2}(\ln \frac{eR}{d})\right)-\frac{1}{4}\right]\right.\\[2mm]
\left.
+2\gb(R^2-r^2)d\left[(\ln \frac{eR}{d})^2-(\ln \frac{eR}{d})\right]x\nabla d+2\gb d^2(\ln\frac{eR}{d})^2\left[NR^2+(2\gb+2-N)r^2\right]
\phantom{\myfrac{1}{1}}\!\!\!\!\right]\EA$$
Finally
\begin{equation}\label{F12}\BA {lllll}
\CL_{\xk }f+f^q=\Gl(R^2-r^2)^{-\gb-2}d^{-\frac{3}{2}}(\ln \frac{eR}{d})^{-\frac{3}{2}}
\left[\phantom{\myfrac{1}{1}}\!\!\!\!\Gl^{q-1}(R^2-r^2)^{(1-q)\gb+2}d^{\frac{q+3}{2}}(\ln \frac{eR}{d})^{\frac{1}{2}(q-1)+2}\right.\\[4mm]\phantom{--}\left.
-(R^2-r^2)^{2}\left[(N-1)dH_d\left(\frac{1}{2}(\ln \frac{eR}{d})^2-\frac{1}{2}(\ln \frac{eR}{d})\right)-\frac{1}{4}\right]\right.\\[4mm]
\left.
-2\gb(R^2-r^2)d\left[(\ln \frac{eR}{d})^2-(\ln \frac{eR}{d})\right]x\nabla d-2\gb d^2(\ln\frac{eR}{d})^2\left[NR^2+(2\gb+2-N)r^2\right]
\phantom{\myfrac{1}{1}}\!\!\!\!\right].
\EA\end{equation}
Notice that $\frac{eR}{d}\geq e$ thus $-\frac{1}{2}\leq (\ln \frac{eR}{d})^2-(\ln \frac{eR}{d})\leq (\ln \frac{eR}{d})^2$
If $\gb$ is large enough, as in Step 1, there holds
$$\BA {ll}
\left|2\gb(R^2-r^2)d\left[(\ln \frac{eR}{d})^2-(\ln \frac{eR}{d})\right]x.\nabla d+2\gb d^2(\ln\frac{eR}{d})^2\left[NR^2+(2\gb+2-N)r^2\right]\right|\\[2mm]
\phantom{-------------}\leq 4N\gb(\gb+1)(\ln\frac Rd)^2\left((R^2-r^2)dR+d^2R^2\right).
\EA$$
There exists $\gd_0>0$ such that
$$\BA {ll}(N-1)dH_d\left(\frac{1}{2}(\ln \frac{eR}{d})^2-\frac{1}{2}(\ln \frac{eR}{d})\right)-\frac{1}{4}\leq-\frac{1}{8}<-1
\EA$$
if $d(x)\leq \gd_0$. If we define $A, B$ by
$$A=\left\{x\in\Gw\cap B_R:d(x)\leq \myfrac{\ge_0(R^2-r^2)}{16\gb R(\ln\frac{eR}{d})^2}\right\}\quad\text{and }\,B:=A\cap\left\{x\in\Gw\cap B_R:d(x)\leq \gd_0^{\phantom{^4}}\right\}
$$
there holds if $x\in B$
$$\BA {ll}
-2\gb(R^2-r^2)d\left[(\ln \frac{eR}{d})^2-(\ln \frac{eR}{d})\right]x.\nabla d-2\gb d^2(\ln\frac{eR}{d})^2\left[NR^2+(2\gb+2-N)r^2\right]\\[4mm]\phantom{---}
-(R^2-r^2)^{2}\left[(N-1)dH_d\left(\frac{1}{2}(\ln \frac{eR}{d})^2-\frac{1}{2}(\ln \frac{eR}{d})\right)-\frac{1}{4}\right]
\geq \frac{(R^2-r^2)^2}{16}.
\EA$$
If $x\in A^c\cap\left\{x\in \Gw\cap \Gw:d(x)\leq \gd_0\right\}$, then
\begin{equation}\label{F13}d(x)\geq c_1\myfrac{R^2-r^2}{ R(\ln\frac{eR}{d})^2}.
\end{equation}
In order to have
\begin{equation}\label{F14}\BA {lllll}
(i)\quad \Gl^{q-1}(R^2-r^2)^{(1-q)\gb+2}d^{\frac{q+3}{2}}(\ln \frac{eR}{d})^{\frac{q+3}{2}2}\geq (\ln\frac{eR}{d})^2(R^2-r^2)dR\\[2mm]
(ii)\quad \Gl^{q-1}(R^2-r^2)^{(1-q)\gb+2}d^{\frac{q+3}{2}}(\ln \frac{eR}{d})^{\frac{q+3}{2}}\geq (\ln\frac{eR}{d})^2d^2R^2
\EA\end{equation}
or equivalently
\begin{equation}\label{F15}\BA {lllll}
(i)\quad \Gl^{\frac{2q-2}{q+1}}d(\ln \frac{eR}{d})^{\frac{q-1}{q+1}}\geq (R^2-r^2)^{\frac{2(q-1)\gb-2}{q+1}}R^{\frac{2}{q+1}}\\[2mm]
(ii)\quad \Gl^{2}d\ln \frac{eR}{d}\geq R^{\frac{4}{q-1}}(R^2-r^2)^{2\gb-\frac{4}{q-1}}
\EA\end{equation}
Up to taking $c_1$ small enough, $(\ref{F13})$ is fulfilled if
\begin{equation}\label{F16}\BA{ll}
\myfrac{eR}{d}\leq \myfrac{R^2}{R^2-r^2}\left(\ln(\frac{R^2}{R^2-r^2})\right)^2\Longleftrightarrow
d\geq\myfrac{e(R^2-r^2)}{R}\left(\ln(\frac{R^2}{R^2-r^2})\right)^{-2}.
\EA\end{equation}
Inequality (\ref{F14})-(i) will be insured if
$$\BA {lllll}\Gl^{\frac{2q-2}{q+1}}\geq \frac{1}{e}(R^2-r^2)^{2\frac{(q-1)\gb-1}{q+1}-1}R^{\frac{2}{q+1}+1}(\ln(\frac{R^2}{R^2-r^2})^{\frac{2}{q+1}}
\EA$$
which holds if, for any $\ge>0$, we have for any $r\in (0,R)$
$$
\Gl^{\frac{2q-2}{q+1}}\geq {C_\ge}(R^2-r^2)^{2\frac{(q-1)\gb-1}{q+1}-1}R^{\frac{2}{q+1}+1}\left(\frac{R^2}{R^2-r^2}\right)^{\ge}.
$$
A sufficient condition for such a task is, with the help of $(\ref{F16})$,
\begin{equation}\label{F17}\BA{ll}
\Gl\geq c_3R^{3\gb-\frac{2}{q-1}}.
\EA\end{equation}
As for (\ref{F14})-(ii), it will be insured if
\begin{equation}\label{F17+}\BA{ll}\Gl\geq c_4R^{2\gb-\frac{2}{q-1}-\frac{1}{2}}
\EA\end{equation}
Thus, if
\begin{equation}\label{F18}\BA{ll}\Gl\geq c_5\max\{R^{2\gb-\frac{2}{q-1}-\frac{1}{2}},R^{3\gb-\frac{2}{q-1}}\}
\EA\end{equation}
for some $c_5>0=c_5(N,\gamma,\beta,\gd_0,|H_d|)$, the function $f$ satisfies $(\ref{F11})$.
\end{proof}

\subsection{A priori estimates}
By the Keller-Osserman estimate, it is clear that any solution $u$ of \ref{Eq1} in $\xO$
 satisfies
\be
u(x)\leq C(q,\xO,N)d^{-\frac{2}{q-1}}(x),\quad\forall x\in \xO.\label{3.4.1}
\ee
This estimate is also a consequence of the following result \cite[Prop 3.4]{1}
\begin{prop}\label{prop BR} Let $\phi_*$ be the first positive eigenfunction of $-\Gd$ in $H^1_0(\Gw)$. For $q>1$, there exists $\gamma>0$ and $\ge_0>0$ such that for any
$0\leq\ge\leq\ge_0$ the function $h_+\ge=\gamma(\phi_*-\ge)^{-\frac{2}{q-1}}$ is a supersolution of \ref{Eq1} in
$\xO_{\ge,\phi_*}:=\{x\in\Gw:\phi_*(x)>\ge\}$.
\end{prop}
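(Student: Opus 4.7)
\medskip
\noindent\textbf{Proof plan for Proposition \ref{prop BR}.}

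The plan is to directly compute $\CL_{\xk}h_\ge+h_\ge^q$ and show that the leading term $h_\ge^q$ dominates every negative contribution once $\gamma$ is taken sufficiently large. Set $\ga:=\frac{2}{q-1}$, so that $h_\ge=\gamma(\phi_*-\ge)^{-\ga}$ and, crucially, $\ga q=\ga+2$. A direct computation using $-\Gd\phi_*=\xl_1\phi_*$ yields
$$
-\Gd h_\ge=-\gamma\ga\xl_1\phi_*(\phi_*-\ge)^{-\ga-1}-\gamma\ga(\ga+1)(\phi_*-\ge)^{-\ga-2}|\nabla\phi_*|^2.
$$
Multiplying $\CL_{\xk}h_\ge+h_\ge^q$ by $(\phi_*-\ge)^{\ga+2}$ I would then reduce the question of $\CL_{\xk}h_\ge+h_\ge^q\geq 0$ to the pointwise inequality
$$
\gamma^q\;\geq\;\gamma\ga\xl_1\phi_*(\phi_*-\ge)+\gamma\ga(\ga+1)|\nabla\phi_*|^2+\xk\gamma\myfrac{(\phi_*-\ge)^2}{d^2(x)},
$$
which must hold uniformly for $x\in\Gw_{\ge,\phi_*}$.

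The key step is to bound the three terms on the right by a constant multiple of $\gamma$. The first two are immediate since $\phi_*\in C^2(\overline\Gw)$ and therefore $\phi_*$ and $|\nabla\phi_*|$ are bounded on $\overline\Gw$, and $0\leq\phi_*-\ge\leq\phi_*$ on $\Gw_{\ge,\phi_*}$. The third term is the only delicate one because of the singular factor $d^{-2}(x)$; this is precisely where the interaction with the Hardy potential appears. Here I would invoke Hopf's boundary lemma together with the smoothness of $\phi_*$: since $\phi_*$ vanishes on $\prt\Gw$ with $\prt_{\bf n}\phi_*<0$, the ratio $\phi_*(x)/d(x)$ extends continuously and strictly positively to $\overline\Gw$, and in particular there is a constant $M=M(\Gw)>0$ with $\phi_*(x)\leq M\,d(x)$ for all $x\in\Gw$. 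Consequently
$$
\myfrac{(\phi_*-\ge)^2}{d^2(x)}\leq \myfrac{\phi_*^2(x)}{d^2(x)}\leq M^2
$$
on $\Gw_{\ge,\phi_*}$, uniformly in $\ge\geq 0$.

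Combining these estimates, the right-hand side of the reduced inequality is bounded by $C\gamma$ for some constant $C=C(\Gw,q,\xk,\xl_1)>0$ independent of $\ge$. It therefore suffices to choose $\gamma$ so large that $\gamma^{q-1}\geq C$, and this single value of $\gamma$ works simultaneously for every $\ge\in[0,\ge_0]$. The parameter $\ge_0$ itself plays no role in the differential inequality; it is simply restricted to a value such as $\ge_0=\tfrac12\|\phi_*\|_{L^\infty(\Gw)}$ to guarantee that the open set $\Gw_{\ge,\phi_*}$ is nontrivial. Since the whole argument is just the computation plus the one-line Hopf bound, there is no genuine analytic obstacle here; the only conceptual point to keep clear is the algebraic identity $\ga q=\ga+2$, which makes the ansatz $h_\ge=\gamma(\phi_*-\ge)^{-\ga}$ balance the nonlinear term against the singular factor $(\phi_*-\ge)^{-\ga-2}$ produced by $-\Gd$.
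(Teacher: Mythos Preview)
Your argument is correct. The computation of $-\Gd h_\ge$, the algebraic identity $\ga q=\ga+2$, and the reduction to the pointwise inequality are all accurate, and the Hopf bound $\phi_*\leq M\,d$ on a $C^2$ domain is exactly what is needed to control the Hardy term uniformly in $\ge$.

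As for comparison with the paper: there is nothing to compare, since the paper does not supply its own proof of this proposition but simply cites \cite[Prop.~3.4]{1} (Bandle--Moroz--Reichel). Your self-contained computation is therefore an addition rather than an alternative; it follows the natural route one would expect and is essentially the argument behind the cited result.
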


We recall here that
$$W(x)=\Bigg\{\begin{array}{lll}&d^{\frac{\xa_-}{2}}(x)\qquad&\text{if}\;\xk <\frac{1}{4}\;\\\\
&d^{\frac{1}{2}}(x)|\log d(x)|\qquad&\text{if}\;\xk =\frac{1}{4}
\end{array}
$$
\begin{prop}\label{prop19}
Let $\xO$ be a bounded open domain uniformly of class
$C^2$ and let $F$ be a compact subset of the boundary. Let $u$
be a nonnegative solution of \ref{T1} in $\xO$ such that
$$\lim_{x\in\xO,\;x\rightarrow\xi}\frac{u(x)}{W(x)}=0,\quad\forall \xi\in\partial\xO\setminus F,$$
locally uniformly in $\partial\xO\setminus F$. Then there exists a constant $C$
depending only on $q, \xk $ and $\xO$
 such that,
 \be
|u(x)|\leq Cd^{\frac{\xa_+}{2}}(x)\left(\mathrm{dist}(x,F)\right)^{-\frac{2}{q-1}-\frac{\xa_+}{2}},\quad\forall x\in \xO,\label{3.4.24}
  \ee
   \be
|\frac{u(x)}{d^{\frac{\xa_+}{2}}(x)}-\frac{u(y)}{d^{\frac{\xa_+}{2}}(y)}|\leq C|x-y|^\gb\left(\mathrm{dist}(x,F)\right)^{-\frac{2}{q-1}-\gb-\frac{\xa_+}{2}}\quad\forall (x,y)\in \xO\times \xO\label{3.4.24+}
  \ee
  such that $\mathrm{dist}(x,F)\leq \mathrm{dist}(y,F),$
  \be
  |\nabla u(x)|\leq Cd^{\frac{\xa_+}{2}-1}(x)\left(\mathrm{dist}(x,F)\right)^{-\frac{2}{q-1}-\frac{\xa_+}{2}},\quad\forall x\in \xO.\label{3.4.24*}
  \ee
\end{prop}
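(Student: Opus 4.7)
I will first establish the sup estimate \eqref{3.4.24}; the H\"older and gradient estimates \eqref{3.4.24+}, \eqref{3.4.24*} then follow by a standard rescaling argument. The proof of the sup estimate combines three tools: the Keller--Osserman bound, the barrier of Proposition \ref{barr}, and the boundary Harnack inequality (Propositions \ref{maincomp}, \ref{maincomp1}), linked through a $\CL_\gk$-harmonic majorant construction.

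Fix $x_0\in\xO$ and set $d_F=\dist(x_0,F)$. If $d(x_0)\geq d_F/4$, Keller--Osserman $u\leq Cd^{-2/(q-1)}$ (obtainable from Proposition \ref{prop BR}) immediately yields \eqref{3.4.24}. Otherwise let $\xi_0\in\partial\xO$ be the projection of $x_0$, so $\dist(\xi_0,F)\geq 3d_F/4$; set $R=d_F/2$ and work in $G=B_R(\xi_0)\cap\xO$, whose intersection with $\partial\xO$ lies in $\partial\xO\setminus F$, on which the hypothesis gives $u/W\to 0$. For a fixed $\gamma_0\in(\tfrac{\xa_-}{2},\tfrac{\xa_+}{2})$, Proposition \ref{barr} provides a supersolution $f=\Lambda(R^2-|x-\xi_0|^2)^{-\beta}d^{\gamma_0}$ of \eqref{T1} in $G$, vanishing on $\partial\xO\cap B_R(\xi_0)$ and blowing up on $\xO\cap\partial B_R(\xi_0)$. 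Extended by $0$ outside $G$, the function $w=(u-f)_+$ lies in $H^1_{loc}(\xO)\cap C(\xO)$ (note $w\to 0$ across $\xO\cap\partial B_R(\xi_0)$ because $f\to\infty$ there) and is a $\CL_\gk$-subsolution of \eqref{T1} in $\xO$ by Proposition \ref{maxsub}(iii). Since $w\leq u$ near $\partial\xO\cap B_R(\xi_0)$ and $w=0$ elsewhere on $\partial\xO$, we have $\limsup_{d(x)\to 0} w(x)/d^{\xa_-/2}(x)\leq 0$, so Proposition \ref{max} (resp. \ref{max1} for $\gk=\tfrac14$) forces $w\equiv 0$, i.e. $u\leq f$ in $G$. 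Evaluating at $x_0$ yields the preliminary estimate
\begin{equation*}
u(x_0)\leq C\,d(x_0)^{\gamma_0}\,d_F^{-\gamma_0-1/(q-1)}.
\end{equation*}

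To sharpen the $d(x_0)$-exponent to $\tfrac{\xa_+}{2}$, I pass to the $\CL_\gk$-harmonic majorant $v:=u+\mathbb{G}_{\CL_\gk}^{G'}[u^q]$ in the smaller subdomain $G'=B_{R/2}(\xi_0)\cap\xO$, where $\mathbb{G}_{\CL_\gk}^{G'}$ is the Green operator of $\CL_\gk$ on $G'$. The preliminary bound ensures $u^q\in L^1_{\gf_\gk}(G')$, so $v$ is well defined; by construction $v\geq u\geq 0$, $\CL_\gk v=0$ in $G'$, and $v/W\to 0$ on $\partial\xO\cap B_{R/2}(\xi_0)$ because Green potentials of $L^1_{\gf_\gk}$ data decay like $\gf_\gk\sim d^{\xa_+/2}=o(W)$ near $\partial\xO$. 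The boundary Harnack inequality applied to the positive $\CL_\gk$-harmonic function $v$ gives
\begin{equation*}
\frac{v(x_0)}{\gf_\gk(x_0)}\leq C\,\frac{v(y)}{\gf_\gk(y)}
\end{equation*}
for a reference point $y\in G'$ with $d(y)\sim R$. At such $y$, Keller--Osserman controls $u(y)\leq Cd_F^{-2/(q-1)}$ and the preliminary bound inserted into the Green potential gives $\mathbb{G}_{\CL_\gk}^{G'}[u^q](y)\leq Cd_F^{-2/(q-1)}$; combining with $\gf_\gk(y)\sim d_F^{\xa_+/2}$ and $\gf_\gk(x_0)\leq c\,d(x_0)^{\xa_+/2}$ produces \eqref{3.4.24}. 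For \eqref{3.4.24+}--\eqref{3.4.24*}, set $r=\min(d(x_0),d_F)/4$ and consider $\bar u(y):=r^{2/(q-1)}u(x_0+ry)$, which solves the same equation on a unit-scale region with coefficients uniformly controlled by \eqref{3.4.24}; interior $C^{1,\alpha}$ elliptic regularity together with the boundary H\"older regularity for $\bar u/\tilde d^{\xa_+/2}$ from Propositions \ref{reglemma}, \ref{reglemma1} then yield the pointwise gradient bound and the H\"older bound on $u/d^{\xa_+/2}$ after unwinding the scaling.

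The principal obstacle is the passage from the preliminary barrier exponent $\gamma_0<\tfrac{\xa_+}{2}$ to the sharp exponent $\tfrac{\xa_+}{2}$: the supersolution $(R^2-|x-\xi_0|^2)^{-\beta}d^\gamma$ degenerates exactly at $\gamma=\tfrac{\xa_+}{2}$ because $\gamma(\gamma-1)+\gk=0$ there, so no direct choice of the barrier alone gives the desired decay. The harmonic majorant $v$ bypasses this degeneracy but requires solving the linear Dirichlet problem on $G'$, a domain with a corner where $\partial\xO$ meets $\partial B_{R/2}(\xi_0)$; this is handled by a smooth approximation of $G'$ together with passage to the limit using the uniform estimates of Propositions \ref{reglemma}, \ref{reglemma1}. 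The choice of $\gamma_0$ (for instance $\gamma_0=\tfrac{\xa_+}{2}-\varepsilon$ for a small fixed $\varepsilon$) must also be calibrated so that the Green potential of $u^q$ remains integrable near the inner sphere $\xO\cap\partial B_{R/2}(\xi_0)$, where the preliminary bound loses its effectiveness.
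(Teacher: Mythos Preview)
Your argument is workable, but it takes a more roundabout route than the paper's and creates a technical obstacle you then have to fight your way out of. The paper proceeds by \emph{scaling first}: for $\xi\in\partial\Omega\setminus F$ with $d_F(\xi)=\tfrac12\dist(\xi,F)$, set $\Omega^\xi=d_F(\xi)^{-1}\Omega$ and $u^\xi(y)=d_F(\xi)^{2/(q-1)}u(d_F(\xi)y)$. Then $u^\xi$ solves the same equation in $\Omega^\xi$, and the barrier of Proposition~\ref{barr} applied at \emph{unit scale} gives a uniform bound $u^\xi\le C$ in $B_{R_0/2}(\xi)\cap\Omega^\xi$. Once $u^\xi$ is bounded with $u^\xi/W\to 0$ on $\partial\Omega^\xi\cap B_{R_0/2}(\xi)$, the boundary regularity argument behind Propositions~\ref{maincomp}--\ref{maincomp1} (Moser iteration for $u^\xi/\gf_\gk^\xi$, i.e.\ the mechanism of Proposition~\ref{exi} and \cite[Th.~2.12]{F.M.T2}) yields $u^\xi(y)\le C\,\dist(y,\partial\Omega^\xi)^{\alpha_+/2}$ directly, with a constant depending only on $N,q,\gk$ and the $C^2$ character of $\Omega$. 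Unscaling gives \eqref{3.4.24}; \eqref{3.4.24+} and \eqref{3.4.24*} follow by the same rescaling and interior estimates, as you do.

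The difference matters for two reasons. First, by scaling before applying the barrier, the paper turns the barrier into a \emph{uniform $L^\infty$ bound}, so the passage to the sharp exponent $\tfrac{\alpha_+}{2}$ is a single application of the boundary regularity already developed in Section~2; no preliminary exponent $\gamma_0$ and no upgrade step are needed. Second, your upgrade via the harmonic majorant $v=u+\BBG_{\CL_\gk}^{G'}[u^q]$ requires a Green operator for $\CL_\gk$ on the corner domain $G'=B_{R/2}(\xi_0)\cap\Omega$, which the paper never constructs; the Green estimates~\eqref{greenest} and the decay of potentials you invoke are proved only for $\Omega$ itself. Your suggested fix (smooth approximation of $G'$) is plausible but is exactly the kind of extra work the scaling trick renders unnecessary. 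A minor point: the exponent in your preliminary bound should read $d_F^{-\gamma_0-2/(q-1)}$ rather than $d_F^{-\gamma_0-1/(q-1)}$.
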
  
\begin{proof}
The proof is based on the proof of Proposition 3.4.3 in \cite{book}. Let $\xi\in\partial\xO\setminus\ F$
and put $d_F (\xi) = \frac{1}{2} \mathrm{dist} (\xi, F).$  Denote by
$\xO^\xi$ the domain
$$\xO^\xi=\{y\in\mathbb{R}^n:\;d_F(\xi)y\in\xO\}.$$
If $u$ is a positive solution of (\ref{T1}) in $\xO,$ denote by $u^\xi$ the function
$$u^\xi (y) = |d_F (\xi)|^{\frac{2}{q-1}}
u(d_F(\xi)y),\;\forall y\in\xO^\xi.
$$
Then,
$$-\xD u^\xi-\xk \frac{u}{|\mathrm{dist}(y,\partial\xO^\xi)|^2}+\left|u^\xi\right|^q=0\qquad\mathrm{in}\;\;\xO^\xi.$$
Let $R_0$ be the constant in Proposition \ref{barr}. First, we assume that
$$\mathrm{dist} (\xi, F)\leq\frac{1}{1+R_0}.$$
Set $r_0=\frac{3R_0}{4},$ then the solution $W_{r_0,\xi}$ mentioned
in Proposition \ref{barr} satisfies
$$
u^\xi(y) \leq W_{r_0,\xi}(y),\quad \forall y\in B_{\frac{3R_0}{4}}(\xi)\cap\xO^\xi .$$
Thus $u^\xi$ is bounded in $B_{\frac{3R_0}{5}}(\xi)\cap\xO^\xi$ by a constant $C>0$ depending only on $n, q, \xk $
and the $C^2$ characteristic of $\xO^\xi .$ As $d_F (\xi)\leq 1$ a $C^2$ characteristic of $\xO$
is also a $C^2$ characteristic of $\xO^\xi$ therefore the constant $C$ can be taken to be independent of $\xi.$ We note here that the constant $0<R_0<1$ depends on $C^2$ characteristic of $\xO.$

Now we note that
$$\lim_{y\in\xO^\xi,\;y\rightarrow P}\frac{u^\xi(y)}{W(x)}=0,\quad\forall P\in\partial\xO^\xi\cap B_{\frac{3R_0}{5}}(\xi).$$
Thus in view of the proof of Lemmas \ref{maincomp1} and \ref{maincomp}, by the above inequality and in view of the proof of Theorem 2.12 in \cite{F.M.T2}, we have that there exists  $C>0$ depending only on $n, p, \xk $ such that
\be
u^\xi(y) \leq\left|\mathrm{dist}(y,\partial\xO^\xi)\right|^\frac{\xa_+}{2},\quad \forall y\in B_{\frac{R_0}{2}}(\xi)\cap\xO^\xi .\label{sim}\ee
$$
\frac{u^\xi(y)}{\left|\mathrm{dist}(y,\partial\xO^\xi)\right|^\frac{\xa_+}{2}}\leq C\frac{u^\xi(x)}{\left|\mathrm{dist}(x,\partial\xO^\xi)\right|^\frac{\xa_+}{2}},\quad\forall x,y\in  B_{\frac{R_0}{2}}(\xi)\cap\xO^\xi
$$

Hence
$$
u(x)\leq d^\frac{\xa_+}{2}(x)d_F(\xi)^{-\frac{2}{q-1}-\frac{\xa_+}{2}},\quad\forall x\in B_{d_F(\xi)\frac{R_0}{2}}(\xi)\cap\xO.
$$
\be
 \frac{u(y)}{d^\frac{\xa_+}{2}}(y)\leq C\frac{u^\xi(x)}{d^\frac{\xa_+}{2}(x)},\quad\forall x,y\in  B_{d_F(\xi)\frac{R_0}{2}}(\xi)\cap\xO.\label{cor6}
\ee
Let $x\in \xO_{\frac{R_0}{2}}$ and assume that
$$d(x)\leq \frac{R_0}{2}d_F(x).$$
Let $\xi$ be the unique point in $\partial\xO\setminus F$ such that $|x-\xi|=d(x).$ Then we have
$$d_F(\xi)\leq d(x)+d_F(x)\leq(1+R_0)d_F(x)<1$$
and
$$|u(x)|\leq Cd^{\frac{\xa_+}{2}}(x)\left((1+R_0)\mathrm{dist}(x,F)\right)^{-\frac{2}{q-1}-\frac{\xa_+}{2}}.$$
If   $d(x)> \frac{R_0}{4}d_F(x),$ then by \eqref{3.4.1} we have that
$$|u(x)|\leq Cd^{-\frac{2}{q-1}}(x)\leq Cd^{\frac{\xa_+}{2}}(x)\left(\frac{R_0}{2}\mathrm{dist}(x,F)\right)^{-\frac{2}{q-1}-\frac{\xa_+}{2}}.$$
Thus (\ref{3.4.24}) holds for every $x\in\xO_{\frac{R_0}{2}}$ such that $\mathrm{dist} (x, F) < \frac{1}{1 + R_0}.$

Now we assume that $x\in \xO_{\frac{R_0}{2}}$ and $$\mathrm{dist} (x, F)\geq \frac{1}{1 + R_0}.$$
Let $\xi$ be the unique point in $\partial\xO\setminus F$ such that $|x-\xi|=d(x).$ Similarly with the proof of \ref{sim} we can prove that
$$
u(x)\leq C d^\frac{\xa_+}{2}(x)\leq  d^\frac{\xa_+}{2}(x)C\left((1+R_0)\mathrm{dist}(x,F)\right)^{-\frac{2}{q-1}-\frac{\xa_+}{2}},\quad\forall x\in B_{\frac{R_0}{2}}(\xi)\cap\xO.
$$
Now if $x\in \xO\setminus\xO_{\frac{R_0}{2}},$ the proof of \eqref{3.4.24} follows by \eqref{3.4.1}.

(ii) Let $x_0\in\xO.$ Set
$$\xO^{x_0}=\{y\in\mathbb{R}^n:\;d(x_0)y\in\xO\},$$
and $d_{x_0}(y)=\mathrm{dist}(y,\partial\xO^{x_0}).$
If $x\in B_{\frac{d(x_0)}{2}}(x_0)$ then $y=\frac{x}{d(x_0)}$ belongs to $B_\frac{1}{2}(y_0),$ where $y_0=\frac{x_0}{d(x_0)}.$ Also we have that
$\frac{1}{2}\leq d_{x_0}(y)\leq \frac{3}{2}$ for each $y\in B_\frac{1}{2}(y_0).$ Set now $v(y)=u(d(x_0)y),\;\forall y\in B_\frac{1}{2}(y_0).$ Then $v$ satisfies
$$-\xD v-\xk \frac{u}{|d_{x_0}(y)|^2}+d^2(x_0)\left|v\right|^q=0\qquad\mathrm{in}\;\;B_\frac{1}{2}(y_0).$$
By standard elliptic estimate we have
$$
\sup_{y\in B_\frac{1}{4}(y_0)}|\nabla v|\leq C\left(\sup_{y\in B_\frac{1}{3}(y_0)}|v|+\sup_{y\in B_\frac{1}{3}(y_0)}d^2(x_0)|v|^q\right),
$$
Now since $\nabla v(y)=d(x_0)\nabla u(d(x_0)y),$ by above inequality and \eqref{3.4.24} we have that
$$|\nabla u(x_0)|\leq C \left(d^{\frac{\xa_+}{2}-1
}(x_0)\left(\mathrm{dist}(x_0,F)\right)^{-\frac{2}{q-1}-\frac{\xa_+}{2}}+ d^{\frac{q\xa_+}{2}+1}(x_0)\left(\mathrm{dist}(x_0,F)\right)^{-q\left(\frac{2}{q-1}-\frac{\xa_+}{2}\right)}\right).$$
Using $\frac{2q}{q-1}=\frac{2}{q-1}+2$ and the fact that $x_0$ is arbitrary the result follows.
\end{proof}

\begin{prop} \label{U-F}Let $O\subset\prt\Gw$ be a relatively open subset and $F=\overline O$. Let $U_F$ be defined by (\ref{T7}) be the maximal solution of (\ref{T1}) which vanishes on $\prt\Gw\setminus F$. Then for any compact set $K\subset O$, there holds
\begin{equation}\label {A-I-1}
\lim_{\xi\to x}(d(\xi))^{\frac{2}{q-1}}U_F(\xi)=\ell_{\gk}=\left(\frac{2(q+1)}{(q-1)^2}+\gk\right)^{\frac{1}{q-1}}\quad\text{uniformly with respect to } x\in K.
\end{equation}
\end{prop}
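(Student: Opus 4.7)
Set $\alpha = 2/(q-1)$. I would first observe that $\ell_\xk$ is the unique positive root of $\ell^{q-1} = \alpha(\alpha+1)+\xk$, which emerges by inserting $u = \ell\, d^{-\alpha}$ into $\CL_\xk u + u^q = 0$ and using $|\nabla d|^2 = 1$ together with the boundedness of $\Delta d$ in a tubular neighborhood of $\prt\Gw$; the basic computation is
\[
\CL_\xk(\ell d^{-\alpha}) + (\ell d^{-\alpha})^q = d^{-\alpha-2}\bigl[\ell^q - \ell(\alpha(\alpha+1)+\xk)\bigr] + O(d^{-\alpha-1}).
\]
Consequently, for every $\ge>0$ there exists $\gd(\ge)>0$ such that $(\ell_\xk+\ge)d^{-\alpha}$ is a supersolution and $(\ell_\xk-\ge)d^{-\alpha}$ is a subsolution of (\ref{T1}) in $\Gw_{\gd(\ge)}$. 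A covering argument then reduces the proof to establishing uniform convergence on a neighborhood of any fixed $x_0 \in K$ whose $\BBR^N$-closure meets $\prt\Gw$ inside $O$.

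For the upper bound, given $\ge>0$ I would pick an open $V\subset\BBR^N$ with $x_0\in V$ and $\overline V\cap\prt\Gw\subset O$, and compare $U_F$ inside $V\cap\Gw_{\gd(\ge)}$ with $\bar u_\ge := (\ell_\xk+\ge)d^{-\alpha} + C f$, where $f$ is the nonlinear barrier of Proposition \ref{barr} placed on a ball slightly larger than $V$. Choosing $C$ large enough, $\bar u_\ge$ will dominate $U_F$ on the lateral boundary $\prt V\cap\Gw_{\gd(\ge)}$ (where $U_F$ is uniformly bounded by interior elliptic regularity) and on the inner shell $\xS_{\gd(\ge)}\cap V$ (where (\ref{3.4.1}) gives $U_F \leq C_0\gd(\ge)^{-\alpha}$). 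Proposition \ref{comp2} will then force $U_F \leq \bar u_\ge$ on $V\cap\Gw_{\gd(\ge)}$, yielding $\limsup_{\xi\to x} d(\xi)^\alpha U_F(\xi) \leq \ell_\xk + \ge$ uniformly on $K\cap V$, after which $\ge\to 0$ closes this half.

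For the lower bound, I would rescale near $x_0$: for any sequence $\xi_n\to x_0$ with $\xi_n\in\Gw$, write $\eta_n\in\prt\Gw$ for the foot of $\xi_n$ (eventually in $O$) and $r_n := d(\xi_n)$, and set $v_n(y) := r_n^\alpha U_F(\eta_n + r_n y)$ on $\Gw_n := r_n^{-1}(\Gw-\eta_n)$. Each $v_n$ satisfies $-\Delta v_n - (\xk/d_n^2)v_n + v_n^q = 0$ with $d_n(y) := r_n^{-1}d(\eta_n+r_n y)$, and (\ref{3.4.1}), (\ref{3.4.24}), (\ref{3.4.24*}) together with standard elliptic regularity will deliver, along a subsequence, a locally uniform limit $v\geq 0$ on the half-space $\BBR^N_+$ (to which $\Gw_n$ converges after rotation, with $d_n \to y_N$ locally uniformly) solving $-\Delta v - (\xk/y_N^2)v + v^q = 0$. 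The already proved upper bound forces $v \leq \ell_\xk y_N^{-\alpha}$, while a localized subsolution of the form $(\ell_\xk-\ge)d^{-\alpha}\chi$ supported in a half-ball of $\Gw$ meeting $\prt\Gw$ inside $O$, compared with the maximal solution $U_F$ via Proposition \ref{comp2}, will yield $v \geq (\ell_\xk-\ge)y_N^{-\alpha}$ for every $\ge>0$. Thus $v = \ell_\xk y_N^{-\alpha}$, the limit is independent of the subsequence, and $r_n^\alpha U_F(\xi_n) = v_n({\bf e}_N) \to \ell_\xk$; uniformity in $x_0\in K$ follows because all barriers and a priori constants are uniform.

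The hardest step will be the lower bound, and within it the construction of a subsolution supported near $x_0$ that vanishes on $\prt\Gw\setminus O$ (so the comparison with the maximal $U_F$ is legitimate) yet has leading boundary profile arbitrarily close to $\ell_\xk d^{-\alpha}$. The cutoff generates error terms of order $d^{-\alpha-2}$, comparable to the Hardy term, which must be absorbed using the strict algebraic surplus $(\ell_\xk-\ge)^q < (\ell_\xk-\ge)(\alpha(\alpha+1)+\xk)$. The identification of the rescaled limit $v$ with the one-dimensional profile $\ell_\xk y_N^{-\alpha}$, rather than one of the many self-similar solutions from Theorem \ref{struct}, is handled precisely by squeezing between the explicit one-dimensional super/subsolutions $(\ell_\xk\pm\ge)y_N^{-\alpha}$, bypassing any direct tangential-translation-invariance argument.
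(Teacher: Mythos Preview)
Your upper bound is the paper's Step~1: the one-dimensional supersolution $(\ell_\gk^{q-1}+\ge)^{1/(q-1)}d^{-2/(q-1)}$ plus the barrier $f_{R,z}$ of Proposition~\ref{barr}. The lower bound, however, has a gap at the comparison you cite from Proposition~\ref{comp2} to obtain $U_F\geq(\ell_\gk-\ge)d^{-\alpha}\chi$. That proposition needs either a one-sided $\limsup$ bound near $\partial G$ or both functions bounded on $\overline G$, and on $\partial\Gw\cap\{\chi>0\}$ both $U_F$ and your subsolution diverge with no a priori control on their ratio---which is exactly the quantity you are trying to pin down. Without this inequality the rescaled half-space limit $v$ could be anything between $0$ and $\ell_\gk y_N^{-\alpha}$, and no Liouville principle singles out the top profile. (Incidentally, the cutoff errors you flag are $O(d^{-\alpha-1})$, not $O(d^{-\alpha-2})$, since $\nabla\chi$ is bounded; the construction is fine, it is the comparison that fails.) One can repair this by invoking the \emph{maximality} of $U_F$ rather than Proposition~\ref{comp2}: the minimal solution dominating $\underline u$ in $\Gw$ vanishes on $\partial\Gw\setminus F$ in the required sense and hence sits below $U_F$. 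But once $U_F\geq(\ell_\gk-\ge)d^{-\alpha}\chi$ is in hand by any route, your blow-up and half-space identification are already redundant.

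The paper's lower bound is shorter and sidesteps the difficulty with two devices you are missing. It shifts the subsolution to the \emph{bounded} function $g_{\ge,-\gt}=(\ell_\gk^{q-1}-\ge)^{1/(q-1)}(d+\gt)^{-\alpha}$, and it compares not with $U_F$ but with the supersolution $U_F+f_{2\gt_\ge,z}$, which tends to $+\infty$ on the \emph{entire} boundary of $B_{2\gt_\ge}(z)\cap\Gw$ (the barrier $f$ takes care of the spherical cap $\Gw\cap\partial B_{2\gt_\ge}(z)$, while $U_F$ blows up on $\partial\Gw\cap B_{2\gt_\ge}(z)\subset O$). Now Proposition~\ref{comp2} applies cleanly; letting $\gt\to 0$ yields $U_F\geq(\ell_\gk^{q-1}-\ge)^{1/(q-1)}d^{-\alpha}-f_{2\gt_\ge,z}$ on $B_{2\gt_\ge}(z)\cap\Gw$, and a finite cover of $K$ finishes the proof without rescaling or passage to a half-space.
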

\begin{proof} {\it Step 1}. We claim that for any $\ge>0$ there exists $C_\ge,\gt_\ge>0$ such that for any $z\in O$ such that
$\overline B_{2\gt_\ge}(z)\subset O$, there holds
\begin{equation}\BA {lll}\label {A-I-2}
u(x)\leq (\ge+\ell_\gk^{q-1})^{\frac{1}{q-1}}\gt^{-\frac{2}{q-1}}+C_\ge\qquad \forall \gt\in (0,\gt_\ge],\,\forall x\in \Gs_{\gt}(\overline {B_{\gt_\ge}}(z)).
\EA\end{equation}
We recall that $\Gs_{\gt}(\overline {B_{\gt_\ge}}(z))=\left\{x\in\Gw,\,x\approx (d(x),\gs(x)),d(x)=\gt, \gs(x)\in \overline B_{\gt_\ge}(z)\right\}$.
Set $g(x)=\ell d^{-\frac{2}{q-1}}(x)$, then
\begin{equation}\BA {lll}\label {A-I-3*}\CL_\gk g+g^q=\myfrac{2(N-1)}{q-1}H_dd^{-\frac{q+1}{q-1}}+\left(\ell^{q-1}-\ell^{q-1}_\gk
\right)d^{-\myfrac{2q}{q-1}},
\EA\end{equation}
where $H_d$ is the mean curvature of $\Gs_d$. If $\Gw$ is convex we take $\ell=\ell_\gk$ and $g$ is a supersolution for $d(x)\leq R_0$ for some $R_0$. In the general case, we take $\ell=\ell(\ge)=(\ge+\ell^{q-1}_\gk)^{\frac{1}{q-1}}$, and $g=g_\ge=\ell(\ge)d^{-\frac{2}{q-1}}$ is a supersolution in the set $\Gw_{\gt_\ge}$ where
$$\gt_\ge=\max\left\{\gt:0< \gt\leq \frac{R_0}{2},\frac{2(N-1)}{q-1}\|H_\gt\|_{L^\infty(\Gs_\gt)}+\ge>0\right\}.
$$
Then $f_{2\gt_\ge,z}+g_\ge$ is a supersolution of (\ref{T1}) in $B_{2\gt_\ge}(z)\cap\Gw$ which tends to infinity on
$\prt (B_{2\gt_\ge}(z)\cap\Gw)=\prt\Gw\cap B_{2\gt_\ge}(z)\cup \Gw\cap\prt B_{2\gt_\ge}(z)$. Since we can replace $g_\ge(x)$ by
$g_{\ge,\gt}(x)=\ell (d(x)-\gt)^{-\frac{2}{q-1}}$ for $\gt\in (0,\gr_\ge)$, any positive solution $u$ of (\ref{T1}) in $\Gw$ is bounded from above by $f_{2\gt_\ge,z}+g_{\ge,\gt}$ and therefore by $f_{2\gt_\ge,z}+g_{\ge}$. This implies (\ref{A-I-2}) with
$C_\ge=\max\{f_{2\gt_\ge,z}(y):|y-z|\leq \gt_\ge\}$, and it can be made explicit thanks to (\ref{BAR1}).\smallskip

\noindent{\it Step 2}. With the same constants as in step 1, we claim that
\begin{equation}\BA {lll}\label {A-I-3}
U_F(x)\geq (\ell_\gk^{q-1}-\ge)^{\frac{1}{q-1}}\gt^{-\frac{2}{q-1}}-C_\ge\qquad \forall \gt\in (0,\gt_\ge],\,\forall x\in \Gs_{\gt}(\overline {B_{\gt_\ge}}(z)).
\EA\end{equation}
If in the definition of the function $g$, we take $\ell=\ell(\ge)=(\ell^{q-1}_\gk-\ge)^{\frac{1}{q-1}}$, then $g$ is a subsolution in the same set $\Gw_{\gt_\ge}$. Since $U_F+f_{2\gt_\ge,z}$ is a supersolution of (\ref{T1}) in $B_{2\gt_\ge}(z)\cap\Gw$ which tends to infinity on the boundary, it dominates the subsolution $g_{\ge,-\gt}=\ell (d(.)+\gt)^{-\frac{2}{q-1}}$ for $\gt\in (0,\gr_\ge)$
and thus , as $\gt\to 0$, $g_{\ge}(x)\leq U_F(x)+f_{2\gt_\ge,z}(x)$. This implies (\ref{A-I-3}) with the same constant $C_\ge$.
\smallskip

\noindent{\it Step 3}. End of the proof. Since $K\subset O$ is precompact, for any $\ge>0$, there exists a finite number of points $z_j$, $j=1,...,k$ such that $K\subset\cup_{j=1}^{k}\overline {B_{\gt_\ge}}(z_j)$ with $\overline {B_{2\gt_\ge}}(z_j)\subset O$. Therefore
\begin{equation}\BA {lll}\label {A-I-4}
(\ell_\gk^{q-1}-\ge)^{\frac{1}{q-1}}\gt^{-\frac{2}{q-1}}-C_\ge\leq U_F(x)\leq (\ge+\ell_\gk^{q-1})^{\frac{1}{q-1}}\gt^{-\frac{2}{q-1}}+C_\ge
\quad \forall \gt\in (0,\gt_\ge],\,\forall x\in \Gs_{\gt}(K).
\EA\end{equation}
Since $\ge$ is arbitrary, it yields to
\begin{equation}\BA {lll}\label {A-I-5}
\lim_{\gt\to 0}\|\gt^{\frac{2}{q-1}}U_F-\ell_\gk\|_{L^{\infty}(\Gs_\gt(K))}=0
\EA\end{equation}
which is (\ref{A-I-1}).
\end{proof}

\begin{coro} \label{U-Max}
Let $U_{\prt\Gw}$ be the maximal solution of (\ref{T1}) in $\Gw$, then
\begin{equation}\label {A-I-1*}
\lim_{d(x)\to 0}(d(x))^{\frac{2}{q-1}}U_{\prt\Gw}(x)=\ell_{\gk}.
\end{equation}
\end{coro}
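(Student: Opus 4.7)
The plan is to apply Proposition \ref{U-F} directly with the choice $O = \partial\Omega$, so that $F = \overline{O} = \partial\Omega$ and consequently $U_F = U_{\partial\Omega}$. The hypothesis of Proposition \ref{U-F} requires a compact subset $K \subset O$; since $\partial\Omega$ is itself compact (as $\Omega$ is a bounded smooth domain), I would simply take $K = \partial\Omega$. Then $\Sigma_\tau(K) = \Sigma_\tau = \{x \in \Omega : d(x) = \tau\}$, so that every point $x \in \Omega$ with $d(x) = \tau < \tau_\varepsilon$ lies in $\Sigma_\tau(K)$.

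With these choices, the two-sided estimate (\ref{A-I-4}) in the proof of Proposition \ref{U-F} becomes
\begin{equation*}
(\ell_\gk^{q-1}-\varepsilon)^{\frac{1}{q-1}}\tau^{-\frac{2}{q-1}} - C_\varepsilon \;\leq\; U_{\partial\Omega}(x) \;\leq\; (\ell_\gk^{q-1}+\varepsilon)^{\frac{1}{q-1}}\tau^{-\frac{2}{q-1}} + C_\varepsilon,
\end{equation*}
valid for every $\tau \in (0,\tau_\varepsilon]$ and every $x \in \Sigma_\tau$. Multiplying by $\tau^{2/(q-1)}$ and letting $\tau \to 0$ gives
\begin{equation*}
(\ell_\gk^{q-1}-\varepsilon)^{\frac{1}{q-1}} \;\leq\; \liminf_{d(x)\to 0} (d(x))^{\frac{2}{q-1}} U_{\partial\Omega}(x) \;\leq\; \limsup_{d(x)\to 0} (d(x))^{\frac{2}{q-1}} U_{\partial\Omega}(x) \;\leq\; (\ell_\gk^{q-1}+\varepsilon)^{\frac{1}{q-1}}.
\end{equation*}
Since $\varepsilon > 0$ is arbitrary, (\ref{A-I-1*}) follows.

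There is essentially no obstacle: the whole content of the corollary has already been done in Proposition \ref{U-F}, where the barriers $f_{2\tau_\varepsilon,z} + g_\varepsilon$ (supersolution) and $g_{\varepsilon,-\tau}$ (subsolution) were built using the elementary computation (\ref{A-I-3*}) together with the localized singular supersolution from Proposition \ref{barr}. The only thing that needs to be verified is that compactness of $\partial\Omega$ allows the covering argument of Step 3 in the proof of Proposition \ref{U-F} to be performed uniformly with respect to the base point, which is immediate: one extracts a finite cover $\{B_{\tau_\varepsilon}(z_j)\}_{j=1}^{k}$ of $\partial\Omega$ with $B_{2\tau_\varepsilon}(z_j) \cap \partial\Omega$ still contained in $\partial\Omega$ (trivially true here). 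Thus the two-sided bound holds on all of $\Sigma_\tau$ with constants depending only on $\varepsilon$, and the conclusion follows.
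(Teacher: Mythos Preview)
Your proposal is correct and matches the paper's intent: the paper states Corollary \ref{U-Max} without proof, precisely because it is the specialization of Proposition \ref{U-F} to $O=F=K=\partial\Omega$. Your verification that this choice is admissible (since $\partial\Omega$ is relatively open in itself, compact, and the covering condition $\overline{B_{2\tau_\varepsilon}(z_j)}\cap\partial\Omega\subset\partial\Omega$ is trivially satisfied) is exactly the observation needed.
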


\noindent\emph{Acknowledgment }
The first author was supported by Fondecyt Grant 3140567.


\begin {thebibliography}{99}

\bibitem{AH} D.R. Adams \&  L.I. Hedberg, \emph{Function Spaces and Potential Theory}, Grundlehren Math. Wiss., Vol. 314, Springer (1996).

\bibitem{An} A. Ancona, \emph{Negatively curved manifolds, elliptic operators and the Martin boundary.} Annals Math. 2nd Series 125 (1987), 495-536.

\bibitem{1} C. Bandle, V.Moroz \& W.Reichel, \emph{Boundary blow up type sub-solutions to semilinear elliptic
equations with Hardy potential}, J. London Math. Soc. 77 (2008), 503-523.

\bibitem{Beres} H. Berestycki, \emph{ Le nombre de solutions de certains probl\`emes semi-lin\'eaires elliptiques}, J. Funct. Anal. 40 (1981), 1-29.

\bibitem{BM} H. Brezis \& M. Marcus, \emph{Hardy's inequalities revisited}, Ann. Sc. Norm. Super. Pisa Cl. Sci. (5) 25  (1997), 217-237.

\bibitem{BFT1} G. Barbatis, S. Filippas \& A. Tertikas, \emph{A unified approach to improved $L^p$ Hardy inequalities with best constants}, Trans. Amer. Math. Soc. 356 (2003), 2169-2196.

\bibitem{caffa} L. Caffarelli, E.Fabes,S. Mortola \& S. Salsa, \emph{Boundary behavior of nonnegative solutions of elliptic operators in divergence form}, Indiana Univ. Math. J 30, 621-640.

\bibitem{DaM} G. Dal Maso,  \emph{On the integral representation of certain local functionals}, Ricerche Mat. 32 (1983),
85Ð113.

\bibitem{dd} J. Davila \& L. Dupaigne, \emph{Hardy-type inequalities}, J. Eur. Math. Soc. (JEMS) 6 (2004), 335-365.

\bibitem{Dyn}  E.B. Dynkin, \emph{Superdiffusions and partial differential equations}, American Mathematical Society Colloquium Publications {\bf 50}. American Mathematical Society, Providence, RI, 2002.

\bibitem{FDeP} D. Feyel \& A. de la Pradelle, \emph{ Topologies fines et compactifications associes  certains espaces de
Dirichlet}, Ann. Inst. Fourier (Grenoble) 27 (1977), 121Ð146.

\bibitem{F.M.T2} S. Filippas, L. Moschini \& A. Tertikas, \emph{Sharp two-sided heat kernel estimates for critical Schrodinger operators on bounded domains.} Comm. Math. Phys. 273 (2007), 237-281.

\bibitem{GV} A. Gmira \& L. V\'{e}ron, \emph{ Boundary singularities of solutions of some nonlinear elliptic equations}, Duke Math. J. 64 (1991), 271-324.

\bibitem{GuV} B. Guerch \& L. V\'{e}ron, \emph{ Local properties of stationary solutions of singular Schr\"odinger equations}, Revista Mat. Iberoamericana 7 (1991), 65-114.

\bibitem{hunt} R.A. Hunt \& R. L. Wheeden, R. L.\emph{Positive harmonic functions on Lipschitz domains.} Transactions of the American Mathematical Society, 147 (1970), 507-527.

\bibitem{Ma} M. Marcus, \emph{Complete classification of the positive solutions of $-\Gd u+u^q=0$ .} J. Anal. Math. 117, 187-220 (2012).

\bibitem{hardy-marcus} M.Marcus, V. J. Mizel \& Y. Pinchover, \emph{On the best constant for Hardy's inequality in $\mathbb{R}^n$}, Trans. Amer. Math. Soc. 350 (1998), 3237-3255.

\bibitem{MaNg} M. Marcus \& P. T. Nguyen, \emph {Moderate solutions of semilinear elliptic equations with Hardy potential. }  	ArXiv:1407.3572v1 (2014).

\bibitem{MV-JMPA01} M. Marcus \& L. V\'{e}ron, \emph{ Removable singularities and boundary trace.} J. Math. Pures Appl. 80  (2001), 879-900.

\bibitem{MV-CPAM} M. Marcus \& L. V\'{e}ron, \emph{  The boundary trace and generalized boundary value problem for semilinear elliptic equations with coercive absorption.} Comm. Pure Appl. Math. 56 (2003), 689-731.

\bibitem{MV-CONT} M. Marcus \& L. V\'{e}ron, \emph{  The precise boundary trace of the positive solutions of the equations $\Gd u=u^q$ in the supercritical case.} Cont. Math. 446 (2007), 345-383.

\bibitem{MV} M. Marcus \& L. V\'{e}ron, \emph{ Boundary trace of positive solutions of semilinear elliptic equations in Lipschitz domains: the subcritical case.} Ann. Sc. Norm. Super. Pisa Cl. Sci. (5) 10 (2011), 913-984.

\bibitem{book} M. Marcus \& L. V\'{e}ron, \emph{ Nonlinear second order elliptic equations involving measures.} De Gruyter Series in Nonlinear Analysis and Applications, 21. De Gruyter, Berlin, 2014. xiv+248 pp. ISBN: 978-3-11-030515-9; 978-3-11-030531-9 35-0

\bibitem{MV2} M. Marcus \& L. V\'{e}ron, \emph{ Boundary trace of positive solutions of supercritical semilinear elliptic equations in dihedral domains.} Ann. Sc. Norm. Super. Pisa Cl. Sci., to appear.

\bibitem{14} E. M. Stein, \emph{ Singular integrals and differentiability properties of functions}, Princeton University Press,
1970.

\bibitem{Ver1}  L. V\'{e}ron, \emph{ Singularities of Solutions of Second Order Quasilinear Equations,} Pitman
Research Notes in Math. 353, Addison-Wesley-Longman, 1996.

\bibitem{Ver2}  L. V\'{e}ron, \emph{ Elliptic Equations Involving Measures,} Handbook of Differential Equations, M. Chipot, P. Quittner, eds. Elsevier: Stationary Partial Differential Equations volume 1, 593-712, 2004.

\bibitem{VY}  L. V\'{e}ron \& C. Yarur, \emph{Boundary value problems with measures for elliptic equations with singular potentials .} J. Funct. Anal. 262 (2012) 733-772.

\end{thebibliography}
\end{document}